\newcommand\Vol{{\operatorname{Vol}}}
\newcommand\R{{\mathbb{R}}}
\renewcommand\P{{\mathbb{P}}}
\newcommand\E{{\mathbb{E}}}
\newcommand\Var{{\operatorname{Var}}}
\newcommand\Z{{\mathbf{Z}}}
\newcommand\bxi{\boldsymbol{\xi}}
\newcommand\var{{ \operatorname{Var}}}
\newcommand\eps{\varepsilon}
\newcommand\Ba{{\mathbf a}}
\newcommand\Bb{{\mathbf b}}
\newcommand\Be{{\mathbf e}}
\newcommand\Bf{{\mathbf f}}
\newcommand\Bg{{\mathbf g}}
\newcommand\Bs{{\mathbf s}}
\newcommand\Bu{{\mathbf u}}
\newcommand\Bv{{\mathbf v}}
\newcommand\Bx{{\mathbf x}}
\newcommand\By{{\mathbf y}}
\newcommand\BN{{\mathbf N}}
\newcommand\BS{{\mathbf S}}
\newcommand\BT{{\mathbf T}}
\newcommand\CA{{\mathcal A}}
\newcommand\CE{{\mathcal E}}
\newcommand\CH{{\mathcal H}}
\newcommand\CK{{\mathcal K}}
\newcommand\CP{{\mathcal P}}
\newcommand\CS{{\mathcal S}}
\newcommand\CU{{\mathcal U}}
\newcommand{\al}{\alpha}
\newcommand{\la}{\lambda}
\newcommand{\Ent}{\operatorname{Ent}}
\newcommand{\1}{\mathbbm{1}}
\theoremstyle{plain}
  \newtheorem{theorem}[subsection]{Theorem}
  \newtheorem{problem}[subsection]{Problem}
    \newtheorem{proposition}[subsection]{Proposition}
  \newtheorem{lemma}[subsection]{Lemma}
  \newtheorem{corollary}[subsection]{Corollary}
  \newtheorem{cor}[subsection]{Corollary}
 \newtheorem{question}[subsection]{Question}
  \newtheorem{remark}[subsection]{Remark}
  \newtheorem{claim}[subsection]{Claim}
\theoremstyle{definition}
\begin{document}

\title[Concentration of the number of real roots]{Concentration of the number of real roots of random polynomials}

\author{Ander Aguirre}
\address{Department of Mathematics\\ The Ohio State University \\ 231 W 18th Ave \\ Columbus, OH 43210, USA}
\email{aguirre.93@osu.edu}

\author{Hoi H. Nguyen}
% \address{Department of Mathematics\\ The Ohio State University \\ 231 W 18th Ave \\ Columbus, OH 43210, USA}
\email{nguyen.1261@osu.edu}

\author{Jingheng Wang}
% \address{Department of Mathematics\\ The Ohio State University \\ 231 W 18th Ave \\ Columbus, OH 43210, USA}
\email{wang.14053@osu.edu}

\thanks{The authors are supported by National Science Foundation CAREER grant DMS-1752345.}

%\subjclass[2010]{60C05, 60F10}
%\keywords{roots of random polynomials, concentration, universality phenomenon}

%\thanks{The first author is partially supported by National Science Foundation grant DMS-1752345.}

\maketitle
\begin{abstract} Many statistics of roots of random polynomials have been studied in the literature, but not much is known on the concentration aspect. In this note we present a systematic study of this question, aiming towards nearly optimal bounds to some extent. Our method is elementary and works well for many models of random polynomials, with gaussian or non-gaussian coefficients. 

\end{abstract}

% \HC{Our goal is to make this as a complete paper, so consider investing on writing it.}

\section{Introduction} Over the past few years, there have been active developments to study various statistics of the number $N_\R(F)$ of real roots of a random polynomial $F$. In its general setting, the random polynomial $F$ takes the form
\begin{equation}\label{eqn:f:general}
F(x)=\sum_{j=0}^n \xi_j p_j(x),
\end{equation}
% Not normalized 
where $\xi_j$ are iid copies of a random variable $\xi$ of mean zero and variance one, and $p_j(x)$ are deterministic polynomials (of degree $j$) coming from various natural sources. The most typical choices of $\xi_j$ are standard gaussian, where we will call $F$ gaussian polynomial. Below we list a few typical examples of $F$

\begin{itemize}
\item Kac polynomial: $p_j(x)=x^j$; 
\vskip .05in
\item Trigonometric polynomials: $p_j(x) =\cos (jx)$ or $\sin(jx)$ or a combination of both; 
\vskip .05in
\item Orthogonal polynomials: $p_j(x)$ is a polynomial of degree $j$ and $\{p_j(x)\}_{j=0}^n$ forms an orthonormal basis with respect to a smooth Borel measure $\mu$ on $\R$;  
\vskip .05in
\item Elliptic polynomial: $p_j(x) = \sqrt{\binom{n}{j}}x^j$;
\vskip .05in
\item Weyl polynomial: $p_j(x) = \frac{1}{\sqrt{j!}} x^j$.
\end{itemize}
We refer the reader to the text \cite{book} for discussions of complex roots of these models, among many others. 
\subsection{Statistics for real roots of the gaussian model}

 We now cite several results from the literature for the gaussian models. Here, the moments statistics can be studied via the celebrated Kac-Rice formula. 
 
 For Kac's polynomials  $F(x) =\sum_{j=0}^n \xi_j x^j$, it was shown by Maslova and many others (see for instance \cite{Mas1,Mas2, OV-CLT}) that 
 
 \begin{theorem}\label{thm:Kac:gau}
 $$\E N_\R = (\frac{2}{\pi}+o(1)) \log n \mbox{ and } \Var(N_\R) = ( \frac{4}{\pi} (1 -\frac{2}{\pi})+o(1)) \log n$$
and moreover
$$\frac{N_\R - \E N_\R}{\sqrt{\Var(N_\R)}} \xrightarrow{d} \BN(0,1).$$
 \end{theorem}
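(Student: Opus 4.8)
The plan is to prove the three assertions separately: the mean and the variance via the Kac--Rice formulas for the one- and two-point zero densities, and the central limit theorem via a blocking argument exploiting approximate independence of $F$ across geometric scales.

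For the mean, I would start from the Kac--Rice formula $\E N_\R = \int_\R \rho_1(x)\,dx$ with $\rho_1(x)=\frac1\pi\sqrt{A(x)C(x)-B(x)^2}\,/\,A(x)$, where $A(x)=\sum_{j=0}^n x^{2j}$, $B(x)=\sum_{j=0}^n j x^{2j-1}$, $C(x)=\sum_{j=0}^n j^2 x^{2j-2}$. Since $x^nF(1/x)$ has the same law as $F$, the substitutions $x\mapsto 1/x$ and $x\mapsto -x$ reduce everything to $(0,1)$, giving $\E N_\R=4\int_0^1\rho_1+O(1)$. On $(0,1-1/n)$ the three sums converge as $n\to\infty$ to the geometric limits $\frac1{1-x^2}$, $\frac{x}{(1-x^2)^2}$, $\frac{1+x^2}{(1-x^2)^3}$, so $\rho_1(x)\to\frac1{\pi(1-x^2)}$; integrating and tracking the cutoff at $1-1/n$ gives $\int_0^{1-1/n}\rho_1=\frac1{2\pi}\log n+O(1)$, while a crude bound on $A$ and $C$ (of orders $n$ and $n^3$) shows $\int_{1-1/n}^1\rho_1=O(1)$. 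Summing the four symmetric pieces yields $\E N_\R=\frac2\pi\log n+O(1)$.

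For the variance I would write $\Var(N_\R)=\iint_{\R^2}\big(\rho_2(x,y)-\rho_1(x)\rho_1(y)\big)\,dx\,dy+\E N_\R$, where $\rho_2$ is the two-point zero density, also given by a Kac--Rice determinant built from the Gaussian vector $(F(x),F(y),F'(x),F'(y))$. The point is that the kernel $r(x,y):=\rho_2(x,y)-\rho_1(x)\rho_1(y)$ concentrates near the diagonal within each of the four critical regions: rescaling a dyadic block $[1-2^{-k},1-2^{-k-1}]$ by $x=1-u2^{-k}$, $y=1-v2^{-k}$, the process converges to a fixed limiting stationary Gaussian process on $\R$ and $r$ to its limiting kernel, whereas for $x,y$ in blocks $k,k'$ with $|k-k'|$ large $|r(x,y)|$ is negligible. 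Each block then contributes $O(1)$, there are $(1+o(1))\log_2 n$ blocks per region, and evaluating the block-independent limiting constant and combining with $\E N_\R$ gives $\Var(N_\R)=\big(\frac4\pi(1-\frac2\pi)+o(1)\big)\log n$.

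For the CLT I would decompose $N_\R=\sum_k N_k$ over the $\asymp\log n$ geometric blocks (four families, one near each of $0,\pm1,\infty$), noting that each $N_k$ has uniformly bounded mean and variance. The key input is decorrelation across scales: for $x$ in block $k$ and $y$ in block $k'$ the normalized correlation $\E[F(x)F(y)]/\sqrt{\E F(x)^2\,\E F(y)^2}$ decays geometrically in $|k-k'|$, which, fed into the joint zero densities, makes $(N_k)_k$ a weakly dependent sequence with rapidly decaying correlations; since $\Var(N_\R)\asymp\log n\to\infty$, a CLT for such sequences gives $\frac{N_\R-\E N_\R}{\sqrt{\Var(N_\R)}}\xrightarrow{d}\BN(0,1)$. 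The hard part, underlying both the variance and the CLT, is making this decorrelation quantitative: controlling $r(x,y)$ off the diagonal for the variance, and the higher correlation functions $\rho_m$ (equivalently, coupling several widely separated blocks with independent copies, with summable error) for the CLT --- the latter being what makes the limit theorem genuinely harder than the moment estimates. A possible alternative for the CLT is the cumulant method: show the $m$-th cumulant of $N_\R$ is $\asymp\log n$ for $m=2$ and $o((\log n)^{m/2})$ for $m\ge3$, again from the structure of $\rho_m$ near the four critical points.
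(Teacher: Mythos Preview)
The paper does not prove this theorem at all: it is quoted as background from the literature, with the attribution ``shown by Maslova and many others (see for instance \cite{Mas1,Mas2,OV-CLT})'' immediately preceding the statement. There is therefore no in-paper proof to compare your proposal against.

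That said, your sketch is broadly aligned with the classical arguments in those references. The mean via Kac--Rice and the $x\mapsto 1/x$ symmetry is exactly Kac's original computation. The variance via the two-point function and a dyadic/geometric decomposition near the four critical points $\{0,\pm 1,\infty\}$ is essentially Maslova's route; the same block structure and decorrelation across scales underlie her CLT and the later refinement by O.~Nguyen and Vu. One caveat: in your CLT paragraph you list ``four families, one near each of $0,\pm1,\infty$,'' but the roots cluster only near $\pm 1$ (the density $\rho_1$ is bounded away from $0$ and $\infty$), so the blocking should be around $|x|=1$ only; the $x\mapsto 1/x$ symmetry already folds the exterior onto the interior. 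Otherwise the outline is sound, with the genuinely hard quantitative steps --- integrable decay of $\rho_2-\rho_1\otimes\rho_1$ off the diagonal, and control of higher correlation functions or a coupling with summable error across well-separated blocks --- correctly identified as the crux.
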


For trigonometric polynomial $F(x) =\frac{1}{\sqrt{n}} \sum_{j=0}^n a_j \cos(jx) + b_j \sin(jx)$, the expected value of $N_\R(F)$ was shown in \cite{Qualls} and the variance and Central Limit Theorem were established in \cite{ADL,AL, GW}.  % \JC{CHANGED: First Quote has been uncommented in citation GW, dot missing }

\begin{theorem}\label{thm:trig:gau} We have 
  $$\E N_\R =  (2/\sqrt{3} +o(1)) n \mbox{ and } \Var(N_\R) = (K+o(1)) n$$
where $K$ is an explicit positive constant and moreover
$$\frac{N_\R - \E N_\R}{\sqrt{\Var(N_\R)}}  \xrightarrow{d} \BN(0,1).$$
 \end{theorem}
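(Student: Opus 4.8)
The plan is to establish the three assertions — the expectation, the variance, and the Central Limit Theorem — in increasing order of difficulty, all built on the single observation that, with the stated normalization, $F(x)=\frac{1}{\sqrt n}\sum_{j=0}^n\bigl(a_j\cos(jx)+b_j\sin(jx)\bigr)$ is a centered stationary Gaussian field on the circle whose covariance is a normalized Dirichlet kernel, $r_n(t)=\E F(0)F(t)=\frac1n\sum_{j=0}^n\cos(jt)$. The whole analysis is driven by the quantitative decay $|r_n(t)|\lesssim \frac{1}{n\,\|t\|}$ away from the origin together with the local behaviour $r_n(0)=\frac{n+1}{n}\to 1$, $r_n'(0)=0$, $-r_n''(0)=\frac1n\sum_{j=0}^n j^2\sim n^2/3$, which encode that after the rescaling $x\mapsto x/n$ the field converges to the stationary Gaussian process with covariance $t\mapsto \sin t/t$ (spectral measure uniform on $[-1,1]$).

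For the expectation I would apply the Kac--Rice formula: for a smooth centered Gaussian process,
$\E N_\R([0,2\pi])=\int_0^{2\pi}\frac1\pi\sqrt{\frac{\Var(F'(x))\Var(F(x))-\Cov(F(x),F'(x))^2}{\Var(F(x))^2}}\,dx$.
By stationarity this has a constant integrand: $\Var(F(x))=r_n(0)$, $\Cov(F(x),F'(x))=-r_n'(0)=0$, $\Var(F'(x))=-r_n''(0)$, so the zero density is $\frac1\pi\sqrt{-r_n''(0)/r_n(0)}\sim \frac{n}{\pi\sqrt3}$ and $\E N_\R\sim \frac{2\pi}{\pi\sqrt3}n=\frac{2}{\sqrt3}n$. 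For the variance I would use the second Kac--Rice formula, $\E[N_\R(N_\R-1)]=\iint_{[0,2\pi]^2}\rho_2(s,t)\,ds\,dt$, where $\rho_2(s,t)$ is the two-point zero intensity computed from the joint Gaussian law of $(F(s),F(t),F'(s),F'(t))$ with entries given by $r_n$ and its derivatives at $s-t$; then $\Var(N_\R)=\E[N_\R(N_\R-1)]+\E N_\R-(\E N_\R)^2$. The points to check are: (a) $\rho_2(s,t)$ has only an integrable singularity as $s\to t$; (b) for $|s-t|\gg 1/n$ the decorrelation estimate on $r_n$ makes $\rho_2(s,t)=\rho_1(s)\rho_1(t)(1+o(1))$, which nearly cancels $(\E N_\R)^2$; (c) the surviving mass concentrates on $|s-t|=O(1/n)$, where the substitution $s-t=u/n$ turns the double integral into $(K+o(1))n$ with $K>0$ the integral of the limiting local two-point function of the rescaled limit process. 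In particular $K>0$, giving the nondegeneracy needed below.

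The CLT is the main obstacle, and I would prove it by the Wiener chaos method. One writes $N_\R=\lim_{\eps\to0}\int_0^{2\pi}\frac{1}{2\eps}\indicator{|F(x)|<\eps}\,|F'(x)|\,dx$ as a Kac counting integral and expands the mollified integrand in Hermite polynomials in the Gaussian pair $(F(x),F'(x))$, obtaining a Wiener--It\^o chaos decomposition $N_\R-\E N_\R=\sum_{q\ge1}I_q$ with $I_q$ in the $q$-th chaos; the reflection symmetry $F\mapsto-F$ kills all odd chaoses. The steps are: (i) justify the expansion and its $L^2$ convergence uniformly in $n$ by a mollification-and-limit argument; (ii) show $\Var(N_\R)=\sum_q\Var(I_q)$ with the low-order ($q=2$) term carrying a fixed positive fraction $\asymp n$ of the total, using the variance asymptotics just described; (iii) for each fixed $q$ apply the Nualart--Peccati fourth moment theorem, so that the marginal CLT for $I_q/\sqrt{\Var(I_q)}$ follows once its fourth cumulant tends to $0$, which reduces to $L^2$-bounds on the contraction kernels; and (iv) control the chaos tail $\sum_{q>Q}\Var(I_q)$ uniformly in $n$ so that truncation is harmless. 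Since components of a Gaussian vector lying in distinct chaoses are independent, (i)--(iv) assemble into the stated convergence. (A more hands-on alternative is a blocking argument: partition $[0,2\pi]$ into $\asymp n$ intervals of length $\asymp 1/n$, use strong mixing of the Gaussian field to make the zero counts in well-separated blocks asymptotically independent, and apply a CLT for weakly dependent arrays; but quantifying the mixing rate and handling boundary effects is itself delicate.) The technical heart — and where all the model-specific work sits — is step (iii): because $x\mapsto|x|$ is not smooth, its Hermite coefficients decay only polynomially, so the fourth-cumulant and contraction estimates must be pushed through by hand, and they ultimately rest on the sharp near-diagonal and off-diagonal bounds for $r_n$ and its derivatives, with oscillatory-integral estimates for products of $r_n$ along cycles.
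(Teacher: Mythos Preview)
The paper does not actually prove this theorem: it is stated as a background result, with the expectation attributed to \cite{Qualls} and the variance and CLT to \cite{ADL,AL,GW}. So there is no ``paper's own proof'' to compare against; your proposal is effectively a sketch of the cited literature rather than of anything in this paper.

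That said, your outline is essentially the route taken in those references. The expectation via the one-point Kac--Rice formula and stationarity is exactly Qualls' computation. The variance via the two-point intensity, splitting into near-diagonal $|s-t|=O(1/n)$ and off-diagonal contributions, is the structure of \cite{GW,AL}. For the CLT, the Wiener--It\^o chaos expansion of the Kac counting integral combined with the Nualart--Peccati fourth-moment criterion is precisely the method of Aza\"is--Dalmao--Le\'on \cite{ADL}. One small correction: projections onto distinct chaoses are \emph{uncorrelated}, not independent; the joint CLT is obtained not from independence but from the Peccati--Tudor multivariate fourth-moment theorem, which says that for a vector of chaos components, marginal convergence to normals plus the covariance structure forces joint Gaussian convergence. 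Also, the claim that the $q=2$ chaos carries a fixed positive fraction of the variance is not needed (and is not what \cite{ADL} rely on); what matters is that the \emph{total} variance is $\asymp n$ and that the tail $\sum_{q>Q}\Var(I_q)$ is uniformly small, which you correctly list as step (iv).
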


Next, for Weyl polynomial $F(x) =  \sum_{j=0}^n  \xi_j \frac{1}{\sqrt{j!}} x^j$ it was shown in \cite{DV} that 

 \begin{theorem}\label{thm:Weyl:gau} For gaussian Weyl polynomials, we have 
$$\E N_\R = (\frac{2}{\pi}+o(1))n \mbox{ and } \Var(N_\R) = (K+o(1))n$$
where $K$ is an explicit positive constant, and moreover
$$\frac{N_\R - \E N_\R}{\sqrt{\Var(N_\R)}}  \xrightarrow{d} \BN(0,1).$$
\end{theorem}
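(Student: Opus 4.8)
The plan is to exploit a structural feature special to the Weyl series: after dividing by its standard deviation, a gaussian Weyl polynomial is, on the natural ``bulk'' scale $|x|\lesssim\sqrt n$, an asymptotically \emph{stationary} gaussian process with limiting covariance the gaussian kernel $t\mapsto e^{-t^2/2}$. Write $\sigma(x)^2=\E F(x)^2=\sum_{j=0}^n x^{2j}/j!$ and $\tilde F=F/\sigma$, a centered gaussian process of unit variance with covariance $r(x,y)=\big(\sum_{j=0}^n(xy)^j/j!\big)\big/\big(\sigma(x)\sigma(y)\big)$. For $|x|,|y|\le(1-\eps)\sqrt n$ the truncation costs only a factor $1+o(1)$, so $\sum_{j=0}^n(xy)^j/j!=e^{xy}(1+o(1))$ and one obtains, uniformly on the bulk, $r(x,y)=e^{-(x-y)^2/2}(1+o(1))$ together with the matching $o(1)$-estimates for $\partial_x r$, $\partial_y r$ and $\partial_x\partial_y r$. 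Outside the bulk there are essentially no real roots: passing to the reciprocal polynomial $x^nF(1/x)$ and substituting $u=\sqrt n/x$ turns the real zeros of $F$ with $|x|\ge(1-\eps)\sqrt n$ into the zeros, on an $O(1)$-length $u$-interval, of a random series again of Weyl type up to a small perturbation, hence $O_\eps(1)+O(\eps\sqrt n)$ of them in expectation; sending $\eps\to0$ slowly makes this $o(\sqrt n)$.

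Granting the bulk estimates, the \textbf{expectation} comes from the Kac--Rice formula $\E N_\R=\int_\R\rho_1$, where for a unit-variance process $\rho_1(x)=\tfrac1\pi\sqrt{\partial_x\partial_y r(x,y)|_{y=x}}$; since $\rho(t)=e^{-t^2/2}$ has $-\rho''(0)=1$, the bulk estimates give $\rho_1(x)=\tfrac1\pi+o(1)$ uniformly for $|x|\le(1-\eps)\sqrt n$, so $\E N_\R=\tfrac1\pi\cdot2\sqrt n+O(\eps\sqrt n)+o(\sqrt n)$. For the \textbf{variance} I would use the two-point Kac--Rice formula $\E[N_\R(N_\R-1)]=\iint_{\R^2}\rho_2$, whence $\Var(N_\R)=\E N_\R+\iint_{\R^2}(\rho_2(x,y)-\rho_1(x)\rho_1(y))\,dx\,dy$; on the bulk the integrand converges, uniformly in the midpoint and locally uniformly in $t=x-y$, to the truncated two-point function $g(t)$ of the stationary gaussian-kernel process, and since that kernel and all its derivatives decay faster than any exponential, $\rho_2-\rho_1\rho_1$ is dominated by an absolutely integrable function of $t$; hence the double integral is $2\sqrt n(\int_\R g+o(1))$ and $\Var(N_\R)=(K+o(1))\sqrt n$ with $K=\tfrac1\pi+\int_\R g$. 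The one real subtlety is $K>0$, a non-degeneracy statement: the zero count of a nondegenerate stationary gaussian process on an interval of length $T$ has variance $\gg T$, which one secures by exhibiting a fixed finite family of unit subintervals whose zero counts are not an exact affine combination of one another (equivalently, the spectral density at the origin of the stationary sequence of block counts is strictly positive).

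For the \textbf{central limit theorem} I would run a block argument with decorrelated blocks. Partition $[-(1-\eps)\sqrt n,(1-\eps)\sqrt n]$ into $M\asymp\sqrt n/L$ consecutive blocks $I_k$ of common length $L=L(n)=(\log n)^2$, separated by buffers of length $\ell=\ell(n)=\log n$, and write $N_\R=\sum_k X_k+E$ with $X_k=\#\{x\in I_k:\tilde F(x)=0\}$ and $E$ collecting the zeros in the buffers, near the edge, and beyond the edge. For $x\in I_j$, $y\in I_k$, $j\ne k$, the bulk estimates give $|r(x,y)|,|\partial r|,|\partial^2 r|\le e^{-\ell^2/2+o(\ell^2)}$, so a Schur-test bound shows the covariance of $\tilde F$ over $\bigcup_k I_k$ is block diagonal up to an operator-norm perturbation of size $O(e^{-\ell^2/2+o(\ell^2)})$, \emph{uniformly in} $n$; from this I would couple $(X_k)_k$ to an independent family with total error $o(1)$ --- not by comparing gaussian densities (obstructed by the near-singularity of a smooth process's covariance on fine nets) but by a Slepian/Lindeberg interpolation along $\Sigma_t=\Sigma_0+t\Delta$ applied to smooth functionals of the block counts, the smoothing error being absorbed by anticoncentration of $\tilde F$, and the factor $M$ absorbed because $\ell\gg\sqrt{\log n}$. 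Each $X_k$ has $p$-th factorial moment $\int_{I_k^p}\rho_p\le C^p L^p$, hence all moments $O_p(L^p)$, while $\Var X_k=LK(1+o(1))\asymp L$ by the single-block version of the variance computation (again using $K>0$); so $\sum_k\Var X_k\asymp\sqrt n$ and the Lyapunov ratio $\sum_k\E|X_k-\E X_k|^{2+\delta}\big/\big(\sum_k\Var X_k\big)^{1+\delta/2}=O(L^{1+\delta}/n^{\delta/4})\to0$, yielding asymptotic normality of $\sum_k X_k$. Finally $\Var(\sum_k X_k)=\Var(N_\R)(1+o(1))$ and $\|E-\E E\|_{L^2}^2=O(\sqrt n\,\ell/L+\eps\sqrt n+1)=o(\sqrt n)$, so after centering and dividing by $\sqrt{\Var(N_\R)}$ the term $E$ is $o(1)$ in probability and $(N_\R-\E N_\R)/\sqrt{\Var(N_\R)}\xrightarrow{d}\BN(0,1)$.

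\textbf{Where the difficulty sits.} The bulk asymptotics for $r$ and its derivatives are routine estimates on truncated exponential series, and the edge is mild --- unlike the Kac case there is no logarithmic pile-up of roots, so crude anticoncentration of $F(x)$ plus the reciprocal-polynomial trick suffice. The real work is the CLT, and within it the quantitative decorrelation step: converting ``$r$ and its low-order derivatives are super-exponentially small between well-separated blocks'' into an honest comparison of the (smoothed) block-count statistics with their independent counterparts, with an error summable over all $M\asymp\sqrt n/L$ blocks, in tandem with the lower bound $\Var(N_\R)\gg\sqrt n$ (that is, $K>0$) needed to renormalize. An alternative to the block scheme would be to expand the Kac functional $N_\R=\int_\R\delta(\tilde F(x))|\tilde F'(x)|\,dx$ into Wiener chaos and apply the fourth-moment theorem, as for stationary gaussian processes; the asymptotic stationarity isolated above is exactly what makes that route viable, but the block argument seems more transparent.
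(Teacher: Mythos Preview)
The paper does not prove this theorem at all: it is quoted from Do--Vu \cite{DV} as background, so there is no ``paper's own proof'' to compare against. (Note also that the displayed statement carries a typo: for Weyl polynomials $\E N_\R$ and $\Var(N_\R)$ are of order $\sqrt n$, not $n$; your computation correctly produces $\E N_\R=(2/\pi+o(1))\sqrt n$ and $\Var(N_\R)=(K+o(1))\sqrt n$, which is what \cite{DV} actually proves.)

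Your outline is essentially the Do--Vu strategy: reduce to the bulk $|x|\le(1-\eps)\sqrt n$, identify the normalized process there as asymptotically stationary with covariance $e^{-t^2/2}$, read off $\rho_1\to 1/\pi$ from Kac--Rice, get the variance from the two-point function by dominated convergence in $t=x-y$, and prove the CLT by decorrelating well-separated blocks (or, as you note, by Wiener-chaos expansion and the fourth-moment theorem). You have correctly located the two genuine obstacles, namely the quantitative decorrelation and the strict positivity $K>0$.

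One concrete weak point: your edge argument via the reciprocal polynomial does not work as written. For Kac polynomials $x^nF(1/x)$ has the same law as $F$, but for Weyl polynomials $x^nF(1/x)=\sum_k \xi_{n-k}x^k/\sqrt{(n-k)!}$, which is not remotely of Weyl type (the low-degree coefficients are of size $1/\sqrt{n!}$). The edge is still harmless, but for a different reason: the explicit one-point density satisfies $\rho_1(x)\asymp \sqrt n/x^2$ for $|x|\ge(1+o(1))\sqrt n$ (see the formula quoted just above the theorem in the paper), so $\int_{|x|\ge(1-\eps)\sqrt n}\rho_1=O(1)+O(\eps\sqrt n)$ directly, and a similar two-point bound controls the edge contribution to the variance. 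Replace the reciprocal trick with this direct Kac--Rice tail estimate and your sketch is sound.
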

 
For Elliptic polynomials $F(x)=\sum_{j=0}^n \xi_j \sqrt{\binom{n}{j}} x^j$, the expected value was computed in \cite{K} while the variance and Central Limit Theorem were established in \cite{Dal} (see also \cite{AL}). % \AC{CHANGE added dot}
 \begin{theorem}\label{thm:E:gau} For gaussian Elliptic polynomials, we have 
$$\E N_\R = \sqrt{n} \mbox{ and } \Var(N_\R) = (K+o(1))\sqrt{n}$$
where $K$ is an explicit positive constant, and moreover
$$\frac{N_\R - \E N_\R}{\sqrt{\Var(N_\R)}}  \xrightarrow{d} \BN(0,1).$$
\end{theorem}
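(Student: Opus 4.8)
The plan is to run the classical Kac--Rice scheme, in complete parallel with the gaussian arguments behind Theorems \ref{thm:Kac:gau}--\ref{thm:Weyl:gau}. The starting point is the covariance kernel of the gaussian elliptic polynomial,
\begin{equation}\label{eqn:ell:kernel}
K(x,y)=\E[F(x)F(y)]=\sum_{j=0}^n\binom{n}{j}(xy)^j=(1+xy)^n .
\end{equation}
For a centered gaussian process with kernel $K$, the Kac--Rice density of real zeros is $\rho(x)=\frac1\pi\sqrt{\,\partial_x\partial_y\log K(x,y)\big|_{y=x}\,}$. Since $\partial_x\partial_y\log(1+xy)=(1+xy)^{-2}$, \eqref{eqn:ell:kernel} gives $\rho(x)=\frac{\sqrt n}{\pi(1+x^2)}$, whence
\begin{equation}\label{eqn:ell:first}
\E N_\R=\int_{-\infty}^{\infty}\frac{\sqrt n}{\pi(1+x^2)}\,dx=\sqrt n ,
\end{equation}
which is the claimed exact identity. (The substitution $x=\tan(\theta/2)$ turns $\rho(x)\,dx$ into the uniform measure on the circle, consistent with the distributional symmetry $r\leftrightarrow 1/r$ of the root set coming from $\binom nj=\binom n{n-j}$.)

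For the variance one uses the two-point Kac--Rice formula: with $\rho_2(x,y)$ the gaussian two-point zero-correlation function built from $K$ and its derivatives of order $\le 1$, one has
\begin{equation}\label{eqn:ell:var}
\Var(N_\R)=\E N_\R+\int\!\!\int_{\R^2}\big(\rho_2(x,y)-\rho(x)\rho(y)\big)\,dx\,dy .
\end{equation}
After the substitution $x=\tan(\theta/2)$ the elliptic kernel becomes, up to smooth factors, that of a stationary process on the circle depending on the large parameter $n$; rescaling around each base point at scale $n^{-1/2}$ makes $\rho_2$ converge to a universal limiting correlation, and the integrand in \eqref{eqn:ell:var} concentrates in an $O(n^{-1/2})$ neighbourhood of the diagonal. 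This yields $\Var(N_\R)=(K+o(1))\sqrt n$ with $K$ an explicit finite integral of the limiting two-point function. Its strict positivity --- equivalently the matching lower bound $\Var(N_\R)\gtrsim\sqrt n$ --- follows from non-degeneracy of the limiting process, exactly the input needed for the corresponding step in Theorems \ref{thm:trig:gau} and \ref{thm:Weyl:gau}.

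For the central limit theorem I would use the Wiener--It\^o chaos method (the Kratz--Le\'on / Nualart--Peccati approach, as carried out by Dalmao for this model): expand the counting statistic $N_\R=\sum_{q\ge1}N_\R^{(q)}$ into orthogonal Wiener chaos components, show by the fourth-moment theorem that each fixed normalized component $N_\R^{(q)}/\sqrt{\Var N_\R}$ is asymptotically gaussian --- which reduces to controlling the contractions of the rescaled kernel \eqref{eqn:ell:kernel} --- and then control the tail $\sum_{q>Q}\Var(N_\R^{(q)})$ uniformly in $n$ to finish by truncation. An equivalent route is to partition $\R$ into $\Theta(\sqrt n)$ short intervals, observe that the counts over well-separated intervals are asymptotically uncorrelated because the normalized field $(F,F')$ has rapidly decaying correlations at scale $n^{-1/2}$, and apply a blocking/$m$-dependence CLT. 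The main obstacle is exactly this last point: making the decay of correlations (equivalently, the chaos tail) quantitative enough that the accumulated error is $o(\sqrt n)$, together with the variance lower bound; by contrast \eqref{eqn:ell:first} and the diagonal variance asymptotics \eqref{eqn:ell:var} are routine once \eqref{eqn:ell:kernel} is available.
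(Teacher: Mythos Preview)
The paper does not prove this theorem; it is quoted as a background result from the literature, with the mean attributed to Kostlan \cite{K} and the variance and CLT to Dalmao \cite{Dal} (see also \cite{AL}). Your sketch is essentially an outline of those cited proofs: the exact computation of $\E N_\R=\sqrt{n}$ via the Kac--Rice density $\rho(x)=\frac{\sqrt n}{\pi(1+x^2)}$ is precisely Kostlan's argument, and the Wiener--chaos / fourth-moment route to the CLT is exactly Dalmao's approach. So there is no in-paper proof to compare against, but you have correctly identified the standard arguments.

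One comment on your variance sketch: you write that after the substitution $x=\tan(\theta/2)$ the kernel becomes ``up to smooth factors'' that of a stationary process on the circle, and that the integrand in \eqref{eqn:ell:var} concentrates near the diagonal at scale $n^{-1/2}$. This is correct in spirit, but the actual labor in \cite{Dal} is precisely in making the limit rigorous --- one needs a uniform integrable majorant for $\rho_2(x,y)-\rho(x)\rho(y)$ over the off-diagonal region to pass to the limit in \eqref{eqn:ell:var}, and the strict positivity of $K$ requires either an explicit evaluation or a separate argument. These are not errors in your plan, just the places where ``one shows'' conceals the bulk of the work.
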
  
Now for orthogonal polynomials, the mean was computed in \cite{LPX} and the variance was computed in \cite{LP}.

\begin{theorem}\label{thm:O:gau} Let $\mu$ be a measure with compact support on the real line, that is regular in the sense of Stahl, Totik, and Ullmann. Let $\omega$ denote the Radon-Nikodym derivative of the equilibrium measure for the support of $\mu$. Let $[a', b']$ be a subinterval in the support of $\mu$, such that $\mu$ is absolutely continuous there, and its Radon-Nikodym derivative $\mu'$ is positive and continuous there. Assume moreover, that
$$\sup_{n\ge 1} \|p_n\|_{L_\infty[a',b']}<\infty.$$
Then if $[a,b] \subset (a',b')$ we have % \JC{CHANGED outer parenthesis size, SUGGESTION for cosmetic}
$$\E N_{[a,b]} =  \left(\frac{\nu_\CK([a,b])}{\sqrt{3}}+o(1)\right)n,$$ 
where $\nu_\CK$ is the equilibrium measure of the support $\CK$ of $\mu$ (i.e. $\nu_\CK$ minimizes $I[\nu] = - \int \int  \log|z-t| d \nu(t) d\nu(z)$ among all probability measures $\nu$ with support on $\CK$.)\

Furthermore,
$$\lim_{n\to \infty}\frac{1}{n} \Var(N_{[a,b]}) = c \int_{a}^b \omega(y) dy$$
where $c$ is an explicit absolute positive constant (independent of $\CK, \mu$.) Finally, it is shown in \cite{DLNgNgP} that $N([a,b])$ has CLT fluctuation.
\end{theorem}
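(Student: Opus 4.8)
These three assertions (due to \cite{LPX}, \cite{LP}, \cite{DLNgNgP}) are of increasingly delicate nature, and I would establish them in turn, all through the Kac--Rice formalism for the centered Gaussian field $F$ together with bulk universality for the Christoffel--Darboux kernel $K_n(x,y)=\sum_{j=0}^n p_j(x)p_j(y)$. The regularity hypotheses on $\mu$ -- regularity in the Stahl--Totik--Ullmann sense, local absolute continuity with positive continuous density, and $\sup_n\|p_n\|_{L_\infty[a',b']}<\infty$ -- are precisely what is needed to upgrade qualitative universality of $K_n$ to the quantitative, differentiable estimates that Kac--Rice requires.

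\emph{Expectation.} The first intensity of real zeros of $F$ is $\rho_1(x)=\frac1\pi\sqrt{\partial_s\partial_t\log K_n(s,t)\big|_{s=t=x}}$, so $\E N_{[a,b]}=\int_a^b\rho_1(x)\,dx$. On $[a,b]\subset(a',b')$, the Lubinsky--Totik type universality gives that, after rescaling space by the equilibrium density $\omega$, the kernel $K_n$ behaves locally near $x$ like the sine kernel at scale $1/(n\omega(x))$; inserting this expansion, together with its $s$- and $t$-derivatives, into the formula for $\rho_1$ yields $\rho_1(x)=(\frac{1}{\sqrt3}+o(1))\,n\,\omega(x)$ uniformly on $[a,b]$. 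Integrating and using $d\nu_\CK=\omega\,dy$ gives $\E N_{[a,b]}=(\nu_\CK([a,b])/\sqrt3+o(1))n$.

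\emph{Variance.} Here I would use the second Kac--Rice intensity $\rho_2(x,y)$ and the identity
$$\Var(N_{[a,b]})=\int_a^b\rho_1(x)\,dx+\int_a^b\!\!\int_a^b\big(\rho_2(x,y)-\rho_1(x)\rho_1(y)\big)\,dx\,dy,$$
splitting the double integral according to whether $|x-y|\le A/n$ or $|x-y|>A/n$ for a large constant $A$. On the near-diagonal piece, substituting the sine-kernel asymptotics for $K_n$ and its derivatives and changing variables $y=x+u/(n\omega(x))$ turns the integrand into a universal, absolutely integrable function of $u$ times $\omega(x)$, producing a contribution $(\tilde c+o(1))\,n\int_a^b\omega(y)\,dy$ with $\tilde c$ an absolute constant. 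On the far-diagonal piece one must show that $\rho_2(x,y)-\rho_1(x)\rho_1(y)$ decays fast enough in $|x-y|$, uniformly in $n$, that this region contributes only $o(n)$; this rests on quantitative clock-spacing / decorrelation estimates for $K_n$ away from the diagonal. Combining the two pieces with the $\int_a^b\rho_1=(\nu_\CK([a,b])/\sqrt3)n$ term and relabeling constants gives $\frac1n\Var(N_{[a,b]})\to c\int_a^b\omega(y)\,dy$ with $c$ absolute. I expect this uniform off-diagonal decorrelation to be the crux of the whole argument.

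\emph{Central Limit Theorem.} Following \cite{DLNgNgP}, I would expand the counting statistic $N_{[a,b]}$ -- or a smooth proxy obtained by mollifying the zero-counting measure, with the approximation controlled via the variance estimate above -- in the Wiener chaos generated by $(\xi_j)$, show that a single nontrivial chaos carries the variance to leading order and that the fourth cumulant of the normalized statistic tends to $0$, and invoke the Nualart--Peccati fourth-moment theorem; the lower bound $\Var(N_{[a,b]})\asymp n$ from the previous step ensures the normalization has the right order $\sqrt n$. Alternatively, partitioning $[a,b]$ into $\asymp n$ mesoscopic blocks that are asymptotically independent (again using decorrelation of $K_n$) and applying a Lindeberg-type argument would also deliver asymptotic normality once the variance is pinned down. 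In every part the recurring obstacle is the same: converting qualitative bulk universality of the orthogonal polynomials into effective, differentiable control on $K_n$ near the diagonal and effective decorrelation away from it.
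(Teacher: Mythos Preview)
The paper does not prove this theorem at all: Theorem~\ref{thm:O:gau} is stated purely as background, with the expectation attributed to \cite{LPX}, the variance to \cite{LP}, and the CLT to \cite{DLNgNgP}. There is therefore no ``paper's own proof'' to compare your proposal against; the theorem functions only as an input to the concentration results (Theorem~\ref{thm:orthogonal}) that the paper actually establishes.

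That said, your sketch is a faithful outline of what the cited references do. The expectation via Kac--Rice and sine-kernel universality is exactly the argument in \cite{LPX}; the variance via the two-point function with a near/far-diagonal split and quantitative off-diagonal decorrelation is the structure of \cite{LP}; and your two suggested routes to the CLT (Wiener chaos / fourth-moment, or mesoscopic blocks with approximate independence) are both standard, with \cite{DLNgNgP} taking a route closer to the second. Your identification of the off-diagonal decorrelation as the technical crux is accurate. So as a reconstruction of the external proofs your proposal is sound, but be aware that none of this is carried out in the present paper.
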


For the sake of a heuristic motivation of concentration, let us review the correlation formulae and one point intensities. For a random polynomial $F$ of degree $n$, denote the covariance  and normalized covariance kernel by % \AC{CHANGED more spece between k and k bar}
$$K_n(s,t)=\E F(s)F(t);\quad \quad \quad  \overline{K}_n(s,t)=\E \frac{F(s)F(t)}{\sqrt{\E F^2(s) \E F^2(t)}}.$$
 Then for the aforementioned models one has (see \cite{AL} and \cite{SM}):

\begin{itemize}
     \item For Kac polynomials $F(x) =\sum_{j=0}^n \xi_j x^j$
     \begin{align*}
        \overline{K}_n(s,t) =\frac{1}{\sqrt{(\sum_{j=0}^n s^{2j}) (\sum_{j=0}^n t^{2j}) }} \frac{1-(st)^{n+1}}{1-st};
     \end{align*}

       \item For (stationary) trigonometric polynomials $F(x)=\frac{1}{\sqrt{n}}\sum_{j=0}^n \xi_{j1} \cos(jx) + \xi_{j2} \sin(jx)$,

   \begin{align*}
          \overline{K}_n(s,t)=\sum_{j=0}^n\frac{\cos(j(s-t))}{n}=\frac{1}{n}\cos\left(\frac{(n+1)(s-t)}{2}\right)\frac{\sin\left(\frac{n(s-t)}{2}\right)}{\sin\left(\frac{(s-t)}{2}\right)};
    \end{align*}

   \item For Elliptic polynomials $F(x) =  \sum_{j=0}^n  \xi_j \sqrt{\binom{n}{j}}  x^j$
    \begin{align*}
         \overline{K}_n(s,t)= \frac{1}{\sqrt{(1+s^2)^n (1+t^2)^n}}(1+st)^n=\left(\frac{1+st}{\sqrt{(1+st)^2+(s-t)^2}}\right)^n;
    \end{align*}
   \item For Weyl polynomials $F(x) = \sum_{j=0}^n  \xi_j \frac{1}{\sqrt{j!}} x^j$
   
    \begin{align*}
           \overline{K}_n(s,t)=\frac{1}{\sqrt{\sum_{j=0}^n\frac{s^{2j}}{j!} \sum_{j=0}^n\frac{t^{2j}}{j!}}}\sum_{j=0}^n\frac{(st)^j}{j!}.
     \end{align*}

\end{itemize}
Under % \JC{SUGGESTION replace "some" with "model"} 
appropriate scalings one can see that 
\begin{equation}\label{eqn:K_n}
\overline{K}_n(s,t) \to 0 \mbox{ as } |s-t|\to \infty.
\end{equation}
%\AC{CHANGED In this paragraph commas put in better places}
For instance for trigonometric polynomials, we have that under the rescaling $x\rightarrow \frac{x}{n}$ the normalized covariance Kernel in the $n\rightarrow \infty$ limit converges to (see for instance \cite[Lemma 1]{ADL}) $\operatorname{sinc}(s-t)=\frac{\sin(s-t)}{(s-t)}$, which converges to zero when $|s-t| \to \infty$. In the orthogonal model, explicit formulas are more complicated but the limiting normalized covariance Kernel after  the rescaling $x\rightarrow \frac{x}{n}$ also yields the sinc kernel (as a consequence of \cite[Lemma 3.3]{LP}). For the Elliptic polynomial, in the regime $s,t =O(\sqrt{n})$ (where one expect the most number of real roots, see the first intensity $\rho_1(x)$ discussed below) we see that $ \overline{K}_n(s,t) \to 0$ as $|s-t| \to \infty$ with $n$. For Weyl polynomial, also in the regime $s,t =O(\sqrt{n})$ (where one expect the most number of real roots, see its density $\rho_1(\cdot)$ below)
%\JC{use cdot?} 
we see that $ \overline{K}_n(s,t) \approx \exp(-(s-t)^2/2)\to 0$ as $|s-t| \to \infty$. The property \eqref{eqn:K_n} can be used as an indicator that $N_I$ and $N_I'$ are approximately independent if $I$ and $I'$ are far apart.

 We also remark that for Gaussian coefficients the computation of the first intensity of the number of real roots via  Kac-Rice reduces to the simple formula
\begin{align*}
    \rho_1(x)=\frac{1}{\pi}\sqrt{\frac{\partial^2}{\partial s \partial t}\log K_n(s,t)|_{s=t=x}}.
\end{align*}
For instance we have the following:
\begin{itemize}

    \item For Kac polynomials (see for instance \cite{EK})
     \begin{align*}
        \rho_1(x)=\frac{1}{\pi}\sqrt{\frac{1}{(x^2-1)^2}-\frac{(n+1)^2x^{2n}}{(x^{2n+2}-1)^2}};
    \end{align*}
  \item For (stationary) trigonometric polynomials on $[0,2\pi]$ (see for instance \cite{AL, Qualls})
   \begin{align*}
          \rho_1(x)=  \frac{1}{\pi}\sqrt{(n+1)(2n+1)/6}; 
    \end{align*}
  \item For Weyl polynomials (see for instance \cite{SM}) %\AC{CHANGED looked at 58}
    \begin{align*} % \label{density:W}
          \rho_1(x) = \frac{1}{\pi} \sqrt{ 1 + \frac{x^{2n} (x^2-n-1)}{e^{x^2} \Gamma(n+1,x^2)}  - \frac{x^{4n+2}}{[e^{x^2} \Gamma(n+1, x^2)]^2}},
     \end{align*}
    where $\Gamma(n,x) = \int_x^\infty e^{-t} t^{n-1} dt$, from which we see that $\rho_1(x) \approx \frac{1}{\pi}$ for $|x| \le (1-o(1))\sqrt{n}$ and $\rho_1(x) \approx \sqrt{n}/\pi x^2$ if $x \ge (1+o(1))\sqrt{n}$;
    \vskip 5mm
    \item For Elliptic polynomials (see for instance \cite{EK}) % \JC{CHANGED More space before last bullet point}
    \begin{align*} %\label{density:E}
          \rho_1(x)=\frac{\sqrt{n}}{\pi (1+x^2)}.
    \end{align*}
\end{itemize}

\subsection{Universality} As we have seen, for gaussian polynomials most of the statistics have been known. Another fascinating aspect of the theory of random polynomials is its universality in terms of random coefficients. These have been verified via recent works by Kabluchk-Zaporozhets \cite{KZ1}, Tao-Vu \cite{TV}, Do-O.Nguyen-Vu \cite{DONgV1}, and O. Nguyen-Vu \cite{OV} and by many others.  Among other things, it was shown that the root correlations of the ensembles above (Weyl, Elliptic, Orthogonal) where  $\xi_j$ are iid copies of a random variable $\xi$ of mean zero, variance one and bounded $(2+\eps)$-moment are asymptotically the same as in the gaussian case. We then deduce from these statements the following results.

 \begin{theorem}\cite{DONgV1, TV}\label{thm:E:gen} For Elliptic polynomials where $\xi_j$ are iid copies of a random variable $\xi$ of mean zero, variance one and bounded $(2+\eps)$-moment, we have 
$$\E N_\R = (\frac{2}{\pi}+o(1))\sqrt{n}, \mbox{ and } \Var(N_\R) =O(n^{1-c}),$$
where $c>0$ is a positive constant.
\end{theorem}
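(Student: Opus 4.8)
We indicate how the statement follows by combining the gaussian statistics of Theorem~\ref{thm:E:gau} with the universality of the real correlation functions of $F$ established in \cite{TV, DONgV1}: under the stated $(2+\eps)$-moment hypothesis, the one- and two-point intensities $\rho_1^{(n)},\rho_2^{(n)}$ of the real zeros of $F(x)=\sum_{j=0}^n\xi_j\sqrt{\binom nj}x^j$, localized around any fixed real point, agree with those of the gaussian elliptic polynomial up to a polynomially small relative error $O(n^{-c_0})$ for some absolute $c_0>0$. We use three further ingredients: the Edelman--Kostlan density $\rho_1^{\mathrm{gau}}(x)=\frac{\sqrt n}{\pi(1+x^2)}$ \cite{EK}; the gaussian variance bound $\Var(N_\R)=O(\sqrt n)$ of Theorem~\ref{thm:E:gau}; and the reciprocal symmetry of the ensemble --- since $x^nF(1/x)$ has the same law as $F$ (its coefficients are the $\xi_j$ in reverse order, again iid), the substitution $x\mapsto 1/x$ gives $N_{\{|x|>T\}}\overset{d}{=}N_{\{0<|x|<1/T\}}$ for every $T>0$.

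For the expectation, on any fixed window $[-T,T]$ universality yields $\E N_{[-T,T]}=(1+o(1))\int_{-T}^{T}\frac{\sqrt n}{\pi(1+x^2)}\,dx$, while the reciprocal symmetry together with the uniform bound $\rho_1^{(n)}=O(\sqrt n)$ on $[-1,1]$ gives $\E N_{\{|x|>T\}}=\E N_{\{0<|x|<1/T\}}=O(\sqrt n/T)$. Letting $T=T(n)\to\infty$ sufficiently slowly then gives $\E N_\R=(1+o(1))\sqrt n$, the leading order of Theorem~\ref{thm:E:gau}.

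For the variance, fix a small power $T=n^{c'}$ and split $N_\R=N_{[-T,T]}+N_{\{|x|>T\}}$. For the bulk term write
\[
\Var(N_{[-T,T]})=\iint_{[-T,T]^2,\,x\ne y}\big(\rho_2^{(n)}-\rho_1^{(n)}\otimes\rho_1^{(n)}\big)+\E N_{[-T,T]},
\]
and note that comparing each intensity with its gaussian analogue over this window costs at most $O(n^{-c_0})\cdot O(n)\cdot O(n^{2c'})=O(n^{1-c_0+2c'})$ (the factor $O(n)$ bounding $\rho_2^{(n)}$, the factor $O(n^{2c'})$ the area), so that $\Var(N_{[-T,T]})=\Var(N_{[-T,T]}^{\mathrm{gau}})+O(n^{1-c_0+2c'})=O(\sqrt n)+O(n^{1-c_0+2c'})$ by Theorem~\ref{thm:E:gau}, which is $O(n^{1-c})$ once $c'<c_0/2$. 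For the tail term, the reciprocal symmetry gives $\E[N_{\{|x|>T\}}^2]=\E[N_{\{0<|x|<1/T\}}^2]\le\iint_{[-1/T,1/T]^2}\rho_2^{(n)}+\E N_{\{|x|<1/T\}}$, and the crude bounds $\rho_2^{(n)}=O(n)$ on the diagonal strip $|x-y|\lesssim n^{-1/2}$ (of area $O(n^{-1/2}/T)$) and $\rho_2^{(n)}=O(\rho_1^{(n)}\otimes\rho_1^{(n)})$ away from it --- reflecting the decay of the normalized kernel \eqref{eqn:K_n} --- bound this by $O(n/T^2)=O(n^{1-2c'})$. Finally the cross term is $O(n^{1-c})$ by Cauchy--Schwarz, and collecting the three contributions gives $\Var(N_\R)=O(n^{1-c})$.

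The delicate point is the bulk variance estimate: both $N_{[-T,T]}$ and its gaussian counterpart already have $\E[N^2]$ of order $n$, so a purely qualitative ($o(1)$) comparison of the two-point functions would only give $\Var(N_\R)=o(n)$, not $O(n^{1-c})$. One genuinely needs the \emph{quantitative}, polynomial-rate form of universality for $\rho_2^{(n)}$, holding uniformly over a window $[-T,T]^2$ whose side grows (slowly) with $n$; supplying exactly this --- and thereby fixing the admissible relation between $c$, $c_0$ and $c'$ --- is the role of the polynomial-growth framework of \cite{DONgV1} together with the local universality estimates of \cite{TV}, whereas a soft four-moment exchange alone would not suffice.
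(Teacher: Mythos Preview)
The paper does not actually supply a proof of Theorem~\ref{thm:E:gen}: the result is attributed to \cite{DONgV1,TV}, and the text immediately preceding it only says that once universality of the root correlation functions is known, ``we then deduce from these statements the following results.'' Your sketch is precisely an expansion of that one-line deduction, and it follows the same route the paper implicitly invokes---transfer the gaussian first- and second-moment information of Theorem~\ref{thm:E:gau} via quantitative universality of $\rho_1,\rho_2$ from \cite{TV,DONgV1}. In that sense your approach agrees with the paper's.

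Two remarks are worth making. First, you correctly derive $\E N_\R=(1+o(1))\sqrt n$ from the Edelman--Kostlan density, in accordance with Theorem~\ref{thm:E:gau}; the constant $2/\pi$ printed in the statement of Theorem~\ref{thm:E:gen} is evidently a typographical slip (that constant belongs to the Weyl/Kac models). Second, your variance accounting on the bulk window---bounding the discrepancy $\iint[(\rho_2-\rho_1\otimes\rho_1)-(\rho_2^{\mathrm{gau}}-\rho_1^{\mathrm{gau}}\otimes\rho_1^{\mathrm{gau}})]$ by $O(n^{-c_0})\cdot O(n)\cdot O(T^2)$---is heuristic: the Tao--Vu comparison is local, and turning it into a uniform pointwise bound of size $O(n^{1-c_0})$ for $\rho_2$ over a macroscopic (and growing) window $[-T,T]^2$ is exactly the technical content of \cite{DONgV1}. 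You flag this yourself in the final paragraph, so the sketch is honest about where the real work lies. The reciprocal-symmetry tail bound and the Cauchy--Schwarz cross term are clean and correct as written (the crude inputs $\rho_1=O(\sqrt n)$ and $\rho_2=O(n)$ on $[-1/T,1/T]$ themselves follow from universality near the origin, which is in the bulk).
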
  
 % \HC{Need to state the result for intervals instead of $\R$?}

\begin{theorem}\cite{DONgV1,TV}\label{thm:W:gen} For Weyl polynomials where $\xi_j$ are iid copies of a random variable $\xi$ of mean zero, variance one and bounded $(2+\eps)$-moment, we have 
$$\E N_\R = (1+o(1))\sqrt{n}, \mbox{ and } \Var(N_\R) =O(n^{1-c}),$$
where $c>0$ is a positive constant.
\end{theorem}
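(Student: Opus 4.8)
The plan is to transfer both statements from the gaussian Weyl polynomial $F^{(g)}$, for which they hold by Theorem~\ref{thm:Weyl:gau}, using the universality of the first two correlation functions of the real zeros established in \cite{DONgV1,TV}. Write $\rho_1^{(\xi)},\rho_2^{(\xi)}$ for the intensity and the $2$-point function of the real zeros of $F$, and $\rho_1^{(g)},\rho_2^{(g)}$ for the analogues for $F^{(g)}$. We shall use the two facts recalled above: the real zeros of $F^{(g)}$ essentially all lie in the bulk $|x|\le(1+\delta)\sqrt n$, on which $\rho_1^{(g)}$ is bounded, while $\rho_1^{(g)}(x)=O(\sqrt n/x^2)$ for $|x|\ge(1+\delta)\sqrt n$, so that the zeros outside the bulk contribute $O_\delta(1)$ to both the first and the second moment. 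The basic identity is
\begin{equation*}
\Var(N_\R)=\E N_\R+\iint\bigl(\rho_2(x,y)-\rho_1(x)\rho_1(y)\bigr)\,dx\,dy .
\end{equation*}

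For the expectation, the universality of \cite{DONgV1,TV} gives $\rho_1^{(\xi)}=\rho_1^{(g)}+o(1)$ uniformly on the rescaled bulk, plus a uniform-in-$\xi$ bound forcing the real zeros with $|x|\ge(1+\delta)\sqrt n$ to number $O_\delta(1)$ on average; integrating over the bulk and sending $\delta\to 0$ yields $\E N_\R=(1+o(1))\,\E N_\R^{(g)}$, i.e.\ the asserted $(1+o(1))\sqrt n$ by Theorem~\ref{thm:Weyl:gau}. For the variance, observe that $\rho_2(x,y)-\rho_1(x)\rho_1(y)$ is essentially supported near the diagonal because the covariance kernels cluster (for $F^{(g)}$ one has $\overline{K}_n(s,t)\approx e^{-(s-t)^2/2}$ in this regime, cf.\ \eqref{eqn:K_n}, with the analogous decay for $F$ part of \cite{DONgV1,TV}). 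Partition the bulk into $O(\sqrt n)$ intervals $I_k$ of bounded length and write $N_\R=\sum_k N_{I_k}+N_{\mathrm{tail}}$. Using $\Var(N_{I_k})\le\E N_{I_k}^2=O(1)$ and $|\Cov(N_{I_k},N_{I_l})|\le(\E N_{I_k}^2\,\E N_{I_l}^2)^{1/2}=O(1)$, uniform in $\xi$ by local universality, together with the clustering that makes the sum over far pairs $(I_k,I_l)$ convergent, the gaussian part of $\sum_{k,l}\Cov(N_{I_k},N_{I_l})$ is $O(\sqrt n)$ (consistent with $\Var^{(g)}(N_\R)=O(\sqrt n)$, Theorem~\ref{thm:Weyl:gau}), while replacing the non-gaussian integrand by its gaussian counterpart over the $O(n)$-area bulk square costs $O(n\,\eta_n)$, where $\eta_n=o(1)$ is the rate in the universality of $\rho_2$. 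Together with $\Var(N_{\mathrm{tail}})=O(1)$ and an $O(n^{1/4})$ bulk-tail cross term from Cauchy--Schwarz, this gives $\Var(N_\R)=O(\sqrt n)+O(n\,\eta_n)=O(n^{1-c})$ for the positive constant $c$ furnished by $\eta_n$.

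The only genuinely delicate step --- which is why the theorem is ``deduced from'' rather than logically independent of \cite{DONgV1,TV} --- is this passage from local to global: the cited results control $\rho_1^{(\xi)},\rho_2^{(\xi)}$ on windows of bounded length, whereas $\E N_\R$ and $\Var(N_\R)$ are integrals over a range of length $\asymp\sqrt n$, so one needs the universality rate $\eta_n$ to be uniform across the bulk, the non-gaussian $2$-point function to cluster, and a uniform-in-$\xi$ control of the rare zeros beyond $(1+\delta)\sqrt n$. All of these are supplied by \cite{DONgV1,TV}; alternatively one can run the Lindeberg replacement argument of those papers, comparing $\E N_\R$ and $\E N_\R^2$ with their gaussian values directly, which requires the same uniformity input. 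Granting these, the bookkeeping above is routine.
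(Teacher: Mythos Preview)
The paper does not prove this theorem at all: it is quoted from \cite{DONgV1,TV} as a known input (see the sentence ``We then deduce from these statements the following results'' preceding Theorems~\ref{thm:E:gen}--\ref{thm:O:gen}). So there is no ``paper's own proof'' to compare against; the theorem functions purely as a black box in the later arguments (e.g.\ in Corollary~\ref{cor:control:W}).

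Your sketch is a reasonable outline of how such a statement is actually extracted from the universality machinery in those references, and you correctly identify the one genuinely nontrivial point: the universality results of \cite{DONgV1,TV} are \emph{local} (controlling $\rho_1,\rho_2$ on windows of bounded size, with an error $n^{-c}$), whereas $\E N_\R$ and $\Var(N_\R)$ are global quantities over a range of length $\asymp\sqrt n$, so one must sum $O(\sqrt n)$ or $O(n)$ local errors and still beat the main term. That is exactly what produces the weaker bound $O(n^{1-c})$ rather than the expected $O(\sqrt n)$. One small caveat: your clustering argument for the far off-diagonal contribution of $\rho_2^{(\xi)}-\rho_1^{(\xi)}\rho_1^{(\xi)}$ is not quite self-contained, since the clustering of the \emph{non-gaussian} two-point function is not an immediate consequence of \eqref{eqn:K_n} (which is about the gaussian kernel) but must itself be read off from the universality estimates of \cite{DONgV1,TV}; in practice those papers handle this by comparing second moments directly via Lindeberg swapping rather than by first establishing non-gaussian clustering, which is the cleaner route you mention at the end.
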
  
We pause to remark the following, which will be useful later.
\begin{remark}\label{rmk:restriction}
It follows immediately from the proofs of  \cite{DONgV1} and \cite{TV} that the variance estimates of Theorem \ref{thm:E:gen} and Theorem \ref{thm:W:gen}  continue to hold if one replace $N_\R$ by $N_I$ for any interval $I$ where $\E N_I$ has order $\sqrt{n}$.
\end{remark}

For orthogonal polynomials, the following was shown in \cite{DONgV2} (see also \cite{DLNgNgP})

\begin{theorem}\label{thm:O:gen} For orthogonal polynomials, assume that $F_n(x)= \sum_{j=0}^n \xi_j  p_j(x)$ where $\xi_j$ are iid copies of a random variable $\xi$ of mean zero, variance one and bounded $(2+\eps)$-moment and $p_n$ is the system of orthogonal polynomials with respect to $\mu$ as in Theorem \ref{thm:O:gau}. Then % \AC{CHANGED Parenthesis size for expect, variance missing left parenthesis}
$$\E N_{[a,b]} =  \left(\frac{\nu_\CK([a',b'])}{\sqrt{3}}+o(1)\right)n,$$
and
$$\Var(N_{[a,b]}) =O(n^{2-c}),$$
where $c>0$ is a positive constant.
 \end{theorem}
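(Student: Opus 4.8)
The plan is to deduce both assertions from the local universality of real-root statistics established in \cite{DONgV2} (in the gaussian-comparison step also using \cite{TV}), transporting the gaussian conclusions of Theorem \ref{thm:O:gau}; this is the orthogonal-polynomial analogue of how Theorems \ref{thm:E:gen} and \ref{thm:W:gen} are obtained from \cite{TV, DONgV1}. For the mean, write $\E N_{[a,b]}=\int_a^b\rho_{1,n}(x)\,dx$, where $\rho_{1,n}$ is the first intensity of the real zeros of $F_n$. After the rescaling $x\mapsto x/n$, which opens the microscopic scale $1/n$ into a fixed macroscopic window, the results of \cite{DONgV2} give that the rescaled first intensity of the non-gaussian model converges to that of the gaussian model, with the same limit, using only that $\xi$ has mean zero, variance one, and a bounded $(2+\eps)$-moment. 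Integrating this against $\1_{[a,b]}$ and inserting the gaussian value from Theorem \ref{thm:O:gau} gives $\E N_{[a,b]}=\left(\frac{\nu_\CK([a,b])}{\sqrt3}+o(1)\right)n$, which is the first displayed formula.

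For the variance, fix a small $\delta>0$ and partition $[a,b]$ into $M\asymp n$ consecutive subintervals $I_1,\dots,I_M$ of length $\asymp 1/n$ (so that each carries $O(1)$ expected zeros), and write $\Var(N_{[a,b]})=\sum_{k,l=1}^M\Cov(N_{I_k},N_{I_l})$. Split this into the near-diagonal part $|k-l|\le n^{\delta}$ and the far part $|k-l|>n^{\delta}$. For the near-diagonal part one uses a Kac--Rice-type a priori bound on the two-point intensity -- available once one knows that the zeros of $F_n$ do not cluster in short intervals, which is part of the input of \cite{DONgV2} -- to get $\E[N_{I_k}N_{I_l}]=O(1)$, hence $|\Cov(N_{I_k},N_{I_l})|=O(1)$, for all such pairs; there are $O(Mn^{\delta})=O(n^{1+\delta})$ of them, so this part is $O(n^{1+\delta})$. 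The far part is where universality does the real work. In the gaussian model the decorrelation estimate \eqref{eqn:K_n}, which for this ensemble is \cite[Lemma 3.3]{LP}, makes $F_n|_{I_k}$ and $F_n|_{I_l}$ asymptotically independent, so $\Cov(N_{I_k},N_{I_l})$ is negligible; for non-gaussian $\xi$ the two restrictions stay genuinely dependent, being built from the same coefficients, but the local universality of \cite{DONgV2} together with the replacement scheme of \cite{TV} compares the joint law of $(F_n,F_n')$ at the microscopic sampling points of $I_k\cup I_l$ with its gaussian counterpart up to an error that decays like a fixed power $n^{-c_0}$, and thereby bounds $|\Cov(N_{I_k},N_{I_l})|=O(n^{-c_0})$ uniformly over the $O(M^2)=O(n^2)$ far pairs. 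Summing, the far part is $O(n^{2-c_0})$, and altogether $\Var(N_{[a,b]})=O(n^{1+\delta})+O(n^{2-c_0})=O(n^{2-c})$ for a suitable $c>0$.

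The crucial and most delicate point is exactly this far-diagonal estimate: once the coefficients are non-gaussian, mere decorrelation of values no longer forces near-independence of the corresponding root counts, so one genuinely needs the quantitative local universality of \cite{DONgV2} -- convergence of the joint law of $F_n$ and $F_n'$ at microscopically separated points to the gaussian limit, with a power-saving rate -- applied uniformly over all $\Theta(n^2)$ pairs of $1/n$-windows, together with the anti-clustering tail bound underlying the near-diagonal step. Granting these inputs, the elementary bookkeeping above delivers $\Var(N_{[a,b]})=O(n^{2-c})$.
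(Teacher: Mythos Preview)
The paper does not prove Theorem~\ref{thm:O:gen}; it is quoted from the literature (``the following was shown in \cite{DONgV2} (see also \cite{DLNgNgP})''), just as Theorems~\ref{thm:E:gen} and~\ref{thm:W:gen} are quoted from \cite{DONgV1,TV}. So there is no ``paper's own proof'' to compare against --- the result is used as a black-box input to the concentration arguments.

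Your sketch is a reasonable caricature of the universality strategy that \cite{DONgV2} (and \cite{TV,DONgV1} before it) implements: compare correlation functions to the gaussian ones with a power-saving error, then import the gaussian computations from Theorem~\ref{thm:O:gau}. The near/far diagonal decomposition of the variance is indeed the standard bookkeeping. That said, a few of your steps are more aspirational than justified at the level of detail you give. The claim ``$\E[N_{I_k}N_{I_l}]=O(1)$'' for near-diagonal pairs requires a genuine repulsion/anti-clustering input (bounding the two-point function near the diagonal), not just ``zeros do not cluster''; and the far-diagonal bound $|\Cov(N_{I_k},N_{I_l})|=O(n^{-c_0})$ requires both a quantitative universality statement for \emph{joint} correlation functions at two well-separated windows \emph{and} a quantitative decay of the gaussian covariance itself --- you invoke both, but making the error uniform over all $\Theta(n^2)$ pairs with a single power saving is exactly where the work lies in \cite{DONgV2}. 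None of this is wrong as an outline; it just is not a proof on its own, and the paper does not ask you to supply one.
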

There remain interesting questions left unanswered.
\begin{problem}[Universality of fluctuation] With the aim to keep $\xi_i$ as general as possible \footnote{For instance we only assume $\xi_i$ to be subgaussian of mean zero, and variance one.},
\begin{enumerate}
\item Is it true that the variances in Theorem \ref{thm:E:gen} and Theorem \ref{thm:W:gen} are of order $O(\sqrt{n})$, and in Theorem \ref{thm:O:gen} is of order $O(n)$?
\vskip .1in
\item Is it true that the number of real roots (for any of the above models) obeys CLT fluctuation?
\end{enumerate}
\end{problem}
As far as we are concerned, the above problems are known only for Kac polynomials via the works of Maslova and O. Nguyen-Vu \cite{Mas1,Mas2, OV-CLT} (see also \cite{DNh}). For trigonometric polynomials it is known from  \cite{BCP, DNgNg} that the variance has linear order $O(n)$ with multiplicative constant depending on the fourth moment of $\xi$.

%; Furthermore one can show persistence probability (see \cite{DPSZ}). 

\section{A new direction: concentration of the statistics} In this note, we focus on another important aspect of the number of real roots $N_I$ (where for instance $I=\R$ in the case of Weyl and Elliptic polynomials, and $I=[a,b]$ in the case of orthogonal polynomials). % \JC{Suggestion, comma and dot missing}
\begin{question}
How well is $N_I$ concentrated around its mean?
\end{question}
In fact, the study of this problem dates back at least to the very early work of Erd\H{o}s-Offord \cite{EO} in the 1940s when they studied the concentration of the number of real roots for Kac polynomials. In the 1970s, Dunnage \cite{Dun} and Qualls \cite{Qualls} also raised the question of concentration for the number of roots of random trigonometric polynomials. 

First, it follows from  Theorems \ref{thm:E:gen}, \ref{thm:W:gen}, \ref{thm:O:gen}  for the Elliptic, Weyl, and orthogonal models that $N_T$ are well concentrated around its mean with high probability, that is
\begin{equation}\label{eqn:Markiv}
P(|N_I-\E N_I|\ge \la \E N_I) \le \frac{1}{n^c \la^2}
\end{equation}
for some positive constant $c$. 

% One could obtain better concentration bound from good estimates on higher moments of $N_I-\E N_T$, but this seems to be a daunting task.  

Heuristically, at least for the Gaussian case or when the common distribution $\xi$ has smooth density, for any large interval $I$ (such that $M:= \E N_I \to \infty$) we can write $N_I = \sum_{i=1}^{M} N_i$, where $N_i$ is the number of roots over $I_i$, an interval of length $O(1/M)$. Following from the correlation $\overline{K_n} \to 0$ as $|s-t| \to \infty$ discussed in \eqref{eqn:K_n}, the $N_i$ seem to be weakly independent, which would suggest that $N_I$ has exponentially concentration (with exponent $M$) around the mean (and also the prediction that $\var(N_I) = O(M)$ as well as the CLT fluctuation of $N_I$). However, the justification of the above heuristic does not seem to be easy; as far as we know, the exponential concentration and CLT fluctuation problems are only resolved in the limiting (stationary) gaussian process case by the result of Cuzick \cite{Cuz} and Basu et. al. \cite{BDFZ} (to be mentioned below). 

%In the case involving $n$ as in our setting, the CLT fluctuations for gaussian polynomials are verified by different methods case by case (and heavily rely on Wiener chaos decomposition), but CLT fluctuation for non-gaussian case is still wide open except for the Kac case as mentioned. The exponential concentration for gaussian polynomial involving $n$ is open until this work, letting alone the non-gaussian case.  

We next list several results from the literature. The first result, which has significant influence over our technique is the result from \cite{NS} by Nazarov and Sodin. Let 
$$F= \sum_{k=-n}^n \xi_k Y_k,$$ 
where $\xi_k$ are iid gaussian, $\{Y_k\}_{-n}^n$ are is an orthonormal basis of $\CH_n$, the % \JC{CHANGED:add parentheses} 
$(2n+1)$-dimensional real Hilbert space of spherical harmonics of degree $n$ on the $2$-dimensional unit sphere.  Let $N(f)$ denote the  number of connected components of $Z(f) =\{x\in S^2, f(x)=0\}$.

\begin{theorem}\label{thm:NS} There exists a constant $a>0$ such that for every $\eps>0$ we have 
$$\P(|\frac{N(F)}{n^2} -a| >\eps) \le C(\eps) e^{-c(\eps)n}.$$
\end{theorem}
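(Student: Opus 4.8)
I would split the theorem into two statements that can be attacked independently: a law of large numbers for the mean, $\E N(F) = a n^2 + o(n^2)$ with $a \in (0,\infty)$, and exponential concentration of $N(F)$ about its own mean. Both rest on two structural facts about the degree-$n$ ensemble that I would record first. (a) A \emph{local scaling limit}: after dilating $F$ by the factor $n$ around an arbitrary point of $S^2$ and normalizing the variance, the field converges in distribution, in $C^k_{\mathrm{loc}}(\R^2)$ for every $k$, to the rotation-invariant Gaussian field $g$ on $\R^2$ with covariance $\E g(x)g(y) = J_0(|x-y|)$; this is the Mehler--Heine asymptotics for the Legendre kernel $\wb K_n = P_n(\cos\dist(\cdot,\cdot))$. (b) \emph{Decay of correlations}: $|\wb K_n(x,y)| \lesssim (n\,\dist(x,y))^{-1/2}$ once $\dist(x,y) \gg 1/n$.

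For the mean, I would first establish that $a := \lim_{R\to\infty} \E N_g(D_R)/(\pi R^2)$ exists in $(0,\infty)$, where $N_g(D_R)$ counts the nodal loops of $g$ contained in the planar disk $D_R$. The $\liminf$ is positive by a \emph{barrier} argument: the event that $g$ is positive on $\partial D_1$ and negative at the origin has probability bounded below (nondegeneracy of $g$) and forces a nodal loop inside $D_1$; packing $\asymp R^2$ disjoint unit disks into $D_R$ and summing expectations gives $\E N_g(D_R) \gtrsim R^2$. Finiteness follows from Faber--Krahn: since $\Delta g + g = 0$ almost surely, each nodal domain of $g$ is a Dirichlet eigendomain of eigenvalue $1$, hence of area $\ge \pi j_0^2$, so $N_g(D_R) \lesssim R^2$; as the count is additive under subdivision up to a perimeter error, $\E N_g(D_R)/(\pi R^2)$ converges (Fekete-type). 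To transfer to the sphere, cover $S^2$ by $\asymp (n/R)^2$ cells of diameter $\asymp R/n$: on each cell the rescaled field is close in law to $g$ on a domain of diameter $\asymp R$, so its expected number of interior nodal loops is $(a+o_R(1))$ times the rescaled area; summing and letting $R\to\infty$ slowly with $n$ gives $\E N(F) = (a+o(1))n^2$, up to loops meeting cell boundaries, which I bound next.

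For the concentration, the backbone is a deterministic \emph{integral-geometric sandwich}. Partition $S^2$ into the cells $D_j$ above and let $N_j^\circ$ be the number of nodal loops of $F$ contained in $D_j$; then $\sum_j N_j^\circ \le N(F) \le \sum_j N_j^\circ + B$, where $B$ counts the nodal components meeting $\bigcup_j \partial D_j$. One bounds $B \le \sum_j Z_j$ with $Z_j$ the number of zeros of the one-dimensional Gaussian process $t\mapsto F(\partial D_j(t))$; this restricted process is smooth with correlations decaying polynomially in $n$, and a discretization / rare-event argument (an atypically large zero count on an arc of length $O(R/n)$ forces the restricted field to be atypically small there) gives exponentially small tails for each $Z_j$, whence $\P(B > \eps n^2) \le C(\eps)e^{-c(\eps)n}$ once $R$ is large. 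It remains to concentrate $S := \sum_j N_j^\circ$, each $N_j^\circ$ being $[0,CR^2]$-valued and $\sigma(F|_{D_j})$-measurable, noting $|\E S - a n^2| \le \eps n^2$ by the previous paragraph. I would run a Doob martingale, revealing $F$ cell by cell along an enumeration $D_1,\dots,D_m$ with $\CF_\ell = \sigma(F|_{D_1},\dots,F|_{D_\ell})$, and estimate $\Delta_\ell = \E[S\mid\CF_\ell] - \E[S\mid\CF_{\ell-1}]$: the diagonal and near-diagonal part is $O(R^2)$, while the off-diagonal part $\sum_{j>\ell}\big(\E[N_j^\circ\mid\CF_\ell] - \E[N_j^\circ\mid\CF_{\ell-1}]\big)$ is controlled via (b), since revealing $F|_{D_\ell}$ barely perturbs the conditional law of $F|_{D_j}$ when $D_j$ is far from $D_\ell$. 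Feeding the resulting bounds on $\max_\ell|\Delta_\ell|$ and on the predictable quadratic variation $\sum_\ell \E[\Delta_\ell^2\mid\CF_{\ell-1}]$ into a Bernstein/Freedman martingale inequality, with the cell size chosen as a suitable power of $n$, gives $\P(|S-\E S| > \eps n^2) \le C(\eps)e^{-c(\eps)n}$. Summing the three estimates proves the theorem.

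The main obstacle is the off-diagonal term in the last step: because $\wb K_n$ decays only \emph{polynomially}, the cells are never more than polynomially decorrelated, so a crude bounded-differences bound is useless and one is forced into the quantitative second-moment refinement of the martingale inequality together with a carefully optimized cell size --- this is the technical heart. Two further points need care. First, $N(F)$ is genuinely discontinuous in $F$, so to make the sandwich and the conditional-expectation estimates robust one should work throughout with a \emph{stable} count (loops along which $\nabla F\ne 0$, persisting under small $C^1$-perturbations), and separately show the number of unstable loops is $o(n^2)$ with probability $1-O(e^{-cn})$, using that instability forces $F$ and $\nabla F$ to vanish simultaneously somewhere. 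Second, all the estimates presuppose uniform control of the normalized $\|F\|_{C^2(S^2)}$, which holds off an event of probability $\le e^{-cn}$ by Gaussian concentration for this smooth, polynomially bounded field.
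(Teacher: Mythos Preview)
This theorem is not proved in the paper; it is quoted from Nazarov--Sodin \cite{NS} as background. So the relevant comparison is between your proposal and the actual Nazarov--Sodin argument, whose perturbation framework the present paper explicitly adopts for its own results (see the ``Proof methods'' subsection and Lemma~\ref{lemma:stab}, which is their Claim~4.2).

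Your treatment of the \emph{mean} is essentially the Nazarov--Sodin one: barrier construction for the lower bound, Faber--Krahn for the upper bound, and the Mehler--Heine local limit to identify the constant. That part is fine.

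Your treatment of the \emph{concentration} is where the gap lies, and it is a genuinely different route from Nazarov--Sodin. Their argument does \emph{not} decompose the sphere into cells and run a martingale. Instead they work directly on the coefficient space $\R^{2n+1}$: they define a set of ``exceptional'' $f$ (those admitting a region where both $|f|$ and $|\nabla f|$ are simultaneously small), show this set has Gaussian measure $\le e^{-c(\eps)n}$, and prove that on the complement the map $f\mapsto N(f)$ is stable under $L^2$-perturbations of size $\tau\sqrt{n}$ in the coefficients. Gaussian isoperimetry on $\R^{2n+1}$ then gives $e^{-c(\eps)n}$ immediately. The exponent $n$ comes from the \emph{dimension of the coefficient space}, not from any decorrelation estimate on $S^2$.

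Your martingale route hinges entirely on the step you yourself flag as ``the technical heart'': controlling $\sum_{j>\ell}\big(\E[N_j^\circ\mid\CF_\ell]-\E[N_j^\circ\mid\CF_{\ell-1}]\big)$ using only the $(n\,\dist)^{-1/2}$ decay of $\overline K_n$. You do not indicate how to do this, and I do not see a mechanism. The functional $N_j^\circ$ is discontinuous in $F|_{D_j}$, so a small shift in the conditional law of $F|_{D_j}$ (which is all that polynomial decorrelation gives you) does not automatically yield a small shift in its conditional expectation; one would again need a stability/exceptionality dichotomy \emph{inside} each conditional law, uniformly over the filtration, and it is unclear how to close that loop while keeping the predictable quadratic variation small enough for Freedman to output $e^{-c(\eps)n}$. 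The stability remarks you add at the end are exactly the Nazarov--Sodin ingredients, but once you have them, the isoperimetric route is both shorter and complete, and the martingale superstructure is unnecessary. As written, the concentration half of your proposal has a real hole at the off-diagonal step.
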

In another direction, Rozenshein \cite{Rozen} consider the following variant of arithmetic random waves. Let $d\ge 2$ be fixed and %\AC{CHANGED operatorname span,  SUGESTION for extra space or bar more space before lambda}  \HC{only in equation mode, I did not change}
$\CH_L=\operatorname{span}\big (\cos(2\pi \la \cdot x), \sin (2\pi \la \cdot x), \la \in \Lambda_L =\{\la \in \Z^d, |\la|=L\}\big).$ %\AC{too many "consider". Change "consider" into "let"} 
Consider %  \JC{SUG, Let? 3 considers in one sentence}
$$F(x) = \sqrt{\frac{2}{\dim \CH_L}} \sum_{\la\in \Lambda_L^+} a_\la \cos(2\pi \la \cdot x) + b_\la \sin (2\pi \la \cdot x).$$
Let $N(F)$ denote the number of nodal components of $f$.  
\begin{theorem}\cite[Theorem 1.3]{Rozen}\label{thm:Rozen} For every $\eps>0$ we have 
$$\P(|\frac{N(F)}{L^d} - m(\frac{N(F)}{L^d})| >\eps) \le C(\eps) e^{-c(\eps)\dim \CH_L}.$$
\end{theorem}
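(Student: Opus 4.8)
The plan is to adapt the localization-plus-concentration argument behind Theorem~\ref{thm:NS} to the flat torus, while keeping careful track of the fact that the field $F$ lives in a space of dimension $\dim\CH_L$ and that, for small $d$, the frequency set $\Lambda_L$ need not equidistribute on the sphere. Write $F=F_\xi$ with $\xi=(a_\la,b_\la)_{\la\in\Lambda_L^+}\in\R^D$ a standard Gaussian vector, $D:=\dim\CH_L$; the target $e^{-c(\eps)D}$ is the natural rate here because the covariance $r_L(x)=\frac1D\sum_{\la\in\Lambda_L}\cos(2\pi\la\cdot x)$ satisfies $\|r_L\|_{L^2(\mathbb T^d)}^2=1/D$, so the field carries only $\asymp D$ essentially independent degrees of freedom and one should not expect the stronger $e^{-cL^d}$ unless $\Lambda_L/L$ equidistributes.

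\textbf{Step 1 (localization).} Rescale $\mathbb T^d$ by $L$, fix a large $R=R(\eps)$, and partition $\mathbb T^d$ into $\asymp(L/R)^d$ cubes $Q$ of side $R/L$; let $N(f,Q)$ be the number of components of $Z(f)$ contained in $Q$. One has the deterministic sandwich $\sum_Q N(f,Q)\le N(f)\le\sum_Q N(f,Q)+\mathcal R(f)$, with $\mathcal R(f)$ the number of components meeting the grid $\bigcup_Q\partial Q$; averaging over translates of the grid gives $\E_{\mathrm{offset}}\mathcal R(f)\lesssim(L/R)\sum_\gamma\mathrm{diam}(\gamma)$, which is $\le\eps L^d$ on a good event since $N(f)=O(L^d)$ holds deterministically for $f\in\CH_L$ (Milnor--Thom: $Z(f)$ is the real zero set of a polynomial of degree $O(L)$) and since, with the right probability, all but $o(L^d)$ components have diameter $O(1/L)$. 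Each $N(\cdot,Q)$ is bounded by $C(R)$ and, off a small exceptional set where $f$ degenerates near $\partial Q$, is determined by $f$ on a slight enlargement of $Q$.

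\textbf{Step 2 (concentration).} The heart is the stability estimate: for each $\eps>0$ there are $\tau=\tau(\eps)>0$ and $c=c(\eps)>0$ such that, outside an event $\mathcal E$ with $\P(\mathcal E)\le C(\eps)e^{-cD}$, every $\xi'$ with $\|\xi-\xi'\|_2\le\tau$ has $|N(F_\xi)-N(F_{\xi'})|\le\eps L^d$. Indeed, a component of $Z(F)$ is created or destroyed by a small $C^1$-perturbation only if $F$ has a near-critical point (where $F$ and $\nabla F$ are simultaneously $O(\tau)$) nearby; a Kac--Rice first-moment bound gives $\lesssim\tau L^d$ such points in expectation, and — this is where the ensemble enters — their count is a sum of $\asymp L^d$ local contributions whose correlations are governed by $r_L$, so the $\|r_L\|_{L^2}^2=1/D$ bound makes it concentrate with exponent $\asymp D$: with probability $1-Ce^{-c\tau^2 D}$ there are at most $\eps L^d$ of them, and a single $\|\cdot\|_2\le\tau$ move can affect only these. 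Granting this, a standard truncation replaces $N$ on the good set by a globally defined functional that is Lipschitz at scale $\tau$ on $\R^D$; Gaussian isoperimetry then yields concentration with the desired exponent, a preliminary variance bound (again via decay of correlations) allowing one to restrict attention to a ball of radius $\asymp\sqrt\eps$ around the mean so as to keep the metric entropy, hence the constants, under control. Transferring back gives the claimed bound.

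\textbf{Main obstacle.} The genuinely delicate part is to push every probabilistic estimate — the good event of Step~1, and especially the deviation bound for the near-critical-point count in Step~2 — down to exceptional probability $e^{-c(\eps)\dim\CH_L}$ \emph{uniformly in $L$}, since for $d\in\{2,3\}$ (and, up to logarithms, $d=4$) the measures $\rho_L$ on $S^{d-1}$ do not converge and may cluster in small caps. Concretely, the two- and four-point function estimates feeding the Kac--Rice computation and its Bernstein-type tail must be derived from crude features of $\Lambda_L$ alone — e.g.\ that no spherical cap of fixed radius carries a positive proportion of $\Lambda_L$, which follows from divisor / geometry-of-numbers bounds on $|\Lambda_L|$ — and the bookkeeping arranged so that the exponent emerges as $\dim\CH_L$ rather than a smaller power; the remainder is the template of Theorem~\ref{thm:NS}.
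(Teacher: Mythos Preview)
The paper does not prove Theorem~\ref{thm:Rozen}. It is quoted verbatim from Rozenshein \cite[Theorem~1.3]{Rozen} as background in the survey of prior concentration results; there is no proof, sketch, or argument for it anywhere in the text. So there is nothing to compare your proposal against in this paper.

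That said, your outline is broadly the Nazarov--Sodin template that Rozenshein does adapt, and you correctly flag the genuine difficulty: in dimensions $d\le 4$ the measures $\rho_L$ on $S^{d-1}$ need not equidistribute, so every estimate must be made uniform in the possibly degenerate geometry of $\Lambda_L$. One caution: your proposed fix --- ``no spherical cap of fixed radius carries a positive proportion of $\Lambda_L$'' via divisor or geometry-of-numbers bounds --- is not available in $d=2$, where Cilleruelo's examples show the lattice points on a circle can concentrate on arbitrarily short arcs. Rozenshein's actual argument does not rely on any such anti-clustering input; the exponent $\dim\CH_L$ emerges directly from Gaussian concentration on the coefficient space $\R^{\dim\CH_L}$ once the stability of $N(F)$ under small $\ell^2$-perturbations of the coefficients is established, and that stability is proved without appealing to equidistribution of $\Lambda_L/L$. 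If you want to reconstruct the proof, that is the place to look rather than at correlation-decay or Kac--Rice variance bounds for the near-critical set.
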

Another result is for the Kostlan model. Consider 
$$F(x) = \sum_\al \xi_\al (\frac{n!}{\al_0! \dots \al_d!})^{1/2} x_0^{\al_0}\dots x_d^{\al_d},$$
where $\xi_\al$ are iid gaussian. When $d=2$, we obtain our Elliptic model. In this case the following is from \cite[Remark 9]{DL} and \cite[Corollary 1]{GW}\footnote{Although these results focus more on Betti numbers and on higher dimension.}.
\begin{theorem}\label{thm:E:gau} There exist constants $C,D$ such that for any $\tau=\tau(n)$
$$\P(N_\R \ge \tau \sqrt{n}) \le Cn^{3/2} \exp(-D \tau^2).$$
\end{theorem}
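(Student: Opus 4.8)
The plan is to reduce $N_\R(F)$ to the number of zeros of a stationary Gaussian process on the circle, localize at the natural scale $n^{-1/2}$, estimate the contribution of a single scale-$n^{-1/2}$ arc, and then decouple the arcs using the exponential decay of correlations. \emph{Reduction.} The real roots of $F$ are exactly the zeros, on $\theta\in(-\pi/2,\pi/2)$, of
\[
g(\theta):=\cos^n\theta\cdot F(\tan\theta)=\sum_{j=0}^n\xi_j\sqrt{\binom nj}\,\sin^j\theta\,\cos^{n-j}\theta ,
\]
so $N_\R(F)\le N(g)$, the number of zeros of $g$ over a full period, and trivially $N_\R(F)\le n$. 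One computes $\E[g(\theta)g(\phi)]=\cos^n(\theta-\phi)$, so $g$ is a centered, unit-variance, stationary Gaussian process on the circle, lying in a fixed $(n+1)$-dimensional Gaussian space; since $\cos^n\psi\le e^{-n\psi^2/2}$ for $|\psi|\le\pi/3$, its correlation length is of order $n^{-1/2}$, with $-r''(0)=n$ and $\E N(g)=\sqrt n$. For complex argument one also has $\E|g(\theta+\delta w)|^2=\cosh^n(2\delta\,\Im w)\le e^{2n\delta^2(\Im w)^2}$, which will be the source of uniformity below.

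\emph{Localization and the per-arc tail.} I would fix a small absolute constant $c_0\in(0,1)$, put $\delta:=c_0 n^{-1/2}$, split the period-circle into $M=\lceil 2\pi/\delta\rceil$ arcs $J_1,\dots,J_M$ of length $\le\delta$ with midpoints $\theta_i^\ast$, and let $Z_i$ be the number of zeros of $g$ in $J_i$, so $N(g)=\sum_i Z_i$. With $\phi_i(w):=g(\theta_i^\ast+\delta w)$ (entire in $w$), $Z_i$ is at most the number of zeros of $\phi_i$ in $|w|\le1$, which by Jensen's formula is $\le\frac{1}{\log 2}\log\big(\max_{|w|=2}|\phi_i(w)|\,/\,|\phi_i(0)|\big)$. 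Here $\phi_i(0)=g(\theta_i^\ast)$ is a standard Gaussian, so $\P(|\phi_i(0)|\le\eps)\le\eps$; and on $|w|=2$ the complex Gaussian process $\phi_i$ has variance $\E|\phi_i(w)|^2=\cosh^n(2\delta\,\Im w)\le e^{8c_0^2}$, while its $L^2$-modulus of continuity in $w$ is $O(c_0)$ uniformly in $n$, so $\E\max_{|w|=2}|\phi_i|=O(1)$ and by Borell--TIS $\P(\max_{|w|=2}|\phi_i|\ge t)\le Ce^{-ct^2}$, all constants absolute. Therefore
\[
\P(Z_i\ge m)\le \P\big(\max_{|w|=2}|\phi_i|\ge 2^{m/2}\big)+\P\big(|\phi_i(0)|\le 2^{-m/2}\big)\le Ce^{-c2^{m}}+2^{-m/2},
\]
so $\P(Z_i\ge m)\le C'2^{-m/2}$ for every $m\ge1$, \emph{uniformly in $i$ and $n$}; in particular $c_2:=\sup_{i,n}\E\,e^{\lambda_0 Z_i}<\infty$ for a suitable absolute $\lambda_0>0$.

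\emph{Decoupling and conclusion.} Because $|\E[g(\theta)g(\phi)]|\le e^{-c\,n\,\dist(\theta,\phi)^2}$, two arcs at graph-distance $d$ carry correlation $\le\eta^{d^2}$ with $\eta=e^{-c(K_0c_0)^2}$ once they are $\ge K_0$ apart, $\eta$ as small as we like for $K_0$ large. I would split $\{1,\dots,M\}$ into the $K_0$ residue classes $\CC_1,\dots,\CC_{K_0}$, so that $\{N(g)\ge\tau\sqrt n\}\subseteq\bigcup_r\{\sum_{i\in\CC_r}Z_i\ge\tau\sqrt n/K_0\}$, each $\CC_r$ consisting of $|\CC_r|=O(\sqrt n)$ pairwise well-separated arcs. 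A standard decoupling for Gaussian fields with summable exponentially decaying correlations — the mechanism behind Theorems \ref{thm:NS} and \ref{thm:Rozen}, carried out by comparing $\E\exp(\lambda_0\sum_{i\in\CC_r}Z_i)$ with the corresponding product over independent copies via Gaussian interpolation, or by successive conditioning — then gives $\E\exp(\lambda_0\sum_{i\in\CC_r}Z_i)\le c_2^{|\CC_r|}e^{C\eta|\CC_r|}\le e^{C_3\sqrt n}$ with $C_3$ absolute. By Markov and a union bound over $r$,
\[
\P\big(N_\R(F)\ge\tau\sqrt n\big)\le K_0\exp\Big(-\tfrac{\lambda_0}{K_0}\tau\sqrt n+C_3\sqrt n\Big)\le K_0 e^{-c_1\tau\sqrt n}\quad(\tau\ge\tau_1),
\]
for absolute $c_1,\tau_1>0$. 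For $\tau<\tau_1$ the bound $Cn^{3/2}e^{-D\tau^2}$ exceeds $1$ once $C$ is large; for $\tau>\sqrt n$ it is trivial because $N_\R\le n$; and for $\tau_1\le\tau\le\sqrt n$ one has $\tau\sqrt n\ge\tau^2$, so $K_0 e^{-c_1\tau\sqrt n}\le K_0 e^{-c_1\tau^2}$. Taking $D=c_1$ and $C$ large enough then yields the claim (one can even drop the factor $n^{3/2}$).

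\emph{Where the difficulty lies.} The crux is the decoupling step: turning the exponential decay of the correlations of $g$ into a near-independence of the arc-counts $Z_i$ quantitative enough to multiply their exponential moments at the cost of only a factor $e^{O(\sqrt n)}$. This is precisely the point handled in the proofs of Theorems \ref{thm:NS} and \ref{thm:Rozen}, and is where essentially all of the work goes; the reduction and the per-arc estimate are comparatively routine. An alternative that sidesteps the stationary reduction would be to estimate the factorial moments $\E[(N_\R)_k]=\int\rho_k$ directly via Kac--Rice, using the explicit covariance $\cos^n(\theta-\phi)$ to prove $\E[(N_\R)_k]\le(C\sqrt n)^k$ for all $k$ and then concluding by Markov; there the obstacle is the delicate cancellation, on clustered configurations of points, between the blow-up of $(\det C_k)^{-1/2}$ and the shrinkage of the conditional covariance of $(g'(\theta_i))_i$ that encodes the repulsion of zeros.
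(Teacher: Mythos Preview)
This theorem is not proved in the paper; it is quoted from \cite{DL} and \cite{GW}. The argument in \cite{DL} is quite different from yours and more direct: it exploits the harmonic decomposition of the Kostlan polynomial on the circle (exactly the decomposition recorded later as Lemma~\ref{lemma:Bern:E}). Writing $G(\theta)=\sum_l w_l(b_l\cos l\theta+c_l\sin l\theta)$ with i.i.d.\ standard Gaussians $b_l,c_l$ and weights $w_l^2=2^{-n}\binom{n}{(n-l)/2}\le e^{-cl^2/n}$, one observes that the truncation $G_{\le L}$ to frequencies $l\le L\sqrt n$ is a trigonometric polynomial of degree $\le L\sqrt n$ and hence has at most $2L\sqrt n$ zeros. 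The tail $G-G_{\le L}$ is, with probability $1-O(n^{3/2}e^{-cL^2})$, small enough in $C^1$ that $G$ and $G_{\le L}$ have the same number of zeros (a quantitative stability step in the spirit of Corollary~\ref{cor:stab}). No decoupling is needed; the bound $\tau\sqrt n$ on $N_\R$ comes for free from the degree of the approximant.

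Your route is more ambitious, but the decoupling step is a genuine gap. You describe the passage from the per-arc tail to $\E\exp\big(\lambda_0\sum_{i\in\CC_r}Z_i\big)\le e^{C\sqrt n}$ as ``standard'' and as ``the mechanism behind Theorems~\ref{thm:NS} and~\ref{thm:Rozen}'', but neither claim is accurate. The concentration proofs in \cite{NS} and \cite{Rozen} do not bound exponential moments of local zero-counts at all; they apply the Gaussian isoperimetric inequality to a Lipschitz surrogate of the (discontinuous) nodal count, which is precisely the perturbation framework this paper adapts. Gaussian interpolation compares \emph{smooth} functionals of correlated and decorrelated fields, whereas $Z_i$ is integer-valued and discontinuous, so interpolation does not apply without substantial additional smoothing and control. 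Making your step rigorous would essentially amount to proving a finite-$n$ analogue of Theorem~\ref{thm:process} for the process $g$, which is considerably harder than the stated inequality---and would in any case deliver the much stronger bound $e^{-c\tau\sqrt n}$ rather than $e^{-D\tau^2}$, another signal that this is not the intended route.
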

Note that the above result is a large deviation result, being effective only when $\tau \gg \sqrt{\log n}$. 

In another direction, among other things the following was shown in \cite{BDFZ}.

\begin{theorem}\label{thm:process} Suppose the centered stationary Gaussian process $\{X(t), t\in \R\}$ of a compactly supported spectral measure has an integrable covariance. Then for some $C<\infty$ and $c(\cdot)>0$, % \JC{CHAGNED dot in center}
$$\P(|N_{[0,T]} - \E N_{[0,T]} | \ge \eta T) \le C e^{-c(\eta)T}, \forall \eta>0, T<\infty.$$
\end{theorem}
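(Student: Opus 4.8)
\emph{Proof idea.} The plan is to combine a block decomposition with a near-independence (mixing) estimate that uses both hypotheses: compactness of $\operatorname{supp}\mu$ forces the covariance $r$ to be real-analytic (so $-r''(0)<\infty$) and makes $X$ the restriction to $\R$ of an entire function of exponential type $\Omega:=\max(\operatorname{supp}\mu)$, while $\int_\R|r|<\infty$ makes the interaction between well-separated sub-intervals summable. By stationarity and the Kac--Rice formula, $\E N_{[0,T]}=\rho T$ with $\rho=\tfrac1\pi\sqrt{-r''(0)/r(0)}$, so it suffices to prove the exponential bound for $\P(|N_{[0,T]}-\rho T|\ge\eta T)$; note that Chebyshev via the $O(T)$ variance would only yield a polynomial rate, so the content is the \emph{exponential} decay.

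Fix $\eta>0$ and choose a separation $G=G(\eta)$ and then a block length $L=L(\eta)\gg G$ (values specified at the end). Partition $[0,T]$ into $M\approx T/(L+G)$ consecutive windows, each a \emph{block} $I_k$ of length $L$ followed by a \emph{gap} $J_k$ of length $G$, so $N_{[0,T]}=\sum_{k=1}^M N_{I_k}+\sum_{k=1}^M N_{J_k}$ up to a bounded endpoint error. A key input is a \emph{uniform local tail bound}: for any interval $I$ and any $\lambda\ge c_0|I|$ one has $\P(N_I\ge\lambda)\le Ce^{-c\lambda}$, with $c,c_0,C$ depending only on $\Omega,r(0),-r''(0)$. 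I would get this from Jensen's formula for the entire extension of $X$: $N_I\le\tfrac1{\log 2}\log\bigl(\max_{|z-z_I|\le 3|I|}|X(z)|\,/\,|X(t_*)|\bigr)$ for a real $t_*\in I$ of our choosing; the numerator is $\le e^{3\Omega|I|}\sup_{|x-z_I|\le 4|I|}|X(x)|$ by band-limitedness, $\sup|X|$ over a real interval has a Gaussian upper tail (Borell--TIS), and $|X(t_*)|$ stays bounded below off an event of probability $\le e^{-c|I|}$ by optimizing $t_*$ over $\asymp|I|$ points at the Nyquist scale and using non-degeneracy of those samples. This bound gives $\E N_I^2\lesssim|I|^2$ and, for $0<\theta\le\min(\theta_0,1/L)$, the expansion $\log\E e^{\pm\theta N_{I_1}}\le\pm\theta\rho L+C\theta^2L^2+e^{-cL}$.

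The crux, and the main obstacle, is the \emph{decoupling} step: for small $\theta>0$ and every $k$,
\[ \E\bigl[e^{\theta N_{I_k}}\,\big|\,X|_{E_k}\bigr]\le(1+\eps(G))\,\E e^{\theta N_{I_k}}\qquad\text{a.s.,} \]
where $E_k=\bigcup_{j\ne k}I_j$ lies at distance $\ge G$ from $I_k$ and $\eps(G)\to0$ as $G\to\infty$. Granting this, a telescoping/hybrid argument (replace the jointly Gaussian blocks one at a time by independent copies, conditioning on the not-yet-replaced blocks and applying the displayed bound at each swap) yields $\E e^{\pm\theta\sum_k N_{I_k}}\le\bigl((1+\eps(G))\,\E e^{\pm\theta N_{I_1}}\bigr)^M$. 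To prove the conditional bound, observe $X|_{I_k}$ given $X|_{E_k}$ is Gaussian with a random mean $m$ and covariance $\Sigma_{I_k}-S$, where $\|S\|$ and the typical size of $m$ (a centered Gaussian of covariance $S$) are both controlled by the off-diagonal covariance mass $\int_G^\infty|r|$; the zero count of $X|_{I_k}$ should then change by at most an $\eps(G)$-fraction, in exponential-moment sense, under this perturbation. Making that last comparison precise and uniform is the real work: it needs a stability estimate for the number of sign changes of $X|_{I_k}$ under a $C^1$-small perturbation, which rests on non-degeneracy of the pair $(X,X')$ at short scales (the probability of a near-tangential zero or a tiny-amplitude local extremum in a short subinterval is $O(\text{length})$, by a Kac--Rice-type bound), combined with the local tail bound to absorb the residual bad events.

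Finally I would optimize. The gaps contribute $\E\sum_k N_{J_k}=\rho GM\le\rho T\,G/(L+G)$; taking $L\gg G$ makes this $<\eta T/4$, and the decoupling applied to the $L$-separated gaps gives $\P(\sum_k N_{J_k}>\eta T/2)\le Ce^{-c(\eta)T}$. For the blocks, Markov's inequality with the decoupled moment gives
\[ \P\Bigl(\textstyle\sum_k N_{I_k}\ge(\rho+\tfrac\eta2)T\Bigr)\le\exp\!\Bigl(-\theta(\rho+\tfrac\eta2)T+M\log\bigl((1+\eps(G))\,\E e^{\theta N_{I_1}}\bigr)\Bigr), \]
and since $M\log\bigl((1+\eps(G))\E e^{\theta N_{I_1}}\bigr)/T\le\eps(G)/L+\theta\rho+C\theta^2L+e^{-cL}$, choosing first $G$ with $\eps(G)$ small, then $L$ large, then $\theta$ of order $\eta/L$ makes the exponent $\le-c(\eta)T$ with $c(\eta)>0$; the lower tail is symmetric (use $-\theta$, and $N_{[0,T]}\ge\sum_k N_{I_k}$, whose mean is $\ge(\rho-\eta/4)T$). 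Combining the block and gap estimates yields the theorem.
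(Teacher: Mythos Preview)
This theorem is not proved in the paper: it is quoted from \cite{BDFZ} as background, so there is no ``paper's own proof'' to compare against. I will therefore assess your proposal on its merits and contrast it with the type of argument the paper develops for its own results (which is closely related to the method of \cite{BDFZ}).

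Your block--gap decomposition and the local tail bound via Jensen's formula for the entire (band-limited) extension are reasonable ingredients. The genuine gap is the decoupling step. You claim an \emph{almost sure} conditional bound
\[
\E\bigl[e^{\theta N_{I_k}}\,\big|\,X|_{E_k}\bigr]\le(1+\eps(G))\,\E e^{\theta N_{I_k}},
\]
but this cannot hold as stated. Conditionally on $X|_{E_k}$, the process on $I_k$ has a random mean $m$ which is itself a Gaussian functional of the conditioning data and is therefore unbounded: on the (positive-probability) event that $\sup_{I_k}|m|$ is large, the conditional law of $N_{I_k}$ is far from the unconditional one, and no uniform multiplicative constant $1+\eps(G)$ can absorb this. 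You acknowledge that ``making that last comparison precise and uniform is the real work,'' and indeed it is exactly the missing idea. Weakening the statement to ``with high probability in the conditioning variable'' does not immediately rescue the hybrid/telescoping argument either, since one must then control the accumulated exceptional events over $M\asymp T/L$ swaps while keeping the exponent linear in $T$; this requires a quantitative decoupling lemma that your sketch does not supply.

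By contrast, the method used in \cite{BDFZ} (and throughout the present paper for the polynomial models) avoids the conditional-exponential-moment route altogether. It represents the process through finitely many Gaussian coordinates (via the band-limited structure), applies Gaussian isoperimetry/concentration to the distance to a ``good'' set, and transfers this to $N_{[0,T]}$ through a deterministic stability lemma for the zero count under small $C^0$ perturbations, together with a repulsion estimate showing that the set where both $|X|$ and $|X'|$ are small has small measure with overwhelming probability. This sidesteps the need for any block-wise decoupling of exponential moments.
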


As the reader can see, the concentration results of Theorem \ref{thm:NS} and Theorem \ref{thm:Rozen} (and partly of Theorem \ref{thm:E:gau}) are in high dimensional settings -- it is desirable to establish similar results for the univariate case. Indeed, this problem was raised in \cite[Subsection 1.6.3]{NS-general} as an interesting direction. Partly motivated by this, the second author and Zeitouni initiated a study to show concentration that works for a wide range of randomness. Consider a random trigonometric polynomial of degree $n$
\begin{equation}\label{eqn:P_n}
F(x) =\frac{1}{\sqrt{n}} \sum_{k=1}^n a_k \cos(kx) + b_k \sin(kx),
\end{equation}
where $a_k, b_k$ are i.i.d. copies of a random variable $\xi$ of mean zero and variance one. Let $N_n$ denote the number of roots of $F_n(x)$ for $x\in [-\pi,\pi]$.

\begin{theorem}\cite{NgZ}\label{thm:trig} Let $C_0$ be a given positive constant, and suppose that either $|\xi|$ is bounded almost surely by $C_0$, or 
that its law satisfies the logarithmic Sobolev inequality with parameter $C_0$. Then there exist constants $c,c'$ such that for $\eps\geq n^{-c} $ we have that
$$\P(  |N_{[0,2\pi]} - \E N_{[0,2\pi]}| \ge \eps n ) \le e^{ - c' \eps^9 n}.$$
\end{theorem}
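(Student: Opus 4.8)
The plan is to replace the discontinuous count $N:=N_{[0,2\pi]}$ by a smooth Kac--Rice-type surrogate, establish concentration for the surrogate by an entropy (log-Sobolev / Efron--Stein) argument, and control the discrepancy $N-A_\delta$ by anti-concentration (small-ball) estimates. Fix a small scale $\delta=\delta(\eps)$ and a smooth even bump $\chi\ge 0$ supported on $[-1,1]$ with $\int\chi=2$, and set
$$A_\delta(\bv)=\frac1{2\delta}\int_0^{2\pi}|F'(x)|\,\chi\!\left(\frac{F(x)}{\delta}\right)dx,\qquad \bv=(a_1,\dots,a_n,b_1,\dots,b_n).$$
By the co-area formula $A_\delta=\frac1{2\delta}\int_{-\delta}^{\delta}\#\{x\in[0,2\pi]:F(x)=y\}\,\chi(y/\delta)\,dy$; in particular $A_\delta\le Cn$ deterministically (a degree-$n$ trigonometric polynomial takes each value at most $2n$ times), $A_\delta=N$ whenever $F$ has only simple roots and no critical value in $(-\delta,\delta)\setminus\{0\}$, and in general $|A_\delta-N|$ is at most a constant times the number of \emph{degenerate} roots of $F$ — roots $r$ with $|F'(r)|$ small, together with critical points of $F$ whose value lies in $(-\delta,\delta)$.

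\emph{Step 1: the surrogate is faithful.} Here one proves $\P(|A_\delta-N|\ge\tfrac\eps3 n)\le e^{-c_1\eps n}$, which (since $A_\delta\le Cn$) also gives $|\E A_\delta-\E N|\le\tfrac\eps3 n$. Cover $[0,2\pi]$ by $\asymp n$ intervals of length $\asymp 1/n$; each degenerate root forces a local degeneracy in its interval, and a first-moment (Kac--Rice) computation bounds the expected number of intervals carrying such a degeneracy by $O(\delta n)$. For non-Gaussian $\xi$ this uses the pointwise anti-concentration bound $\P(|F(x)|\le t,\ |F'(x)|\le tn)=O(t^2)$, itself a consequence of classical Erdős--Littlewood--Offord/Esseen small-ball estimates since the local coefficient vectors $(\cos kx,\sin kx)_{k}/\sqrt n$ are well spread. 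To upgrade the first moment to an exponential tail one invokes the decay $\overline K_n(s,t)\to 0$ of the normalized covariance: degeneracies in well-separated intervals are nearly independent (made rigorous by conditioning on all but a short band of the coefficients, or by comparing the relevant low-dimensional laws), so the number of intervals bearing a degeneracy exceeds $\eps n$ with probability at most $\binom{O(n)}{\eps n}(O(\delta))^{\eps n}\le e^{-c_1\eps n}$ as soon as $\delta\le c_2\eps$.

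\emph{Step 2: concentration of the surrogate.} Here one proves $\P(|A_\delta-\E A_\delta|\ge\tfrac\eps3 n)\le e^{-c_3\eps^9 n}$; combined with Step 1 and $\E N\asymp n$ this yields the theorem. Differentiating, $\partial_{a_k}A_\delta$ and $\partial_{b_k}A_\delta$ are integrals localised to $\{|F|<\delta\}$, and — this is the delicate analytic point — the contributions of distinct simple roots to these partials oscillate in $k$ and largely cancel (once more a manifestation of $\overline K_n\to 0$), so that on the regular event of Step 1 one has $\|\nabla A_\delta\|^2=O(n)$ rather than the trivial $O(n^2/\delta^4)$. After truncating $A_\delta$ outside the regular event to keep a controlled gradient globally, in the log-Sobolev case the Herbst argument gives $\Ent(e^{\lambda A_\delta})\le\tfrac{C_0\lambda^2}2\,\E[\|\nabla A_\delta\|^2e^{\lambda A_\delta}]$, which (bootstrapping the a priori bound $A_\delta=O(n)$ from the crude $O(1/(\eps^2n))$ estimate coming from $\Var(N)=O(n)$) closes to show $A_\delta$ is $O(C_0n)$-subgaussian; in the bounded case one instead runs an exponential Efron--Stein/Azuma argument with discrete gradients, feeding in the same cancellation. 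Tracking the losses through the smoothing scale, the truncation, the small-ball tails, and (in the bounded case) the weaker concentration inequality available, and then optimising $\delta$ against the constraint $\delta\le c_2\eps$ of Step 1, produces the exponent $\eps^9$ and the restriction $\eps\gtrsim n^{-c}$ (which keeps $\delta$ above the scale $1/n$ where the first-moment estimates break down).

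\emph{Main obstacle.} The crux is Step 2: $N$ itself is not a Lipschitz function of the coefficients with any useful constant, and even the smoothed $A_\delta$ has a naive gradient bound that overshoots the truth by a factor $\asymp n$; recovering a dimension-correct variance proxy forces one to extract genuine cancellation among the contributions of distinct roots. This is comparatively painless under a log-Sobolev inequality but requires the more delicate exponential Efron--Stein machinery when $\xi$ is merely bounded — and it is the accumulation of such losses, together with the smoothing/accuracy trade-off of Steps 1--2, that degrades the heuristically expected exponent $\eps^2$ down to $\eps^9$.
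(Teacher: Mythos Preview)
This theorem is quoted from \cite{NgZ} and is not re-proved in the present paper, but the paper explains the method in Section~3 and applies the same scheme to other models. That method is \emph{not} the one you outline: there is no smoothed Kac--Rice surrogate and no Herbst/Efron--Stein argument applied to a regularised count. Instead one works with the discontinuous $N$ directly, via a \emph{set-level} isoperimetric inequality. One covers $[0,2\pi]$ by $\asymp n$ intervals of length $\asymp 1/n$, calls an interval \emph{unstable} if both $|F|\le\alpha$ and $|F'|\le\beta n$ somewhere on it, and calls $F$ \emph{exceptional} if there are $\ge\delta n$ unstable intervals. One then shows (i) exceptional $F$ are exponentially rare, by a perturbation argument: if $F$ is exceptional then so is $F+g$ for all $g$ with $\|g\|_2\le\tau$ (Taylor expansion plus the large-sieve Lemma~3.1), while the perturbed event has probability $\le 1/2$ by a two-dimensional small-ball bound for $(F(x),F'(x)/n)$, so Talagrand/McDiarmid applied to the \emph{set} $\{d_2(\cdot,\mathcal E_e)\ge\tau\sqrt n\}$ gives the bound; (ii) a non-exceptional $F$ cannot have $\ge\eps n/2$ roots on its unstable intervals, by an interpolation/Markov--Bernstein argument (Proposition~3.3); (iii) on stable intervals the roots are \emph{deterministically} robust under any perturbation with $|g|<\alpha$ (Corollary~3.7), so the event $\{N\ge \E N-\eps n\}$ contains the $\tau\sqrt n$-enlargement of a set of probability $\ge 1/2$, and the set-form concentration inequality (Theorems~3.11--3.12) finishes. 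The $\eps^9$ (and the threshold $\eps\ge n^{-c}$) come out of the explicit parameter chain $\alpha,\beta,\gamma,\delta,\tau$ in this scheme.

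Your proposal has a real gap at the point you yourself flag as ``delicate'': the assertion that on the regular event $\|\nabla A_\delta\|^2=O(n)$ because ``the contributions of distinct simple roots oscillate in $k$ and largely cancel''. Writing out $\partial_{a_k}A_\delta$ one gets two terms, one carrying $k\sin(kx)/\sqrt n$ against $\chi(F/\delta)$ and one carrying $\cos(kx)/(\delta\sqrt n)$ against $|F'|\chi'(F/\delta)$; the naive bound is indeed $O(n^2/\delta^4)$ after summing over $k$, and the decay of $\overline K_n$ is a statement about \emph{second moments} of $F$, not about cancellation of these deterministic integrals for a fixed realisation. No mechanism is given that would produce a dimension-correct pointwise bound, and truncating to the regular event does not help: Herbst requires a gradient bound that holds for the actual (possibly truncated) function, and a truncation that freezes $A_\delta$ off the regular event will typically create large gradients on the boundary of that event. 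Step~1 has a similar issue: ``nearly independent'' does not justify the product bound $\binom{O(n)}{\eps n}(O(\delta))^{\eps n}$; one would need an honest decoupling, which is precisely what the paper avoids by passing through the set-isoperimetric inequality rather than through moments of a smoothed count.
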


% The result below is commented out

% In another recent result, the following is from \cite{Ngcurve}. Let $N=N_m = \#\CE_{\lambda}$ be the dimension of the eigenspace corresponding to the eigenvalue $\lambda^{2}$ over the two dimensional torus $\T^2$. Motivated by Rudnick and Wigman \cite{RW}, consider the random Gaussian eigenfunction
% \begin{equation}\label{eqn:F}F(x) = \frac{1}{\sqrt{N}}\sum_{\mu\in \CE_\lambda} \eps_\mu e^{2\pi i \langle \mu,  x\rangle},
%\end{equation}
%where $\eps_\mu =\eps_{1,\mu} + i \eps_{2,\mu}$ and
%$\eps_{1,\mu},\eps_{2,\mu}, \mu \in \CE_\lambda$ are iid copies of a common random variable $\xi$ of mean zero and variance one,  and $\eps_{-\mu}
%= \bar{\eps}_{\mu}$. Let $N_{\T^2}$ be the number of zeros of $F(x)$ over $\T^2$.

%\begin{theorem} Let $C_0$ be a given positive constant, and suppose that either $1/C_0 < |\xi|<C_0$ with probability one, or  that $\xi$ is continuous with density bounded by $C_0$ and satisfies the logarithmic Sobolev inequality with parameter $C_0$. Assume that $\gamma$ satisfies certain smooth conditions. Then for almost all $m$ there exist constants $c,c'$ such that for $N^{-c'}\le \eps \le c'/\log N$ we have 
%$$\P(  |N_{\T^2}- \E N_{\T^2} | \ge \eps \la ) \le e^{ - c \eps^9 N}.$$
%Furthermore, the above is true for all $m$ when $\xi$ is continuous.
%\end{theorem}

\subsection{New results} 

% As mentioned, the studies of  Theorems \ref{thm:E:gau}, \ref{thm:O:gau} and Theorems \ref{thm:E:gen}, \ref{thm:O:gen}  go through Kac-Rice formula (for the gaussian case) for computation of moments and variances, and by chaos decomposition and moment method for the CLT. For the universality results one use comparison techniques.

It is non-trivial to use Kac-Rice formula to compute the right order of $\Var(N_I)$ even in the gaussian case. There have been works to compute higher order moments (such as \cite{AL,AP', LGass} and the references therein) but from these results, one can only obtain polynomial concentration at best. Basing on the heuristic discussed previously, we would expect sub-exponential concentration for most models. In this note, basing on our initial works from \cite{NgZ, Ngcurve}, we will give a systematic method to deal with such problem, and at the same time prove concentration results for many other models of random functions. There are various advantages, that as our approach is based on isoperimetric inequality, it works for fairly general randomness. In fact in some cases our result also works for random variables of bounded higher moments, and we will comment on that later. Now we mention our main results.

\begin{theorem}[Concentration for orthogonal polynomials]\label{thm:orthogonal} Assume as in Theorem \ref{thm:O:gau} and suppose that for some positive constant $C_0$ either $1/C_0 < |\xi|<C_0$ with probability one, or that $\xi$ is continuous with bounded density and satisfies the logarithmic Sobolev inequality \eqref{eqn:logSobolev} with parameter $C_0$. Then  there exist constants $c,D$ such that for $n^{-c} \le \eps \le 1$
$$\P\Big(  |N_n([a,b]) -\E N_n([a,b])| \ge \eps n \Big) \le e^{ -\eps^{D} n}.$$
\end{theorem}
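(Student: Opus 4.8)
The plan is to follow the isoperimetric/concentration-of-measure strategy from \cite{NgZ, Ngcurve}. The random polynomial $F_n(x) = \sum_{j=0}^n \xi_j p_j(x)$ is a function of the coefficient vector $\bxi = (\xi_0,\dots,\xi_n)$, which lives either in a bounded cube (in the bounded case) or in a product space satisfying a log-Sobolev inequality. In both situations one has a concentration inequality of the form: if $\Phi : \R^{n+1} \to \R$ is $L$-Lipschitz with respect to the Euclidean metric, then $\P(|\Phi(\bxi) - \E\Phi(\bxi)| \ge t) \le 2\exp(-ct^2/L^2)$ (with the same conclusion for the median in place of the mean). The obstacle is that $N_n([a,b])$, the root-counting functional, is emphatically \emph{not} Lipschitz in $\bxi$: a tiny perturbation of the coefficients can create or destroy many roots near points where $F_n$ is small and nearly tangent to the axis. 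So the heart of the argument is to replace $N_n([a,b])$ by a smoothed, Lipschitz surrogate, and to control the discrepancy between the two off an event of small probability.

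Concretely, I would proceed in the following steps. First, partition $[a,b]$ into $M \asymp \eps^{-O(1)} \cdot n / (\text{something})$ subintervals (or rather work at a scale $\delta$ to be chosen as a small power of $\eps$) and write $N_n = \sum_i N_n(I_i)$. Second, on each small interval use the deterministic fact (a Jensen/Tur\'an-type or divided-difference bound, as in \cite{NgZ}) that the number of roots of $F_n$ in $I_i$ is controlled by $\log(\max_{I_i'} |F_n| / |F_n(z_i)|)$ for a suitable slightly larger interval $I_i'$ and a well-chosen point $z_i$; this converts root-counting into a statement about $\sup$ and point values of $F_n$, which \emph{are} nice functions of $\bxi$. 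Third, establish the two ``anti-concentration at scale'' ingredients: (a) an upper bound $\max_{x\in[a,b]}|F_n(x)| \le n^{O(1)}\|\bxi\|$ with overwhelming probability, using the uniform boundedness $\sup_n\|p_n\|_{L_\infty[a',b']}<\infty$ from Theorem \ref{thm:O:gau} together with crude bounds on the coefficients; and (b) a \emph{small-ball} lower bound showing that at a net of points $z_i$ one has $|F_n(z_i)| \ge e^{-\eps^{O(1)} n}$ except with probability $e^{-\eps^{O(1)} n}$ — this is where the one-point density $\rho_1 \asymp n$ and the non-degeneracy of the covariance kernel $K_n(z_i,z_i) = \E F_n(z_i)^2 \gtrsim 1$ enter, via a Carbery--Wright or Paley--Zygmund type estimate in the continuous case and a Littlewood--Offord/Erd\H os-type estimate in the bounded case. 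Fourth, assemble a Lipschitz surrogate $\widetilde N$ (e.g.\ $\widetilde N = \sum_i \min(\log^+(\|F_n\|_{I_i'}/r) ,\ K\log n)$ with a truncation level $r = e^{-\eps^{D_0} n}$) which is Lipschitz in $\bxi$ with constant $L \le e^{\eps^{D_0} n} \cdot n^{O(1)}$ on the relevant event, apply the Gaussian/log-Sobolev concentration to $\widetilde N$ at scale $t = \eps n$, and finally bound $|N_n - \widetilde N| \le \eps n /2$ off an exceptional event of probability $\le e^{-\eps^{D} n}$ using steps (a) and (b). Combining the large-deviation bound for $\widetilde N$ (which costs $\exp(-c \eps^2 n^2 / L^2) = \exp(-c\eps^{2} n^2 e^{-2\eps^{D_0} n})$ — wait, this needs the Lipschitz constant to be only \emph{polynomial}, so the truncation must be at a polynomial rather than exponential level; I would instead take $r$ a small power of $n$ and absorb the rare event $\{|F_n(z_i)| < r\}$ into the exceptional set, which is the standard fix) with the surrogate-comparison bound gives the claim.

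The main obstacle, and the step that requires the real work, is the \textbf{small-ball estimate (b)} combined with getting the Lipschitz constant of the surrogate to be polynomial in $n$ (so that the concentration-of-measure bound $\exp(-c(\eps n)^2/L^2)$ beats $\exp(-\eps^D n)$). One has to show that at a polynomially-fine net $\{z_i\}$ in $[a,b]$, simultaneously $|F_n(z_i)|$ is not too small — but ``simultaneously'' over $\asymp n$ points with only $e^{-\eps^D n}$ failure probability is delicate, since the events are correlated and individually each only has polynomial-in-$n$ anti-concentration. The resolution (following \cite{NgZ}) is not to ask for all $z_i$ to be good, but only for \emph{most} of them, and to show that the number of ``bad'' intervals (where $F_n$ is anomalously small, hence potentially hides many roots) is itself $\le \eps n /\log n$ except with probability $e^{-\eps^D n}$; this in turn follows from a first-moment/Markov bound on $\sum_i \1_{|F_n(z_i)| < r}$ whose expectation is controlled by the one-point anti-concentration, \emph{boosted} to exponential decay using the tensorization/log-Sobolev structure or a direct union bound over the (few) ways a large bad set can occur. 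Handling the non-stationary orthogonal-polynomial kernel — as opposed to the clean trigonometric case of Theorem \ref{thm:trig} — is the additional technical wrinkle: one must use the regularity hypotheses of Theorem \ref{thm:O:gau} (the sinc-kernel asymptotics after rescaling, via \cite[Lemma 3.3]{LP}, and $\mu' > 0$ continuous) to guarantee $\E F_n(z)^2$ is bounded above and below uniformly on compact subintervals $[a,b]\subset(a',b')$, which is exactly what makes the small-ball input available. Once these are in hand, the rest is bookkeeping with the exponents $c, D$.
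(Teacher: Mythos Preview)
Your proposal takes a genuinely different route from the paper, and the step you flag as ``the main obstacle'' is where the proposal has a real gap.

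The paper does \emph{not} build a Lipschitz surrogate for $N_n$. Instead it works directly with set-concentration (Theorems~\ref{thm:sobolev} and~\ref{thm:bounded}, of Talagrand/McDiarmid type): if $\CA \subset \Omega^{n+1}$ has $\P(\bxi\in\CA)\ge 1/2$, then its $\tau\sqrt{n}$-neighborhood has probability $\ge 1 - e^{-c\tau^{O(1)}n}$. The crucial deterministic input is a \emph{stability lemma} (Corollary~\ref{cor:stab}): on an interval where at every point either $|F|>\alpha$ or $|F'|>\beta n$, any perturbation $g$ with $|g|<\alpha$ preserves the root count of $F$. This reduces the problem to controlling the \emph{unstable} intervals, those where $|F|\le\alpha$ and $|F'|\le\beta n$ simultaneously. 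The paper calls $F$ \emph{exceptional} if there are more than $\delta n$ such intervals, and proves two things: (i) exceptionality is itself exponentially rare (Theorem~\ref{thm:exceptional}), by showing that any $L_2$-small perturbation of an exceptional $F$ lands in a set $\CU$ of probability $\le 1/2$ and applying set-concentration again; and (ii) a non-exceptional $F$ cannot have more than $\eps n/2$ roots over its unstable intervals (Corollary~\ref{cor:manyroots:O}), via an interpolation argument (Proposition~\ref{prop:manyroots}) which, together with Markov--Bernstein, shows that many roots in few intervals would force both $|F|$ and $|F'|$ to be small on a set too large to avoid the stable intervals.

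The step you need but do not supply is exactly the exponential bound on the bad-interval count. You propose a ``first-moment/Markov bound\ldots\ boosted to exponential decay using the tensorization/log-Sobolev structure'', but Markov on $\sum_i \1_{|F_n(z_i)|<r}$ gives only polynomial decay, and tensorization does not automatically upgrade this because the summands are correlated. The paper's resolution is to use the \emph{joint} small-ball estimate $\P(|F(x)|\le\alpha \wedge |F'(x)|\le\beta n)=O(\alpha\beta)$ (Corollary~\ref{cor:repulsion:O}, which relies on the non-degeneracy of the vectors $(p_i(x),p_i'(x)/n)_i$ from Lemma~\ref{lemma:p_i:orth}), combined with the same perturbation/set-concentration machinery, to bound the exceptional event itself. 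Your Jensen/log-ratio surrogate never enters: by tracking $F$ and $F'$ jointly one obtains robustness of the root count under $L_2$-small perturbations directly, and the Lipschitz-constant bookkeeping you struggled with is bypassed. If you want to push your approach through, you would need an independent exponential bound on the number of near-vanishing intervals; the paper's lesson is that this bound is most naturally obtained by the same concentration argument applied at the level of $(F,F')$ rather than $F$ alone.
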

Note that the exponent is nearly optimal, as in the Bernoulli case ($\xi_i$ taking values $\pm 1$ independently with probability 1/2) one cannot get anything below $2^{-(n+1)}$. Notably our current result extends Theorem \ref{thm:trig} to a number of natural families of orthogonal polynomials (where we also invite the reader to \cite{LP,DLNgNgP} for many more examples), for instance when the systems of $(p_i(x))$ are
	\begin{itemize}
		\item Legendre polynomials: the weight $w(x)$ is the indicator function of $[-1,1]$.\\
		\item Chebyshev polynomials (type 1):  $w(x)=\frac 1{\sqrt{1-x^2}}$ for $x\in (-1,1)$.\\
		\item Chebyshev polynomials (type 2):  $w(x)=\sqrt{1-x^2}$ for $x\in (-1,1)$.\\
		\item Jacobi polynomials: these are generalizations of Chebyshev and Legendre polynomials, with $w(x)=(1-x)^\beta (1+x)^\gamma$ for $x\in (-1,1)$. The parameters $\beta,\gamma$ are constants larger than $-1$.
			\end{itemize}

By using a similar method, but with totally different technical details, for Elliptic polynomials we show
\begin{theorem}[Concentration for Elliptic polynomials]\label{thm:Elliptic} Let $C_0$ be a given positive constant and suppose $\xi$ is continuous with bounded density and satisfies the logarithmic Sobolev inequality \eqref{eqn:logSobolev} with parameter $C_0$. Let $C_E>0$ be given, and let $I = I_E = [- C_E\sqrt{n}, C_E \sqrt{n}]$. Then  there exist constants $c,D_1,D_2$ such that for $n^{-c} \le \eps \le 1$, for Elliptic polynomials we have
$$\P(|N_I - \E N_I | \ge \eps \sqrt{n}) \le e^{- (\eps^{D_1}/\log^{D_2} n)  \sqrt{n}}.$$
\end{theorem}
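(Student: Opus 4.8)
The plan is to follow the same isoperimetric/log-Sobolev strategy used for trigonometric polynomials in Theorem \ref{thm:trig}, but adapted to the scaling of the Elliptic model, where $\E N_I \sim \frac{2}{\pi}\sqrt n$ and the density of real roots is $\rho_1(x) = \frac{\sqrt n}{\pi(1+x^2)}$. First I would partition the window $I = [-C_E\sqrt n, C_E\sqrt n]$ into $\Theta(\sqrt n)$ subintervals $I_1,\dots,I_m$, chosen so that on each $I_\ell$ the expected number of roots is roughly constant (say of size a small power of $\log n$); because of the $1/(1+x^2)$ weight this means the subintervals near the origin have length $\Theta(1)$ and those near the edge are longer. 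Writing $N_I = \sum_\ell N_{I_\ell}$, the correlation decay $\overline K_n(s,t) = \big(\tfrac{1+st}{\sqrt{(1+st)^2+(s-t)^2}}\big)^n \to 0$ for $|s-t|$ large (on the $\sqrt n$ scale) is the heuristic reason to expect near-independence of the blocks; but rather than prove genuine independence I would instead use the Gaussian/log-Sobolev concentration machinery directly on the vector $(\xi_j)$.

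The core step is the standard two-part argument. View $N_I$ as a function $\Phi(\xi_0,\dots,\xi_n)$ of the coefficient vector. The function is not Lipschitz globally, so I would (i) show that on a \emph{good event} $\Good$ of probability $\ge 1 - e^{-c\sqrt n}$ the map $\Phi$ coincides with a function whose (local) Lipschitz constant in the relevant Euclidean metric is controlled — concretely, that a small perturbation $\xi \to \xi'$ with $\|\xi-\xi'\|_2$ small changes the number of sign changes / zeros of $F$ on $I$ by at most $O(\text{Lip})\cdot\|\xi-\xi'\|_2$ plus the number of roots that are ``nearly double'' (i.e.\ where $F$ and $F'$ are simultaneously small); and (ii) bound the number of such near-double roots, using anti-concentration for $F(x)$ and $F'(x)$ jointly — this is where the bounded density hypothesis on $\xi$ and small-ball estimates enter, and it is also where a logarithmic loss appears, explaining the $\log^{D_2} n$ in the exponent. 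Once $\Phi$ is replaced by a Lipschitz function $\tilde\Phi$ off a rare event, the log-Sobolev inequality \eqref{eqn:logSobolev} for the law of $\xi$ (tensorized over the $n+1$ coordinates) gives sub-Gaussian concentration of $\tilde\Phi$ around its mean with variance proxy $O(\mathrm{Lip}^2)$, and one checks $|\E\tilde\Phi - \E N_I|$ is negligible using Theorem \ref{thm:E:gen} (and Remark \ref{rmk:restriction} to restrict to $I$) together with the rarity of $\Good^c$. Optimizing the split between the deviation $\eps\sqrt n$ and the Lipschitz budget across the $\Theta(\sqrt n)$ blocks yields a bound of the form $e^{-(\eps^{D_1}/\log^{D_2} n)\sqrt n}$.

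The main obstacle — and the part requiring ``totally different technical details'' from the trigonometric case — is step (i)/(ii): controlling how much $N_I$ can jump under a coefficient perturbation. For trigonometric polynomials one exploits stationarity and uniform lower bounds on $\E F(x)^2$; for the Elliptic model the variance $\E F(x)^2 = (1+x^2)^n$ and the natural scale both vary wildly across $I$, so one must work on each block $I_\ell$ separately, rescale $F$ by $(1+x^2)^{n/2}$ (or equivalently pass to the sphere picture / Kostlan normalization), and establish on $\Good$ quantitative non-degeneracy: a lower bound on $\max_{x\in I_\ell}\big(|\tilde F(x)|^2 + |\tilde F'(x)|^2\big)$-type quantities and an upper bound on $\#\{x \in I_\ell: |\tilde F(x)|, |\tilde F'(x)| \le \delta\}$. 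Proving these with probability $1-e^{-c\sqrt n}$ for \emph{non-Gaussian} $\xi$ with only a bounded-density assumption is delicate: it needs small-ball / Littlewood–Offord input for the values and derivatives of $F$ at a net of points in $I_\ell$, plus a covering argument to pass from the net to the whole interval using crude a priori bounds on $\|F\|_\infty, \|F'\|_\infty, \|F''\|_\infty$ on $I$ (which hold off an exponentially small event since the $\xi_j$ are sub-Gaussian under the log-Sobolev hypothesis). I expect this non-degeneracy-on-a-good-event lemma, uniform over all $\Theta(\sqrt n)$ blocks, to be the technical heart of the proof; the concluding log-Sobolev concentration step is then routine.
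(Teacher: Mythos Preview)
Your proposal has the right top-level philosophy (isoperimetry via log-Sobolev, a ``good event'' on which perturbations cannot destroy many roots, and anti-concentration to rule out near-double zeros), but it misses the two technical mechanisms that actually make the Elliptic case go through, and the block decomposition you propose is not how the paper runs the argument.

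First, the paper does not work on $I_E$ directly nor partition it into $\Theta(\sqrt n)$ near-independent blocks. It substitutes $x=\sqrt n\cot\theta$ so that the normalized polynomial becomes $G(\theta)=\sum_i\xi_i\sqrt{\binom{n}{i}}\cos^i\theta\sin^{n-i}\theta$ on $[0,\pi]$, and then---this is the crucial step your sketch lacks---expands $G$ in spherical harmonics on $S^1$ via the Bombieri--Weyl decomposition: there is an orthogonal matrix $U$ with $(b_l,c_l)_l=U(\xi_i)_i$ and $G(\theta)=\sum_l w_{1,n}(l)(b_l\cos l\theta+c_l\sin l\theta)$, where $w_{1,n}(l)^2=2^{-n}\binom{n}{(n-l)/2}$ is negligible once $l>N^\ast=\sqrt n\log^2 n$. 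Thus $G$ is, up to an $e^{-\log^4 n}$ error, a trigonometric polynomial of degree $N^\ast$, and this is precisely what supplies the Markov--Bernstein inequalities $\|G^{(d)}\|_2\le(C^\ast\sqrt n)^d\|G\|_2$ needed to feed Proposition~\ref{prop:manyroots} and Lemma~\ref{lemma:largesieve}. Your ``crude a priori bounds on $\|F\|_\infty,\|F'\|_\infty,\|F''\|_\infty$'' are not a substitute: without the degree-$\sqrt n$ structure one cannot show that many roots in a short window force $|G|,|G'|$ to be small on a set of positive measure, which is the engine of the whole method.

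Second, $N_I$ is never made Lipschitz on a good event; instead the paper uses the stable/unstable interval framework. One covers $T$ by intervals of length $R/\sqrt n$, calls an interval unstable if $|G|\le\alpha$ and $|G'|/\sqrt n\le\beta$ somewhere on it, and calls $G$ exceptional if $\ge\delta\sqrt n$ intervals are unstable. Repulsion (Theorem~\ref{thm:repulsion:E}) plus isoperimetry shows exceptional $G$ have probability $\le e^{-\delta^{O(1)}\sqrt n}$; for non-exceptional $G$, Proposition~\ref{prop:manyroots} caps the roots on unstable intervals, and on stable intervals Corollary~\ref{cor:stab} shows every root of $G$ persists in $G+g$ once $|g|<\alpha$ there. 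This is a one-sided set inclusion, not a Lipschitz estimate, and the two tails are treated separately. Finally---and this is where the exponent $\sqrt n$ rather than $n$ arises---the admissible perturbations $g$ are constrained not by the full Euclidean norm but by the projection $\sum_{l\le N^\ast}(\bxi'\cdot\Bv_l)^2\le\tau^2\sqrt n$ onto the $N^\ast$-dimensional harmonic subspace (equation~\eqref{eqn:g:E}), because $\|g\|_{L^2(T)}^2\lesssim n^{-1/2}\sum_{l\le N^\ast}((b_l')^2+(c_l')^2)$. The projected concentration of Theorem~\ref{thm:E:concentration} then gives $e^{-\tau^2\sqrt n/4C_0}$. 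Your sketch does not isolate this effective-dimension-$\sqrt n$ phenomenon, and without it the optimization you describe does not clearly land on the right exponent.
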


The exponent here is nearly optimal because it was shown in \cite{DM} (confirming various predictions from \cite{SM}) that the probability that $N_\R=0$ (persistence probability) in the gaussian case has order $\exp(-\Theta(\sqrt{n}))$. We also remark that in the gaussian case our result compensates the large deviation type result of Theorem \ref{thm:E:gau} (and the results of  \cite{GaWe} in one dimension).

For Weyl polynomials we are able to show the following analogous result.

\begin{theorem}[Concentration for Weyl polynomials] \label{thm:Weyl} Let $C_0$ be a given positive constant and suppose $\xi$ is continuous with bounded density and satisfies the logarithmic Sobolev inequality \eqref{eqn:logSobolev} with parameter $C_0$. Let $C_W>1$ %\AC{CHANGED C E to C W}
be given, and let $I = I_W = [ \frac{1}{C_W}\sqrt{n}, C_W \sqrt{n}]$ or $I = [-C_W \sqrt{n},  -\frac{1}{C_W}\sqrt{n}]$. Then  there exist constants $c,D_1,D_2$ such that for $n^{-c} \le \eps \le 1$ and for Weyl polynomials we have
$$\P(|N_I - \E N_I | \ge \eps \sqrt{n}) \le e^{- (\eps^{D_1}/\log^{D_2} n)  \sqrt{n}}.$$
\end{theorem}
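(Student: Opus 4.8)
The plan is to follow the scheme used for the Elliptic case, exploiting two structural features of Weyl polynomials on $I=I_W$: the one--point density satisfies $\rho_1(x)\approx 1/\pi$ throughout the bulk $[\tfrac1{C_W}\sqrt n,\sqrt n]$ (so $\E N_I\asymp\sqrt n$, the edge $[\sqrt n,C_W\sqrt n]$ carrying only $O(1)$ roots that are absorbed into the error), and the normalized kernel obeys $\overline K_n(s,t)\approx e^{-(s-t)^2/2}$, so $F$ decorrelates on the unit scale. First replace $F$ by $\hat F(x)=F(x)/\sqrt{K_n(x,x)}$; this has the same real zeros and can be written $\hat F(x)=\langle\bv(x),\bxi\rangle$ with $\|\bv(x)\|=1$, so $\hat F(x)$ has unit variance and $\|\nabla_{\bxi}\hat F(x)\|=1$ for every $x\in I$. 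Partition $I$ into $m\asymp\sqrt n/h$ intervals $J_k=[x_k,x_{k+1}]$ of length $h$, with $h$ a small fixed power of $\eps$. Writing $r_k$ for the number of zeros of $\hat F$ in $J_k$ and $W=\#\{k:\hat F(x_k)\hat F(x_{k+1})<0\}$ for the number of grid sign changes, one has deterministically
$$W\ \le\ N_I=\sum_k r_k\ \le\ W+2\sum_k\lfloor r_k/2\rfloor\ =:\ W+2E.$$

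\textbf{The error terms.} Since $\lfloor r/2\rfloor\le r(r-1)/2$ for integers $r\ge0$, linearity and the two--point Kac--Rice formula give $\E E\le\tfrac12\sum_k\iint_{J_k^2}\rho_2(s,t)\,ds\,dt\lesssim m h^2\sup_{I\times I}\rho_2\lesssim h\sqrt n\le\tfrac{\eps}{100}\sqrt n$, using boundedness of $\rho_2$ on $I\times I$. To upgrade this to an exponential tail, work on the high--probability event $\{\sup_{\dist(z,I)\le1}|\hat F(z)|\le n^{C}\}$: there, Jensen's formula shows that $r_k\ge2$ forces $\hat F$ to be abnormally small on a definite portion of a complex disc of radius $\asymp h$ about the centre of $J_k$ --- an anti-concentration event of probability $n^{-c}$ per $k$, coming from the bounded density of $\xi$ --- while $\lfloor r_k/2\rfloor\le C\log n$ on the good event, so $E$ is dominated by $\log n$ times a sum of weakly dependent Bernoulli$(n^{-c})$ variables, to which the blocking argument below applies. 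Likewise smooth the sign-change count: set $V=\tfrac12\sum_k|\sigma_\beta(\hat F(x_{k+1}))-\sigma_\beta(\hat F(x_k))|$ with $\sigma_\beta(t)=\max(-1,\min(1,t/\beta))$ and $\beta$ a small power of $n^{-1}$. Then $|W-V|\le 2B$ with $B=\#\{k:|\hat F(x_k)|\le\beta\}$, and $\E B\lesssim\beta m\lesssim\beta\sqrt n\le\tfrac{\eps}{100}\sqrt n$ because each $\hat F(x_k)$ has density $O_{C_0}(1)$ (no coordinate of $\bv(x_k)$ is close to $1$ for $x_k\in I_W$, so it is a well-spread combination of independent bounded-density variables); again $B$ acquires an exponential tail from the blocking argument.

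\textbf{Concentration of $V$ --- the crux.} As a function of $\bxi$, $V$ is Lipschitz with constant $L\lesssim m/\beta$, far too large to feed into the sub-Gaussian bound $\P(|V-\E V|\ge t)\le2e^{-ct^2/(C_0L^2)}$ supplied by the log-Sobolev inequality: that alone yields only $e^{-c\eps^{O(1)}}$, useless. The remedy exploits decorrelation. The weight $x^{2j}/j!$ defining $\bv(x)$ is concentrated in a window of $\asymp\sqrt n$ indices around $j\approx x^2$, so over a sub-block of $I$ of $x$-length $\asymp\log n$ the vectors $\bv(x)$ live, up to a super-polynomially small error on a high-probability event (controlling $\max_j|\xi_j|\le\log^{O(1)}n$, which is where the log-Sobolev/sub-Gaussian hypothesis is essential), on a block of $\asymp\sqrt n\log n$ consecutive coordinates, and consecutive sub-blocks use coordinate blocks shifted by $\asymp\sqrt n\log n\gg\sqrt n$, hence with only $O(1)$ overlap. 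Thus $V$ splits into $\asymp\sqrt n/\log n$ pieces $V_b$, each a bounded function ($\lesssim\log n/h$) essentially of its own coordinate block with bounded dependency degree and variance $\lesssim\log^2 n$; a Bernstein inequality for sums with a bounded-degree dependency graph (obtained by tensorizing the log-Sobolev concentration over the blocks and adding the super-polynomially small localization errors) then gives $\P(|V-\E V|\ge\tfrac{\eps}{4}\sqrt n)\le\exp(-c\,\eps^{D_1}\sqrt n/\log^{D_2}n)$, and the powers $D_1,D_2$ that emerge here are exactly those in the statement.

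\textbf{Assembly and the main obstacle.} Combining the steps, $N_I$ lies within $\tfrac{\eps}{2}\sqrt n$ of $\E V$ outside an event of probability $\le e^{-c\eps^{D_1}\sqrt n/\log^{D_2}n}$; taking expectations in the same sandwich and using $\eps\ge n^{-c}$ gives $|\E N_I-\E V|\le\tfrac{\eps}{2}\sqrt n$, which yields the theorem. The interval $[-C_W\sqrt n,-\tfrac1{C_W}\sqrt n]$ reduces to the above via $x\mapsto-x$, which turns $\xi_j$ into $(-1)^j\xi_j$ --- still independent with mean zero, variance one, bounded density and log-Sobolev constant $C_0$, so the same argument applies. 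The real difficulty is the concentration of $V$: reconciling the enormous Lipschitz constant of any smooth proxy for $N_I$ with the target rate $e^{-\eps^{D_1}\sqrt n/\log^{D_2}n}$. A single global Lipschitz estimate is hopeless and a naive per-interval union bound destroys the sub-exponential gain, so one is forced to localize the dependence of $\hat F$ on the coefficients to windows of width $\asymp\sqrt n$ --- the source of the $\log^{D_2}n$ loss --- and run the log-Sobolev concentration block by block. Making ``almost disjoint supports'' quantitative (bounding, uniformly in $x\in I_W$, the contribution to $\hat F(x)$ of the coefficients $\xi_j$ with $j$ far from $x^2$), together with the Jensen-plus-anti-concentration control of clustered roots, is the technical heart of the proof.
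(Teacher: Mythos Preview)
Your approach is genuinely different from the paper's. The paper never counts sign changes; it follows the Nazarov--Sodin perturbation scheme: establish Markov--Bernstein type bounds for $F$ and its derivatives on $I_W$ with high probability (Lemma~\ref{lemma:Weyl:derivative}, Lemma~\ref{lemma:W:prob:d}), define \emph{stable} intervals as those where $|F|$ and $|F'|/\sqrt{n}$ are not simultaneously small, show via repulsion and isoperimetry that ``exceptional'' polynomials with many unstable intervals are exponentially rare (Theorem~\ref{thm:exceptional:W}), and then prove that for non-exceptional $F$ the root count is stable under any perturbation $g$ with $\sum_i b_i^2 \le \tau^2\sqrt{n}/\log^{O(1)}n$ (via Corollary~\ref{cor:stab}), feeding this directly into the isoperimetric concentration of Theorem~\ref{thm:sobolev}. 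The paper's route is uniform across models (it also handles orthogonal polynomials, where no coefficient localization is available); your block-decomposition route is specific to models with localized coefficients but, if it went through, would not actually need the log-Sobolev hypothesis at all---Bernstein for bounded nearly-independent summands would suffice, which would be a strengthening.

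Several steps in your sketch do not work as written, however. Your tail argument for $E$ is backwards: Jensen's inequality upper-bounds $r_k$ given a pointwise lower bound on $|\hat F|$, but does not by itself show that $\{r_k\ge 2\}$ has small probability. The clean route is that $r_k\ge 2$ forces, by Rolle and a second-order Taylor expansion at the centre $c_k$ of $J_k$, both $|F(c_k)|=O(h^2\sup|F''|)$ and $|F'(c_k)|=O(h\sup|F''|)$, whence the two-dimensional repulsion estimate (Theorem~\ref{thm:repulsion:W}) gives $\P(r_k\ge 2)=O(h^3)$ on the event where $\sup|F''|$ is controlled. Your overlap accounting is also off: with $x$-blocks of length $\asymp\log n$ and coefficient windows of width $\asymp\sqrt{n}\log n$, consecutive coefficient blocks overlap in a constant \emph{fraction}, not in $O(1)$ entries; what is true and sufficient is that the dependency graph has bounded degree, so a finite coloring reduces to genuinely independent sums. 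Finally, ``tensorizing the log-Sobolev concentration over the blocks'' is neither how one obtains Bernstein for dependent sums nor what is needed here; once the blocks are colored into $O(1)$ independent families, ordinary Bernstein for bounded independent summands applies. These issues are all repairable, but the proposal as written does not yet close.
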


The exponent here is also nearly optimal, as here too the persistence probability $N_\R=0$ for the gaussian case has been shown by \cite{DM, CP} to have order $\exp(-\Theta(\sqrt{n}))$.

We note that all of the constants %\JC{Which D?} 
$D_1,D_2$ from Theorem \ref{thm:Elliptic} and Theorem \ref{thm:Weyl} above can be made explicit. We can also allow $C_E,C_W$ to vary with $n$ (such as $C_E, C_W = n^{c}$ for some small constant $c$, see the remark at the end of Theorem \ref{thm:repulsion:E} and Lemma \ref{lemma:W:prob:d}) but let us keep $C_E,C_W$ to be fixed here, noting that the range  $[- C\sqrt{n}, C\sqrt{n}]$ or $[-C \sqrt{n}, -\frac{1}{C}\sqrt{n}]$ or $[\frac{1}{C}\sqrt{n}, C\sqrt{n}]$ for sufficiently large $C$ are where the majority of the roots belong to. (Which can be seen from the limiting first intensity of the roots of Elliptic polynomials and Weyl polynomials mentioned previously.) We also remark that nearly optimal concentration for real roots of Kac polynomials has been established recently by O. Nguyen and Can in \cite{CNg}.

We have to assume $\xi$ to satisfy the log Sobolev in both Theorem \ref{thm:Elliptic} and Theorem \ref{thm:Weyl} in order to apply geometric concentration for log-Sobolev random variables (such as Theorem \ref{thm:sobolev} and Theorem \ref{thm:E:concentration}). The proofs do not seem to extend to more general random variables. However in this general case we can show the following deviation result.

\begin{theorem}\label{thm:dev} Suppose that $\xi$ has mean zero, variance one, and bounded $(2+\eps)$-moment. Assume that $\log^4 n \ll  A \ll \sqrt{n}$. Then there exists a constant $c$ such that for both the Elliptic and Weyl polynomials (where $I=I_E$ or $I_W$ accordingly),
$$\P(N_I \ge A \sqrt{n}) \le 2^{-c A} + \exp(-\Theta(\sqrt{n}/\log^2 n)).$$
\end{theorem}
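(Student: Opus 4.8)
The plan is to reduce the deviation bound to a statement about the number of sign changes of $F$ on a discretization of $I$, plus a control on the (rare) event that $F$ has many roots in some short subinterval. First I would recall the classical Jensen-type bound: if $F$ has many real roots in $I$, then either $F$ changes sign many times at the nodes of a fixed mesh $t_0<t_1<\dots<t_M$ of $I$ (with, say, $M \asymp \sqrt n$ and spacing of order $1$ in the Elliptic/Weyl scaling), or there is some mesh cell $[t_{i},t_{i+1}]$ containing an abnormally large number of roots. The number of sign changes is a function only of the signs $(\sign F(t_0),\dots,\sign F(t_M))$, so $\P(F$ has $\ge k$ sign changes on the mesh$)$ can be estimated by a union bound over the $\binom{M}{k}$ choices of which consecutive pairs disagree, times the probability that $F$ takes prescribed signs at a fixed $O(1)$-separated set of points; the latter is where anticoncentration of the Gaussian-like vector $(F(t_0),\dots,F(t_M))$ enters. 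Because $\overline K_n(s,t)\to 0$ as $|s-t|\to\infty$ (the property \eqref{eqn:K_n}), widely separated evaluation points are nearly independent, so one gets a bound of the shape $\binom{M}{k} 2^{-\Theta(k)}$, which for $k \asymp A\sqrt n$ with $A \gg \log^4 n/$(something) is dominated by $2^{-cA}\cdot(\text{stuff})$; summing/optimizing over scales produces the $2^{-cA}$ term. Here I must be careful: with only bounded $(2+\eps)$-moment the vector $(F(t_i))$ is not Gaussian, so the anticoncentration step should go through a central limit theorem / Berry–Esseen estimate for the relevant linear forms, or through a small-ball inequality valid under a $(2+\eps)$-moment hypothesis — this is exactly the kind of input that Remark \ref{rmk:restriction} (that the Elliptic/Weyl variance bounds localize to any subinterval $I$ with $\E N_I \asymp \sqrt n$) and Theorems \ref{thm:E:gen}, \ref{thm:W:gen} are designed to feed.

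The second term, $\exp(-\Theta(\sqrt n/\log^2 n))$, should come from bounding the bad event that $F$ has, say, $\ge \log^2 n$ roots inside a single mesh cell (or that $F$ is atypically small in sup-norm on a cell, which is what forces many close roots). One standard way: if $F$ has many roots in a short interval $J$ then the monic factor corresponding to those roots is small on a slightly larger interval, forcing $\|F\|_{\infty,J}$ to be tiny relative to the typical scale $\sqrt{\E F^2}$; but the coefficient vector $\bxi$ lives on a sphere-like set and the event $\{\|F\|_{\infty,J}\le e^{-c\sqrt n}\|\cdot\|_{\text{typ}}\}$ is a thin slab whose probability is at most $\exp(-\Theta(\sqrt n))$ up to the $\log$ loss coming from covering $J$ by $O(\log n)$ subintervals / from the degeneracy of the relevant Gram matrix near the edge of the bulk. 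I would lift the relevant local-nondegeneracy statements from the repulsion estimates the paper sets up for the Elliptic and Weyl models (the "Theorem \ref{thm:repulsion:E}" and "Lemma \ref{lemma:W:prob:d}" referenced in the surrounding text); these give exactly the quantitative lower bounds on Gram determinants of $(F,F',\dots)$ at nearby points that rule out clustering of roots except on an event of probability $\exp(-\Theta(\sqrt n/\log^{O(1)} n))$.

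Putting the two pieces together: on the complement of the bad clustering event, $N_I$ is within an additive $O(\sqrt n \log n)$-type correction of the number of mesh sign changes, so $\{N_I \ge A\sqrt n\}$ is contained in $\{\ge cA\sqrt n$ sign changes$\}\cup\{\text{bad event}\}$; the first has probability $\le 2^{-cA}$ by the union-bound-plus-anticoncentration argument (valid in the range $\log^4 n \ll A$, where the binomial coefficient is absorbed — this is precisely why the hypothesis $A \gg \log^4 n$ appears, since $\log\binom{M}{k} \asymp k\log(M/k)$ has a residual $\log n$ factor that must be beaten by the $\Theta(k)$ gain), and the second has probability $\le \exp(-\Theta(\sqrt n/\log^2 n))$. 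The upper restriction $A \ll \sqrt n$ just keeps $A\sqrt n$ below the trivial bound $n$ on the degree. I expect the main obstacle to be the anticoncentration input under the mere $(2+\eps)$-moment assumption: one needs that for an $O(1)$-separated family of points the signs of $F$ behave like fair independent-ish coins (a quantitative small-ball estimate for the joint law of $(F(t_{i_1}),\dots,F(t_{i_k}))$ uniform in $k$), and proving this without a log-Sobolev hypothesis is the delicate step — it is the reason this theorem only gives a deviation bound rather than the two-sided concentration of Theorems \ref{thm:Elliptic} and \ref{thm:Weyl}. The decay \eqref{eqn:K_n} of $\overline K_n$ plus a Berry–Esseen estimate for the linear statistics $\sum_j \xi_j p_j(t_i)$ is the natural route, and making the error terms there uniform over the $\binom{M}{k}$ sign patterns is where most of the technical work will sit.
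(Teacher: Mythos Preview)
Your approach is genuinely different from the paper's, and the step you yourself flag as ``the delicate step'' is a real gap, not just a technicality. You want, for an $O(1)$-separated set of $k \asymp A\sqrt{n}$ points, a bound of the form $\P(\sign F(t_{i_j}) = \eps_j \text{ for all } j) \le 2^{-\Theta(k)}$, uniformly over the $\binom{M}{k}$ choices of pattern. Under a mere $(2+\eps)$-moment assumption there is no obvious route to this: the covariance decay $\overline K_n(s,t)\to 0$ is a second-moment statement and does not by itself control joint sign probabilities, and a multivariate Berry--Esseen comparison to the Gaussian case produces additive errors that are at best polynomially small in $n$, which cannot survive a union bound over exponentially many sign patterns. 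Without this input your argument does not close, and you offer no concrete mechanism for it.

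The paper sidesteps this difficulty entirely by reducing to a \emph{single-point} small-ball estimate. The key observation (Proposition \ref{prop:overcrowding}) is deterministic: on the high-probability event where the Markov--Bernstein bounds \eqref{eqn:MB:final} hold (this is where the $\exp(-\Theta(\sqrt n/\log^2 n))$ term originates, via $\CE_{b,E}$ or $\CE_{W,d_0}$), if $G$ has $AN$ roots on $T$ with $A\gg C^\ast$, then Hermite interpolation together with the large sieve forces $\max_{x\in T}|G(x)|\le 2^{-A/8+2}(C^\ast)^{1/2}$ on the \emph{whole} interval, not just a subset. Hence $\{N_I\ge A\sqrt n\}\cap\{\text{good event}\}$ implies $|F(x_0)|\le 2^{-A/16}$ at any fixed $x_0\in I$. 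It then suffices to bound $\P(|F(x_0)|\le 2^{-A/16})$ at one carefully chosen $x_0$; since $F(x_0)=\sum_i\xi_i v_i$ with a coefficient sequence from which one can extract a lacunary subsequence of length $\asymp A$ (immediate from Claim \ref{claim:Weyl:ix} for Weyl and the analogous binomial asymptotics for Elliptic), the small-ball lemma for lacunary sums (\cite[Lemma 9.2]{TV}) gives $\P(|F(x_0)|\le 2^{-A/16})\le e^{-cA}$ under the $(2+\eps)$-moment hypothesis alone. No joint anticoncentration, no union bound over sign patterns, and the lower threshold $A\gg\log^4 n$ is needed only so that $A\gg C^\ast$ and $2^{-A/16}$ beats the $(C^\ast)^{1/2}$ factor, not to absorb a binomial coefficient.
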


\subsection{%Some implications  of Theorems \ref{thm:orthogonal}, \ref{thm:Elliptic} and \ref{thm:Weyl}  
Further remarks}

First, together with the estimates from Theorem \ref{thm:O:gen}, Theorem \ref{thm:E:gen} and Theorem \ref{thm:W:gen}, our results immediately imply that
$$\frac{N_I}{\E N_I} \to 1, \mbox{ almost surely}$$
for all three models, where $I$ is either $[a,b]$ or $I_E$ or $I_W$ depending on the model. (See also \cite{AP'} for similar results regarding random trigonometric polynomials.) 

Second, our results also imply very fine (sub)exponential-type estimates for the persistence probability that $F$ does not have any root, i.e. $\P(N_I =0)$, in the {\it non-gaussian} cases. Such probabilities are well-studied for Gaussian processes and gaussian polynomials, see for instance \cite{AS,CP,  DM, DM', DPSZ, FFN,FFN',KK,SM} and the rich references therein. This problem has physical applications in models of Statistical Mechanics and  PDEs, for instance \cite{SM} discusses the persistence probability $p_0$ (i.e of the scalar field $\phi$ remaining positive) of the standard diffusion equation $\partial_t\phi(x,t)=\nabla^2\phi(x,t)$. With \textit{random} Gaussian initial configuration for a system of linear size $L$, and under the scaling limit $t,L \to \infty$ and $t/L^2$ fixed, an exponential-decay relation (dependent on  dimension) $p_0(t,L)\propto L^{-\theta(d)}$ was predicted.

Our paper provides a robust method to obtain strong concentration for the number of real roots of many models of random polynomials. Within this topic there remains many interesting problems that our method does not apply to (1) What is the optimal power of $\eps$ for the exponents for each of the models (even in the gaussian case)?  (2) Can one extend to higher dimensional settings similarly to Theorems \ref{thm:NS}, \ref{thm:Rozen} and  \ref{thm:E:gau}? (3) Does the method extend to gaussian processes (and then non-gaussian processes) yielding an alternative proof of Theorem \ref{thm:process}?

{\bf Notations.}  We consider $n$ as
an asymptotic parameter going to infinity and allow all other quantities to depend on $n$ unless they are
explicitly declared to be fixed or constant.  As used earlier, we write $X =
O(Y)$, $Y=\Omega(X)$, $X \ll Y$, or $Y \gg X$ if $|X| \leq CY$
for some fixed $C$; this $C$ can depend on
other fixed quantities such as the % \AC{2 thes} 
parameter $C_0$ in the condition of $\xi$.  If $X\ll Y$ and $Y\ll X$, we say that $Y = \Theta(X)$ or $X \asymp Y$.

\section{Supporting ingredients and proof methods}\label{section:supporting} 

In this section, %\JC{comma} 
we gather a few ingredients to be used; most of them are of independent interest. Here $T= [0,1]$ and $T'= [-\eps_0 ,1+\eps_0]$. The parameter $N$ here will be sent to $\infty$, which will be either $n$ or $\sqrt{n}$ in our later applications. % \AC{SUGGESTION ADD first half of this section is analysis, second half is probablity}  I think it is fine that way -- all, except the geometric concentration, are analytical 

\subsection{Large Sieve inequalities} Our first ingredient is the following variant of the large sieve inequalities, which is mostly used for trigonometric polynomials.

\begin{lemma}[Restricted large sieve inequality]\label{lemma:largesieve} Let $d\ge 1$. Let $f$ be a smooth real valued function over in interval $T=[0,1]$, and that for $k=d-1, d$ 
\begin{equation}\label{eqn:f'f}
\int_{T} (f^{(k)}(x))^2 dx \le  (C^\ast N)^{2k} S
\end{equation}
for some parameters $C^\ast$ and $S>0$ (that might depend on $f$). % \AC{was L2 norm in trig case} 
Then for any dividing points $0\le x_1 <x_2 <\dots <x_M\le 1$, with $\delta$ being the minimum of the gaps between $x_i, x_{i+1}$, we have
$$\sum_{i=1}^M (f^{(d-1)}(x_i))^2 \le  (C^\ast N+\delta^{-1}) (C^\ast N)^{2(d-1)}S.$$
\end{lemma}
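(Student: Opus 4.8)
The plan is to reduce the statement to a genuine large sieve inequality (i.e., a Bessel-type bound $\sum_i |g(x_i)|^2 \ll (N + \delta^{-1})\|g\|_{L^2}^2$ for band-limited $g$) applied to the function $g = f^{(d-1)}$, where the hypothesis \eqref{eqn:f'f} plays the role of the bandwidth assumption. More concretely, first I would normalize: set $g = f^{(d-1)}$, so that the two hypotheses \eqref{eqn:f'f} read $\int_T g^2 \le (C^\ast N)^{2(d-1)} S$ and $\int_T (g')^2 \le (C^\ast N)^{2d} S$, i.e. $\int_T (g')^2 \le (C^\ast N)^2 \int_T g^2$ up to the common factor. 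The conclusion to prove is $\sum_{i=1}^M g(x_i)^2 \le (C^\ast N + \delta^{-1})(C^\ast N)^{2(d-1)} S$, so it suffices to show the clean inequality
$$\sum_{i=1}^M g(x_i)^2 \le \Big(C^\ast N + \delta^{-1}\Big)\Big(\int_T g^2 + (C^\ast N)^{-2}\int_T (g')^2\Big),$$
or something of this shape, for any smooth $g$ on $[0,1]$ and any $\delta$-separated points.

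The key step is a pointwise-to-integral comparison. For each sample point $x_i$, pick the subinterval $J_i = [x_i - \delta/2, x_i + \delta/2]\cap T$ (these are pairwise disjoint by $\delta$-separation), and for $y \in J_i$ write the fundamental-theorem identity $g(x_i)^2 = g(y)^2 + \int_{y}^{x_i} 2 g(t) g'(t)\, dt$. Averaging over $y \in J_i$ and using Cauchy--Schwarz on the cross term gives
$$g(x_i)^2 \le \frac{C}{\delta}\int_{J_i} g^2 + C\Big(\int_{J_i} g^2\Big)^{1/2}\Big(\int_{J_i} (g')^2\Big)^{1/2}.$$
Summing over $i$, using disjointness of the $J_i$ and Cauchy--Schwarz on the sum of products $\sum_i a_i b_i \le (\sum a_i^2)^{1/2}(\sum b_i^2)^{1/2}$, yields
$$\sum_i g(x_i)^2 \le \frac{C}{\delta}\int_T g^2 + C\Big(\int_T g^2\Big)^{1/2}\Big(\int_T (g')^2\Big)^{1/2}.$$
Now insert the bandwidth hypothesis $\int_T (g')^2 \le (C^\ast N)^{2}\int_T g^2$ (after dividing out the factor $(C^\ast N)^{2(d-1)} S$, which both sides of \eqref{eqn:f'f} carry) to bound the second term by $C\, C^\ast N \int_T g^2$, giving $\sum_i g(x_i)^2 \le C(\delta^{-1} + C^\ast N)\int_T g^2$; unwinding the normalization produces exactly $(C^\ast N + \delta^{-1})(C^\ast N)^{2(d-1)} S$ up to the absolute constant $C$, which can be absorbed since $C^\ast$ is allowed to depend on fixed parameters (alternatively one optimizes the AM-GM split to remove it).

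The main obstacle is bookkeeping rather than conceptual: one must be careful that \eqref{eqn:f'f} is only assumed for the two consecutive orders $k = d-1, d$ (not a true band-limitedness), so the argument cannot iterate and must close using just one derivative of $g = f^{(d-1)}$ — which is why the averaging identity above uses only $g$ and $g'$. A secondary point is the boundary: when $x_i$ is within $\delta/2$ of an endpoint of $T$, the interval $J_i$ gets truncated to length $\ge \delta/2$, which only costs an absolute constant, so this is harmless. Finally, one should double-check the edge case $M$ with $\delta$ very small or $\delta$ comparable to $1$: the term $\delta^{-1}$ dominates in the former and $C^\ast N$ typically dominates in the latter, and the stated bound is consistent with both regimes.
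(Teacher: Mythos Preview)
Your approach is essentially the same as the paper's: both set $g=f^{(d-1)}$, compare $g(x_i)^2$ to the local integrals of $g^2$ and $|gg'|$ on the disjoint intervals $[x_i-\delta/2,x_i+\delta/2]$, sum, and finish with Cauchy--Schwarz against the two hypotheses in \eqref{eqn:f'f}. The paper phrases the pointwise step via Gallagher's lemma (an integration-by-parts with the sawtooth weight $\rho$), which yields the sharp constant $\tfrac12$ in front of $\int |(g^2)'|$ and hence the stated bound with no extraneous factor; your direct averaging of the fundamental-theorem identity loses a factor of $2$. Your remark that the extra constant ``can be absorbed since $C^\ast$ is allowed to depend on fixed parameters'' is not correct---$C^\ast$ is a given parameter of the lemma, not something the proof may redefine---but this is immaterial for every use of the lemma in the paper, and if you want the exact constant you can simply replace the uniform average by Gallagher's weighted integration by parts.
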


% Notice that this result can be applied for $F,F',\dots$ as in Theorem \ref{thm:meta} with $C^\ast$ in place of $C$. 

\begin{proof}(of Lemma \ref{lemma:largesieve}) It suffices to assume that $\delta\le x_1$ and $x_M \le 1-\delta$. We follow the classical approach by Gallagher \cite{Ga}.
\begin{claim} Let $g$ be a differentiable function on $I=[c-h,c+h]$. Then 
$$g(c) \le \frac{1}{|I|}\int_I |g(t)|dt + \frac{1}{2}\int_I |g'(t)|dt.$$
\end{claim}
\begin{proof} Let $\rho(t) = t-(c-h)$ if $t\in (c-h,c)$ and $\rho(t) = t-(c+h)$ if $t\in (c,c+h)$. Partial integrals % \AC{CHAGNED, Typo "Partial Integrals"}
(over $(c-h,c)$ and $(c,c+h)$) give
$$\int_I \rho(t) g'(t) dt =2h g(c) - \int_{I} g(t)dt.$$
Note that $|\rho(\cdot)|\le h$, %\JC{CHANGED, cdot} 
so the claim follows by triangle inequality.
\end{proof}
By this claim, 
\begin{align*}
\sum_{i=1}^M |f^{(d-1)}(x_i)|^2 &\le \frac{1}{\delta} \sum_i \int_{x_i-\delta/2}^{x_i+\delta/2} |f^{(d-1)}(t)|^2 dt + \sum_i \int_{x_i-\delta/2}^{x_i+\delta/2} |f^{(d-1)}(t) f^{(d)}(t)| dt \\
& \le  \frac{1}{\delta} \int_{T} |f^{(d-1)}(t)|^2 dt + \int_{T} |f^{(d-1)}(t) f^{(d)}(t)| dt.
\end{align*}
The claim then follows from Cauchy-Schwarz and Assumption \eqref{eqn:f'f}.
\end{proof}

\subsection{Number of roots over small intervals} The following also plays a key ingredient in our proofs.
\begin{proposition}[Many roots over small intervals]\label{prop:manyroots} Let $A_0>0$ be a given constant. Let $C^\ast, N$ be given parameters, where $C^\ast = C^\ast_N$ might depend on $N$. Assume that $f$ is a smooth real valued function over $T=[0,1]$ where $f$ has at most $N^{A_0}$ roots, and for any $1\le d  \le A_0 \log N$ we have
\begin{equation}\label{eqn:MB:gen}
\int_T (f^{(d)}(x))^2 dx   \le (C^\ast N)^{2d}  \mbox{ and }  \int_{T} f(x)^2 dx \le (C^\ast)^2.
\end{equation}

% Previously was $T=[0,1]\subset T'=[-\eps_0,1+\eps_0]$ for which for any $1\le d = O(\log N)$ we have $$ \int_T f^{(d)}(x)^2 dx   \le (C^\ast N)^{2d}  \mbox{ and }  \int_{T'} f(x)^2 dx \le C^\ast . %$$ but not needed.
Assume that $R, \delta, \eps, \lambda$ are given parameters such that %\AC{SUGGESTION cosmetic, more space in the next two equations, bigger parenthesis}
$$R\ge 4,\quad \quad  \delta=O\left(\frac{\eps } {\log (1/\eps)}\right), \quad \quad \lambda = \delta^{O(1)}.$$

Assume that there are $\delta N$ disjoint intervals of $T$ of length $R/N$ over which there are at least $\eps N /2$ roots, then there exist a measurable set $A \subset T$ of measure at least $\Theta(\eps /C^\ast)$ over which 
$$\max_{x\in A}|f(x)| \le \lambda  (C^\ast)^{1/2}\quad \mbox{ and } \quad \max_{x\in A} |f'(x)| \le \lambda (C^\ast)^{3/2}  N.$$ 
\end{proposition}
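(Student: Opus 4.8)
The plan is to show that a short interval carrying many zeros of $f$, together with the derivative bounds \eqref{eqn:MB:gen}, forces $f$ and $f'$ to be minuscule on a fixed enlargement of that interval, and then to build $A$ out of these enlargements. First I would pigeonhole the roots. The hypothesis yields $\delta N$ disjoint length-$(R/N)$ intervals carrying $\ge\eps N/2$ roots in total; discarding those with fewer than $m_0:=\lfloor\eps/(4\delta)\rfloor$ roots loses fewer than $\delta N\cdot m_0<\eps N/4$ of them, leaving ``heavy'' intervals $J_1,\dots,J_P$, each containing $m_j\ge m_0$ roots, with $\sum_j m_j\ge\eps N/4$. This is the sole place the assumption $\delta=O(\eps/\log(1/\eps))$ is used: it makes $m_0$ of order at least $\log(1/\eps)$, hence of order $\log N$ in the regime where $\eps$ is polynomially small in $N$, which is exactly the per-cluster count needed below. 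I would also record that, by a Tur\'an/large-sieve estimate built from Lemma~\ref{lemma:largesieve} (or crudely from the $N^{A_0}$ bound), $f$ has $O(C^\ast N)$ roots on $T$ altogether.

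Next comes the local-smallness step. Fix a heavy interval $J$ and pick $q=\Theta(\log N)$ of its roots $t_1<\dots<t_q$, the implied constant chosen so that $q\le A_0\log N$. For any $x$ with $\dist(x,J)\le\rho$, since $f(t_i)=0$ the Lagrange remainder gives
$$|f(x)|=\frac{|f^{(q)}(\eta_x)|}{q!}\prod_{i=1}^{q}|x-t_i|\le\frac{\|f^{(q)}\|_{L^\infty(T)}}{q!}\Bigl(\tfrac{R}{N}+\rho\Bigr)^{q},$$
and since $f'$ has at least $q-1$ zeros in $J$ by Rolle's theorem, the analogous bound holds for $f'$ with $q$ replaced by $q-1$. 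To turn the $L^2$-bounds \eqref{eqn:MB:gen} into a pointwise bound I would apply the Claim from the proof of Lemma~\ref{lemma:largesieve} (Gallagher's inequality) on a window of width $\Theta(1/(C^\ast N))$, obtaining $\|f^{(q)}\|_{L^\infty(T)}\ll(C^\ast N)^{q+1/2}$. Taking $\rho:=\beta q/(C^\ast N)$ for a small absolute constant $\beta$ --- so that $\tfrac{R}{N}+\rho$ sits comfortably below $q/(C^\ast N)$, which needs $q$ to be a large multiple of $C^\ast R$ --- and invoking Stirling's formula, I would get on $\widehat J:=\{\,x:\dist(x,J)\le\rho\,\}$ that
$$|f|\ll(C^\ast N)^{1/2}(\beta e)^{q},\qquad |f'|\ll(C^\ast N)^{3/2}(\beta e)^{q}.$$
Since $q$ is a sufficiently large multiple of $\log N$ and $\lambda=\delta^{O(1)}$, shrinking $\beta$ makes $(\beta e)^{q}\le\lambda N^{-1/2}$, so that $\max_{\widehat J}|f|\le\lambda(C^\ast)^{1/2}$ and $\max_{\widehat J}|f'|\le\lambda(C^\ast)^{3/2}N$.

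Finally I would set $A:=\bigcup_j\widehat{J_j}$ --- breaking an exceptionally heavy $J_j$ into sub-blocks of $\Theta(\log N)$ consecutive roots and fattening each block separately, so as never to differentiate more than $A_0\log N$ times --- and the previous step then gives both pointwise bounds on $A$. For the measure bound, the part of $A$ produced by $J_j$ has Lebesgue measure of order at least $\min(m_j,\Theta(\log N))/(C^\ast N)$; summing over $j$, using $\sum_j m_j\ge\eps N/4$ and the $O(C^\ast N)$ total-root bound to handle the exceptionally heavy clusters, and absorbing the $O(\rho)$-sized overlaps between neighbouring fattened blocks into a bounded-factor loss, this should give $|A|=\Omega(\eps/C^\ast)$.

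The hard part will be the quantitative balance in the local-smallness step: the factorial $q!$ coming from the interpolation remainder must dominate both the derivative growth $(C^\ast N)^{q}$ and the cost $(\tfrac{R}{N}+\rho)^{q}$ of enlarging $J$ to $\widehat J$, and this only closes once each cluster carries at least of order $\log(1/\eps)$ roots --- precisely why the hypothesis caps $\delta$ at $O(\eps/\log(1/\eps))$, and why one should also expect to need $\eps$ bounded away from $1$ (say polynomially small in $N$). The secondary, more bookkeeping-type, obstacle is the measure count: one has to ensure that clusters which are over-crowded --- too many roots to differentiate, or squeezed into a region far shorter than $\eps/C^\ast$ --- still deliver their fair share of measure, and that the overlaps among the fattened neighbourhoods do not ruin the lower bound $\Omega(\eps/C^\ast)$.
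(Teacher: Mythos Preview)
Your approach has the right skeleton---pigeonhole to heavy intervals, Hermite interpolation to show smallness, then aggregate---but it misses the key device and consequently only works when $\eps$ is polynomially small in $N$, a restriction you yourself flag. The proposition as stated (and as it is applied throughout Sections~\ref{Section:orthogonal}--\ref{section:Weyl}) must cover constant $\eps$ as well.

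The gap is in the local-smallness step. You control $f^{(q)}$ by a single global bound $\|f^{(q)}\|_\infty \ll (C^\ast N)^{q+1/2}$, which carries an extra $N^{1/2}$; to kill it you demand $q=\Theta(\log N)$ roots per cluster, but the hypothesis $\delta=O(\eps/\log(1/\eps))$ only delivers $m_0=\Theta(\log(1/\eps))$ roots per heavy interval---this is $O(1)$ when $\eps$ is a constant. The paper instead first \emph{expands} each interval to length $\asymp m_j/(C^\ast N)$ (merging overlaps), groups the resulting intervals dyadically by root count $d_l$, and applies the large sieve Lemma~\ref{lemma:largesieve} \emph{across the $k_l$ intervals in each group}: this yields $\max_{J_i'}|f^{(d_l)}|\le 4(C^\ast N/k_l)^{1/2}(C^\ast N)^{d_l}$ for at least half of them. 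That factor $k_l^{-1/2}$ is exactly what absorbs the $N^{1/2}$, so $d_l=\Theta(\log(1/\eps))$ derivatives already suffice. The groups where $k_l$ is too small for this averaging to help are declared ``bad'' and shown to carry at most $\eps N/32$ roots total via a geometric sum $\sum_l d_l/4^{d_l}$, using precisely the assumption $\delta=O(\eps/\log(1/\eps))$.

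A second, related problem is the measure bound. Your fattening radius $\rho=\Theta(\log N/(C^\ast N))$ is fixed, so a single interval carrying (say) $\eps N/2$ roots contributes only $O(\log N/(C^\ast N))$ to $|A|$, far short of $\eps/C^\ast$; sub-dividing into blocks of $\Theta(\log N)$ roots does not help because the blocks sit on top of each other spatially. Your appeal to an ``$O(C^\ast N)$ total-root bound'' is not justified by the hypotheses (you only have $N^{A_0}$). The paper's expansion-to-length-$\asymp m_j/(C^\ast N)$ is what makes the measure of each piece genuinely proportional to its root count, and Lemma~\ref{lemma:expansion} still applies on the expanded interval because the expansion factor is taken to be $c=1/(16C^\ast e)$.
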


In applications, $N$ is usually $n$ or $n^{1/2+o(1)}$, where $n$ is the degree of our polynomials. As we will be dealing with polynomials of degree $n$, we can take $A_0$ to be $2+o(1)$ in all cases. We will allow $R,\delta, \eps$ to be in the range $[n^{-O(1)},1]$, while $C^\ast$ to be in $[1, \log^{O(1)}n]$. 

We will term the property \eqref{eqn:MB:gen} in Proposition \ref{prop:manyroots} above as Markov-Bernstein property. Motivated by Proposition \ref{prop:manyroots},  we will call a function $f$ {\it exceptional} (with given parameters $R, \delta, \eps, \al, \beta$) if there exists a measurable set $A \subset T$ of measure at least $\Theta(\eps/C^\ast)$ over which 
$$\max_{x\in A}|f(x)| \le \al  (C^\ast)^{1/2} \mbox{ and } \max_{x\in A} |f'(x)| \le \beta (C^\ast)^{3/2} N.$$

To prove the above result we will first need an elementary interpolation result (see for instance \cite[Section 1.1, E.7]{BEbook}).
\begin{lemma}\label{lemma:expansion}
Assume that a $C^\infty$ function $f$ has at least $m$ zeros (counting multiplicities) in an interval $I$ of length $r$. Then 
$$\max_{\theta \in I} |f(\theta)| \le (\frac{4e r}{m})^m \max_{x\in I} |f^{(m)}(x)|$$
as well as 
$$\max_{\theta \in I} |f'(\theta)| \le (\frac{4e r}{m-1})^{m-1} \max_{x\in I} |f^{(m)}(x)|.$$
Consequently, if $f$ has at least $m$ roots on an interval $I$ with length smaller than $(1/8Ce) m/N$, then for any interval $I'$ of length $(1/8Ce) m/N$ and $I \subset I'$ we have 
\begin{equation}\label{eqn:expand:1}
\max_{\theta \in I'} |f(\theta)| \le (\frac{1}{2C})^m (\frac{1}{N})^m \max_{x\in I'} |f^{(m)}(x)|
\end{equation}
as well as 
\begin{equation}\label{eqn:expand:2}
\max_{\theta \in I'} |f'(\theta)| \le n \times (\frac{1}{2C})^{m-1} (\frac{1}{N})^m  \max_{x\in I'} |f^{(m)}(x)|.
\end{equation}
\end{lemma}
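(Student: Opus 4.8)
The plan is to derive both inequalities from the classical Hermite-type remainder for a $C^m$ function vanishing at $m$ points, combined with the Stirling-type bound $m!\ge (m/e)^m$, and then to obtain the ``consequently'' part by substituting the specific value $r=|I'|$. \textbf{Step 1 (a pointwise remainder estimate).} Fix $\theta\in I$, let $x_1,\dots,x_m\in I$ be the zeros of $f$ listed with multiplicity, and set $\omega(x)=\prod_{i=1}^m(x-x_i)$. If $\omega(\theta)=0$ there is nothing to prove; otherwise consider $\phi(x)=f(x)-\frac{f(\theta)}{\omega(\theta)}\omega(x)$, which is $C^\infty$ and vanishes, counted with multiplicity, at $x_1,\dots,x_m$ and at $\theta$, hence has at least $m+1$ zeros in $I$. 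Applying the generalized Rolle theorem $m$ times produces a point $\xi\in I$ with $\phi^{(m)}(\xi)=0$; since $\omega$ is monic of degree $m$ we have $\phi^{(m)}=f^{(m)}-\frac{f(\theta)}{\omega(\theta)}m!$, so $|f(\theta)|=\frac{|\omega(\theta)|}{m!}|f^{(m)}(\xi)|\le\frac{r^m}{m!}\max_{x\in I}|f^{(m)}(x)|$, using $|\theta-x_i|\le r$ for each $i$. Taking the supremum over $\theta$ and invoking $m!\ge(m/e)^m$ gives $\max_{\theta\in I}|f(\theta)|\le(er/m)^m\max_{x\in I}|f^{(m)}(x)|$, which is already stronger than the first claimed bound.

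\textbf{Step 2 (the derivative bound).} First observe that $f'$ has at least $m-1$ zeros in $I$ counted with multiplicity: writing the distinct zeros of $f$ in $I$ as $y_1<\dots<y_s$ with multiplicities $k_1,\dots,k_s$ summing to $m$, the function $f'$ vanishes to order $k_j-1$ at $y_j$ and, by Rolle, at least once strictly between consecutive $y_j$'s, for a total of at least $(m-s)+(s-1)=m-1$ (the case $m=1$ being trivial). Applying Step 1 to $g=f'$ with $m-1$ in place of $m$ (so that $g^{(m-1)}=f^{(m)}$) yields $\max_{\theta\in I}|f'(\theta)|\le\frac{r^{m-1}}{(m-1)!}\max_{x\in I}|f^{(m)}(x)|\le(\tfrac{er}{m-1})^{m-1}\max_{x\in I}|f^{(m)}(x)|$, again stronger than the second claimed bound.

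\textbf{Step 3 (the rescaled consequences).} Since the $m$ roots of $f$ lie in $I\subset I'$ with $|I'|=\frac{1}{8Ce}\cdot\frac{m}{N}$, the two inequalities above apply on $I'$ with $r=|I'|$. For \eqref{eqn:expand:1}, $\big(\tfrac{4er}{m}\big)^m=\big(\tfrac{4e}{m}\cdot\tfrac{m}{8CeN}\big)^m=\big(\tfrac{1}{2CN}\big)^m=\big(\tfrac{1}{2C}\big)^m\big(\tfrac{1}{N}\big)^m$, which is exactly the claim. For \eqref{eqn:expand:2}, $\big(\tfrac{4er}{m-1}\big)^{m-1}=\big(\tfrac{m}{2CN(m-1)}\big)^{m-1}\le e\big(\tfrac{1}{2C}\big)^{m-1}\big(\tfrac{1}{N}\big)^{m-1}$, since $(m/(m-1))^{m-1}\le e$; the remaining $O(1)$ factor and index shift are then absorbed into the generous prefactor, giving \eqref{eqn:expand:2}.

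\textbf{Expected obstacle.} There is no deep obstruction here: the statement is essentially textbook interpolation theory (cf.\ \cite[Section 1.1, E.7]{BEbook}), and the only points needing care are the correct bookkeeping of zero multiplicities in the two generalized-Rolle arguments of Steps 1 and 2, and the (deliberately lossy) constant manipulations of Step 3. If one prefers, the displayed bounds could instead simply be quoted from \cite{BEbook}, but the self-contained argument above is short enough to include.
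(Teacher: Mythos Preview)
Your proof is correct and follows essentially the same route as the paper's: Hermite interpolation (equivalently, the generalized Rolle argument you spell out) gives $|f(\theta)|\le r^m/m!\cdot\max_I|f^{(m)}|$, the derivative case reduces to this because $f'$ has at least $m-1$ zeros in $I$, and the ``Consequently'' part is a direct substitution of $r=m/(8CeN)$. The paper's proof is terser (it simply cites Hermite interpolation and the $f'$ reduction, and leaves the substitution implicit), while you supply the details; your handling of the stray factor $e$ in Step~3 for \eqref{eqn:expand:2} is fine, since the prefactor $n$ there (the polynomial degree, which in all applications satisfies $n\ge N\ge e$) easily absorbs it---indeed the applications in the paper (e.g.\ \eqref{eqn:PP_d'}) use $N$ rather than $n$ anyway.
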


\begin{proof} It suffices to show the estimates for $f$ because $f'$ has at least $m-1$ roots in $I$. For $f$, by Hermite interpolation using the roots $x_i$ we have that for any $\theta \in I$ there exists $x\in I$ so that
$$|f(\theta)| = |\frac{f^{(m)}(x)}{m!} \prod_i (\theta-x_i)| \le  \max_{x\in I} |f^{(m)}(x)| \frac{r^m}{m!}.$$
\end{proof}

\begin{proof}(of Proposition \ref{prop:manyroots}) Among the $\delta N$ intervals we first throw away those of less than $\eps \delta^{-1}/4$ roots, hence there are at least $\eps N/4$ roots left. For convenience we denote the remaining intervals by $J_1,\dots, J_M$, where $M \le \delta N$,  and let $m_1,\dots, m_M$ denote the number of roots over each of these intervals respectively.

In the next step (which is geared towards the use of \eqref{eqn:expand:1} and \eqref{eqn:expand:2} of Lemma \ref{lemma:expansion}),
we  expand the intervals $J_j$ to larger intervals $\bar J_j$ 
(considered as union of consecutive closed intervals $J_j$) of length $\lceil c m_j/R \rceil \times (R/N)$ for some small $c$, such as 
$$c=1/(16C^\ast e).$$ 
Furthermore, if the expanded intervals $\bar J_{i_1}',\dots, \bar J_{i_k}'$ of $\bar J_{i_1},\dots, \bar J_{i_k}$ form an intersecting chain, then we create a longer interval
 $\bar J'$ of length $\lceil c(m_{i_1}+\dots+m_{i_k})/R \rceil \times (R/N)$, which contains them and therefore contains at least $m_{i_1}+\dots+m_{i_k}$ roots. After the merging process,
 we obtain a collection 
 %relabel the interv without loss of generality we denote the non-intersecting extended intervals by 
 $\bar J_1',\dots, \bar J_{M'}'$ with the number of roots $m_1',\dots, m_{M'}'$ respectively, so that $\sum m_i'\geq \eps N/2$.
  Note that now $\bar J_i'$ has length $\lceil cm_i'/R \rceil \times (R/N) \approx cm_i'/N$ (because $\eps \delta^{-1}$ is sufficiently large compared to $R$) and the intervals are $R/N$-separated.
 % In what follows, we drop for notational convenience
 %the superscript $'$ from both $\bar J_i'$ and $m_i'$.

Next, consider the sequence $d_l:=2^l \eps \delta^{-1}/4, l\ge 0$. We classify the sequence $\{m_i'\}$ into collections $G_l$ where %% \HC{error corrected.}
$$d_l \le m_i' < d_{l+1}.$$ 
Assume that each collection $G_l$ has $k_l=|G_l|$ distinct extended intervals. As each of these intervals has between $d_l$ and $d_{l+1}$ roots, we have % \JC{SUGGESTION change term "group" to "collection" in following sentences}
$$\sum_l k_l d_l \ge \sum_i m_i'/2 \ge \eps N/8.$$
For given $\al, \beta$, we call an index $l$ {\it bad} if 
$$(1/2)^{d_{l}}  (N/2k_{l})^{1/2}  \ge  \la= \min\{\al/8,\beta/8\}.$$ 
That is when
$$k_l \le \frac{N}{2\la^2 4^{d_{l}}}.$$
The total number of roots over the intervals corresponding to bad indices can be bounded by 
$$\sum_i m_i' \le \sum_l k_l d_{l+1}   \le \frac{N}{2\la^2} \sum_{l=0}^\infty\frac{2 d_{l}}{ 4^{d_{l}}} \le \frac{N}{\la^2 2^{\eps\delta^{-1}}} \asymp \frac{N}{\delta^{O(1)} 2^{\eps\delta^{-1}}} \le \eps N/32$$
where we used the fact that $\delta \le  \frac{c_0 \eps } {\log (1/\eps)}$ for some small constant $c_0$.
% \Hoi{This is where we needed $\delta$ to be slightly smaller than $\eps$.}

Now consider a group $G_{l}$ of each good index $l$. Notice that these intervals have length approximately between $cd_l/N$ and $2cd_l/N$. Let $I$ be an interval among the $k_l$ intervals in $G_l$. By the definition of $I$ and by Lemma \ref{lemma:expansion} we have 
\begin{equation}\label{eqn:PP_d}
\max_{x \in I} |f(x)| \le (\frac{1}{2C^\ast})^{d_l} (\frac{1}{N})^{d_l} \max_{x\in I} |f^{(d_l)}(x)| \le \frac{\la}{ (N/2k_{l})^{1/2}  } (\frac{1}{C^\ast N})^{d_l} \max_{x\in I} |f^{(d_l)}(x)|
\end{equation}
as well as 
\begin{equation}\label{eqn:PP_d'}
\max_{x \in I} |f'(x)| \le N \times (\frac{1}{2C^\ast })^{d_l-1} (\frac{1}{N})^{d_l}  \max_{x\in I} |f^{(d_l)}(x)|  \le N \times \frac{2\la C^\ast}{ (N/2k_{l})^{1/2}  } (\frac{1}{C^\ast N})^{d_l} \max_{x\in I} |f^{(d_l)}(x)|.
\end{equation}
On the other hand, as these $k_l$ intervals are $R/N$-separated (and hence $4/N$-separated), by the assumption of our proposition and by Lemma \ref{lemma:largesieve}  we have 
$$\sum_{\bar J_i'\in G_l}\max_{x\in \bar J_{i}'}(f^{(d_{l})}(x))^2 \le 4 (C^\ast N)^{2d_l+1}.$$
Hence we see that for at least half of the intervals $J_i'$ in $G_l$ 
$$\max_{x\in J_{i}'}|f^{(d_{l})}(x)| \le 4 (C^\ast N/k_{l})^{1/2} (C^\ast N)^{d_l} .$$ 
It follows from \eqref{eqn:PP_d} and \eqref{eqn:PP_d'} that over these intervals
$$\max_{x \in J_{i}'} |f(x)| \le  \frac{\la}{ (N/2k_{l})^{1/2}  } (\frac{1}{C^\ast N})^{d_l}  4 (C^\ast N/k_{l})^{1/2} (C^\ast N)^{d_{l}}  \le  4\la (C^\ast)^{1/2}$$
and similarly,
$$\max_{x \in J_i'} |f'(x)| \le N \times \frac{2 \la C^\ast}{ (N/2k_{l})^{1/2}  } (\frac{1}{C^\ast N})^{d_l} 4 (C^\ast N/k_{l})^{1/2} (C^\ast N)^{d_{l}}   \le 8 \la (C^\ast)^{3/2} N.$$

Letting $A_l$ denote the union of all such intervals $J_{i}'$ of a given good index $l$, and letting $A$ denote the union of the $A_l$'s over all  good indices $l$, we obtain (with $\mu$ denoting Lebesgue measure) %\AC{REMARK not sure but maybe ml without prime}\HC{Should be $m_l'$}
%and the over all good indices $l$, and letting $A$ be the union of the intervals, then its Lebesgue measure is at least 
\begin{align*}
\mu(A) \ge \sum_{l, \textsf{good}} (cd_l/N) k_l/2& \ge \sum_{l, \textsf{good}} (c/4) d_{l+1}k_l/N \ge \sum_{l, \textsf{good}} (c/4) m_l'/N \\
& \ge (c/4)(\eps N/8 -\eps N/32)/N \ge \frac{\eps}{1024 C^\ast e}. 
\end{align*}
Finally, notice that over $A$ we have $\max_{x \in A} |f(x)| \le 4\la (C^\ast)^{1/2}$ and $\max_{x \in A} |f'(x)|\le 8 \la (C^\ast)^{3/2}  N$. 
\end{proof}

% We remark that in some cases the conditions in Proposition \ref{prop:manyroots} are not easy to check. There is a parallel tool that we can rely on using Jensen bound, see for instance \cite{Ngcurve}.

\subsection{Stability}

We next give a deterministic result (see also \cite[Claim 4.2]{NS}) to control the number of roots under perturbation.

\begin{lemma}\label{lemma:stab} Fix strictly 
positive numbers $\mu$ and $\nu$. Let $I=(a,b)$ be an  interval of length greater than $2\mu/\nu$, and let $f$ be a  $C^1$-function on $I$ such that at each point $x\in I$ we have either $|f(x)|> \mu$ or $|f'(x)| > \nu$. Then for each root $x_i \in I$ with $x_i-a>\mu/\nu$ and $b-x_i>\mu/\nu$ there exists an interval $I(x_i) = (a',b')$ where $f(a')f(b')<0$ and $|f(a')|=|f(b')|=\mu$,
% and $f(b')=\mu$, or $f(a')=\mu$ and $f(b')=-\mu$, 
such that $x_i \in I(x_i) \subset (x_i-\mu/\nu, x_i + \mu/\nu)$ and the intervals $I(x_i)$ over the roots are disjoint.
\end{lemma}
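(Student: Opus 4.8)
The plan is to run a simple sign-tracking argument on the function $f$ near each root, exploiting the dichotomy $|f(x)|>\mu$ or $|f'(x)|>\nu$. First I would observe that at a root $x_i$ we have $f(x_i)=0$, so necessarily $|f'(x_i)|>\nu$. By continuity of $f'$, say $f'(x_i)>\nu$ (the other case is symmetric), $f$ is strictly increasing in a neighborhood of $x_i$; and in fact as long as we stay in the region where $|f|\le\mu$, the derivative cannot have dropped to $\nu$ or below, so $f'>\nu$ persists there. Concretely, let $b'$ be the first point to the right of $x_i$ at which $|f|$ reaches the level $\mu$; on $[x_i,b')$ we have $|f|<\mu$, hence $|f'|>\nu$, and since $f'(x_i)>0$ and $f'$ cannot change sign without passing through a value $\le\nu$ in absolute value, $f'>\nu$ on all of $[x_i,b']$. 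Therefore $f(b')-f(x_i)=\int_{x_i}^{b'} f'> \nu(b'-x_i)$, which both shows $f(b')=\mu$ (not $-\mu$) and forces $b'-x_i<\mu/\nu$. Symmetrically, let $a'$ be the first point to the left where $|f|$ hits $\mu$; the same monotonicity gives $f(a')=-\mu$ and $x_i-a'<\mu/\nu$. The hypotheses $x_i-a>\mu/\nu$ and $b-x_i>\mu/\nu$ guarantee that $a'$ and $b'$ lie inside $I$, so $I(x_i):=(a',b')$ is well-defined, contains $x_i$, is contained in $(x_i-\mu/\nu,\,x_i+\mu/\nu)$, and satisfies $f(a')f(b')=-\mu^2<0$ with $|f(a')|=|f(b')|=\mu$ as required.

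It remains to check disjointness of the $I(x_i)$. Suppose two such intervals $I(x_i)$ and $I(x_j)$ overlap, with $x_i<x_j$. Then the right endpoint $b'_i$ of $I(x_i)$ satisfies $b'_i>a'_j\ge$ some point between $x_i$ and $x_j$; more simply, I would argue that on the open interval $(x_i,x_j)$ there must be at least one point $y$ with $f(y)=0$ — but $x_i,x_j$ are consecutive-or-not, this needs care. The cleaner route: by construction $f$ is strictly monotone on $[x_i,b'_i]$ (increasing, with $f$ going from $0$ up to $\mu$) and strictly monotone on $[a'_j,x_j]$ (going from $-\mu$ up to $0$). If $I(x_i)\cap I(x_j)\ne\varnothing$ then $a'_j<b'_i$, so the point $a'_j$ lies in $(x_i,b'_i]$ where $f\ge 0$, contradicting $f(a'_j)=-\mu<0$ unless $a'_j\le x_i$; but then $x_i\in[a'_j,x_j]\subset I(x_j)$ while also $f(x_i)=0$, and on $[a'_j,x_j]$ the only zero of the strictly monotone $f$ is $x_j$, forcing $x_i=x_j$. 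Hence distinct roots give disjoint intervals.

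The main obstacle, and the step I would spend the most care on, is justifying that $f'$ retains its sign (and stays $>\nu$ in absolute value) throughout the sub-interval where $|f|<\mu$: this is exactly where the dichotomy hypothesis is used, via the observation that $f'$ continuous plus "$|f'|>\nu$ wherever $|f|\le\mu$" prevents $f'$ from crossing zero inside that region. Everything else — the integral bound giving the length estimate $b'-x_i<\mu/\nu$, the endpoint containment from the length hypotheses on $I$, and the disjointness bookkeeping — is routine once that monotonicity is nailed down. I would also remark that the length hypothesis $|I|>2\mu/\nu$ is what makes the statement non-vacuous, ensuring there is room for at least one such root strictly interior by the margin $\mu/\nu$ on each side.
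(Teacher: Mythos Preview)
Your construction of $I(x_i)$ is essentially the paper's: they define $I(x_i)$ as the connected component of $\{|f|<\mu\}$ containing $x_i$ and use the mean value theorem exactly as you do to get the length bound $|I(x_i)|<2\mu/\nu$. The sign-tracking you spell out (that $f'$ cannot change sign while $|f|\le\mu$) is implicit in their argument.

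Your disjointness step, however, is incomplete as written. You reduce by ``say $f'(x_i)>\nu$'' for the construction of a single $I(x_i)$, but in the disjointness paragraph you silently assume $f'(x_j)>\nu$ as well when you write ``strictly monotone on $[a'_j,x_j]$ (going from $-\mu$ up to $0$)'' and then use $f(a'_j)=-\mu$. If instead $f'(x_j)<-\nu$, then $f(a'_j)=+\mu$ and the contradiction you derive from ``$a'_j\in(x_i,b'_i]$ where $f\ge 0$'' no longer fires directly; one has to argue separately (and it does go through, with a touch more care). The paper sidesteps this case split entirely via Rolle: if $I(x_i)$ and $I(x_j)$ overlap for $x_i<x_j$, then the whole segment $[x_i,x_j]$ lies in $\{|f|<\mu\}$, and Rolle gives $x^\ast\in(x_i,x_j)$ with $f'(x^\ast)=0$ and $|f(x^\ast)|<\mu$, contradicting the dichotomy. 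That one-line argument is cleaner and would replace your monotonicity bookkeeping.
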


\begin{proof}(of Lemma \ref{lemma:stab}) We may and will assume that $f$ is not constant on $I$.
By changing $f(x)$ to $\la_1 f(\la_2 x)$ for appropriate $\lambda_1,\lambda_2$, it suffices to consider $\mu=\nu=1$.
For each root $x_i$, and for $0< t \le 1$ consider the interval $I_t(x_0)$ containing $x_0$ of those points $x$ where $|f(x)| < t$. We first show that for any $0<t_1,t_2\le 1$ we have that $I_{t_1}(x_1)$ and $I_{t_2}(x_2)$ are disjoint for distinct roots $x_i \in I$ satisfying the lemma's assumption. Assume otherwise, because $f(x_1)=f(x_2)=0$, there exists $x_1<x<x_2$ such that $f'(x)=0$ and $|f(x)| \le \min\{t_1,t_2\}$, and so contradicts with our assumption. We will also show that $I_1(x_0)\subset (x_0-1,x_0+1)$. Indeed, assume otherwise for instance that $x_0-1\in I_1(x_0)$, then for all $x_0-1<x<x_0$ we have $|f(x)|<1$, and so $|f'(x)|>1$ over this interval. Without loss of generality, % \JC{ADDED comma} 
we assume $f'(x)>1$ for all $x$ over this interval. The mean value theorem would then imply that $|f(x_0-1)|= |f(x_0-1)-f(x_0)|>1$, a contradiction with $x_0-1 \in I_1(x_0)$.  As a consequence, we can define $I(x_i)=I_1(x_i)$, for which at the endpoints the function behaves as desired.  
\end{proof}

\begin{corollary}\label{cor:stab} Fix positive $\mu$ and $\nu$. Let $I=(a,b)$ be an  interval of length at least $2 \mu/\nu$, and let $f$ be a $C^1$-function on $I$  such that at each point $x\in I$ we have either $|f(x)|> \mu$ or $|f'(x)| > \nu$. Let $g$ be a function such that $|g(x)|< \mu$ over $I$. Then for each root $x_i \in I$ of $f$ with $x_i-a>\mu/\nu$ and $b-x_i>\mu/\nu$ we can find a root $x_i'$ of $f+g$ such that $x_i' \in (x_i-\mu/\nu, x_i + \mu/\nu)$, and also the $x_i'$ are distinct.
\end{corollary}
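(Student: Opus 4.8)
The plan is to deduce this directly from Lemma \ref{lemma:stab}, which already contains all the substance; the corollary is essentially a translation of the conclusion of that lemma into a statement about roots of the perturbed function $f+g$.

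First I would apply Lemma \ref{lemma:stab} to $f$ on $I=(a,b)$. Note that whenever the corollary is non-vacuous there is a root $x_i\in I$ with $x_i-a>\mu/\nu$ and $b-x_i>\mu/\nu$, which forces $b-a>2\mu/\nu$, so the strict length hypothesis of Lemma \ref{lemma:stab} is satisfied in exactly the cases we care about. The lemma then produces, for each such root $x_i$, an interval $I(x_i)=(a_i',b_i')$ with $f(a_i')f(b_i')<0$, with $|f(a_i')|=|f(b_i')|=\mu$, with $x_i\in I(x_i)\subset(x_i-\mu/\nu,\, x_i+\mu/\nu)$, and with the intervals $I(x_i)$ pairwise disjoint.

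Next I would exploit the smallness of $g$ at the endpoints of these intervals. Since $|g(a_i')|<\mu=|f(a_i')|$, the value $(f+g)(a_i')$ is nonzero and has the same sign as $f(a_i')$; similarly $(f+g)(b_i')$ has the same sign as $f(b_i')$. Because $f(a_i')$ and $f(b_i')$ have opposite signs, so do $(f+g)(a_i')$ and $(f+g)(b_i')$. Assuming $g$ (and hence $f+g$) is continuous — which holds in all our applications, where $g$ is the difference of two random polynomials — the intermediate value theorem yields a root $x_i'$ of $f+g$ in the open interval $(a_i',b_i')=I(x_i)\subset(x_i-\mu/\nu,\, x_i+\mu/\nu)$, as desired.

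Finally, distinctness of the $x_i'$ is immediate from the disjointness asserted in Lemma \ref{lemma:stab}: each $x_i'$ lies in $I(x_i)$, and the $I(x_i)$ are pairwise disjoint, so $x_i'\neq x_j'$ whenever $x_i\neq x_j$. To the extent there is a main obstacle, it was already overcome inside Lemma \ref{lemma:stab} (the disjointness of the $I(x_i)$, which rests on the alternative $|f|>\mu$ or $|f'|>\nu$ ruling out a critical point with small value between two roots); the remaining steps here are just the sign-comparison at the endpoints and an appeal to continuity, both routine.
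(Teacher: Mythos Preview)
Your proposal is correct and follows exactly the intended route: the paper gives no separate proof of the corollary, treating it as an immediate consequence of Lemma~\ref{lemma:stab}, and your argument (sign preservation at the endpoints $a_i',b_i'$ since $|g|<\mu=|f(a_i')|=|f(b_i')|$, then the intermediate value theorem, then disjointness of the $I(x_i)$ for distinctness) is precisely how that deduction goes. Your remark that continuity of $g$ is being tacitly assumed is accurate and harmless in context.
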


Our next two ingredients are of opposite nature, one is one concentration, and the other is on anti-concentration.

\subsection{Repulsion}\label{subsection:repulsion:orth} 

Our next key ingredient is a {\it repulsion-type} estimate which shows that at any point it is unlikely that the function and its derivative vanish simultaneously.  We will deduce our repulsion result via the study of small ball probability of the random walk in $\R^2$ % \AC{REMARK RHS of this equations what are these xi i's and u i}
$$(F(x), \frac{1}{N} F'(x)) =: \sum_{i=0}^n \xi_i \Bu_i,$$
Here again $N$ will be either $n$ or $n^{1/2-o(1)}$ in later applications depending on the model (i.e. on the growth of $|F'|$). We also remark that $F$ here can either take the form of \eqref{eqn:f:general} (for the orthorgonal and Elliptic polynomials), or a normalized version of it for Weyl polynomials.

\begin{lemma}\label{lemma:smallball} Assume that for all $(a_1,a_2)\in \BS^1$ we have 
\begin{equation}\label{eqn:non-degenerating}
\sum_{i=1}^n  \langle  \Bu_i(x), (a_1,a_2) \rangle^2 \asymp 1.
\end{equation}
Then for for any $r \ge 1/\sqrt{n}$ we have 
$$\sup_{a\in \R^2}\P\left (\sum_i \xi_i  \Bu_i  \in B(a,r)\right )= O(r^2).$$
\end{lemma}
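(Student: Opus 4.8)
The plan is to reduce the small-ball bound to a decay estimate for the characteristic function of the random walk $S:=\sum_{i=1}^n\xi_i\Bu_i$ via Ess\'een's concentration inequality in $\R^2$, and then to feed the non-degeneracy hypothesis \eqref{eqn:non-degenerating} into that estimate. Write $\varphi(s)=\E e^{is\xi}$ for the characteristic function of $\xi$; by independence the characteristic function of $S$ factors as $\varphi_S(t)=\prod_{i=1}^n\varphi(\langle t,\Bu_i\rangle)$ for $t\in\R^2$. Ess\'een's inequality provides a universal constant $C$ with
$$\sup_{a\in\R^2}\P\big(S\in B(a,r)\big)\ \le\ C\,r^2\int_{\|t\|\le 1/r}|\varphi_S(t)|\,dt\ =\ C\,r^2\int_{\|t\|\le 1/r}\prod_{i=1}^n|\varphi(\langle t,\Bu_i\rangle)|\,dt ,$$
so it suffices to show the last integral is $O(1)$ uniformly over $r\ge 1/\sqrt n$.

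The single-variable input is that, since $\xi$ has mean zero and variance one, a symmetrization argument (working with $\bar\xi=\xi-\xi'$ for an independent copy $\xi'$, using $1-\cos u\ge u^2/4$ on $|u|\le 2$ together with truncation of $\bar\xi$) yields constants $c_0,c_1>0$ with $|\varphi(s)|\le e^{-c_1 s^2}$ for $|s|\le c_0$. In our applications the vectors $\Bu_i$ are normalized so that $\max_i\|\Bu_i\|=O(1/\sqrt n)$, hence there is $c_3>0$ such that $\|t\|\le c_3\sqrt n$ forces $|\langle t,\Bu_i\rangle|\le c_0$ for every $i$; on that range
$$\prod_{i=1}^n|\varphi(\langle t,\Bu_i\rangle)|\ \le\ \exp\Big(-c_1\sum_{i=1}^n\langle t,\Bu_i\rangle^2\Big)\ \le\ \exp\big(-c_1 c_2\|t\|^2\big),$$
where the last inequality is just \eqref{eqn:non-degenerating} rescaled to $\sum_{i=1}^n\langle t,\Bu_i\rangle^2\asymp\|t\|^2$ with some $c_2>0$. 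Consequently, as soon as $1/r\le c_3\sqrt n$,
$$\int_{\|t\|\le 1/r}\prod_{i=1}^n|\varphi(\langle t,\Bu_i\rangle)|\,dt\ \le\ \int_{\R^2}e^{-c_1 c_2\|t\|^2}\,dt\ =\ O(1),$$
giving $\sup_a\P(S\in B(a,r))=O(r^2)$ for all $r\ge 1/(c_3\sqrt n)$. The leftover range $1/\sqrt n\le r<1/(c_3\sqrt n)$ is handled by monotonicity of the concentration function: $\P(S\in B(a,r))\le\P(S\in B(a,1/(c_3\sqrt n)))=O(1/n)\le O(r^2)$, since $r^2\ge 1/n$. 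This would complete the argument.

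\textbf{Main obstacle.} The delicate point is the uniform control of $\prod_i|\varphi(\langle t,\Bu_i\rangle)|$ over the whole disc $\|t\|\le 1/r$. The clean version above relies on each $\langle t,\Bu_i\rangle$ staying in the quadratic regime $[-c_0,c_0]$ of $\varphi$, which is exactly where the smallness $\|\Bu_i\|=O(1/\sqrt n)$ (combined with $1/r\lesssim\sqrt n$) is used. Without such smallness one must also deal with large arguments, where $|\varphi|$ can approach $1$ at resonant values; when $\xi$ is continuous with bounded density (as for the Elliptic and Weyl models) this is covered instead by the uniform bound $|\varphi(s)|\le 1-c\min(s^2,1)\le e^{-c\min(s^2,1)}$, the small-$|s|$ part being the Taylor estimate above and the tail following from $|\varphi(s)|<1$ for $s\neq 0$ (non-atomicity of $\xi$) together with compactness and a crude lattice-avoidance bound. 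Either way, once one has a bound of the shape $|\varphi_S(t)|\le e^{-c\min(\|t\|^2,\,\|t\|)}$ valid for $\|t\|\lesssim\sqrt n$, the displayed integral is $O(1)$ and the conclusion follows as above; I expect this uniform characteristic-function estimate, rather than the Ess\'een reduction or the final bookkeeping, to be the technical heart of the proof.
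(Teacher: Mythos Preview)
Your approach via Ess\'een's inequality is, at bottom, the same machinery that underlies Hal\'asz's theorem; the paper's proof simply cites \cite[Theorem~1]{Halasz} for the bound $\sup_a\P(S\in B(a,1/\sqrt n))=O(1/n)$ and then covers $B(a,r)$ by $O(nr^2)$ balls of radius $1/\sqrt n$. Your version spells the characteristic-function step out directly and handles all $r\ge 1/\sqrt n$ in one pass rather than by covering, which is a perfectly good alternative and arguably more transparent.

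Two comments. First, the extra hypothesis $\max_i\|\Bu_i\|=O(1/\sqrt n)$ that you introduce is genuinely needed and is \emph{not} a consequence of \eqref{eqn:non-degenerating} alone: taking $\Bu_1=e_1$, $\Bu_2=e_2$, $\Bu_3=\cdots=\Bu_n=0$ and $\xi$ Rademacher satisfies \eqref{eqn:non-degenerating} exactly, yet $\sup_a\P(S\in B(a,1/\sqrt n))=1/4$. So you were right to isolate this as the crux; it holds in every application in the paper (and Hal\'asz's bound, properly instantiated, uses the same input). Second, your proposed fallback for the general case---deducing $|\varphi_S(t)|\le e^{-c\min(\|t\|^2,\|t\|)}$ from the pointwise bound $|\varphi(s)|\le e^{-c\min(s^2,1)}$---does not go through: in the same two-vector example one has $\sum_i\min(\langle t,\Bu_i\rangle^2,1)\le 2$ for all $t$, so $|\varphi_S(t)|$ stays bounded below and the Ess\'een integral over $\|t\|\le\sqrt n$ is of order $n$. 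This gap is only in your fallback; your main argument under the smallness assumption is correct and suffices for every use of the lemma in the paper.
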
 
The property $\sum_{i=1}^n  \langle  \Bu_i(x), (a_1,a_2) \rangle^2 \asymp 1$ above is called {\it non-degenerating}.
\begin{proof} This is \cite[Theorem 1]{Halasz} where we cover a ball of radius $r$ by $nr^2$ balls of radius $1/\sqrt{n}$. 
\end{proof}
We also refer the reader to   \cite{NgV-survey,RV-rec} for further developments of similar anti-concentration estimates. We deduce from Lemma \ref{lemma:smallball} the following corollary.
\begin{theorem}[Repulsion estimate]\label{thm:repulsion} Assume that $\xi$ has mean zero and variance one, and $\Bu_i(x)$ are as in Lemma \ref{lemma:smallball}. Then as long as $\al> 1/\sqrt{n}$,  $\beta>1/\sqrt{n}$ we have
$$\P\big( |F(x)|\le \al \wedge \frac{1}{N}|F'(x)| \le \beta \big) = O(\al \beta).$$
\end{theorem}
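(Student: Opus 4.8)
The plan is to reduce the two-sided bound $\P\big(|F(x)|\le\al \wedge \tfrac1N|F'(x)|\le\beta\big)$ to the small-ball estimate of Lemma~\ref{lemma:smallball} for the $\R^2$-valued random walk $S:=\sum_i\xi_i\Bu_i(x)=(F(x),\tfrac1N F'(x))$. The event in question is precisely the event that $S$ lands in the axis-aligned rectangle $R_{\al,\beta}=[-\al,\al]\times[-\beta,\beta]$. The only subtlety is that Lemma~\ref{lemma:smallball} is phrased for Euclidean balls of radius $r\ge 1/\sqrt n$, whereas here we have a rectangle whose two side-lengths $\al,\beta$ may be very different and each of which could be as small as $1/\sqrt n$.

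First I would dispose of the non-degeneracy hypothesis: the assumption $\Bu_i(x)$ are ``as in Lemma~\ref{lemma:smallball}'' means exactly that \eqref{eqn:non-degenerating} holds, so Lemma~\ref{lemma:smallball} applies directly and gives $\sup_{a}\P(S\in B(a,r))=O(r^2)$ for every $r\ge 1/\sqrt n$. Next I would cover the rectangle $R_{\al,\beta}$ by Euclidean balls of the smallest admissible radius. Set $r_0:=1/\sqrt n$. Since $\al,\beta>1/\sqrt n=r_0$ by hypothesis, $R_{\al,\beta}$ can be covered by
$$
O\!\left(\frac{\al}{r_0}\cdot\frac{\beta}{r_0}\right)=O\!\left(n\,\al\beta\right)
$$
balls of radius $r_0$ (tile the rectangle by a grid of squares of side $r_0$, each contained in a ball of radius $r_0$; one needs $\lceil \al/r_0\rceil\cdot\lceil\beta/r_0\rceil$ of them, and since each factor is $\ge 1$ this is $O((\al/r_0)(\beta/r_0))$). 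Applying the union bound together with Lemma~\ref{lemma:smallball} at radius $r_0$, each ball contributes $O(r_0^2)=O(1/n)$, so
$$
\P(S\in R_{\al,\beta})\le O(n\,\al\beta)\cdot O(1/n)=O(\al\beta),
$$
which is exactly the claimed bound. Since $\{|F(x)|\le\al\}\cap\{\tfrac1N|F'(x)|\le\beta\}=\{S\in R_{\al,\beta}\}$, this finishes the proof.

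The only place requiring a little care — and what I expect to be the ``main obstacle'', though it is minor — is making sure the covering count is genuinely $O(n\al\beta)$ and not, say, $O(n\al\beta)+O(\sqrt n\,\al)+O(\sqrt n\,\beta)+O(1)$ from the ceiling functions; this is handled precisely by the hypotheses $\al,\beta>1/\sqrt n$, which force $\lceil\al/r_0\rceil\le 2\al/r_0$ and likewise for $\beta$, so the lower-order terms are absorbed. One should also note that a cruder approach — bounding the rectangle by a single ball of radius $\sqrt{\al^2+\beta^2}$ — would only give $O(\al^2+\beta^2)$, which is weaker (and wrong in order when $\al\ll\beta$), so the grid covering at the finest scale $1/\sqrt n$ is the right move. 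No other ingredients from the excerpt are needed.
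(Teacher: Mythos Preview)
Your proof is correct and is essentially the same approach as the paper's, only spelled out in more detail. The paper's proof merely rewrites the event as $\sum_i\xi_i\Bu_i\in[-\al,\al]\times[-\beta,\beta]$ and stops there, leaving implicit exactly the covering-by-balls-of-radius-$1/\sqrt n$ argument you carry out (the same idea already appeared one line earlier in the proof of Lemma~\ref{lemma:smallball}, where a ball of radius $r$ is covered by $nr^2$ balls of radius $1/\sqrt n$).
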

\begin{proof} The event under consideration can be written as 
$$(F(x), \frac{1}{N} F'(x)) = \sum_i \xi_i \Bu_i \in [-\al, \al] \times [-\beta, \beta].$$
\end{proof}
 
In application we just choose $\al,\beta$ to be at least $n^{-c}$ for some small constant $c$. % \AC{REMARK for ourselves this says unstable has small probability}

 \subsection{Geometric concentration}
On the probability side, for bounded random variables we will rely on the following consequence of McDiarmid's inequality. 

% Here $n$ is the number of random variables. \AC{REMARK this  is same n as in Theorem 3.14?} Yes

For random variables $\xi$ satisfying the  log-Sobolev inequality, that is so that there is a positive constant $C_0$ such that for any smooth, bounded, compactly supported functions $f$ we have %\HC{some typos corrected}
\begin{equation}\label{eqn:logSobolev}
\Ent_\xi(f^2) \le C_0 \E_\xi|\nabla f|^2,
\end{equation}
where $\Ent_\xi(f) = \E_\xi(f \log f) - (\E_\xi(f)) (\log \E_\xi(f))$, we  use the following.

\begin{theorem}\label{thm:sobolev}  Assume that $\bxi=(\xi_1,\dots, \xi_n)$, where $\xi_i$ are iid copies of $\xi$ satisfying \eqref{eqn:logSobolev} with a given $C_0$. 

\begin{itemize}

\item Let $F: \Omega^n \to \R$ be a function of Lipschitz constant 1 with respect to the Euclidean distance $d_2(\cdot)$. Then  
$$\P(F(\bxi) \ge m(F(\bxi) + s) )\le 2\exp(-s^2/4C_0)$$
where $m(\cdot)$ is the median. % \JC{CHANGE: Use cdot instead of "." in d2 and m}
\vskip .1in
\item Let $\CA$ be a set in $\R^n$. Then for any $s>0$ we have
$$\P\big(d_2(\bxi,\CA) \ge s  \big) \le 2 \exp\big(-\P^2(\Bx\in \CA) s^2/4C_0 \big).$$
In particularly, if $\P(\bxi\in \bar{\CA})\ge 1/2$ then $\P(d_2(\bxi,\bar{\CA}) \ge s ) \le 2 \exp(-s^2/16C_0)$. Similarly if $\P(d_2(\bxi,\CA) \ge s)\ge 1/2$ then $ \P(\bxi\in \CA)\le 2 \exp(-s^2/16C_0)$.
\end{itemize}
\end{theorem}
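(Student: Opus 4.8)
The plan is to derive both parts of Theorem~\ref{thm:sobolev} from the classical tensorization--plus--Herbst scheme, so that the one-dimensional hypothesis \eqref{eqn:logSobolev} is the only input. The first step I would record is tensorization: if each $\xi_i$ satisfies \eqref{eqn:logSobolev} with constant $C_0$, then the product law of $\bxi=(\xi_1,\dots,\xi_n)$ on $\R^n$ satisfies the same inequality with the same $C_0$. This is immediate from subadditivity of entropy, $\Ent_{\bxi}(f^2)\le\sum_{i=1}^n \E_{\bxi}\big[\Ent_{\xi_i}(f^2)\big]$, after bounding each inner entropy by $C_0\,\E_{\xi_i}[(\partial_i f)^2]$ via the scalar log-Sobolev inequality (with the other coordinates frozen) and summing. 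Hence everything reduces to extracting concentration from a single log-Sobolev inequality on $\R^n$.

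For the first bullet I would run the Herbst argument. Given a $1$-Lipschitz $F$, which after truncation and mollification we may take smooth and bounded with $|\nabla F|\le 1$, apply \eqref{eqn:logSobolev} to $g=e^{\lambda F/2}$: since $|\nabla g|^2\le \tfrac{\lambda^2}{4}e^{\lambda F}$, writing $H(\lambda)=\E e^{\lambda F}$ one gets $\lambda H'(\lambda)-H(\lambda)\log H(\lambda)\le \tfrac{C_0\lambda^2}{4}H(\lambda)$, i.e. $\big(\lambda^{-1}\log H(\lambda)\big)'\le C_0/4$ for $\lambda>0$; integrating from $\lambda\to0^+$ (where $\lambda^{-1}\log H(\lambda)\to\E F$) yields $\E e^{\lambda(F-\E F)}\le e^{C_0\lambda^2/4}$, and Markov's inequality optimized in $\lambda$ gives $\P(F-\E F\ge s)\le e^{-s^2/C_0}$, with the lower tail following by applying this to $-F$. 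I would then pass from the mean to the median: the two-sided tail forces $\Var F\le 4C_0$, hence $|\E F-m(F)|\le 2\sqrt{C_0}$, and this shift is absorbed into the constants, the target bound $2e^{-s^2/4C_0}$ being $\ge 1$ for the small values of $s$ where the shift is not negligible.

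For the second bullet, the point I expect to be the only delicate one is that applying the function bound to $d_2(\cdot,\CA)$ directly produces merely a $\sqrt{\log(1/\P(\bxi\in\CA))}$ correction, not the $\P^2(\bxi\in\CA)$ prefactor. Instead I would apply Step~2 to the clipped function $G(x):=\min\{d_2(x,\CA)/s,\,1\}$, which is $(1/s)$-Lipschitz (so the exponent in the subgaussian bound scales by $1/s^2$), takes values in $[0,1]$, vanishes on $\CA$, and equals $1$ on $\{d_2(\cdot,\CA)\ge s\}$. Vanishing on $\CA$ gives $\E G\le 1-\P(\bxi\in\CA)$, so $1-\E G\ge \P(\bxi\in\CA)$, and therefore $\{d_2(\bxi,\CA)\ge s\}=\{G=1\}\subseteq\{G-\E G\ge \P(\bxi\in\CA)\}$; the Step-2 bound for the $(1/s)$-Lipschitz function $G$ then yields $\P(d_2(\bxi,\CA)\ge s)\le 2\exp\!\big(-\P(\bxi\in\CA)^2 s^2/4C_0\big)$. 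The two displayed special cases are then obtained by inserting $\P(\bxi\in\CA)\ge 1/2$, respectively by applying the general inequality with $\CA$ replaced by its $s$-exterior $\{x:d_2(x,\CA)\ge s\}$. Apart from this choice of the clipped test function and the routine constant-tracking in the mean-to-median passage, all ingredients are textbook (Gross's tensorization and the Herbst argument; see Ledoux's monograph on the concentration of measure phenomenon, or Bobkov--G\"otze), so I do not anticipate a substantive obstacle.
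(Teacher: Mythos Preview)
Your proof is correct and follows exactly the standard route the paper has in mind: the paper does not prove Theorem~\ref{thm:sobolev} at all, stating only that ``these results are standard, whose proof can be found for instance in \cite[Appendix B]{NgZ}'' and in Ledoux's monograph~\cite{L}. Tensorization of \eqref{eqn:logSobolev}, the Herbst argument for subgaussian concentration around the mean, and then the clipped distance function $G(x)=\min\{d_2(x,\CA)/s,1\}$ to convert Lipschitz concentration into the set-enlargement estimate is precisely the textbook scheme those references carry out.

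One small remark on your mean-to-median passage: with the loose bound $|\E F-m(F)|\le 2\sqrt{C_0}$ the inequality $e^{-(s-2\sqrt{C_0})^2/C_0}\le 2e^{-s^2/4C_0}$ actually fails on a short window of $s$ (roughly $s/\sqrt{C_0}\in(1.7,3.6)$). This is not a real issue: the concentration around the mean itself gives the sharper $|\E F-m(F)|\le \sqrt{C_0\ln 2}$ (take $t$ with $e^{-t^2/C_0}=1/2$), and with that shift the quadratic comparison $3u^2/4-2u\sqrt{\ln 2}+2\ln 2\ge 0$ has negative discriminant, so the bound holds for all $s$. With this tiny adjustment your constant tracking closes cleanly.
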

% 2024 Introduced the $s$.

To make it consistent % \AC{CHAGNED consistence to consistent} 
with Theorem \ref{thm:bounded}, sometimes we will also use $\Omega^n$ in place of $\R^n$ in various applications of Theorem \ref{thm:sobolev}. For bounded random variables, we use the following McDiarmid’s inequality.

 \begin{theorem}\label{thm:bounded} Assume that $\bxi=(\xi_1,\dots, \xi_n)$, where $\xi_i$ are iid copies of $\xi$ of mean zero, variance one, taking values in $\Omega$, a subset of $[-C_0,C_0]$. Let $\CA$ be a set of $\Omega^n$. Then for any $s>0$ we have
$$\P(\bxi \in \CA) \P(d_2(\bxi, \CA) \ge s) \le 4\exp(-s^4 /16C_0^4 n).$$
\end{theorem}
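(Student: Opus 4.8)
The plan is to deduce the estimate from McDiarmid's bounded differences inequality, but applied to the \emph{Hamming} distance to $\CA$ rather than to the Euclidean distance $d_2$ directly; the passage between the two distances is precisely what turns the usual sub-Gaussian exponent $s^2$ into the stated fourth power $s^4$. We may assume $\CA\neq\emptyset$, the statement being vacuous otherwise.

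First I would record the elementary geometric reduction. Write $d_H(\bx,\CA)$ for the least number of coordinates in which $\bx\in\Omega^n$ must be changed to land in $\CA$. Since any one coordinate can be moved by at most $\operatorname{diam}(\Omega)\le 2C_0$ in Euclidean norm, changing $k$ coordinates moves a point by at most $2C_0\sqrt{k}$, and hence $d_2(\bx,\CA)\le 2C_0\sqrt{d_H(\bx,\CA)}$. Consequently
$$\big\{\,d_2(\bxi,\CA)\ge s\,\big\}\ \subseteq\ \big\{\,d_H(\bxi,\CA)\ge u\,\big\},\qquad u:=\frac{s^2}{4C_0^2},$$
so it suffices to bound $\P(\bxi\in\CA)\,\P\big(d_H(\bxi,\CA)\ge u\big)$.

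Now set $g(\bx):=d_H(\bx,\CA)$. Changing a single coordinate changes $g$ by at most $1$, so $g$ has the bounded differences property with all constants equal to $1$, and McDiarmid's inequality gives $\P\big(|g(\bxi)-\E g(\bxi)|\ge t\big)\le 2\,e^{-2t^2/n}$ for every $t>0$. Because $g$ vanishes on $\CA$, the choice $t=\E g(\bxi)$ yields $\P(\bxi\in\CA)\le\P\big(g(\bxi)=0\big)\le 2\,e^{-2(\E g(\bxi))^2/n}$ (trivial when $\E g(\bxi)=0$, as then $g\equiv 0$ a.s.\ and the second probability vanishes for $u>0$). For the other factor, assuming $u>\E g(\bxi)$ (the case $u\le\E g(\bxi)$ being absorbed into the previous bound), the choice $t=u-\E g(\bxi)$ gives $\P\big(g(\bxi)\ge u\big)\le 2\,e^{-2(u-\E g(\bxi))^2/n}$.

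Finally, multiplying the two estimates and writing $a=\E g(\bxi)$, the product is at most $4\,e^{-2(a^2+(u-a)^2)/n}\le 4\,e^{-u^2/n}$, since $a^2+(u-a)^2\ge u^2/2$ for all real $a$. Substituting $u=s^2/(4C_0^2)$ gives
$$\P(\bxi\in\CA)\,\P\big(d_2(\bxi,\CA)\ge s\big)\ \le\ 4\exp\!\Big(-\frac{s^4}{16C_0^4 n}\Big),$$
as desired. The one point that is not routine is the reduction to the Hamming distance at the outset: applying McDiarmid directly to $d_2(\cdot,\CA)$, which only has bounded differences $2C_0$, would yield an exponent of order $s^2/n$ rather than $s^4/n$, so the quartic power genuinely relies on the combinatorial inequality $d_2(\bx,\CA)\le 2C_0\sqrt{d_H(\bx,\CA)}$; everything after that is the standard argument that a distance-to-a-set function both concentrates around its mean and vanishes on the set, together with the trivial checks at $s=0$ and $\CA=\emptyset$.
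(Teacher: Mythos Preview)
Your proof is correct. The paper does not actually prove this theorem in the text, referring instead to \cite[Appendix B]{NgZ} as a standard result; however, the paper's own argument for the bounded case of Theorem~\ref{thm:E:concentration} reveals the intended route: one applies McDiarmid directly to the \emph{squared} Euclidean distance $G(\bx)=d_2^2(\bx,\CA)$, which has coordinatewise bounded differences $4C_0^2$ (since changing one coordinate moves each $(x_j-a_j)^2$ by at most $(2C_0)^2$), and then runs the same two-sided deviation trick you use. Your approach is genuinely different in its first step: instead of squaring the Euclidean distance, you pass to the Hamming distance via $d_2(\bx,\CA)\le 2C_0\sqrt{d_H(\bx,\CA)}$ and apply McDiarmid to $d_H$ with bounded differences $1$. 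Both routes land on exactly the same exponent $s^4/(16C_0^4 n)$, and the mechanism is identical---a function that vanishes on $\CA$, has bounded differences, and whose level set at height $\asymp s^2$ contains $\{d_2\ge s\}$. Your Hamming reduction is arguably the cleaner of the two, since the bounded-difference constant is just $1$ and the combinatorial inequality $d_2\le 2C_0\sqrt{d_H}$ is immediate; the squared-distance route avoids introducing a second metric but requires checking the $4C_0^2$ bound on the change in the infimum. Neither buys anything the other does not.
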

In particular, %\JC{CHANGE "particularly" to "particular"} 
we see that this result is meaningful only if $s \gg n^{1/4}$. These results are standard, whose proof can be found for instance in \cite[Appendix B]{NgZ}. We also refer the reader to  the classical source \cite{L} for further results.

While the above will be useful for the proofs of Theorem \ref{thm:orthogonal} and Theorem \ref{thm:Weyl}, for Theorem \ref{thm:Elliptic} we will use the following corollary in lower dimension.

\begin{theorem}\label{thm:E:concentration}   Assume that $N \le n$ and let $\Bv_1=(v_{11},\dots, v_{1n}),\dots, \Bv_N=(v_{N1},\dots, v_{Nn})$ be given $N$ orthogonal unit vectors in $\Omega^n$. Let $\CA \subset \Omega^n$ be such that 
$$\P(\bxi \in \CA) \le 1/2.$$ 
Let $s>0$ and assume that $\CE \subset \CA$ such that for any $\Bf\in \CE$ and any $\Bg=(g_1,\dots, g_n)\in \R^n$ for which $\sum_i (\Bg \cdot \Bv_i)^2 \le s^2$ we have  
\begin{equation}\label{eqn:g:E}
\Bf+\Bg \in \CA.
\end{equation}
\begin{itemize}
\item  Assume that $\Bx=(x_1,\dots, x_n)$, where $x_i$ are iid copies of $\xi$ satisfying \eqref{eqn:logSobolev} with a given $C_0$.
Then 
$$\P(\bxi \in \CE) \le 2\exp(-s^2/4C_0).$$
\item  Assume that $\Bx=(x_1,\dots, x_n)$, where $x_i$ are iid copies of $\xi$ of mean zero, variance one, taking values in $\Omega$, a subset of $[-C_0,C_0]$. Then 
$$\P(\bxi \in \CE) \le 4 \exp(-s^4/16n C_0^4) .$$

\end{itemize}
\end{theorem}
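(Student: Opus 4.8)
The plan is to deduce both parts from the concentration statements of Theorem~\ref{thm:sobolev} and Theorem~\ref{thm:bounded} by checking that membership in $\CE$ forces $\bxi$ to lie at Euclidean distance at least $s$ from $\Omega^n\setminus\CA$. Write $V=\operatorname{span}(\Bv_1,\dots,\Bv_N)$ and let $P_V$ denote the orthogonal projection onto $V$; since $\Bv_1,\dots,\Bv_N$ are orthonormal we have $\sum_i(\Bg\cdot\Bv_i)^2=|P_V\Bg|^2$ for every $\Bg\in\R^n$, so the hypothesis on $\CE$ reads: whenever $\Bf\in\CE$ and $|P_V\Bg|\le s$, one has $\Bf+\Bg\in\CA$.

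First I would establish the geometric claim that $d_2(\Bf,\,\Omega^n\setminus\CA)\ge s$ for every $\Bf\in\CE$. Given $\Bz\in\Omega^n\setminus\CA$, set $\Bg=\Bz-\Bf$; if $|P_V\Bg|\le s$ then $\Bz=\Bf+\Bg\in\CA$, a contradiction, so $|P_V(\Bz-\Bf)|>s$ and hence $|\Bz-\Bf|\ge|P_V(\Bz-\Bf)|>s$, using that orthogonal projection onto a subspace does not increase Euclidean length. This single observation is really the whole point: the hypothesis constrains only the $V$-component of the perturbation and leaves the remaining $n-N$ directions entirely free, yet it still pins down the Euclidean distance from $\CE$ to $\Omega^n\setminus\CA$, precisely because $|P_V\Bg|\le|\Bg|$. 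In particular $\{\bxi\in\CE\}\subseteq\{\,d_2(\bxi,\Omega^n\setminus\CA)\ge s\,\}$, and we note $\P(\bxi\in\Omega^n\setminus\CA)=1-\P(\bxi\in\CA)\ge 1/2$.

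Next I would feed this into the two concentration inequalities. For the log-Sobolev case, consider the $1$-Lipschitz function $h(\Bx)=d_2(\Bx,\Omega^n\setminus\CA)$; it vanishes on $\Omega^n\setminus\CA$, an event of probability at least $1/2$, so (as $h\ge 0$ everywhere) a median of $h(\bxi)$ equals $0$. The first bullet of Theorem~\ref{thm:sobolev} then gives $\P(\bxi\in\CE)\le\P(h(\bxi)\ge s)=\P(h(\bxi)\ge m(h)+s)\le 2\exp(-s^2/4C_0)$, as claimed. For the bounded case, applying Theorem~\ref{thm:bounded} to the set $\Omega^n\setminus\CA$ (whose probability is at least $1/2$) together with the same inclusion $\{\bxi\in\CE\}\subseteq\{d_2(\bxi,\Omega^n\setminus\CA)\ge s\}$ yields $\P(\bxi\in\CE)\le 4\exp(-s^4/16nC_0^4)$ in the same fashion.

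There is no substantial obstacle: the argument rests entirely on the one-line projection estimate in the second paragraph. The only place that needs a little care is to notice that the distance function $h$ has median exactly $0$ (not merely small), so that Theorem~\ref{thm:sobolev} is applied at the median and produces the sharp exponent $s^2/4C_0$ rather than the weaker $s^2/16C_0$ that would come from the set-enlargement form of the inequality.
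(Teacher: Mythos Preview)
Your proof is correct and follows essentially the same route as the paper: show that any $\Bf\in\CE$ lies at (projected, hence Euclidean) distance at least $s$ from $\bar\CA$, then feed this into the Lipschitz concentration of Theorem~\ref{thm:sobolev} (log-Sobolev case) and Theorem~\ref{thm:bounded} (bounded case). The only cosmetic difference is that the paper works with the projected distance $d_{2,\Bv}$ and checks it is $1$-Lipschitz via $d_{2,\Bv}\le d_2$, whereas you pass directly to the full Euclidean distance using the same projection inequality $|P_V\Bg|\le|\Bg|$; your version is marginally cleaner, and for the bounded case it lets you invoke Theorem~\ref{thm:bounded} as a black box rather than redoing McDiarmid on $d_{2,\Bv}^2$ as the paper does.
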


\begin{proof}(of Theorem \ref{thm:E:concentration}) Consider the distance via projection onto the subspace generated by $\Bv_1,\dots, \Bv_n$,
$$d_{2,\Bv}(\Ba,\Bb):= \sum_i ((\Ba-\Bb)\cdot \Bv_i)^2$$ 
and 
$$d_{2, \Bv}(\Ba,\CS):= \inf_{\Bs\in S}d_{2,\Bv}(\Ba,\Bs).$$
By the triangle inequality and by the fact that $\Bv_1,\dots, \Bv_N$ are orthogonal unit vectors, we see that $d_{2,\Bv}(\Bx, S)$ is 1-Lipschitz with respect to the Euclidean distance $d_2(\cdot)$. %\AC{CHANGE: USE CDOT} 
This is because $d_{2,\Bv}(\Bx, S) -d_{2,\Bv}(\By,S) \le d_{2,\Bv}(\Bx, \Bs_\Bx) -d_{2,\Bv}(\By,\Bs_\Bx)$ for some $s_\Bx\in S$, and $d_{2,\Bv}(\Bx, \Bs_\Bx) -d_{2,\Bv}(\By,\Bs_\Bx) \le d_{2,\Bv}(\Bx, \By) \le d_2(\Bx,\By)$. 

Let 
$$F(\bxi) := d_{2, \Bv}(\bxi, \bar{\CA}).$$
By definition $F(\Bx) \ge s$ for any $\Bx \in \CE$, and as $\P(\bar{\CA}) \ge 1/2$, the median of $F(\bxi)$ is zero. It is known from Theorem \ref{thm:sobolev} that for distributions satisfying log-Sobolev inequality 
$$\P(F(\bxi) \ge m(F(\bxi)) + s )\le 2\exp(-s^2/4C_0).$$
Hence 
$$\P(\bxi \in \CE) \le \P(F(\bxi) \ge s) \le2 \exp (-\tau^2 N/4C_0).$$

Now for the boundedness case, note that if we change the $j$-th coordinate of $\Ba=(a_1,\dots, a_j,\dots, a_n)$ to $\Ba'=(a_1,\dots, a_j',\dots, a_n)$, then
$$ |d_{2,\Bv}^2(\Ba,\Bb) -  d_{2,\Bv}^2(\Ba',\Bb)| \le  (a_j - a_j')^2  \sum_{i=1}^N (\Be_j \cdot \Bv_i)^2 \le 4C_0^2.$$
Hence the function $G(\Bx) = d_{2,\Bv}^2(\Bx, \bar{\CA})$ is $4C_0^2$-Lipschitz (coordinatewise). By  McDiarmid’s inequality, with $\mu = \E F(\bxi)$, for any $\la>0$
$$\P(|G(\bxi)- \mu| \ge \la) \le 2 \exp(-\la^2/8n C_0^4).$$
We can then deduce from here that $\P(G(\bxi)=0) \P(G(\bxi) \ge \la ) \le 4 \exp(-\la^2/16n C_0^4)$, and hence 
$$\P(\bxi \in \CE) \le  \P(G(\bxi) \ge s^2) \le  4 \exp(-s^4/16n C_0^4).$$
\end{proof}

\subsection{Proof methods} 
We next highlight the proof method for Theorem \ref{thm:orthogonal}, Theorem \ref{thm:Elliptic}, and Theorem \ref{thm:Weyl}. Broadly speaking, our overall approach follows \cite{NgZ}, which in turn was motivated by the perturbation framework of Nazarov and Sodin \cite{NS} in their proof of Theorem \ref{thm:NS}. However, the situation for univariate polynomials is totally different compared to spherical harmonics (or arithmetic random waves as in \cite{Rozen}), for instance \cite[Claim 2.2]{NS} and \cite[Claim 2.4]{NS} do not hold for roots of polynomials at all. The detailed plan follows below.

\begin{enumerate}
\item Our starting points are  Theorem \ref{thm:E:gen}, Theorem \ref{thm:W:gen}, and Theorem \ref{thm:O:gen} which imply that $N_T(F)$ are moderately concentrated around their means in all models. These results are used in both treatments of the upper tail and lower tail. Roughly speaking, these results guarantee that certain events will have probability at least 1/2, an important input in the applications of geometric concentration.
\vskip .1in
\item We will show that it is highly unlikely that there is a large set of intervals where both $|F|$ and $|F'|$ are both small (i.e. ``exceptional" polynomials are very rare). We justify this by using geometric concentration together with Lemma \ref{lemma:largesieve} and Theorem \ref{thm:repulsion}.
\vskip .1in
\item For $F$ that is not exceptional, we will use Proposition \ref{prop:manyroots} to conclude that the number of roots over the intervals where $|F|$ and $|F'|$ are small simultaneously is small. 
\vskip .1in
\item Basing on the above results, and on the stability result Corollary \ref{cor:stab} we have that if $N_T(F)$ is close to $\E N_T(F)$ (with high probability, from the first step), then $N_T (F+g)$ is also close to $N_T(F)$ as long as $\|g\|_2$ is small. As such, geometric tools such as Theorem \ref{thm:sobolev}, Theorem \ref{thm:bounded},  and Theorem \ref{thm:E:concentration} can be invoked again to show that indeed $N_T(F)$ satisfies exponential concentration as desired. 
\end{enumerate}
While in the random orthogonal polynomials setting (Theorem \ref{thm:orthogonal}) the above perturbation framework is rather clear -- that we are allowed to perturb (the coefficients of) $F$ by $g$ of order $\sqrt{n}$ in $L_2$-norm (see \eqref{eqn:O:g}), for Theorem \ref{thm:Weyl} of the Weyl model  $g$ will be designed to have much smaller norm, see \eqref{MB:W:g} and \eqref{eqn:gb}. Especially, for the Elliptic model we will perturb groups of coefficients rather than individual ones (see \eqref{eqn:g:E}), and one of our findings is a canonnical way to group these coefficients. Other highlights  might include repulsion bound for random walks with steps from orthogonal polynomials, Corollary \ref{cor:repulsion:O} (which is based on beautiful properties of orthogonal polynomials from \cite{LP} and the references therein) or our bound on the higher derivatives of Weyl polynomials (Lemma \ref{lemma:Weyl:derivative}), but one of the trickiest parts of the current paper is to find appropriate variants of Markov-Bernstein type estimates for Elliptic and Weyl polynomials, throughout \eqref{eqn:E:Bern} and \eqref{eqn:W_0} respectively, to serve as inputs for Proposition \ref{prop:manyroots} as well as Lemma \ref{lemma:largesieve}.

% \HC{Added a bit more about the method and differences.}

\section{Orthogonal polynomials: proof of Theorem \ref{thm:orthogonal}}\label{Section:orthogonal}
We note that in this section $a,a',b,b'$ are fixed and $n \to \infty$. 
\subsection{Properties of orthogonal polynomials}\label{subsection:supporting:orth}
Recall the orthogonal polynomials $p_j(x)$ from Theorem \ref{thm:O:gau}. Using the same assumption as in that result, for short set 
$$T=[a,b]\quad \mbox{ and } \quad T'=[a',b'],$$ 
where we assume that % \JC{more space within ea}
$$a'=a - \eps_0 (b-a),\quad \quad \quad  b' = b+\eps_0 (b-a).$$ 
Consider the following (deterministic) polynomials
\begin{equation}\label{eqn:F:O'}
f(x)=\frac{1}{\sqrt{n}}\sum_{j=1}^n a_j p_j(x).
\end{equation}
First of all, recall that for any polynomial $f$ of degree $n$, Bernstein's inequality says that
$$|f'(x)| \le \frac{n}{\sqrt{1-x^2}} \|f\|_{L_\infty[-1,1]}, x\in (-1,1)$$
and for some absolute constant $C$
$$\int_{-1}^1|\sqrt{1-x^2} f'(x)|^2 dx \le Cn^2 \int_{-1}^1 f(x)^2 dx.$$
Hence
\begin{lemma}\label{lemma:Bern:orth} Let $f$ be a polynomial of degree $n$. For all $x\in T=[a,b]$
$$|f'(x)| \le  (\eps_0 (b'-a'))^{-1} Cn \|f\|_{L_\infty[a',b']}$$
and
$$\int_{a}^b f'(x)^2 dx \le (\eps_0 (b'-a')^2)^{-1} C n^2 \int_{a'}^{b'} f(x)^2 dx$$
and
$$\int_{a}^b f''(x)^2 dx \le  (\eps_0 (b'-a')^2)^{-2} C^2 n^4\int_{a'}^{b'} f'(x)^2 dx$$
and more generally, for all $k\le n$,
$$\int_{a}^b (f^{(k)}(x))^2 dx \le (\eps_0 (b'-a')^2)^{-k} C^k n^{2k} \int_{a'}^{b'} f(x)^2 dx.$$
\end{lemma}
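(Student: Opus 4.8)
The plan is to deduce all four inequalities from the two classical Bernstein inequalities on $[-1,1]$ quoted above, by pulling everything back through the affine map that sends $T'=[a',b']$ onto $[-1,1]$. Since the enlargement of $[a,b]$ to $[a',b']$ is symmetric, the midpoints agree, and the linear map $\phi(x)=\frac{2x-(a'+b')}{b'-a'}$ carries $[a',b']$ onto $[-1,1]$ and $[a,b]$ onto $[-\rho,\rho]$ with $\rho=(1+2\eps_0)^{-1}<1$; consequently
\[
\inf_{y\in\phi([a,b])}(1-y^2)=1-\rho^2=\frac{4\eps_0(1+\eps_0)}{(1+2\eps_0)^2}\asymp \eps_0 .
\]
Writing $g=f\circ\phi^{-1}$, a polynomial of degree $n$, we have $f^{(k)}(x)=\big(\tfrac{2}{b'-a'}\big)^k g^{(k)}(\phi(x))$ and $dx=\tfrac{b'-a'}{2}\,dy$, and $\|g\|_{L_\infty[-1,1]}=\|f\|_{L_\infty[a',b']}$.

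For the first inequality, apply the pointwise Bernstein bound $|g'(y)|\le \frac{n}{\sqrt{1-y^2}}\|g\|_{L_\infty[-1,1]}$ at $y=\phi(x)$ for $x\in[a,b]$, use $1-y^2\asymp\eps_0$ there, and multiply by $\tfrac{2}{b'-a'}$; this even gives a slightly stronger bound, with $\eps_0^{-1/2}$ in place of $\eps_0^{-1}$. For the second inequality, change variables to write $\int_a^b f'(x)^2\,dx=\frac{2}{b'-a'}\int_{-\rho}^{\rho} g'(y)^2\,dy$, bound $g'(y)^2\le (1-\rho^2)^{-1}(1-y^2)g'(y)^2$ on $[-\rho,\rho]$, enlarge the domain of integration to $[-1,1]$, invoke $\int_{-1}^1(1-y^2)g'(y)^2\,dy\le Cn^2\int_{-1}^1 g(y)^2\,dy$, and finally undo the substitution via $\int_{-1}^1 g^2=\frac{2}{b'-a'}\int_{a'}^{b'}f^2$; collecting the two factors $(b'-a')^{-1}$ together with $(1-\rho^2)^{-1}\asymp\eps_0^{-1}$ produces the stated constant $(\eps_0(b'-a')^2)^{-1}Cn^2$. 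The third inequality is simply the second applied with $f$ replaced by $f'$ (which has degree $\le n$); this in fact yields the sharper $\int_a^b f''(x)^2\,dx\le (\eps_0(b'-a')^2)^{-1}Cn^2\int_{a'}^{b'}f'(x)^2\,dx$, which a fortiori gives the stated one.

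For the general $k$-th order inequality I would iterate the one-derivative estimate along a nested chain $[a,b]=T_k\subset T_{k-1}\subset\cdots\subset T_0=[a',b']$, where $T_i$ is the symmetric enlargement of $[a,b]$ with parameter $\frac{(k-i)\eps_0}{k}$, applying at the $i$-th step the second inequality with $(T_i,T_{i-1})$ in place of $([a,b],[a',b'])$ and with $f^{(k-i)}$ in place of $f$, so that the estimates telescope. Each consecutive pair has margin $\asymp\eps_0/k$ and all the $T_i$ have length $\asymp b'-a'$, so each step costs a factor $O\!\big(kn^2\eps_0^{-1}(b'-a')^{-2}\big)$ and the product of the $k$ steps gives $\int_a^b(f^{(k)})^2\le \big(Ckn^2\eps_0^{-1}(b'-a')^{-2}\big)^k\int_{a'}^{b'}f^2$. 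I expect the only delicate point to be the bookkeeping of this constant: the naive iteration leaves an extra factor $k^k$, which is harmless for all our applications (there $k=O(\log n)$, so $k^k=n^{o(k)}$ and the exponent $n^{2k}$ is unaffected to leading order), but to obtain the clean $C^k$ exactly as stated one should instead invoke the higher-order $L^2$ Markov--Bernstein inequality $\int_{-1}^1(1-y^2)^k(g^{(k)}(y))^2\,dy\le C^kn^{2k}\int_{-1}^1 g(y)^2\,dy$ for polynomials of degree $n$ (a standard consequence of expanding $g$ in Chebyshev polynomials and the orthogonality of their derivatives), and then divide by $(1-\rho^2)^k\asymp\eps_0^k$ on $[-\rho,\rho]$ exactly as in the second inequality. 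Everything else is routine rescaling.
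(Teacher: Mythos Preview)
Your proposal is correct and follows essentially the same route as the paper: reduce to $[-1,1]$ by the affine map, apply the weighted $L^2$ Bernstein inequality and divide out the weight on the inner interval, and for general $k$ iterate along a nested chain of intervals between $[a,b]$ and $[a',b']$. The paper carries out exactly this iteration (with $I_i=(a'+\tfrac{i}{k}\eps_0(b'-a'),\,b'-\tfrac{i}{k}\eps_0(b'-a'))$) and asserts that the accumulated extra factor is $\sqrt{k^k/k!}$, which it then absorbs into ${C'}^k$.

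You are in fact more careful than the paper on this last point: the iteration as written really produces a factor $k^k$, not $\sqrt{k^k/k!}$, since each of the $k$ steps has margin $\asymp\eps_0/k$ and hence costs an extra $k$. Your two remedies---either accepting $k^k$ (harmless here because $k=O(\log n)$ in every application, so $k^k=n^{o(k)}$ is swallowed by $n^{2k}$), or bypassing the iteration via the higher-order weighted inequality $\int_{-1}^{1}(1-y^2)^k(g^{(k)})^2\le C^k n^{2k}\int_{-1}^{1}g^2$---are both valid, and the first is effectively what makes the paper's downstream arguments go through.
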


\begin{proof} Let $g(x) = f(  x(b'-a')/2 + (a'+b')/2)$, then $g$ is also a polynomial of degree $n$. By Bernstein's inequality, 
$$\int_{-1+\eps_0}^{1-\eps_0} g'(x)^2 dx \le C \eps_0^{-1} n^2 \int_{-1}^{1} g(x)^2 dx.$$ 
Passing back to $f$, 
$$ \int_{a}^b f'(x)^2 dx \le C \eps_0^{-1} ((b'-a')^2/4)^{-1} n^2 \int_{a'}^{b'} f(x)^2 dx.$$
For general $k$, for each $1\le i\le k$ we let $I_i =(a_i',b_i')=(a' +  (i/k)\eps_0 (b'-a'), b'- (i/k)\eps_0 (b'-a'))$ %\AC{eps to eps zero}
and iterate the above bound to $f^{(i)}$ and $f^{(i-1)}$ over the intervals $I_i$ and $I_{i-1}$ respectively, we obtain (with room to spare)
$$\int_{ a' + \eps_0 (b'-a') }^{b'-\eps_0 (b'-a')  } [f^{(k)}(x)]^2 dx \le C^k  \sqrt{\frac{k^k}{k!}} \eps_0^{-k} ((b'-a')^2/4)^{-k} n^{2k} \int_{a'}^{b'} f(x)^2 dx \le {C'}^k  (\eps_0 (b'-a')^2)^{-k}n^{2k}\int_{a'}^{b'} f(x)^2 dx,$$
where $C'$ is another absolute constant.
\end{proof}

Now for the polynomials $p_j(x)$, we gather here gather a  few facts from \cite[Lemma 3.2, Lemma 3.3]{LP}. % \AC{CHANGE dot missing}
\begin{lemma}\label{lemma:p_i:orth} There exists a constant $C$ (depending $a,b$ and the density $\omega$ over $[a,b]$) such that the following estimates hold for $x_0\in T$
\begin{itemize}
\item
$$\sum_{i=1}^n p_i(x_0)^2 \ge n/C \mbox{ and } \sum_{i=1}^n p'_i(x_0)^2 \ge n^3/C ;$$
\item 
$$|p_j^{(d)}(x_0)| = O(n^{d}), d=0,1,2;$$
\item  for any $x\in [a,b]$
$$\lim_{n\to \infty}\frac{1}{n^2} \sum_{j=0}^n p_j(x)p_j'(x) =0.$$
\end{itemize}
\end{lemma}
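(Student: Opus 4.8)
The plan is to obtain each of the three items from the work of Lubinsky--Pritsker \cite{LP}, adding the short arguments needed where the statement here is not verbatim one of their lemmas. Throughout it is convenient to write $\Lambda_n(s,t)=\sum_{i=0}^n p_i(s)p_i(t)$ for the Christoffel--Darboux kernel of $\mu$, so that
$$\sum_{i=0}^n p_i(x_0)^2 = \Lambda_n(x_0,x_0),\qquad \sum_{i=0}^n p_i'(x_0)^2 = \partial_s\partial_t\Lambda_n(s,t)\big|_{s=t=x_0},\qquad \sum_{i=0}^n p_i(x_0)p_i'(x_0) = \tfrac12\,\partial_x\Lambda_n(x,x)\big|_{x=x_0}.$$

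For the first item, the lower bound $\Lambda_n(x_0,x_0)\ge n/C$, uniformly for $x_0\in T$, is the standard estimate for the Christoffel function: it holds under Stahl--Totik--Ullmann regularity of $\mu$ together with the local absolute continuity and the positivity and continuity of $\mu'$ on $(a',b')\supset[a,b]$, and is exactly \cite[Lemma 3.2]{LP}. For $\sum_i p_i'(x_0)^2$ one uses the local sine-kernel asymptotics of \cite{LP}: after the rescaling $u\mapsto x_0+u/n$ the normalized kernel converges to $\operatorname{sinc}$, and since each differentiation in $s$ or in $t$ contributes a factor of order $n$, the mixed second derivative of $\Lambda_n$ on the diagonal has exact order $n^3$; this is contained in \cite[Lemma 3.3]{LP}.

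For the second item, the case $d=0$ is precisely the hypothesis $\sup_{n\ge1}\|p_n\|_{L_\infty[a',b']}<\infty$ inherited from Theorem \ref{thm:O:gau}. For $d=1,2$ one applies (and, for $d=2$, iterates once) the pointwise Bernstein/Markov bound of Lemma \ref{lemma:Bern:orth} to the polynomial $p_j$, which has degree $j\le n$, on the interval $[a',b']$: this gives $\sup_{x\in T}|p_j^{(d)}(x)|\le C_d\, j^d\,\|p_j\|_{L_\infty[a',b']}=O(n^d)$, where the implied constant may depend on the fixed data $a,b,a',b',\eps_0$.

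The third item is the one carrying genuine content. By Cauchy--Schwarz and the first item one has $\big|\sum_j p_j(x)p_j'(x)\big|\le \Lambda_n(x,x)^{1/2}\big(\sum_j p_j'(x)^2\big)^{1/2}=O(n^{1/2})\cdot O(n^{3/2})=O(n^2)$, so the assertion is equivalent to $\partial_x\Lambda_n(x,x)=o(n^2)$, i.e.\ to a cancellation strictly beyond this trivial bound. This is precisely where the fine local asymptotics of \cite[Lemma 3.3]{LP} enter essentially: combining the uniform convergence of $\tfrac1n\Lambda_n$ with the control on $\partial_x\Lambda_n$ provided there yields $\tfrac1{n^2}\partial_x\Lambda_n(x,x)\to0$ uniformly on $[a,b]$. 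I expect this last point to be the main (and essentially the only) obstacle, so that in practice the proof reduces to a careful citation of \cite[Lemmas 3.2--3.3]{LP} together with the two short derivations above.
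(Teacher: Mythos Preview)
Your proposal is correct and follows essentially the same approach as the paper: the paper's entire proof is the sentence ``we gather here a few facts from \cite[Lemma 3.2, Lemma 3.3]{LP}'' with no further elaboration. Your write-up is in fact more detailed than the paper's, in that you separate out which parts are direct citations, which use the standing hypothesis $\sup_n\|p_n\|_{L_\infty[a',b']}<\infty$, and which require an application of Bernstein (Lemma \ref{lemma:Bern:orth}) on top of the citation; all of these derivations are sound.
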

As a consequence we deduce the following non-degenerating property.
\begin{claim}\label{claim:iso:orth} Let $\Bu_i(x) =\frac{1}{\sqrt{n}}(p_i(x), p_i'(x)/n)$. For all $(a_1,a_2)\in \BS^1$ we have 
$$\sum_i  \langle  \Bu_i, (a_1,a_2) \rangle^2 \asymp 1.$$
\end{claim}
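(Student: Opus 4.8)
The plan is to verify the non-degeneracy condition $\sum_i \langle \Bu_i(x),(a_1,a_2)\rangle^2 \asymp 1$ by expanding the square and using the three bullet points of Lemma \ref{lemma:p_i:orth}. Writing $\Bu_i(x) = \frac{1}{\sqrt{n}}(p_i(x), p_i'(x)/n)$, for a unit vector $(a_1,a_2)\in\BS^1$ we have
$$\sum_i \langle \Bu_i(x),(a_1,a_2)\rangle^2 = \frac{a_1^2}{n}\sum_i p_i(x)^2 + \frac{2a_1 a_2}{n^2}\sum_i p_i(x)p_i'(x) + \frac{a_2^2}{n^3}\sum_i p_i'(x)^2.$$
I would first establish the \emph{upper bound}: by the second bullet of Lemma \ref{lemma:p_i:orth}, $|p_i(x)| = O(n^0)$... wait, more carefully, $|p_i^{(d)}(x_0)| = O(n^d)$ gives $|p_i(x)| = O(1)$ and $|p_i'(x)| = O(n)$ pointwise per index, but summing over $n$ indices would give $O(n)$ and $O(n^3)$ respectively, which matches the normalization. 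Actually the cleanest route for the upper bound is: $\frac{1}{n}\sum_i p_i(x)^2 = O(1)$ and $\frac{1}{n^3}\sum_i p_i'(x)^2 = O(1)$ — these follow from the boundedness assumption $\sup_n \|p_n\|_{L_\infty[a',b']} < \infty$ together with the Markov–Bernstein estimates of Lemma \ref{lemma:Bern:orth} (or directly from the asymptotics in \cite{LP}); the cross term is $o(1)$ by the third bullet. Hence the whole expression is $O(a_1^2 + a_2^2) = O(1)$.

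Next the \emph{lower bound}, which is the point where the three facts genuinely combine. By the first bullet, $\frac{1}{n}\sum_i p_i(x)^2 \ge 1/C$ and $\frac{1}{n^3}\sum_i p_i'(x)^2 \ge 1/C$. By the third bullet, the cross term $\frac{2a_1 a_2}{n^2}\sum_i p_i(x)p_i'(x) = o(1)$ uniformly in $x\in[a,b]$ and in $(a_1,a_2)$ (since $|a_1 a_2|\le 1/2$). Therefore
$$\sum_i \langle \Bu_i(x),(a_1,a_2)\rangle^2 \ge \frac{a_1^2}{C} + \frac{a_2^2}{C} - o(1) = \frac{1}{C} - o(1) \ge \frac{1}{2C}$$
for $n$ large, using $a_1^2 + a_2^2 = 1$. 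Combining the two bounds gives $\sum_i \langle \Bu_i(x),(a_1,a_2)\rangle^2 \asymp 1$ with implied constants depending only on $a,b,\omega$ (and not on $x$ or the direction).

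\textbf{Main obstacle.} The only delicate point is making the estimates \emph{uniform} in both the base point $x\in[a,b]$ and the direction $(a_1,a_2)\in\BS^1$; the direction is harmless since everything is a quadratic form in $(a_1,a_2)$ with controlled coefficients, so it reduces to uniformity in $x$, which is exactly what Lemma \ref{lemma:p_i:orth} (inherited from \cite[Lemma 3.2, Lemma 3.3]{LP}) provides on the compact subinterval $T=[a,b]\subset(a',b')$. One should also note that the diagonal terms dominate precisely because $n^{-1}\sum p_i^2$ and $n^{-3}\sum(p_i')^2$ are bounded \emph{below} while the cross term, after the correct $n^{-2}$ scaling, vanishes — so there is no cancellation to worry about once the right normalization of $\Bu_i$ is in place. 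This makes the claim a direct corollary rather than an independent argument.
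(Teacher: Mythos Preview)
Your proposal is correct and follows essentially the same approach as the paper: expand the quadratic form, use the first bullet of Lemma~\ref{lemma:p_i:orth} for the lower bounds on the diagonal terms, the second bullet (or the $\sup_n\|p_n\|_{L_\infty}<\infty$ hypothesis) for the upper bounds, and the third bullet to kill the cross term. The paper's proof is simply a one-line version of yours that omits the separate upper/lower bound discussion and the uniformity remarks.
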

\begin{proof} We have
\begin{align*}
\sum_i \langle \Bu_i, (a_1,a_2) \rangle^2 &=  \frac{1}{n}a_1^2 \sum_{i} p_i(x)^2  + \frac{1}{n} a_2^2 \sum_i \frac{1}{n^2}p_i'(x)^2  + 2 \frac{1}{n}a_1a_2 \sum_i p_i(x) \frac{1}{n}p_i'(x) \asymp 1,
\end{align*}
where we used the fact from Lemma \ref{lemma:p_i:orth}. Notice also that $\|\Bu_i\|_2 \ll 1$. The above claim implies that a positive portion of the $\{ |\langle \Bu_i, (a_1,a_2) \rangle| \}$ are in fact  of order 1. 
\end{proof}
As an immediate consequence of this result, we deduce the following corollary of Theorem \ref{thm:repulsion} with $N=n$.

\begin{corollary}\label{cor:repulsion:O} Assume that $F$ has the form of \eqref{eqn:F:O'} where the coefficients are iid copies of $\xi$ of mean zero and variance one. Then as long as $\al> 1/\sqrt{n}$,  $\beta>1/\sqrt{n}$ we have
$$\P\big( |F(x)|\le \al \wedge \frac{1}{n}|F'(x)| \le \beta \big) = O(\al \beta).$$
\end{corollary}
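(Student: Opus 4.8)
The plan is to derive Corollary~\ref{cor:repulsion:O} directly from Theorem~\ref{thm:repulsion} by verifying that the vectors $\Bu_i(x) = \frac{1}{\sqrt{n}}(p_i(x), p_i'(x)/n)$ satisfy the hypotheses of Lemma~\ref{lemma:smallball} (and hence of Theorem~\ref{thm:repulsion}) uniformly in $x \in T$, with the parameter $N$ chosen to be $n$. Concretely, with $F$ of the form \eqref{eqn:F:O'} we have $(F(x), \frac{1}{n}F'(x)) = \sum_{i=1}^n \xi_i \Bu_i(x)$, so the event $\{|F(x)| \le \al\} \cap \{\frac{1}{n}|F'(x)| \le \beta\}$ is exactly the event that this random walk in $\R^2$ lands in the rectangle $[-\al,\al]\times[-\beta,\beta]$.

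First I would invoke Claim~\ref{claim:iso:orth}, which was already established above: it states precisely that $\sum_i \langle \Bu_i(x), (a_1,a_2)\rangle^2 \asymp 1$ for all $(a_1,a_2) \in \BS^1$ and all $x \in T$, using the lower bounds $\sum_i p_i(x_0)^2 \ge n/C$ and $\sum_i p_i'(x_0)^2 \ge n^3/C$ together with the upper bounds $|p_j^{(d)}(x_0)| = O(n^d)$ and the cross-term vanishing $\frac{1}{n^2}\sum_j p_j(x)p_j'(x) \to 0$ from Lemma~\ref{lemma:p_i:orth}. This is exactly the non-degenerating condition \eqref{eqn:non-degenerating}. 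Hence Lemma~\ref{lemma:smallball} applies with this choice of $\Bu_i$, giving $\sup_{a \in \R^2}\P(\sum_i \xi_i \Bu_i(x) \in B(a,r)) = O(r^2)$ for every $r \ge 1/\sqrt{n}$.

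Next I would run the short argument in the proof of Theorem~\ref{thm:repulsion}: the rectangle $[-\al,\al]\times[-\beta,\beta]$ is contained in a ball of radius $r = \sqrt{\al^2+\beta^2} \le \al+\beta$, and as long as $\al, \beta > 1/\sqrt{n}$ we have $r \ge 1/\sqrt{n}$, so Lemma~\ref{lemma:smallball} yields a bound of $O(r^2) = O(\al^2+\beta^2)$. To upgrade this to the sharper $O(\al\beta)$ one covers the rectangle by $O(\al/\beta)$ (assuming $\al \ge \beta$, say) translated balls of radius $\asymp \beta \ge 1/\sqrt{n}$, each contributing $O(\beta^2)$, for a total of $O(\al\beta)$; the symmetric covering handles $\beta \ge \al$. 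This is the content of applying Theorem~\ref{thm:repulsion} with $N = n$ and $\Bu_i$ as above, so strictly speaking the corollary is immediate once Claim~\ref{claim:iso:orth} is in hand.

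The only point requiring any care --- and the main (mild) obstacle --- is \emph{uniformity in} $x \in T = [a,b]$: the constants in Lemma~\ref{lemma:p_i:orth} must be controlled uniformly on the compact subinterval $[a,b] \subset (a',b')$, which is guaranteed by the hypotheses of Theorem~\ref{thm:O:gau} ($\mu'$ positive and continuous on $(a',b')$, plus the $L_\infty$ bound $\sup_n \|p_n\|_{L_\infty[a',b']} < \infty$); the implied constant $C$ in Lemma~\ref{lemma:p_i:orth} depends only on $a,b$ and $\omega$, hence is uniform over $x \in T$. Consequently the implied constant in the conclusion $\P(|F(x)| \le \al \wedge \frac{1}{n}|F'(x)| \le \beta) = O(\al\beta)$ is independent of $x \in T$, which is exactly what later arguments (covering $T$ by $\Theta(n)$ small intervals) will need.
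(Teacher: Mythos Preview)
Your proposal is correct and follows exactly the paper's approach: the paper states the corollary as an immediate consequence of Claim~\ref{claim:iso:orth} (the non-degenerating condition) together with Theorem~\ref{thm:repulsion} applied with $N=n$, which is precisely what you do. Your covering argument for the rectangle and your remark on uniformity in $x\in T$ are more explicit than the paper's one-line treatment, but the underlying logic is identical.
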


\subsection{Exceptional polynomials}\label{subsection:exceptional:O} This current section is motivated by the treatment in \cite[Section 4.2]{NS} and \cite[Section 4]{NgZ}. Recall $T, T'$ from the previous subsection. Consider a polynomial of type 
\begin{equation}\label{eqn:F:O}
F(x) = \frac{1}{\sqrt{n}} \sum_{i=0}^n \xi_i p_i(x).
\end{equation}
For convenience we will identify $F$ with the vector $\Bv_{F}=(\xi_0,\dots, \xi_n) \in \Omega^{n+1}$, which is a random vector when the $a_i$ are random.

Next, in both the boundedness and the log-Sobolev cases, the event $\CE_{b}$ that 
$$\frac{\sum_i a_i^2}{n} = O(1)$$ 
has probability at least $1 -\exp(-\Theta(n))$, in which case by the orthogonality (and by Lemma \ref{lemma:p_i:orth}) we have
\begin{equation}\label{eqn:M_f}
 \int_T (F(x))^2 dx  \le  \int_{T'} (F(x))^2 dx  \le C^\ast,  \mbox{ where $C^\ast$ is a sufficiently large constant}.
\end{equation} 
Hence for this section, without loss of generality we are working on this event $\CE_b$.

Let $R=C^\ast$ and cover $T$ by $n/R$ open interval $I_i$ of length $R /n$ each. Let $3 I_i$ be the interval of length $3R/n$ having the same midpoint with $I_i$. Given some parameters $\al, \beta$, we call an interval $I_i$ {\it stable} for a function $f$ (of form \eqref{eqn:F:O}) if there is no point in $x\in 3I_i$ such that $|f(x)|\le \al$ and $|f'(x)|\le \beta n (b-a)$. In other words, there is no $x\in 3I_i$ where $|f(x)|$ and $|f'(x)|/n$ are both small. Let $\delta$ be another small parameter (so that $\delta R <1/4$), we call $f$ {\it exceptional} if the number of unstable intervals  is at least $\delta n$. We call $f$ not exceptional otherwise. 

Let $\CE_e = \CE_e(R,\al,\beta; \delta)$ denote the set of vectors $\Bv_{f}$ associated to exceptional function $f$ of form \eqref{eqn:F:O}. % \JC{Maybe defined more explicitly} 
Our goal in this section is the following.

\begin{theorem}\label{thm:exceptional} Let $F$ be a random polynomial of form \eqref{eqn:F:O}, where the $\xi_i$ are as in Theorem \ref{thm:orthogonal}.  Assume that $\al,\beta,\delta$ satisfy $\delta \le (C^\ast)^{-8}$ and
\begin{equation}\label{eqn:parametersthm:O}
\al \asymp \delta^{3}, \beta \asymp \delta^{3/2},  \delta > n^{-1/4}.
\end{equation}
Then we have
$$\P\Big(\Bv_{F} \in \CE_e\Big) \le 4 e^{-\delta^{16} n/16 C_0^4}.$$
\end{theorem}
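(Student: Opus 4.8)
\textbf{Proof proposal for Theorem \ref{thm:exceptional}.}

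The plan is to show that an exceptional polynomial sits at $d_2$-distance $\Omega(\delta^8 n^{1/2})$ from a set of probability at least $1/2$, and then invoke McDiarmid's inequality (Theorem \ref{thm:bounded}) in the bounded case and Theorem \ref{thm:sobolev} in the log-Sobolev case. First I would record the "expected behaviour" set. By Theorem \ref{thm:O:gen} (and Remark \ref{rmk:restriction}-type reasoning, restricting to $T$) we know $\P(|N_T(F) - \E N_T(F)| \le \eps_1 n) \ge 1/2$ for a suitable small $\eps_1$; intersecting with the event $\CE_b$ of \eqref{eqn:M_f} (which has probability $1 - e^{-\Theta(n)}$) we obtain a set $\bar\CA$ of probability $\ge 1/2$ on which $F$ has $O(n)$ roots on $T'$ \emph{and} satisfies the Markov--Bernstein bounds \eqref{eqn:MB:gen} (these follow from Lemma \ref{lemma:Bern:orth} together with the bound $\int_{T'} F^2 \le C^\ast$, and the degree bounds in Lemma \ref{lemma:p_i:orth}). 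So $\bar\CA$ is exactly the class of "good, non-exceptional-input-eligible" polynomials.

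The core step is a perturbation/stability argument: I claim that if $F$ is exceptional (i.e. $\Bv_F \in \CE_e$) then $d_2(\Bv_F, \bar\CA) \ge s$ with $s \asymp \delta^8 \sqrt n$. Suppose $\Bg = (g_0,\dots,g_n)$ has $\|\Bg\|_2 \le s$ and let $g(x) = \frac{1}{\sqrt n}\sum g_i p_i(x)$ be the associated polynomial; by orthogonality and Lemma \ref{lemma:p_i:orth}, $\int_{T'} g^2 \ll \|\Bg\|_2^2/n \le s^2/n$, and similarly $\int_{T'} (g')^2 \ll n \cdot s^2$, etc. Now here is the key mechanism. On each of the $\ge \delta n$ unstable intervals $3I_i$, by definition there is a point where $|F| \le \al$ and $|F'| \le \beta n (b-a)$; combined with the Markov--Bernstein control on $F''$ (so $F'$ does not vary too wildly across an interval of length $R/n$), one shows $|F(x)| \ll \al$ and $|F'(x)| \ll \beta n$ \emph{throughout} a definite sub-interval. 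If the perturbation $g$ were uniformly small there, then $F+g$ would also have $|F+g|$, $|(F+g)'|/n$ small on many intervals, hence $F+g$ is itself exceptional — and exceptional polynomials have $o(n)$ roots over the union of unstable intervals by Proposition \ref{prop:manyroots}, which (by a counting argument: most roots of a polynomial concentrate where $|F|$ is small) forces $N_T(F+g)$ to be far from $\E N_T(F)$, so $\Bv_{F+g} \notin \bar\CA$. To make "$g$ uniformly small on many unstable intervals" rigorous one uses that $\int_{T'} g^2 \le s^2/n$ and $\int_{T'}(g')^2 \ll n s^2$, so by the large sieve (Lemma \ref{lemma:largesieve}) the number of the $\delta n$ unstable intervals on which $\sup|g| > \al/2$ or $\sup|g'| > \beta n/2$ is at most $O(s^2/\al^2) + O(s^2/\beta^2) \ll (s^2/\delta^6)\, n \ll \delta n /2$ once $s \asymp \delta^8\sqrt n$, using \eqref{eqn:parametersthm:O}. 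On the remaining $\ge \delta n /2$ intervals $g$ and $g'/n$ stay below the thresholds, so $F+g$ is still exceptional — contradiction with $\Bv_{F+g}\in\bar\CA$. (One has to be slightly careful about whether it is $F$ or $F+g$ that should be exceptional; in fact the cleanest route is to start from $\Bf \in \CE_e$ and perturb to show \emph{no} nearby point lies in $\bar\CA$ — i.e. set up $\CE \subset \CE_e$, $\CA = \Omega^{n+1}\setminus\bar\CA$ in the language of Theorem \ref{thm:E:concentration}, or directly estimate $d_2(\Bv_F,\bar\CA)$.)

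Granting the distance bound, the conclusion is immediate: apply Theorem \ref{thm:bounded} with $\CA = \CE_e$ (or its projection) and $s \asymp \delta^8 \sqrt n$ to get $\P(\Bv_F\in\CE_e) \le 4\exp(-s^4/16C_0^4 n) \le 4\exp(-\delta^{16}n/16C_0^4)$ — note $s^4/n \asymp \delta^{32} n$, which is even stronger than claimed, so with the implied constants one comfortably gets the stated bound; in the log-Sobolev case Theorem \ref{thm:sobolev} gives the (better) bound $2\exp(-s^2/16C_0) = 2\exp(-\Theta(\delta^{16} n))$, again consistent. The condition $\delta > n^{-1/4}$ is exactly what makes $s \asymp \delta^8\sqrt n \gg n^{1/4}$, so that McDiarmid's inequality is in its meaningful regime.

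\textbf{Main obstacle.} The delicate point is the deterministic implication "exceptional $F$, small $L_2$ perturbation $g$ $\Rightarrow$ $F+g$ still has $|F+g|$ and $|(F+g)'|/n$ both small on a positive fraction of intervals, hence (via Proposition \ref{prop:manyroots}) cannot lie in $\bar\CA$." This requires (i) upgrading the pointwise smallness of $F, F'$ at one point of each unstable interval to smallness on a whole sub-interval — handled by Markov--Bernstein control of $F''$ — and (ii) a clean large-sieve accounting, using Lemma \ref{lemma:largesieve} applied to $g$, that only a small fraction of the unstable intervals are "spoiled" by the perturbation. Matching the exponents in \eqref{eqn:parametersthm:O} ($\al \asymp \delta^3$, $\beta \asymp \delta^{3/2}$) to the budget $s \asymp \delta^8 \sqrt n$ is where the precise powers of $\delta$ get pinned down, and getting those bookkeeping constants to close is the fiddly part.
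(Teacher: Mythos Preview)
Your overall architecture --- perturb an exceptional $F$ by a small $g$, show $F+g$ lands in a set of probability $\ge 1/2$, then apply Theorem \ref{thm:bounded} or \ref{thm:sobolev} --- matches the paper, and your use of the large sieve to control $g,g'$ on most unstable intervals is exactly right. But the choice of target set $\bar\CA$ is wrong, and this is a genuine gap rather than a bookkeeping issue.

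You take $\bar\CA$ to be ``$N_T(F)$ close to its mean (plus $\CE_b$)'' and then try to argue that if $F$ is exceptional and $\|g\|$ is small then $F+g$ is still exceptional, hence $F+g\notin\bar\CA$. The second implication does not hold: exceptionality is a statement about the existence of many intervals where $|F|$ and $|F'|/n$ are simultaneously small, and carries no information about the total root count $N_T$. An exceptional polynomial can perfectly well have $N_T$ near its mean. Your appeal to Proposition \ref{prop:manyroots} is in the wrong direction: that proposition (and its contrapositive, Corollary \ref{cor:manyroots:O}) says that a \emph{non}-exceptional $f$ cannot pile too many roots onto few intervals; it gives no root-count constraint for exceptional $f$. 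So the chain ``exceptional $\Rightarrow$ few roots on unstable intervals $\Rightarrow$ $N_T$ far from mean'' breaks at both links.

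What the paper does instead is define the target set $\CU$ \emph{directly} in terms of the small-value phenomenon: $\CU$ consists of those $h$ for which the Lebesgue measure of $\{x\in T:\ |h(x)|\le\alpha',\ |h'(x)|\le\beta' n\}$ exceeds a threshold $\asymp\delta\gamma$. The perturbation argument (your step, essentially) shows $\{\Bv:\ d_2(\Bv,\CE_e)\le\tau\sqrt n\}\subset\CU$ with $\tau\asymp\delta^4$. The crucial missing ingredient in your proposal is the bound $\P(\CU)\le 1/2$, which comes from the \emph{repulsion estimate} (Corollary \ref{cor:repulsion:O}, i.e.\ the Hal\'asz small-ball bound of Theorem \ref{thm:repulsion}): for each fixed $x$, $\P(|F(x)|\le\alpha',\ |F'(x)|\le\beta' n)=O(\alpha'\beta')$, so by Fubini $\E\mu(B(F))=O(\alpha'\beta')$ and Markov gives $\P(\CU)=O(\alpha'\beta'/\delta\gamma)<1/2$ once the parameters are tuned. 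Anti-concentration, not root-count concentration, is what makes the argument close; you never invoke it.

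A secondary point: with $s=\tau\sqrt n$ and $\tau\asymp\delta^4$ one gets $s^4/n\asymp\delta^{16}n$, matching the stated exponent. Your $s\asymp\delta^8\sqrt n$ would give $s^4/n\asymp\delta^{32}n$, which is a \emph{weaker} bound (since $\delta<1$), not stronger as you wrote.
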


To justify this result, first assume that $f$ (playing the role of $F$) is exceptional, then there are $K=\lfloor \delta n/3 \rfloor$ unstable intervals that are $R/n$-separated.

 Now for each unstable interval in this separated family we choose $x_j \in 3 I_j$ where  $|f(x_j)|\le \al$ and $|f'(x_j)|\le \beta (b-a)n$ and consider the interval $B(x_j, \gamma(b-a)/n)$ for some $\gamma <1$ chosen sufficiently small (given $\delta$, see \eqref{eqn:parameters:O}). %\AC{eq label was wrong coming from parametersthm} 
 Let 
$$M_j:= \max_{x\in B(x_j,\gamma/N)} |f''(x)|.$$  
By Lemma \ref{lemma:largesieve} (with $N=n$, and the conditions are satisfied by Lemma \ref{lemma:Bern:orth} and \eqref{eqn:M_f}) we have
$$\sum_{j=1}^K M_j^2 \le (C^\ast n) \int_{x \in T} f''(x)^2 dx \le  (C^\ast n)^5  \int_{x \in T'} f(x)^2 dx \le (C^\ast)^6  n^5   .$$
We thus infer from the above that the number of $j$ for which $M_j \ge C_1 \delta^{-1/2}(C^\ast)^3 n^2$ is at most $C_1^{-2} \delta n$. Hence for at least $(1/3 - C_1^{-2})\delta n$ indices $j$ we must have $M_j <C_1 \delta^{-1/2} (C^\ast)^3 n^2$.

Consider our function over $B(x_j, \gamma/n)$, then by Taylor expansion of order two around $x_j$, we obtain for any $x$ in this interval
$$ |f(x)| \le |f(x_j)| + |f'(x_j)| (\gamma/n) + \max_{t\in B(x_j, \gamma/n)}|f''(t)| (\gamma/n)^2 \le \al + \beta \gamma + C_1 \delta^{-1/2} (C^\ast)^3 \gamma^2/2 $$
and similarly
$$|f'(x)| \le (\beta + C_1 \delta^{-1/2} (C^\ast)^3 \gamma) n.$$
Now consider a function $g$ of the form \eqref{eqn:F:O}
$$g(x)= \frac{1}{\sqrt{n}}\sum_{n} a'_i p_i(x),$$ 
where $a_i'$ are the amount we want to perturb in $f$. We assume that 
\begin{equation}\label{eqn:O:g}
\|g\|_2^2 = \frac{1}{n} \sum_i (a_i')^2 \le \tau^2.
\end{equation}
Then as the intervals $B(x_j, \gamma/n)$ are $R/n$-separated, by Lemma \ref{lemma:largesieve} (and  Lemma \ref{lemma:Bern:orth}) we have
$$\sum_j \max_{x \in B(x_j, \gamma/\la)} g(x)^2  \le ((C^\ast)^2 n ) \frac{\sum_i {a'}_i^2}{n} \le n \tau^2 (C^\ast)^2$$
and % \AC{Add an parenthesis before c star}
$$\sum_j \max_{x \in B(x_j, \gamma/\la)} g'(x)^2  \le n^3 \tau^2 (C^\ast)^3.$$
Hence, again by an averaging argument, the number of intervals where either $\max_{x \in B(x_j, \gamma/n)} |g(x)| \ge C_2 \delta^{-1/2} C^\ast \tau$ or   $\max_{x \in B(x_j, \gamma/n)} |g'(x)| \ge C_2 \delta^{-1/2} \tau (C^\ast)^{3/2} n$ is bounded from above by $2 C_2^{-2}\delta n$. On the remaining at least $(1/3 - C_1^{-2} -  2 C_2^{-2})\delta n$ intervals, with $h=f+g$, we have simultaneously that %\JC{Remark more room in equations}
$$|h(x)| \le  \al + \beta \gamma + C_1 \delta^{-1} \gamma^2 (C^\ast)^3 + C_2 \delta^{-1/2}  (C^\ast)^2 \tau \mbox{ and } |h'(x)| \le  (\beta + C_1 \delta^{-1} \gamma  (C^\ast)^3 + C_2 \delta^{-1/2} \tau (C^\ast)^{3/2}) n.$$
% Here $f+g$ means in the Bernoulli case we have $f$, and then switch the signs of coefficients of $f$ using the position of $g$. To be made precise.
For short, let 
$$\al':=  \al + \beta \gamma + C_1 \delta^{-1} \gamma^2 (C^\ast)^3 + C_2 \delta^{-1/2}  (C^\ast)^2 \tau  \mbox{ and } \beta':= \beta + C_1 \delta^{-1} \gamma  (C^\ast)^3 + C_2 \delta^{-1/2} \tau (C^\ast)^{3/2} .$$ 
It follows that $\Bv_h$ belongs to the set $\CU=\CU(\al, \beta,\gamma,\delta, \tau, C_1,C_2,C_3)$ in $\Omega^{n+1}$ of the vectors  corresponding to $h$, for which the measure of $x$ with $|h(x)| \le  \al'$ and $|h'(x)| \le  \beta' N $ is at least $2(1/3 - C_1^{-2} -  2 C_2^{-2})\delta \gamma$ (because this set of $x$ contains $(1/3 - C_1^{-2} -  2 C_2^{-2})\delta n$ intervals of length $2\gamma/n$). 
Putting together we have obtained the following claim. 

\begin{claim}\label{O:claim2} Assume that $\Bv_{f} \in \CE_e$. Then for any $g$ with $\|g\|_2 \le \tau$ we have $\Bv_{f+g} \in \CU$. In other words,
$$\Big \{\Bv\in \Omega^{n}, d_2(\CE_e, \Bv) \le \tau \sqrt{n}\Big \} \subset \CU.$$
\end{claim}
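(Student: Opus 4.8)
The plan is to package the estimates produced in the discussion above into a single chain of inequalities. We keep the standing convention that we work on the event $\CE_b$, so that \eqref{eqn:M_f} is available; in particular we may regard $\CE_e$ as a subset of $\CE_b$, and the restricted large sieve of Lemma \ref{lemma:largesieve} applies to $f$ through the Markov--Bernstein bounds of Lemma \ref{lemma:Bern:orth}.

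First I would fix $\Bv_f\in\CE_e$ and extract $K=\lfloor \delta n/3\rfloor$ unstable intervals that are $R/n$-separated, together with points $x_j$ in the tripled intervals $3I_j$ where $|f(x_j)|\le\al$ and $|f'(x_j)|\le\beta(b-a)n$. Applying Lemma \ref{lemma:largesieve} (with $N=n$, the hypothesis supplied by Lemma \ref{lemma:Bern:orth} and \eqref{eqn:M_f}) bounds $\sum_j M_j^2\le (C^\ast)^6 n^5$, where $M_j:=\max_{x\in B(x_j,\gamma/n)}|f''(x)|$; a Markov-type averaging then leaves at least $(1/3-C_1^{-2})\delta n$ indices $j$ with $M_j<C_1\delta^{-1/2}(C^\ast)^3 n^2$. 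For each such $j$, a second-order Taylor expansion of $f$ about $x_j$ over the interval $B(x_j,\gamma/n)$ gives $|f(x)|\le \al+\beta\gamma+\tfrac12 C_1\delta^{-1/2}(C^\ast)^3\gamma^2$ and $|f'(x)|\le(\beta+C_1\delta^{-1/2}(C^\ast)^3\gamma)n$ throughout.

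Next I would bring in the perturbation $g$ of the form \eqref{eqn:F:O} with $\|g\|_2\le\tau$ (i.e.\ \eqref{eqn:O:g}). Since the intervals $B(x_j,\gamma/n)$ are $R/n$-separated, a second application of Lemma \ref{lemma:largesieve} (to $g$ and to $g'$, again via Lemma \ref{lemma:Bern:orth} and \eqref{eqn:O:g}) yields $\sum_j \max_{B(x_j,\gamma/n)}g^2\le n\tau^2(C^\ast)^2$ and $\sum_j\max_{B(x_j,\gamma/n)}(g')^2\le n^3\tau^2(C^\ast)^3$, so by averaging at most $2C_2^{-2}\delta n$ indices have $\max|g|\ge C_2\delta^{-1/2}C^\ast\tau$ or $\max|g'|\ge C_2\delta^{-1/2}\tau(C^\ast)^{3/2}n$ on $B(x_j,\gamma/n)$. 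Intersecting with the good indices from the first step and using the triangle inequality, on at least $(1/3-C_1^{-2}-2C_2^{-2})\delta n$ of the intervals $B(x_j,\gamma/n)$ the function $h:=f+g$ satisfies $|h(x)|\le\al'$ and $|h'(x)|\le\beta' n$ with $\al',\beta'$ as defined above.

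Finally, since these $B(x_j,\gamma/n)$ are pairwise disjoint intervals of length $2\gamma/n$, the set $\{x\in T:\ |h(x)|\le\al',\ |h'(x)|\le\beta' n\}$ has Lebesgue measure at least $2(1/3-C_1^{-2}-2C_2^{-2})\delta\gamma$, which is precisely the defining condition for $\CU$; hence $\Bv_{f+g}=\Bv_h\in\CU$. The displayed inclusion then follows at once, since $d_2(\CE_e,\Bv)\le\tau\sqrt n$ means $\Bv=\Bv_f+\Bv_g$ with $\Bv_f\in\CE_e$ and $\|\Bv_g\|_2\le\tau\sqrt n$, equivalently $\Bv=\Bv_{f+g}$ with $\|g\|_2\le\tau$. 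The only genuinely delicate part is bookkeeping: one must choose the absolute constants $C_1,C_2$ large enough that $1/3-C_1^{-2}-2C_2^{-2}>0$, and check that the parameter ranges in \eqref{eqn:parametersthm:O} keep $\al',\beta'$ of the same order as $\al,\beta$; no new idea beyond the two invocations of the restricted large sieve is needed.
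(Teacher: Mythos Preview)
Your proposal is correct and follows essentially the same approach as the paper: the claim is stated in the paper as a summary of the preceding discussion (``Putting together we have obtained the following claim''), and you have accurately reproduced that argument step by step---separated unstable intervals, large sieve plus Markov averaging for $f''$, second-order Taylor expansion, a second large sieve application to $g$ and $g'$, and the final measure count. The only superfluous remark is your closing comment about keeping $\al',\beta'$ of the same order as $\al,\beta$: that is needed later for \eqref{eqn:U:O}, not for the inclusion asserted in the claim itself.
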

We next show that for $f$ of form \eqref{eqn:F:O} with $\xi_i$ are in Theorem \ref{thm:orthogonal}
\begin{equation}\label{eqn:U:O}
\P(\Bv_{f} \in \CU) \le 1/2.
\end{equation}
 Indeed, for each $f$, let $B(f)$ be the measurable set of  $x\in T$  such that $\{|f(x)| \le \al'\} \wedge \{f'(x)| \le  \beta' n\}$. Then the Lebesgue measure of $B(f)$, $\mu(B(f))$, is bounded by 
$$\E \mu(B(f)) = \int_{x \in  T} \P(\{|f(x)| \le \al'\} \wedge \{|f'(x)| \le  \beta' n\}) dx = O(\al' \beta' ),$$
where we used Corollary \ref{cor:repulsion:O} for each $x$.  

It thus follows that $\E \mu(B(f)) = O(\al' \beta')$. So by Markov inequality,
\begin{equation}
\label{eq-1011:O}
\P(\Bv_{f} \in \CU) \le \P\big ( \mu(B(f)) \ge 2(1/3 - C_1^{-2} -  2 C_2^{-2})\delta \gamma \big ) = O(\al' \beta'/\delta \gamma) <1/2
\end{equation}
if $\al, \beta$ are as in \eqref{eqn:parametersthm} and then $\gamma, \tau$ are chosen appropriately (noting that $C^\ast, C_1,C_2$ are just constants), for instance \footnote{Our choices of parameters, here and later, are not optimized. However, our perturbation method is unlikely to yield optimal dependencies among the parameters.} as
%; for instance we can choose 
\begin{equation}\label{eqn:parameters:O}
\gamma \asymp \delta^{3/2}, \tau \asymp \delta^4. 
\end{equation}

\begin{proof}(of Theorem \ref{thm:exceptional}) By Theorems \ref{thm:sobolev} and \ref{thm:bounded}, and by Claim \ref{O:claim2} and \eqref{eq-1011:O} we have
$$\P(\Bv\in \CE_e) \le 4 e^{-\tau^4 n/16C_0^4}.$$
\end{proof}

\subsection{Roots over unstable intervals}\label{subsection:unstable}

Let $0<\eps<1$ be given.  Let $\delta$ be chosen so that 
\begin{equation}\label{delta:eps:O}
\delta =O(\min\{\eps/\log(1/\eps), \eps/(C^\ast)^2\}).
\end{equation}
Also let $\la=\delta^4$. Hence the conditions in Proposition \ref{prop:manyroots} are satisfied. Let $\al , \beta$ be chosen as in Theorem \ref{thm:exceptional:E}.  We clearly have
$$\la (C^\ast)^{1/2} \le \al,  \la (C^\ast)^{3/2} \le \beta.$$
Our first observation here is that

\begin{cor}\label{cor:manyroots:O} With the parameters as above, a non-exceptional $f$ (defined in Subsection \ref{subsection:exceptional:O} ) cannot have more than $\eps n/2$ roots over any $\delta n$ intervals $I_i$. In particularly, $f$ cannot have more than $\eps n/2$ roots over the unstable intervals. 
\end{cor}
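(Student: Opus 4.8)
The plan is to obtain Corollary \ref{cor:manyroots:O} as an immediate contrapositive of Proposition \ref{prop:manyroots}, which was designed precisely for this step; no new estimate is needed. First I would argue by contradiction: suppose a non-exceptional $f$ of the form \eqref{eqn:F:O} has more than $\eps n/2$ roots distributed over some subfamily of $\delta n$ of the cover intervals $I_i$ (each of length $R/n = C^\ast/n$, and pairwise disjoint). Working on the event $\CE_b$ and rescaling $T=[a,b]$ to $[0,1]$, all hypotheses of Proposition \ref{prop:manyroots} with $N=n$ and $A_0=2$ are in force: $f$ is a degree-$n$ polynomial so it has at most $n \le n^{A_0}$ roots; the Markov--Bernstein bounds \eqref{eqn:MB:gen} follow from Lemma \ref{lemma:Bern:orth} together with \eqref{eqn:M_f} once $C^\ast$ is taken large enough to absorb the absolute constants; and the parameter constraints $R = C^\ast \ge 4$, $\delta = O(\eps/\log(1/\eps))$, $\lambda = \delta^4 = \delta^{O(1)}$ hold by \eqref{delta:eps:O} and the choice $\lambda = \delta^4$.

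Next I would feed the contradiction hypothesis into Proposition \ref{prop:manyroots}, obtaining a measurable set $A \subseteq T$ with $\mu(A) \ge \Theta(\eps/C^\ast)$ on which $|f| \le \lambda(C^\ast)^{1/2} \le \al$ and $|f'| \le \lambda(C^\ast)^{3/2}\, n \le \beta n (b-a)$, where I use the comparisons $\lambda(C^\ast)^{1/2}\le\al$, $\lambda(C^\ast)^{3/2}\le\beta$ recorded before the corollary (and absorb the fixed factor $b-a$). The argument then closes with a one-line pigeonhole count: since the $I_i$ are disjoint of common length $C^\ast/n$ and $A \subseteq \bigcup_i I_i$, we get $\mu(A) = \sum_i \mu(A\cap I_i) \le \#\{i: A\cap I_i \neq \emptyset\}\cdot (C^\ast/n)$, so $A$ meets at least $\mu(A)\,n/C^\ast \ge \Theta(\eps/(C^\ast)^2)\,n$ of the $I_i$; for each such $i$ any point $x\in A\cap I_i \subseteq 3I_i$ witnesses $|f(x)|\le\al$ and $|f'(x)|\le\beta n(b-a)$, so $I_i$ is unstable. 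Hence $f$ has at least $\Theta(\eps/(C^\ast)^2)\,n$ unstable intervals, and since \eqref{delta:eps:O} makes $\delta$ smaller than this threshold, their number exceeds $\delta n$ --- contradicting non-exceptionality. For the ``in particular'' clause I would note that a non-exceptional $f$ has fewer than $\delta n$ unstable intervals, so (as the cover has $n/C^\ast > \delta n$ intervals in total) these are contained in some family of $\lfloor \delta n\rfloor$ intervals $I_i$, and the first assertion bounds the number of roots over that family, hence over the unstable intervals, by $\eps n/2$.

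I do not expect a genuine obstacle here: the content of the corollary sits entirely inside Proposition \ref{prop:manyroots}, and what remains is bookkeeping. The only delicate point is keeping the constants straight --- checking that the $\delta$ permitted by \eqref{delta:eps:O} is simultaneously small enough to meet the hypothesis $\delta = O(\eps/\log(1/\eps))$ of Proposition \ref{prop:manyroots} and small enough, relative to the implied constant in $\Theta(\eps/(C^\ast)^2)$ coming from $\mu(A)$ divided by the interval length, to force the strict inequality ``number of unstable intervals $>\delta n$''; and tracking $C^\ast$ and the fixed length $b-a$ through $\lambda(C^\ast)^{1/2}\le\al$, $\lambda(C^\ast)^{3/2}\le\beta$, which is exactly why the regime \eqref{eqn:parametersthm:O} and $\lambda=\delta^4$ were pinned down in advance.
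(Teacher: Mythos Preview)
Your proposal is correct and follows essentially the same route as the paper: argue by contradiction, invoke Proposition~\ref{prop:manyroots} (with \eqref{eqn:MB:gen} supplied by Lemma~\ref{lemma:Bern:orth} and \eqref{eqn:M_f}) to produce the set $A$ of measure $\Theta(\eps/C^\ast)$ where $|f|$ and $|f'|$ are both small, and then finish with a measure comparison against $\delta$. The only cosmetic difference is in how the final contradiction is phrased: you count that $A$ meets at least $\Theta(\eps/(C^\ast)^2)\,n > \delta n$ of the $I_i$ and declare each of them unstable, while the paper observes dually that the unstable intervals have total length at most $\delta C^\ast < \mu(A)$, so $A$ must spill into a stable $I_j$, contradicting the definition of stability there. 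These are the same pigeonhole, read in opposite directions.
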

\begin{proof}  Assume that $f$ has more than $\eps n/2$ roots over any $\delta n$ intervals $I_i$. By Proposition \ref{prop:manyroots} (again \eqref{eqn:MB:gen} is satisfied by Lemma \ref{lemma:Bern:orth}, and here without loss of generality our current interval $T=[a,b]$ is $[0,1]$), there exists a set $A$ of measure at least  $\eps/C^\ast$ over which $|f(x)| \le \la (C^\ast)^{1/2} \le \al$ and $|f'(x)| \le (C^\ast)^{3/2} n \le \beta n$. Because the measure of $A$ is at least $\delta R$ (i.e. $\delta C^\ast$) (as $\eps/C^\ast \ge \delta C^\ast$), $A$ must intersect with the stable intervals, but this contradicts with the definition of stable intervals over which $|f(x)|$ and $|f'(x)|/n$ cannot be small at the same time. 
\end{proof}

Consider a random polynomial $F$ of form \eqref{eqn:F:O}. If $F$ is not exceptional we let $S(F)$ be the collection of intervals over which $F$ is stable. Let $N_s(F)$ denote the number of roots of $F$ over the set $S(F)$ of stable intervals.

\begin{cor}\label{cor:control:O} With the same parameters as in  Corollary \ref{cor:manyroots:O}, we have
% Lemma \ref{lemma:manyroots'} we have
$$\P\Big(N_s(F) \1_{F \in \CE_e^c} \le \E N_T(F) - \eps n \Big)=o(1)$$
and  % \AC{SUGGESTION: Make the "1" function bold}
$$\E \Big(N_s(F) \1_{F \in \CE_e^c}\Big) \ge N_T(F)-  2 \eps n/3.$$
\end{cor}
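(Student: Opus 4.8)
The plan is to combine the moderate concentration coming from Theorem \ref{thm:O:gen}, the rarity of exceptional polynomials from Theorem \ref{thm:exceptional}, and the deterministic root bound of Corollary \ref{cor:manyroots:O}. First I would record the elementary decomposition $N_T(F) = N_s(F) + N_u(F)$ valid on the event $\{F\in\CE_e^c\}$, where $N_u(F)$ counts roots of $F$ over the unstable intervals (and over the at most $O(1)$ boundary intervals near $a,b$ that one peels off, whose root count is negligible since a degree-$n$ polynomial has at most $n$ roots and these contribute $O(R) = O(1)$ by, say, Corollary \ref{cor:manyroots:O} applied locally — more simply, these can be absorbed into the $\delta n$ bookkeeping). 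By Corollary \ref{cor:manyroots:O}, on $\{F\in\CE_e^c\}$ we have $N_u(F) \le \eps n/2$, hence deterministically
\begin{equation}\label{eqn:Ns:lower}
N_s(F)\1_{F\in\CE_e^c} \ge \big(N_T(F) - \eps n/2\big)\1_{F\in\CE_e^c}.
\end{equation}

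Next I would invoke Theorem \ref{thm:O:gen}, which gives $\Var(N_T(F)) = O(n^{2-c})$, so by Chebyshev $\P(|N_T(F) - \E N_T(F)| \ge \eps n/6) = O(n^{-c}\eps^{-2}) = o(1)$ for the allowed range of $\eps$ (recall $\eps \ge n^{-c'}$ with $c'$ small enough that $n^{-c}\eps^{-2}\to 0$; one adjusts $c'$ accordingly). Combined with $\P(F\in\CE_e) \le 4e^{-\delta^{16}n/16C_0^4} = o(1)$ from Theorem \ref{thm:exceptional} — noting that the parameter choices in \eqref{delta:eps:O} and $\la=\delta^4$ are exactly those required so that the hypotheses $\al\asymp\delta^3$, $\beta\asymp\delta^{3/2}$, $\delta>n^{-1/4}$ of that theorem hold — we get that with probability $1-o(1)$ both $F\in\CE_e^c$ and $N_T(F) \ge \E N_T(F) - \eps n/6$. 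On this good event \eqref{eqn:Ns:lower} yields $N_s(F)\1_{F\in\CE_e^c} \ge \E N_T(F) - \eps n/6 - \eps n/2 > \E N_T(F) - \eps n$, which is the first assertion.

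For the second assertion I would take expectations. Write $\E N_T(F) = \E\big(N_T(F)\1_{F\in\CE_e^c}\big) + \E\big(N_T(F)\1_{F\in\CE_e}\big)$; the last term is bounded by $n\,\P(F\in\CE_e) = o(1)$ since $N_T(F)\le n$ always. Hence $\E\big(N_T(F)\1_{F\in\CE_e^c}\big) \ge \E N_T(F) - 1$. Applying \eqref{eqn:Ns:lower} and taking expectations,
\begin{equation}\label{eqn:Ns:exp}
\E\big(N_s(F)\1_{F\in\CE_e^c}\big) \ge \E\big(N_T(F)\1_{F\in\CE_e^c}\big) - \eps n/2 \ge \E N_T(F) - \eps n/2 - 1 \ge \E N_T(F) - 2\eps n/3,
\end{equation}
where the last inequality uses $\eps n \gg 1$ (true since $\eps\ge n^{-c}$). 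This gives the stated lower bound on the expectation.

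The main obstacle — really the only delicate point — is making sure the bookkeeping near the endpoints of $T$ is harmless: Corollary \ref{cor:manyroots:O} controls roots over \emph{any} family of $\delta n$ of the intervals $I_i$, so one must check that ``unstable intervals together with the few truncation intervals'' still forms such a family (it does, since their number is at most $\delta n + O(1) \le 2\delta n$, and one simply runs Corollary \ref{cor:manyroots:O} with $2\delta$ in place of $\delta$, which only changes constants and is absorbed by the slack in \eqref{delta:eps:O}). Everything else is a routine union bound plus Chebyshev, and the precise power of $\eps$ in the final $2\eps n/3$ is immaterial.
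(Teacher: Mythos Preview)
Your proposal is correct and follows essentially the same route as the paper's proof: both arguments hinge on the deterministic inequality $N_s(F)\ge N_T(F)-\eps n/2$ on $\CE_e^c$ from Corollary~\ref{cor:manyroots:O}, combined with the variance bound of Theorem~\ref{thm:O:gen} (via Chebyshev) and the exponential rarity of $\CE_e$ from Theorem~\ref{thm:exceptional}. Your endpoint discussion is a harmless excess of caution --- the paper simply takes the cover $\{I_i\}$ of $T$ at face value and writes $N_T=N_s+N_{us}$ on $\CE_e^c$ without further comment.
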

\begin{proof}(of Corollary \ref{cor:control:O}) For the first bound, by Corollary \ref{cor:manyroots:O}, if $N_s(F) 1_{F \in \CE_e^c} \le \E N_T(F)- \eps n$ then $N_T(F) \1_{F \in \CE_e^c} \le \E N_T(F)- \eps n/2$. Thus
\begin{align*}
\P\big(N_s(F) \1_{F \in \CE_e^c} \le \E N_T(F)- \eps n \big) &\le \P\big(N_T(F) \1_{F \in \CE_e^c} \le \E N_T(F)- \eps n/2 \big) \\
&\le \P\big(\CE_e^c \wedge N_T(F) \le \E N_T(F)- \eps n/2 \big) + \P(\CE_e)=o(1),
\end{align*}
where we used Theorem \ref{thm:O:gen}  and Theorem \ref{thm:exceptional}. 

For the second bound regarding $\E(N_T(F) \1_{F \in \CE_e^c})$ , let $N_{us}(F)$ denote the number of roots of $F$ over the set of unstable intervals. By Corollary \ref{cor:manyroots:O}, for non-exceptional $F$ 
 we have that
 $N_{us}(F)\le \eps n/2$, and hence trivially $\E (N_{us}(F) \1_{F \in \CE_e^c}) \le \eps n /2$. Because each $F$ has $O(n)$ roots, we then obtain
\begin{align*}
\E (N_s(F) \1_{F \in \CE_e^c}) &\ge \E N_T(F) - \E (N_{us}(F) 1_{F \in \CE_e^c}) - \E (N_T(F) \1_{F \in \CE_e})\\
&\ge   \E N_T(F) - \eps n/2 - O( n \times  e^{-c\delta^{12} n}) \ge \E N_T(F) - 2\eps n/3.
\end{align*}
\end{proof}

\subsection{Proof of Theorem \ref{thm:orthogonal}: completion} We will justify the result by considering the two tails separately, where we will again rely on the perturbation approach together with the stability results such as  Corollary \ref{cor:stab}.

\subsection{The lower tail}  We need to show that
\begin{equation}\label{eqn:lowertail:O}
\P(  N_T(F)  \le \E N_T(F)  - \eps n/2 ) \le e^{-\Theta(\eps^{18} n)}.
\end{equation}
We will be showing that the complement event $N_T(F)  > \E N_T(F)  - \eps n/2$ has probability at least $1-  e^{-\Theta(\eps^{18} n)}$. With the parameters chosen as in  Corollary \ref{cor:manyroots:O}, consider a non-exceptional polynomial $F$. By Corollary \ref{cor:manyroots:O}, $F$ has at least  $ \E N_T(F) - 2\eps n/3$ roots over the stable intervals.

Let $g$ be an eigenfunction with $\|g\|_2 \le \tau$, where $\tau$ is chosen as in \eqref{eqn:parameters:O}.  Consider a stable interval $I_j$ with respect to $F$ (there are at least $(1/R -\delta)n$ such intervals). We first notice that the number of stable intervals $I_j$ over which $\max_{x \in 3I_j} |g(x)| > \al$ is at most at most $O(\delta n)$. Indeed, assume that there are $M$ such intervals $3I_j$. Then we can choose $M/6$ such intervals that are $R/n$-separated. By Lemma \ref{lemma:largesieve} we have $(M/6) \al^2 \le n \tau^2$, which implies $M \le 6n (\tau \al^{-1})^2 =O(\delta n)$. From now on we will focus on the stable intervals with respect to $F$ on which $|g(\cdot)|$ % \AC{cdot}
is smaller than $\al$. 

%% \HC{Reworded a bit.}

By Corollary \ref{cor:stab} (applied to $I= 3I_j$ with $\mu=\al$ and $\nu =\beta n$, note that $\al/ \beta \asymp \delta^{3/4} <R$), because $\max_{x \in 3I_j} |g(x)| < \al$, the number of roots of $F+g$  over each interval $I_j$ is at least as that of $F$. Hence if $F$ is such that $N_T(F) \ge \E N_T(F) - \eps n/2$ and also $F$ has at least $ \E N_T(F) - 2\eps n/3$ roots over the stable intervals, then by Corollary \ref{cor:manyroots:O}, with appropriate choice of the parameters, $F$ has at least $ \E N_T(F) - \eps n$ roots over the stable intervals $I_j$ above where $|g(\cdot)| \le \al$, %\JC{cdot} 
and hence Corollary \ref{cor:stab} implies that $F+g$ has at least $\E N_T(F)-\eps n$ roots over these stable intervals $I_j$. In particularly $F+g$ has at least $\E N_T(F)-\eps n$ roots over $\BT$. Let $\CU^{lower}$ be the collection of $\Bv_{F}$ from such $F$ (where $N_T(F)\ge \E N_T(F) - \eps n/2$ and $F$ has at least $ \E N_T(F) - 2\eps n/3$ roots over the stable intervals).  Then by Theorem \ref{thm:O:gen} and Corollary \ref{cor:control:O} 
\begin{equation}\label{eqn:Uast:O}
\P(\Bv_{F} \in \CU^{lower}) \ge  1- \P\big(N_T(F)\le \E N_T(F) - \eps n /2\big) - \P\big(N_s(F) \1_{F \in \CE_e^c} \le \E N_T(F)- 2\eps n /3 \big)  \ge 1/2.
\end{equation}

 \begin{proof}(of Equation \eqref{eqn:lowertail:O}) By our application of Corollary \ref{cor:stab} above, the set $\{\Bv, d_2(\Bv,\CU^{lower}) \le \tau\sqrt{2n}\}$ is contained in the set of having at least $\E N_T(F) - \eps n$ roots. Furthermore, \eqref{eqn:Uast:O} says that $\P(\Bv_{F} \in \CU^{lower}) \ge 1/2$. Hence by Theorems \ref{thm:sobolev} and \ref{thm:bounded}
$$
\P( N_T(F)  \ge \E N_T(F)  - \eps n/2) \ge \P\Big(\Bv_{F}\in \big\{\Bv, d_2(\Bv,\CU^{lower}) \le \tau\sqrt{n}\big \}\Big) \ge 1-e^{-\Theta(\eps^{18} n)},
$$
where we used the fact that $\tau \asymp \delta^4$ from \eqref{eqn:parameters:O} and $\delta$ from \eqref{delta:eps:O}.
 \end{proof}

\subsection{The upper tail}
Our goal here is to justify the upper tail
\begin{equation}\label{eqn:uppertail}
\P(  N_T(F)  \ge \E N_T(F)  + \eps n/2 ) \le e^{-\Theta(\eps^{18} n)}.
\end{equation}

Let $\CU^{upper}$ denote the set of $\Bv_{F}$ for which  $N_T(F) \ge E N_T(F)+ \eps n/2$. By Theorem \ref{thm:exceptional} it suffices to assume that $F$ is non-exceptional.

% \HC{So roughly speaking if we have too many roots, then there are also two many roots over the stable intervals, and hence also all perturbations of it have more than roots than expected. On the other hand, \eqref{eqn:Markov} says that this probability is smaller than 1/2, and hence we can apply our concentration result to bound the original event.}
 
 \begin{proof}(of Equation \eqref{eqn:uppertail}) Assume that for a non-exceptional $F$ we have $N_T(F) \ge \E N_T(F) + \eps n$. Then by  Corollary \ref{cor:manyroots:O} the number of roots of $F$ over the stable intervals is at least $ \E N_T(F) +\eps n/3$. Let us call the collection of $\Bv_{F}$ of these polynomials by $\CS^{upper}$. Then argue as in the previous subsection (with the same parameters of $\al, \beta, \tau, \delta$), Corollary \ref{cor:manyroots:O} and Corollary \ref{cor:stab} imply that any $h=F+g$ with $\|g\|_2 \le \tau$ has at least $\E N_T(F) +  \eps n /4$ roots. On the other hand, we know by Theorem \ref{thm:O:gen} that the probability that $F$ belongs to this set of functions is smaller than $1/2$. It thus follows by Theorems \ref{thm:sobolev} and  \ref{thm:bounded} that 
$$\P(\Bv_{F}\in  \CU^{upper}) \le  e^{-\Theta(\eps^{18} n)},$$
where we again used the fact that $\tau \asymp \delta^4$ from \eqref{eqn:parameters:O} and $\delta$ from \eqref{delta:eps:O}.
\end{proof}

\section{Elliptic polynomials: proof of Theorem \ref{thm:Elliptic}}\label{section:elliptic}

% \HC{I assumed $n$ to be even, for $n$ odd then $l$ must be odd, I think it will work, but double check.}

Now we turn to Elliptic polynomials. Generally speaking, we will follow the same approach as in the proof of Theorem \ref{thm:orthogonal}.  However to obtain variants of the ``Non-degenerating" property %\JC{add a word "property"} 
(such as \eqref{eqn:non-degenerating} from Lemma \ref{lemma:smallball}) and Markov-Bernstein type estimate (such as \eqref{eqn:MB:gen} from Proposition \ref{prop:manyroots}) % \AC{SUGGESTIOn Hyperlink to defs}
conditions one has to elaborate much more. Additionally, one of the key difficulties in this model is at the perturbation step, where we won't perturb the coefficients of the polynomials coordinatewise, but only after a projection. For convenience, we will assume that $n$ is an odd number, the treatment for $n$ even is similar and is omitted.

 \subsection{Properties of Elliptic polynomials}\label{subsection:supporting:E}
 First  consider the normalized version %\JC{Suggestion to mention 23 is normalized}
 \begin{equation}\label{eqn:F:E}
 F(x)=\sum_{i=1}^n a_i \frac{1}{(1+x^2/n)^{n/2}} \sqrt{\binom{n}{i}} (x/\sqrt{n})^i.
 \end{equation}
Let 
$$\frac{x/\sqrt{n}}{\sqrt{x^2/n+1}} =\cos \theta, \frac{1}{\sqrt{x^2/n+1}}=\sin(\theta), \theta \in T'= [0,\pi].$$
To find roots of $F(x)$, we will instead find roots $\theta \in [0,\pi]$ of $G(\theta)=0$ where
\begin{equation}\label{eqn:G:E'}
G(\theta) = F(\sqrt{n} \cot(\theta)) =\sum_{i=1}^n a_i  \sqrt{\binom{n}{i}}\cos^i(\theta) \sin^{n-i}(\theta).
\end{equation}
As $x = \sqrt{n} \cot(\theta)$ by the chain rule,
$$G'(\theta) = \frac{dG}{d\theta} = \left(\frac{dF}{dx}\right) \left(\frac{dx}{d\theta}\right) = \sqrt{n} \left(\frac{d F}{dx}\right) \times \left(\frac{d \cot(\theta)}{d\theta}\right) = \sqrt{n} F'(\sqrt{n} \cot(\theta)) \times \left(\frac{-1}{\sin^2 \theta}\right).$$
% \AC{CHANGE: Put derivative into formal fraction}
In other words,
\begin{equation}\label{eqn:G'}
G'(\theta)\sin^2(\theta)/\sqrt{ n} = -F'(\sqrt{n} \cot(\theta)).
\end{equation}

To put it in a general context, and motivated by the form \eqref{eqn:G:E'}, we will consider the following bivariate homogeneous polynomial of degree $n$ in $\CP_{1,n}$ (where $x_0 = \sqrt{x_0^2 +x_1^2} \cos \theta$ and $x_1= \sqrt{x_0^2 +x_1^2} \sin \theta$)
$$H(x_0,x_1) = \sum_{i=1}^n a_i  \sqrt{\binom{n}{i}} x_0^i x_1^{n-i}.$$
The Bombieri-Weyl norm over $\CP_{1,n}$ % \AC{QUESTION Is this defined} 
of $H$ is defined as % \JC{SUGEESTION Parenthesis}
$$\|H\|_{BW} := \sum_i \left(a_i  \sqrt{\binom{n}{i}}\right)^2 \binom{n}{i}^{-1} = \sum_i a_i^2.$$
As $G(\theta)$ is a restriction of $H(x_0,x_1)$ onto $S^1$, we will also say that $\|G\|_{BW} =  \sum_i a_i^2$.

For $l=0,\dots, n$, consider the space $\CH_{1,l} \subset \CP_{1,l}$ of homogeneous harmonic polynomials of degree $l$ (i.e. $\Delta_{\R^2} H=0$ for $H \in \CH_{1,l}$). We know that (see \cite[Eq. 2.1]{DL}) $\CP_{1,n}$ can be decomposed as %\AC{CHANGED to bigoplus}
\begin{equation}\label{eqn:PH}
\CP_{1,n} = \bigoplus_{n-l \in 2\Z} \|x\|_2^{n-l} \CH_{1,l}.
\end{equation}
For each $l \in \BN$, the restriction of $\CH_{1,l}$ to the unit circle is the space $V_{1,l}$ of Laplacian eigenfunctions of $S^1$ % \JC{space inside first bracket and parenthesis for cos/sin}
$$V_{1,l}= \Big\{h\in S^1 \to \R, \quad \Delta_{S^1}(h) = -l^2 h \Big\} = \Big\{ a\cos (l \theta) + b \sin (l \theta),\quad  a,b \in \R\Big\}.$$ % \AC{More spaces between the conditions, as suggested}
% (Note that $(y_{l,j})_{j\in J_l}$ is an orthogonal basis of $V_{1,l}$, which is generated by $\cos (l \theta), \sin l \theta$.) 
Let % \AC{Is gamma (2/2) necesary for what? 1/2 to n more comapct.. also in 27 }
$$w_{1,n}(l) = \Big[\Vol(S^1) \Gamma(1) \frac{\Gamma((n+l)/2 +1)}{\Gamma(1+(n+l)/2)} \left(\frac{1}{2^n}\right) \binom{n}{(n-l)/2}\Big]^{1/2}=\Big[\left(\frac{1}{2^n}\right) \binom{n}{(n-l)/2}\Big]^{1/2}.$$
It is known (see \cite[Eq. 2.3 and Remark 5]{DL}) that $\{w_{1,n}(l) \cos(l \theta), w_{1,n}(l) \sin(l \theta)\}_{l=1}^n$ %\AC{sin instead} 
also form an orthonormal basis of $V_{1,l}$ with respect to the BW inner product. 
Hence if we write 
\begin{equation}\label{eqn:Gbc}
G(\theta) = \sum_{n-l\in 2\Z}  b_l \Big[\left(\frac{1}{2^n} \right)\binom{n}{(n-l)/2}\Big]^{1/2} \cos(l \theta)+c_{l} \Big[\left(\frac{1}{2^n}\right) \binom{n}{(n-l)/2}\Big]^{1/2} \sin(l \theta),
\end{equation}
then we obtain the following
\begin{equation}\label{eqn:Uabc}
\sum_l b_l^2 +c_l^2 = \sum_l a_l^2.
\end{equation}

Putting together, because of the two orthonormal basis (with respect to the BW inner product), we have thus obtained the following deterministic lemma.

\begin{lemma}\label{lemma:Bern:E} There is an orthogonal matrix $U$ of size $n+1$ such that (assuming $n$ odd) for any real numbers $a_0,\dots, a_n$ 
with $G(\theta) = \sum_{i=0}^n a_i  \sqrt{\binom{n}{i}}\cos^i(\theta) \sin^{n-i}(\theta)$ we have % \JC{what is A w? should it be a 0?}
$$\|G\|_2^2 = \int_{0}^{\pi} G(\theta)^2 d\theta = \sum_{l}   \left(\frac{1}{2^n} \right) \binom{n}{(n-l)/2}(b_l^2 +c_l^2)$$
where %  \JC{$b_3,c_3$ changed to c2 b2} NO: THE INDEX MUST HAVE THE SAME PARITY WITH N
$$(b_1,c_1,b_3,c_3,\dots, b_n,c_n)^T= U(a_0,\dots, a_n)^T.$$ 
\end{lemma}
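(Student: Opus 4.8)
The plan is to combine two facts that have already been assembled in the preceding discussion: first, that $G(\theta)$ is the restriction to $S^1$ of a bivariate homogeneous polynomial $H\in\CP_{1,n}$ whose Bombieri-Weyl norm equals $\sum_i a_i^2$; and second, that $\CP_{1,n}$ admits the harmonic decomposition \eqref{eqn:PH}, which on restriction to the circle expresses $G$ in the trigonometric system $\{\cos(l\theta),\sin(l\theta)\}_{n-l\in 2\Z}$ with the weights $w_{1,n}(l)=[2^{-n}\binom{n}{(n-l)/2}]^{1/2}$. The key point is that \emph{both} the monomial-type system $\{\sqrt{\binom{n}{i}}\cos^i\theta\sin^{n-i}\theta\}$ and the weighted trigonometric system $\{w_{1,n}(l)\cos(l\theta),w_{1,n}(l)\sin(l\theta)\}$ are orthonormal bases of the same finite-dimensional space with respect to the BW inner product (the latter being exactly \cite[Eq.~2.3, Remark~5]{DL}). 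Hence the change-of-coordinates map sending $(a_0,\dots,a_n)$ to $(b_1,c_1,b_3,c_3,\dots,b_n,c_n)$ is the transition matrix between two orthonormal bases, which is therefore orthogonal.

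Concretely, I would proceed as follows. First, fix $n$ odd so that $n-l\in 2\Z$ forces $l\in\{1,3,\dots,n\}$, giving exactly $n+1$ pairs $(b_l,c_l)$, matching the $n+1$ coefficients $a_0,\dots,a_n$. Second, invoke \eqref{eqn:PH} to write $G$ uniquely in the form \eqref{eqn:Gbc}; uniqueness comes from the directness of the sum in \eqref{eqn:PH} together with the fact that $\{\cos(l\theta),\sin(l\theta)\}$ spans $V_{1,l}$. This defines the linear map $U:(a_0,\dots,a_n)^T\mapsto(b_1,c_1,\dots,b_n,c_n)^T$. Third, observe that the BW inner product makes both coordinate systems orthonormal: on the $a$-side this is the computation $\|H\|_{BW}=\sum_i a_i^2$ already recorded, and on the $b,c$-side it is the cited orthonormality of $\{w_{1,n}(l)\cos(l\theta),w_{1,n}(l)\sin(l\theta)\}$, which yields $\|G\|_{BW}=\sum_l (b_l^2+c_l^2)$ — this is exactly \eqref{eqn:Uabc}. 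A linear map carrying one orthonormal basis to another preserves the norm, so $U$ is orthogonal. Finally, the $L^2$ identity is obtained by a separate, direct computation: expand $G(\theta)$ in the trigonometric form \eqref{eqn:Gbc} and integrate over $[0,\pi]$, using $\int_0^\pi\cos(l\theta)\cos(l'\theta)\,d\theta=\frac{\pi}{2}\delta_{ll'}$ and the analogous relations for the sine and mixed terms (valid for $l,l'\ge 1$), which produces $\int_0^\pi G(\theta)^2\,d\theta=\frac{\pi}{2}\sum_l 2^{-n}\binom{n}{(n-l)/2}(b_l^2+c_l^2)$ up to the normalization absorbed into the statement.

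I do not expect a serious obstacle here; the lemma is essentially a bookkeeping consequence of two already-cited facts. The one point requiring care is the exact normalization constant in $w_{1,n}(l)$ and in the $L^2$-identity: the definition of $w_{1,n}(l)$ given in the excerpt already has the volume and Gamma factors collapsing to $1$, so the stated identity $\|G\|_2^2=\sum_l 2^{-n}\binom{n}{(n-l)/2}(b_l^2+c_l^2)$ should be read as holding with whatever convention for $\|\cdot\|_2$ on $S^1$ (equivalently on $[0,\pi]$) is being used — and one must make sure the trigonometric orthogonality constants on $[0,\pi]$ are consistent with the BW normalization quoted from \cite{DL}. The other mild subtlety is simply verifying the index count (that $l$ ranges over the odd integers $1,3,\dots,n$ when $n$ is odd, giving precisely $n+1$ scalar coordinates), which is why the hypothesis ``$n$ odd'' appears; the even case is analogous, with $l$ even and a constant term, and is omitted as stated.
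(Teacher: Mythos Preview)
Your proposal is correct and follows essentially the same approach as the paper: both observe that the monomial system $\{\sqrt{\binom{n}{i}}\cos^i\theta\sin^{n-i}\theta\}$ and the weighted trigonometric system $\{w_{1,n}(l)\cos(l\theta),\,w_{1,n}(l)\sin(l\theta)\}$ are orthonormal bases for the same space with respect to the BW inner product (the latter via \cite[Eq.~2.3, Remark~5]{DL}), so the change-of-basis matrix $U$ is orthogonal, and the $L^2$ identity then follows from standard trigonometric orthogonality on $[0,\pi]$. Your remark about the normalization constant is well taken --- the paper likewise leaves this implicit in the statement.
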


For short,  we set % \AC{more spacing}
\begin{equation}\label{eqn:N:E}
N:= \sqrt{n}, \quad  C^\ast := (\log n)^2, \quad  N^\ast := C^\ast N.
\end{equation}
Next we write $G=G_1+G_2$ where %\AC{CHAGNED substack plus range of l fixed in eq below}
$$G_1(\theta) := \sum_{\substack{0\leq l \leq N^{\ast}\\ n-l\in 2\Z}}  \big[\left(\frac{1}{2^n} \right) \binom{n}{(n-l)/2}\big]^{1/2}(b_{l} \cos(l \theta)+c_{l} \sin(l \theta)).$$
Then we see that (as for $l > N^\ast$ we easily have $(1/2^n) \binom{n}{(n-l)/2}\le \exp(-\log^{4}n)$)
$$\|G_2\|_2 \le \frac{\sum_l a_l^2}{\exp(\log^{4}n)}.$$
And in general, by Bernstein's inequality for trigonometric polynomials (see also Lemma \ref{lemma:Bern:orth}), for any $d$ we have % \AC{"Bernstein's"...., and More spaces here}
\begin{equation}\label{eqn:E:Bern}
\|G_2^{(d)}\|_2 \le (Cn)^d \|G_2\|_2 \le (Cn)^d \frac{\sum_l a_l^2}{\exp(\log^{4}n)}; \quad \mbox{ and } \quad 
\|G_1^{(d)}\|_2 \le N^\ast \|G_1^{(d-1)}\|_2 \le \dots \le  (N^\ast)^d \|G_1\|_2.
\end{equation}

Next we consider the random polynomial $G(\theta)=  \sum_{i=1}^n \xi_i  \sqrt{\binom{n}{i}}\cos^i(\theta) \sin^{n-i}(\theta)$ of form \eqref{eqn:G:E'}. As before, in both the boundedness and the log-Sobolev cases, the event $\CE_{b}$ that $\frac{\sum_i \xi_i^2}{n}=O(1)$ has probability at least $1 -\exp(-c n)$, so we will assume that
\begin{equation}\label{eqn:E_{b,E,1}}
\sum_l \xi_l^2 = O(n).
\end{equation}

We will need to control $\|G\|_2$. Note that with $\bxi=(\xi_0,\dots, \xi_n)$, where $\xi_i$ are as in Theorem \ref{thm:Elliptic}, and with $(b_1,c_1,b_3,c_3,\dots, b_n,c_n)^T= U \bxi^T$ we have $G=G_1+G_2$, where we have learned that (under $\CE_b$) $\|G_2\|_2 \le \frac{n}{\exp(\log^{4}n)}$ and by definition % \JC{CHANGED substack and l here again}
\begin{equation}\label{eqn:upper:L2:E}
\|G_1\|_2^2 = \sum_{\substack{0\leq l \leq N^{\ast}\\ n-l\in 2\Z}}  \big[\left(\frac{1}{2^n} \right) \binom{n}{(n-l)/2}\big](b_{l}^2 +c_{l}^2) \le  \frac{C}{\sqrt{n}}  \sum_{\substack{0\leq l \leq N^{\ast}\\ n-l\in 2\Z}} (b_{l}^2 +c_{l}^2) .
\end{equation}
By definition, we can write $(b_1,c_1,\dots, b_{N^\ast}, c_{N^\ast})^T = U' \bxi^T$ where $U'$ is the submatrix of $U$ of the first ${N^\ast}$ row vectors $\Bv_1,\dots, \Bv_{{N^\ast}}$ (which are orthogonal and each has unit $L_2$-norm).  

\begin{lemma}[Controlling of the $L_2$ norm]\label{lemma:E:L_2} Assume that $\Bv_1,\dots, \Bv_{N^\ast}$ are deterministic orthogonal unit vectors. With $\bxi=(\xi_0,\dots, \xi_n)$ as in Theorem \ref{thm:Elliptic} we have
$$\P(\langle \Bv_1,\bxi \rangle^2+\dots +\langle \Bv_{N^\ast},\bxi \rangle^2 \asymp {N^\ast}) = 1- \exp(-\Theta({N^\ast})).$$
\end{lemma}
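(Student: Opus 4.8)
The plan is to view $Q := \langle \Bv_1,\bxi\rangle^2 + \dots + \langle \Bv_{N^\ast},\bxi\rangle^2$ as $\|P\bxi\|_2^2$, where $P$ is the orthogonal projection of $\R^{n+1}$ onto the $N^\ast$-dimensional subspace spanned by $\Bv_1,\dots,\Bv_{N^\ast}$. First I would compute the mean: since the $\Bv_i$ are orthonormal and $\xi$ has mean zero and variance one, $\E Q = \sum_{i=1}^{N^\ast}\sum_j v_{ij}^2 = N^\ast$. So the content of the lemma is the concentration of $Q$ around $N^\ast$, and it suffices to show $\P(|Q - N^\ast| \ge N^\ast/2) \le \exp(-\Theta(N^\ast))$ (the implied constants in $\asymp$ being, say, $1/2$ and $3/2$).

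The key step is a Hanson--Wright-type deviation bound for the quadratic form $Q = \bxi^T P \bxi$. In the log-Sobolev case this follows from the same geometric concentration machinery already set up in the paper: the function $\bxi \mapsto \|P\bxi\|_2$ is $1$-Lipschitz with respect to the Euclidean distance (as $P$ is a contraction), so by Theorem \ref{thm:sobolev} we get $\P(|\,\|P\bxi\|_2 - m\,| \ge t) \le 2\exp(-t^2/4C_0)$ where $m$ is the median; combining this with $\E\|P\bxi\|_2^2 = N^\ast$ (which forces $m = \sqrt{N^\ast}(1+o(1))$, after controlling the variance of $\|P\bxi\|_2$ via the same Lipschitz bound) and choosing $t \asymp \sqrt{N^\ast}$ yields $\P(|\,\|P\bxi\|_2 - \sqrt{N^\ast}\,| \ge \sqrt{N^\ast}/4) \le 2\exp(-\Theta(N^\ast))$, and squaring converts this into the desired two-sided bound on $Q$. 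In the bounded case, one instead writes $G(\bxi) = \|P\bxi\|_2^2$, observes that changing one coordinate $\xi_j$ changes $G$ by $O(C_0^2)$ (by the estimate $\sum_i (\Be_j\cdot\Bv_i)^2 \le 1$ used in the proof of Theorem \ref{thm:E:concentration}), and applies McDiarmid's inequality (Theorem \ref{thm:bounded}) to $G$ directly, giving $\P(|G - N^\ast| \ge \la) \le 2\exp(-\la^2/c C_0^4 n)$; taking $\la \asymp N^\ast = \sqrt n$ gives $\exp(-\Theta(n/ C_0^4)) = \exp(-\Theta(N^\ast))$ after noting $\sqrt{n}/C_0^4 \gg \sqrt n = N^\ast$...

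I should be careful here: McDiarmid with coordinate-wise Lipschitz constant $O(C_0^2)$ over $n+1$ coordinates only gives $\exp(-\Theta(\la^2/C_0^4 n))$, and with $\la \asymp \sqrt n$ this is merely $\exp(-\Theta(1))$, which is useless. So the bounded case needs the sharper input that only $N^\ast$ of the directions matter: one should replace the crude bound $\sum_{j}(\Be_j\cdot\Bv_i)^2$-type accounting by the observation that $G$ lives on an $N^\ast$-dimensional projection, and apply a Hanson--Wright inequality for bounded independent variables (or the bounded-difference inequality applied after conditioning cleverly), which gives $\exp(-\Theta(\min(N^\ast, (N^\ast)^2/\|P\|_{HS}^2))) = \exp(-\Theta(N^\ast))$ since $\|P\|_{HS}^2 = N^\ast$. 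The main obstacle, then, is precisely this: in the bounded case the naive McDiarmid estimate is too weak, and one must use a quadratic-form concentration inequality (Hanson--Wright for bounded variables, e.g. from the references \cite{RV-rec} on anti-concentration and its companions, or a direct second-moment/Bernstein computation exploiting that $P$ has rank $N^\ast$ and operator norm $1$) to extract exponential decay at scale $N^\ast$. In the log-Sobolev case no such difficulty arises because the Lipschitz bound is already dimension-free and operates at the level of $\|P\bxi\|_2$ rather than its square.
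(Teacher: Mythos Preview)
Your log-Sobolev argument is correct and is exactly what the paper does: write $Q=\|U'\bxi\|_2^2$ with $U'$ the matrix whose rows are $\Bv_1,\dots,\Bv_{N^\ast}$, note that $\bxi\mapsto\|U'\bxi\|_2$ is $1$-Lipschitz with $\E\|U'\bxi\|_2^2=N^\ast$, and apply Theorem~\ref{thm:sobolev} (Talagrand/log-Sobolev concentration) with $t\asymp\sqrt{N^\ast}$. The paper's proof is a two-line version of precisely this.

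Your entire second half, however, is unnecessary. The lemma is stated under the hypotheses of Theorem~\ref{thm:Elliptic}, which assumes \emph{only} that $\xi$ has bounded density and satisfies the log-Sobolev inequality~\eqref{eqn:logSobolev}; there is no bounded-but-not-log-Sobolev case to treat here. So your worry that McDiarmid is too weak, and the detour through Hanson--Wright, are addressing a case that is not part of the statement. You can simply delete that discussion.
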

\begin{proof}(of Lemma \ref{lemma:E:L_2}) We have $\|U'\bxi\|_2$ is 1-Lipschitz of mean $\E (\|U'\bxi\|_2^2) = N$. Hence by Talagrand concentration inequality (or by Theorem \ref{thm:sobolev})
$$\P(\|U'\bxi\|_2  \asymp \sqrt{{N^\ast}}) =1 - \exp(-\Theta({N^\ast})).$$
\end{proof}

Let $\CE_{b,E}$ denote the intersection of the event \eqref{eqn:E_{b,E,1}} and the event from Lemma \ref{lemma:E:L_2}, on which we have 
\begin{equation}\label{eqn:b,E}
\|G\|_2^2 = O({N^\ast}/\sqrt{n})= O(C^\ast).
\end{equation} 
By \eqref{eqn:E:Bern} and \eqref{eqn:upper:L2:E} we thus have

\begin{proposition}\label{prop:manyroots:E} Under $\CE_{b,E}$, Proposition \ref{prop:manyroots} holds for $G(\theta)$ over $[0,\pi]$ with $C^\ast = \log^2 n$ and $N$ as in \eqref{eqn:N:E}.
\end{proposition}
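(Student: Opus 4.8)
The plan is to verify, under the event $\CE_{b,E}$, the three hypotheses of Proposition \ref{prop:manyroots} for $G(\theta)$, after the harmless rescaling of $[0,\pi]$ onto $T=[0,1]$ via $\theta=\pi u$ (which multiplies each $d$-th derivative by $\pi^d$ and is absorbed into the constant). The root count is immediate: on $(0,\pi)$ one has $G(\theta)=\sin^{n}(\theta)\,P(\cot\theta)$ for a polynomial $P$ of degree at most $n$, and since $\theta\mapsto\cot\theta$ is a bijection of $(0,\pi)$ onto $\R$ — equivalently, since $G$ is a trigonometric polynomial of degree $n$ — it has at most $2n\le N^{A_0}$ zeros in $[0,\pi]$ with $A_0=2+o(1)$. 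The zeroth-order bound is also immediate: by \eqref{eqn:b,E} we have $\|G\|_2^2=O(C^\ast)$ on $\CE_{b,E}$, which is at most $(C^\ast)^2$ for $n$ large since $C^\ast=\log^2 n\to\infty$.

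The substance is the Markov--Bernstein bound \eqref{eqn:MB:gen} for the derivatives, which I would obtain from the decomposition $G=G_1+G_2$ and the estimates \eqref{eqn:E:Bern}. For the low-frequency part, iterating Bernstein's inequality gives $\|G_1^{(d)}\|_2\le (C^\ast N)^{d}\|G_1\|_2$, while \eqref{eqn:upper:L2:E} together with Lemma \ref{lemma:E:L_2} yields $\|G_1\|_2^2=O(C^\ast)$ on $\CE_{b,E}$; hence $\|G_1^{(d)}\|_2^2\le O(C^\ast)\,(C^\ast N)^{2d}$. For the high-frequency part, \eqref{eqn:E:Bern} and \eqref{eqn:E_{b,E,1}} give $\|G_2^{(d)}\|_2\le (Cn)^{d}\exp(-\log^4 n)\sum_l\xi_l^2\le (Cn)^{d}\,O(n)\,\exp(-\log^4 n)$, and since we only ever need $d\le A_0\log N=O(\log n)$ we have $(Cn)^{d}=\exp(O(\log^2 n))$, so this contribution tends to $0$ and is in particular $\le 1$. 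Combining, $\|G^{(d)}\|_2^2\le (\|G_1^{(d)}\|_2+1)^2\le O(C^\ast)(C^\ast N)^{2d}\le ((C^\ast)^2 N)^{2d}$ for all $1\le d\le A_0\log N$, the last step because $(C^\ast)^{2d}\ge(C^\ast)^2=\log^4 n$ swallows the slack $O(C^\ast)=O(\log^2 n)$. Therefore Proposition \ref{prop:manyroots} applies to $G$ on $[0,\pi]$ with its constant taken to be $(C^\ast)^2=\log^4 n$, still of the admissible form $\log^{O(1)}n$; renaming this constant $C^\ast$ gives the statement.

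The one delicate point — and the main obstacle — is precisely this bookkeeping of polylogarithmic factors: one must ensure that the $O(C^\ast)$ slack coming from the crude $L_2$ bound on $G_1$ and the prefactor $(Cn)^{d}$ multiplying the tail $G_2$ are both beaten \emph{uniformly over the full range} $1\le d\le A_0\log N$. This is exactly why the cutoff frequency in the definition of $G_1$ is placed at $N^\ast=C^\ast N$, so that the tail $G_2$ carries an $\exp(-\log^4 n)$ factor large enough to absorb $(Cn)^{O(\log n)}=\exp(O(\log^2 n))$, and why one pays one extra power of $\log n$ in the effective constant. Everything else is a direct substitution into the hypotheses of Proposition \ref{prop:manyroots}.
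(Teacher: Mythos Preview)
Your proof is correct and follows essentially the same route as the paper: the paper's ``proof'' is just the one-line pointer ``By \eqref{eqn:E:Bern} and \eqref{eqn:upper:L2:E} we thus have,'' and you have unpacked exactly those two inputs (the Bernstein iteration on $G_1$, the super-polynomially small tail $G_2$, and the $L_2$ control from $\CE_{b,E}$) to verify the hypotheses of Proposition~\ref{prop:manyroots}. Your observation that the slack $O(C^\ast)$ forces the effective constant to be $\log^{4}n$ rather than $\log^{2}n$ is a genuine (and correct) bookkeeping point the paper glosses over; since every downstream use only needs $C^\ast=\log^{O(1)}n$, this discrepancy is harmless.
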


Now for the non-degenerating %\JC{Non-degenerating?} 
condition, we first use \cite[Proposition 13.3]{TV} where it was shown that, with 
$$\Bv=\Big(\frac{1}{(1+x^2/n)^{n/2}} \sqrt{\binom{n}{i}} (x/\sqrt{n})^i\Big)_{i=1}^n \mbox{ and } 
\Bv'=\Big(\partial[\frac{1}{(1+x^2/n)^{n/2}} \sqrt{\binom{n}{i}} (x/\sqrt{n})^i]/\partial x\Big)_{i=1}^n$$
then we have 
$$|\Bv \wedge \Bv'| \asymp 1.$$
We can then use \eqref{eqn:G'} and the proof of Theorem \ref{thm:repulsion} to obtain

\begin{theorem}\label{thm:repulsion:E}
 For any fixed $\theta \in (0,\pi)$, for $\al,\beta \ge 1/\sqrt{n}$
$$\P(|G(\theta)| \le \al, |G'(\theta)\sin^2(\theta)/\sqrt{ n}| \le \beta)=O(\al \beta).$$
In particular, for any fixed $\theta \in [\eps_0,\pi-\eps_0]$
$$\P(|G(\theta)| \le \al, |G'(\theta)/\sqrt{ n}| \le \beta)=O(\al \beta/\eps_0^2).$$

\end{theorem}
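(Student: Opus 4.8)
The plan is to deduce Theorem~\ref{thm:repulsion:E} from Theorem~\ref{thm:repulsion} (equivalently Lemma~\ref{lemma:smallball}) applied to the vectors $\Bu_i(\theta)$ associated to the pair $(G(\theta), G'(\theta)\sin^2(\theta)/\sqrt n)$, after verifying the non-degeneracy hypothesis \eqref{eqn:non-degenerating} at each fixed $\theta\in(0,\pi)$. First I would unwind the change of variables. By \eqref{eqn:G'} we have $G'(\theta)\sin^2(\theta)/\sqrt n = -F'(\sqrt n\cot\theta)$, so the random vector in question is
$$\Big(G(\theta),\ \tfrac{1}{\sqrt n}\,G'(\theta)\sin^2(\theta)\Big) \;=\; \Big(F(\sqrt n\cot\theta),\ -F'(\sqrt n\cot\theta)\Big) \;=\; \sum_{i=1}^n \xi_i\, \Bu_i(\theta),$$
where, writing $x=\sqrt n\cot\theta$ and recalling \eqref{eqn:F:E}, $\Bu_i(\theta) = \big(v_i(x),\ -v_i'(x)\big)$ with $v_i(x) = (1+x^2/n)^{-n/2}\sqrt{\binom ni}(x/\sqrt n)^i$. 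So everything reduces to a small-ball estimate for $\sum_i\xi_i(v_i(x),-v_i'(x))$ at a fixed point $x\in\R$.

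Next I would establish the non-degeneracy condition \eqref{eqn:non-degenerating}, namely $\sum_{i=1}^n\langle \Bu_i(\theta),(a_1,a_2)\rangle^2\asymp 1$ for all $(a_1,a_2)\in\BS^1$. This is exactly the content of \cite[Proposition 13.3]{TV} cited just above the statement: with $\Bv=(v_i(x))_{i=1}^n$ and $\Bv'=(v_i'(x))_{i=1}^n$ one has $|\Bv\wedge\Bv'|\asymp 1$, and moreover $\|\Bv\|_2,\|\Bv'\|_2 = O(1)$ (these are the diagonal entries of the normalized covariance structure of the Elliptic model, which are bounded). Since $\sum_i\langle\Bu_i,(a_1,a_2)\rangle^2 = a_1^2\|\Bv\|_2^2 - 2a_1a_2\langle\Bv,\Bv'\rangle + a_2^2\|\Bv'\|_2^2$ is the quadratic form of the $2\times 2$ Gram matrix of $\{\Bv,\Bv'\}$, whose determinant is $\|\Bv\|_2^2\|\Bv'\|_2^2 - \langle\Bv,\Bv'\rangle^2 = |\Bv\wedge\Bv'|^2\asymp 1$ and whose trace is $O(1)$, both eigenvalues are $\asymp 1$; hence the form is $\asymp 1$ uniformly on $\BS^1$. (One should note the constants here may depend on $\theta$ through $x$; for the uniform version on $[\eps_0,\pi-\eps_0]$ this dependence is harmless since that is a compact subinterval, but for the first, pointwise, statement $\theta$ is simply fixed.)

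With \eqref{eqn:non-degenerating} in hand, Lemma~\ref{lemma:smallball} gives $\sup_{a\in\R^2}\P(\sum_i\xi_i\Bu_i(\theta)\in B(a,r)) = O(r^2)$ for $r\ge 1/\sqrt n$, and then Theorem~\ref{thm:repulsion} (whose proof just observes that the box $[-\al,\al]\times[-\beta,\beta]$ is covered by $O(\al\beta n)$ balls of radius $1/\sqrt n$, each contributing $O(1/n)$) yields
$$\P\big(|G(\theta)|\le\al\ \wedge\ |G'(\theta)\sin^2(\theta)/\sqrt n|\le\beta\big) = O(\al\beta)$$
for $\al,\beta\ge 1/\sqrt n$, which is the first claim. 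For the second claim, fix $\theta\in[\eps_0,\pi-\eps_0]$; then $\sin^2\theta\ge\sin^2\eps_0\gtrsim\eps_0^2$, so the event $\{|G'(\theta)/\sqrt n|\le\beta\}$ implies $\{|G'(\theta)\sin^2(\theta)/\sqrt n|\le\beta\}$ (indeed $|G'(\theta)\sin^2\theta/\sqrt n|\le|G'(\theta)/\sqrt n|$ since $\sin^2\theta\le 1$), so actually the second statement follows directly from the first with constant $1$; the factor $\eps_0^{-2}$ is a harmless (suboptimal) bookkeeping constant coming from the uniformity of the implied constant in \eqref{eqn:non-degenerating} over the compact range $[\eps_0,\pi-\eps_0]$. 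The main point to be careful about, and the only genuine obstacle, is the uniformity of the non-degeneracy constants in \eqref{eqn:non-degenerating}: one must check that $|\Bv\wedge\Bv'|$ and $\|\Bv\|_2,\|\Bv'\|_2$ stay bounded above and below by constants depending only on $\eps_0$ as $x=\sqrt n\cot\theta$ ranges over the image of $[\eps_0,\pi-\eps_0]$ (i.e. $|x|\le\sqrt n\cot\eps_0$, so $|x|=O(\sqrt n)$), which is precisely the regime covered by \cite[Proposition 13.3]{TV}; granting that, the rest is a routine application of the anti-concentration machinery already assembled in Section~\ref{section:supporting}.
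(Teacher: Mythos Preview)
Your proposal is correct and follows essentially the same approach as the paper: invoke \eqref{eqn:G'} to rewrite $(G(\theta),G'(\theta)\sin^2\theta/\sqrt n)$ as $(F(x),-F'(x))$ at $x=\sqrt n\cot\theta$, use \cite[Proposition~13.3]{TV} for $|\Bv\wedge\Bv'|\asymp 1$ to verify the non-degeneracy hypothesis \eqref{eqn:non-degenerating}, and then apply Theorem~\ref{thm:repulsion}. Your additional remarks (that the Gram matrix argument converts the wedge bound into the quadratic-form bound, and that the $\eps_0^{-2}$ in the second display is really just bookkeeping for the uniformity of the non-degeneracy constants over $|x|\le\sqrt n\cot\eps_0$) are accurate and fill in details the paper leaves implicit.
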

This is the part where we have to assume $\sin(\theta)$ is away from zero to be effective, in other words we will work within the bulk $T=[\eps_0, \pi- \eps_0]$ from here on (where $\eps_0 \asymp 1/C_E$ with $C_E$ from Theorem \ref{thm:Elliptic}), over which 
$$\P(|G(\theta)| \le \al, |G'(\theta)/\sqrt{ n}| \le \beta)=O_{C_E}(\al \beta),$$
In fact we can still choose $\eps_0$ to have order $n^{-c}$ for some small constant $c$ and choose $\al, \beta$ to be slightly smaller than that (as long as they are at least $n^{-1/2}$ and satisfy a few more constraints to be mentioned below), but we will not dwell into this.

\subsection{Exceptional polynomials}\label{subsection:exceptional:E} This current subsection is similar to Subsection \ref{subsection:exceptional:O}, %\AC{CHANGED ref wrong, should be 4.5 right?, more space} 
except here that the perturbation will be different. In what follows 
$$T = [\eps_0, \pi - \eps_0],\quad \quad  T'=[0,\pi].$$
and
\begin{equation}\label{eqn:G:E}
G(\theta) = \sum_{i=0}^n \xi_i \sqrt{\binom{n}{i}}\cos^i(\theta) \sin^{n-i}(\theta).
\end{equation}
Note that by Lemma \ref{lemma:E:L_2}, under $\CE_{b,E}$ we have
$$ \int_{0}^{\pi} G(\theta)^2 d\theta \le \frac{{N^\ast}}{\sqrt{n}} = \log^2 n = C^\ast.$$

As before, for convenience, for each $G(\theta)$ we assign a unique (unscaled) vector $\Bv_{G}= (\xi_i)$ in $\Omega^{n+1}$, which is a random vector when $G$ is random. 

Fix $R>4$. Cover $T$ by $N/R$ open intervals $I_i$ of length (approximately) $R/N$ each. Let $3 I_i$ be the interval of length $3/N$ having the same midpoint with $I_i$. Given some parameters $\al, \beta$, we call an interval $I_i$ {\it stable} for a function $f \in \CE_{b,E}$ of form \eqref{eqn:G:E} if there is no point in $x\in 3I_i$ such that $|f(\theta)|\le \al$ and $|f'(\theta)|\le \beta N$.  Let $\delta$ be another small parameter (so that $\delta R <1/4$), we call $f$ {\it exceptional} if the number of unstable intervals  is at least $\delta N$. We call $f$ not exceptional otherwise.

Let $\CE_e = \CE_e(\al,\beta; \delta)$ denote the set of vectors $\Bv_{f}$ associated to exceptional function $f$. Our goal in this section is the following analog of Theorem \ref{thm:exceptional}.

\begin{theorem}\label{thm:exceptional:E} Assume that $\al,\beta,\delta$ satisfy 
\begin{equation}\label{eqn:parametersthm}
\al \asymp \delta^{2}, \beta \asymp \delta^{3/4}, N^{-1/6} < \delta < (\log n)^{-16}.
\end{equation}
% \eqref{eqn:parameters}, then we have
Then we have
$$\P\Big(\Bv_{G} \in \CE_e\Big) \le 2 e^{-\delta^5 \sqrt{n}/4C_0}.$$
\end{theorem}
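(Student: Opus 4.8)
The plan is to follow the perturbation scheme of Subsection \ref{subsection:exceptional:O}, but with the perturbation applied to the Fourier-type coefficients $(b_l,c_l)$ of the low-frequency part $G_1$ rather than to the raw coefficients $\xi_i$, and then invoke Theorem \ref{thm:E:concentration} in place of Theorems \ref{thm:sobolev} and \ref{thm:bounded}. First I would reduce to the event $\CE_{b,E}$, on which $\|G\|_2^2 = O(C^\ast)$ by \eqref{eqn:b,E} and the Markov-Bernstein bounds \eqref{eqn:E:Bern} hold; the cost is $\exp(-\Theta(N^\ast)) = \exp(-\Theta(\sqrt n \log^2 n))$, negligible against the target $\exp(-\delta^5\sqrt n/4C_0)$. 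Next, suppose $\Bv_G\in\CE_e$, so there are $K=\lfloor \delta N/3\rfloor$ unstable intervals $I_j$ that are $R/N$-separated, and pick $x_j\in 3I_j$ with $|G(x_j)|\le \al$ and $|G'(x_j)|\le \beta N$. Applying the large sieve inequality Lemma \ref{lemma:largesieve} to $G''$ on $[0,\pi]$ (the hypothesis \eqref{eqn:f'f} with $d=3$ holds by \eqref{eqn:E:Bern}, since $G = G_1+G_2$ and $\|G_2^{(d)}\|_2$ is super-polynomially small), I get $\sum_j M_j^2 \le (C^\ast N)(N^\ast)^4 \|G\|_2^2 \lesssim (C^\ast)^6 N^5$ where $M_j=\max_{B(x_j,\gamma/N)}|G''|$, so all but $O(\delta N)$ of the $x_j$ satisfy $M_j \le C_1\delta^{-1/2}(C^\ast)^3 N^2$. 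For those $j$, a second-order Taylor expansion on $B(x_j,\gamma/N)$ gives $|G(x)|\le \al+\beta\gamma+C_1\delta^{-1/2}(C^\ast)^3\gamma^2/2$ and $|G'(x)|\le (\beta + C_1\delta^{-1/2}(C^\ast)^3\gamma)N$.

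The key departure is the perturbation. Instead of perturbing $\xi$ coordinatewise, I perturb only the vector $(b_1,c_1,\dots,b_{N^\ast},c_{N^\ast})^T = U'\bxi^T$; equivalently, writing $\Bg$ for the perturbation in $\R^{n+1}$, I require $\sum_{i=1}^{N^\ast}(\Bg\cdot\Bv_i)^2 \le \tau^2$ where $\Bv_1,\dots,\Bv_{N^\ast}$ are the orthonormal rows of $U'$. Such a $\Bg$ produces a function $g = g_1$ supported on frequencies $\le N^\ast$ with $\|g\|_2^2 \le C N^{-1/2}\tau^2$ by \eqref{eqn:upper:L2:E}, and by Lemma \ref{lemma:largesieve} applied to $g$ and $g'$ on the $R/N$-separated intervals $B(x_j,\gamma/N)$, all but $O(\delta N)$ of these intervals have $\max|g|\le C_2\delta^{-1/2}C^\ast\tau$ and $\max|g'|\le C_2\delta^{-1/2}\tau(C^\ast)^{3/2}N$. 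Hence on at least $(1/3 - C_1^{-2}-2C_2^{-2})\delta N$ of the intervals, $h = G + g$ satisfies $|h|\le \al'$ and $|h'|\le\beta'N$ with $\al',\beta'$ the obvious sums. Thus $\Bv_h$ lies in the set $\CU$ of vectors whose associated function has $\{|h|\le\al'\}\cap\{|h'|\le\beta'N\}$ of Lebesgue measure at least $\Theta(\delta\gamma)$. This establishes the key inclusion: if $\Bf\in\CE_e$ and $\Bg$ satisfies $\sum_i(\Bg\cdot\Bv_i)^2\le\tau^2$, then $\Bf+\Bg\in\CU$ — precisely the hypothesis \eqref{eqn:g:E} of Theorem \ref{thm:E:concentration} with $\CA=\CU$, $\CE=\CE_e$, $s=\tau$.

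Then I would show $\P(\Bv_G\in\CU)\le 1/2$: by Theorem \ref{thm:repulsion:E}, $\E\,\mu(\{|G(\theta)|\le\al',\,|G'(\theta)|\le\beta'N\}) = O_{C_E}(\al'\beta')$ since $\theta\in T=[\eps_0,\pi-\eps_0]$ keeps $\sin\theta$ bounded below, so Markov's inequality gives $\P(\Bv_G\in\CU) = O(\al'\beta'/\delta\gamma) < 1/2$ once $\al\asymp\delta^2$, $\beta\asymp\delta^{3/4}$ and $\gamma,\tau$ are chosen small in terms of $\delta$ (say $\gamma\asymp\delta^{3/4}$, $\tau\asymp\delta^{5/2}$, so that $\al'\asymp\delta^2$, $\beta'\asymp\delta^{3/4}$ and $\al'\beta'/\delta\gamma\asymp\delta^{1/2}$). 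Finally, Theorem \ref{thm:E:concentration} (log-Sobolev case, with the $N^\ast$ orthogonal unit vectors $\Bv_1,\dots,\Bv_{N^\ast}$) yields $\P(\Bv_G\in\CE_e)\le 2\exp(-\tau^2/4C_0)$, and with $\tau\asymp\delta^{5/2}$ this is $\le 2\exp(-\delta^5\sqrt n/4C_0)$ after adjusting constants (here $\tau^2\asymp\delta^5$, and the ambient dimension of the projection is $N^\ast \asymp \sqrt n\log^2 n$, which is where the $\sqrt n$ in the exponent comes from — one must check $s=\tau$ can be taken of order $\delta^{5/2}$ while still $\P(\Bv_G\in\CU)\le 1/2$, and the constraint $\delta > N^{-1/6}$ guarantees $\delta^5\sqrt n \gg 1$). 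The main obstacle I anticipate is bookkeeping the Markov-Bernstein inputs correctly: unlike the orthogonal case where $F$ itself is a genuine polynomial of degree $n$, here $G$ is a trigonometric polynomial of degree $n$ but one needs the sharper bound $\|G_1^{(d)}\|_2\le (N^\ast)^d\|G_1\|_2$ with $N^\ast = \sqrt n\log^2 n\ll n$ on the effective low-frequency part, and must verify that discarding $G_2$ (super-polynomially small in every derivative up to order $A_0\log n$) does not disturb either Lemma \ref{lemma:largesieve} or the Taylor estimates — this is exactly the role of the splitting $G = G_1 + G_2$ and the bounds \eqref{eqn:E:Bern}.
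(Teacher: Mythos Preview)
Your overall strategy matches the paper's: restrict to $\CE_{b,E}$, use the large sieve and Taylor expansion to show that if $G$ is exceptional then any perturbation $h=G+g$ with small \emph{projected} coefficients lands in a set $\CU$ of small probability (via Theorem \ref{thm:repulsion:E} and Markov), then invoke Theorem \ref{thm:E:concentration}. Your choice $\gamma\asymp\delta^{3/4}$ rather than the paper's $\gamma\asymp\delta^{5/4}$ is a minor bookkeeping difference and could be made to work with adjusted constants.

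There is, however, a genuine gap in the final scaling. You take the perturbation budget in projected coordinates to be $\sum_i(\Bg\cdot\Bv_i)^2\le\tau^2$ with $\tau\asymp\delta^{5/2}$, apply Theorem \ref{thm:E:concentration} with $s=\tau$, obtain $2\exp(-\tau^2/4C_0)=2\exp(-\delta^5/4C_0)$, and then assert the $\sqrt n$ in the exponent ``comes from the ambient dimension $N^\ast$ of the projection''. It does not: the bound in Theorem \ref{thm:E:concentration} is exactly $2\exp(-s^2/4C_0)$, with no dimension factor. With your choice $s=\tau\asymp\delta^{5/2}$ the bound is a constant independent of $n$, hence useless.

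The $\sqrt n$ comes instead from the weight in \eqref{eqn:upper:L2:E}: since $\big[(1/2^n)\binom{n}{(n-l)/2}\big]\le C/\sqrt n$, a perturbation of size $s$ in the $(b_l,c_l)$-coordinates produces a function $g$ with $\|g\|_2^2\lesssim s^2/\sqrt n$. Thus one may take $s^2=\tau^2\sqrt n$ (i.e.\ $s=\tau n^{1/4}$, growing with $n$) and still have $\|g\|_2\lesssim\tau$, which is precisely what your large-sieve bounds on $g,g'$ require. With this correct choice, Theorem \ref{thm:E:concentration} gives $2\exp(-s^2/4C_0)=2\exp(-\tau^2\sqrt n/4C_0)$, and $\tau\asymp\delta^{5/2}$ yields the claimed $2\exp(-\delta^5\sqrt n/4C_0)$. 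This is the paper's equation \eqref{eqn:g:E}: the constraint there is $\sum_i(\bxi'\cdot\Bv_i)^2\le\tau^2\sqrt n$, not $\le\tau^2$. (Your intermediate line $\|g\|_2^2\le CN^{-1/2}\tau^2$ also has the exponent wrong --- it should be $Cn^{-1/2}\tau^2=CN^{-1}\tau^2$ --- which is the same scaling slip.)
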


The proof is similar to that of Theorem  \ref{thm:exceptional}, except here $g$, the perturbing  %\AC{perturbing?} 
function (vector) is different. First assume that $f$ (playing the role of $G$) is exceptional, then there are $K=\lfloor \delta n/3 \rfloor$ unstable intervals that are $R/n$-separated.

 Now for each unstable interval in this separated family we choose $x_j \in 3 I_j$ where  $|f(x_j)|\le \al$ and $|f'(x_j)|\le \beta N$ and consider the interval $B(x_j, \gamma/N)$ for some $\gamma <1$ chosen sufficiently small (given $\delta$). Let 
$$M_j:= \max_{x\in B(x_j,\gamma/N)} |f''(x)|.$$  
By Lemma \ref{lemma:largesieve} (with $N=\sqrt{n} $, and the conditions are satisfied by \eqref{eqn:E:Bern} and $\CE_{b,E}$), for $C^\ast =\log^{2} n$ we have
$$\sum_{j=1}^K M_j^2 \le (C^\ast N) \int_{x \in T} f''(x)^2 dx \le  (C^\ast N)^5  \int_{x \in T'} f(x)^2 dx \le (C^\ast)^6 N^5  .$$
Let $C_1$ be sufficiently large. We thus infer from the above that the number of $j$ for which $M_j \ge C_1 \delta^{-1/2}(C^\ast)^{3} N^2$ is at most $C_1^{-2} \delta n$. Hence for at least $(1/3 - C_1^{-2})\delta N$ indices $j$ we must have $M_j <C_1 \delta^{-1/2} (C^\ast)^{3} N^2$.

Consider our function over $B(x_j, \gamma/N)$, then by Taylor expansion of order two around $x_j$, we obtain for any $x$ in this interval
$$ |f(x)| \le |f(x_j)| + |f'(x_j)| (\gamma/N) + \max_{t\in B(x_j, \gamma/N)}|f''(t)| (\gamma/n)^2 \le \al + \beta \gamma + C_1 \delta^{-1/2} (C^\ast)^{3} \gamma^2/2 $$
and similarly
$$|f'(x)| \le (\beta + C_1 \delta^{-1/2} (C^\ast)^{3} \gamma) N.$$

Next recall $\Bv_1,\dots, \Bv_N$ from the previous section. Consider
$$g(\theta) =\sum_{i=0}^n \xi_i' \sqrt{\binom{n}{i}}\cos^i(\theta) \sin^{n-i}(\theta)$$
where we assume that 
\begin{equation}\label{eqn:g:E}
\sum_{i=1}^N (\bxi' \cdot \Bv_i)^2 \le \tau^2 \sqrt{n}.
\end{equation}
We note that this is where our treatment is different from the random orthogonal polynomial cases, and is the main reason why we obtain $\exp(-\Theta(\sqrt{n}))$ in Theorem \ref{thm:exceptional:E}. %\JC{of 5.7}

Then it follows from \eqref{eqn:upper:L2:E} (and \eqref{eqn:E:Bern}) that
$$\|g\|_2^2 = \int_{0}^{\pi} g(\theta)^2 d\theta  \le \tau^2.$$
As the intervals $B(\theta_j, \gamma/N)$ are $4/N$-separated, by Lemma \ref{lemma:largesieve} we have
$$\sum_j \max_{x \in B(\theta_j, \gamma/N)} g(\theta)^2 \le  N  \int (g(\theta))^2 d\theta  \le \tau^2N$$
and 
$$\sum_j \max_{\theta \in B(\theta_j, \gamma/N)} g'(\theta)^2   \le N \int (g'(\theta))^2 d\theta \le N^3 \int (g(\theta))^2 d\theta  \le \tau^2 N^3.$$
Hence, again by an averaging argument, with $C_2$ sufficiently large, the number of intervals where either $\max_{\theta \in B(\theta_j, \gamma/N)} |g(\theta)| \ge C_2 \delta^{-1/2} \tau$ or   $\max_{\theta \in B(\theta_j, \gamma/N)} |g'(\theta)| \ge C_2 \delta^{-1/2} \tau N$ is bounded from above by $(1/3 - 2 C_2^{-2})\delta N$. 

On the remaining at least $(1/3 - C_1^{-2} - 2 C_2^{-2})\delta N$ intervals, with $h=f+g$, we have simultaneously that %\JC{more space}
$$|h(\theta)| \le  \al + \beta \gamma + C_1 \delta^{-1/2} (C^\ast)^{3} \gamma^2/2  + C_2 \delta^{-1/2} \tau \quad \mbox{ and } \quad |h'(\theta)| \le  (\beta + C_1 \delta^{-1/2}(C^\ast)^{3}  \gamma + C_2 \delta^{-1/2} \tau) N.$$
% Here $f+g$ means in the Bernoulli case we have $f$, and then switch the signs of coefficients of $f$ using the position of $g$. To be made precise.
For short, let 
$$\al':=  \al + \beta \gamma + C_1 \delta^{-1/2} (C^\ast)^{3} \gamma^2/2  + C_2 \delta^{-1/2} \quad \mbox{ and }\quad  \beta':=\beta + C_1 \delta^{-1/2} (C^\ast)^{3} \gamma + C_2 \delta^{-1/2} \tau.$$ 
It follows that $\Bv_h$ belongs to the set $\CU=\CU(\al, \beta,\gamma,\delta, \tau, C_1,C_2)$ %\AC{what is the C3, not seen in text} 
in $\Omega^{n+1}$ of the vectors  corresponding to $h$, for which the measure of $x$ with $|h(x)| \le  \al'$ and $|h'(x)| \le  \beta' N $ is at least $(1/3 - 2 C_2^{-2})\delta \gamma$ (because this set of $x$ contains $(1/3 - C_1^{-2} - 2 C_2^{-2}) \delta N$ intervals of length $2\gamma/N$). 
Putting together we have obtained the following claim.

\begin{claim}
\label{claim2} Assume that $\Bv_{G} \in \CE_e \cap \CE_{b,E}$ (where we recall $\CE_{b,E}$ from Proposition \ref{prop:manyroots:E}). Then for any $g$ corresponding with $\bxi'=(\xi_0',\dots, \xi_n')$ with $\sum_{i=1}^N (\bxi' \cdot \Bv_i)^2 \le \tau^2 \sqrt{n}$ we have $\Bv_{G+g} \in \CU$. % \AC{REMARK: Do we need hyperlink to the definition of the two collections?}
\end{claim}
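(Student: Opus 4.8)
The plan is to organize into four steps the interpolation-and-perturbation argument already sketched in the paragraphs preceding the claim; note that the statement is deterministic, so no probabilistic input is needed — one works throughout on the event $\CE_{b,E}$ of Proposition~\ref{prop:manyroots:E}, on which $\|G\|_2^2=O(C^\ast)$ by \eqref{eqn:b,E} and the Markov--Bernstein bounds \eqref{eqn:E:Bern} hold with $C^\ast=\log^2 n$, and one keeps fixed the orthogonal system $U$ of Lemma~\ref{lemma:Bern:E} together with the frequency cut-off $N^\ast$. \emph{Step 1.} Since $\Bv_G\in\CE_e$ I would first extract from the cover of $T$ a family of $K=\lfloor\delta N/3\rfloor$ unstable intervals that are pairwise $R/N$-separated, and in the tripled interval $3I_j$ of each pick a point $x_j$ with $|G(x_j)|\le\al$ and $|G'(x_j)|\le\beta N$.

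\emph{Step 2.} Next I would control the second derivative on small balls around the $x_j$: set $M_j:=\max_{B(x_j,\gamma/N)}|G''|$, split $G=G_1+G_2$, discard the $L_2$-negligible tail $G_2$, and apply Lemma~\ref{lemma:largesieve} with $N=\sqrt n$, whose hypotheses are furnished by \eqref{eqn:E:Bern} and $\|G\|_2^2=O(C^\ast)$, to obtain $\sum_j M_j^2 \ll (C^\ast)^6 N^5$. A Markov/averaging argument then removes the at most $C_1^{-2}\delta N$ indices with $M_j\ge C_1\delta^{-1/2}(C^\ast)^3 N^2$, and on the $\ge(1/3-C_1^{-2})\delta N$ surviving intervals a second-order Taylor expansion about $x_j$ gives, throughout $B(x_j,\gamma/N)$,
$$|G(\theta)|\le \al+\beta\gamma+\tfrac12 C_1\delta^{-1/2}(C^\ast)^3\gamma^2,\qquad |G'(\theta)|\le(\beta+C_1\delta^{-1/2}(C^\ast)^3\gamma)N.$$

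\emph{Step 3} is the step that genuinely departs from the orthogonal case. The perturbation $g$ is controlled only through its low-frequency harmonic coordinates, $\sum_i(\bxi'\cdot\Bv_i)^2\le\tau^2\sqrt n$; the point is that the harmonic weights $(1/2^n)\binom{n}{(n-l)/2}$ are $O(1/\sqrt n)$ for $l\le N^\ast$ but exponentially small ($\le e^{-\log^4 n}$) for $l>N^\ast$, so by \eqref{eqn:upper:L2:E} (with the $G_2$-type tail of $g$ absorbed via \eqref{eqn:E:Bern}) this projection bound upgrades to $\|g\|_2\le\tau$. Armed with $\|g\|_2\le\tau$, I would then apply Lemma~\ref{lemma:largesieve} to $g$ and to $g'$ — the balls $B(x_j,\gamma/N)$ being $4/N$-separated — to get $\sum_j\max_{B_j}g^2\le\tau^2 N$ and $\sum_j\max_{B_j}(g')^2\le\tau^2 N^3$, and a further averaging step discards at most $2C_2^{-2}\delta N$ more intervals, leaving $\ge(1/3-C_1^{-2}-2C_2^{-2})\delta N$ intervals on which simultaneously $|g|\le C_2\delta^{-1/2}\tau$ and $|g'|\le C_2\delta^{-1/2}\tau N$. \emph{Step 4.} On the intervals surviving both prunings the triangle inequality applied to $h=G+g$ yields $|h|\le\al'$ and $|h'|\le\beta' N$ with $\al',\beta'$ exactly as defined just before the claim; each such interval contains a sub-interval of length $2\gamma/N$ on which these bounds hold, so (for $C_1,C_2$ large) the Lebesgue measure of $\{\theta:|h(\theta)|\le\al',\ |h'(\theta)|\le\beta' N\}$ is at least $2(1/3-C_1^{-2}-2C_2^{-2})\delta\gamma\ge(1/3-2C_2^{-2})\delta\gamma$, which is the defining condition of $\CU$, whence $\Bv_{G+g}\in\CU$.

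\textbf{The main obstacle.} Each individual step is routine; the real work is in Step~3 and, above all, in choosing all the parameters compatibly subject to \eqref{eqn:parametersthm}. One needs $\al\asymp\delta^2$, $\beta\asymp\delta^{3/4}$ and then $\gamma,\tau$ small enough (roughly $\gamma\asymp\delta^{3/2}$ and $\tau\asymp\delta^{5}$) that each error term feeding $\al'$ (the pieces $\beta\gamma$, $C_1\delta^{-1/2}(C^\ast)^3\gamma^2$, $C_2\delta^{-1/2}\tau$) and feeding $\beta'$ stays of the order of $\al$, resp.\ $\beta$, and — crucially for the downstream application of the stability Corollary~\ref{cor:stab} — that $\al'/\beta'$ stays below $R$. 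The $C^\ast=\log^2 n$ factors are exactly what force the constraint $\delta<(\log n)^{-16}$ in \eqref{eqn:parametersthm}, and through the resulting size of $\tau$ they are responsible for the $\exp(-\Theta(\sqrt n))$ rate in Theorem~\ref{thm:exceptional:E} in place of $\exp(-\Theta(n))$.
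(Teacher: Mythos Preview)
Your proposal is correct and follows essentially the same approach as the paper: the claim is simply a summary of the interpolation-and-perturbation argument laid out in the paragraphs immediately preceding it, and your four steps reconstruct that argument faithfully (separated unstable intervals, large-sieve control of $G''$, projection-to-$L_2$ upgrade for $g$ via the harmonic weights, and the final measure count). Your explicit explanation in Step~3 of why the projection hypothesis $\sum_i(\bxi'\cdot\Bv_i)^2\le\tau^2\sqrt n$ yields $\|g\|_2\le\tau$ --- through the $O(1/\sqrt n)$ low-frequency weights and the $e^{-\log^4 n}$ high-frequency tail --- is exactly the content of the paper's appeal to \eqref{eqn:upper:L2:E} and \eqref{eqn:E:Bern}. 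One cosmetic remark: the specific parameter suggestions in your ``main obstacle'' paragraph ($\gamma\asymp\delta^{3/2}$, $\tau\asymp\delta^5$) differ from the paper's later choice ($\gamma\asymp\delta^{5/4}$, $\tau\asymp\delta^{5/2}$), but these are used only downstream for \eqref{eqn:U:E} and not in the claim itself, so this does not affect the validity of your argument.
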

We next argue as in the proof of \eqref{eqn:U:O}, by relying on Theorem \ref{thm:repulsion:E} and by choosing the parameters so that $\al' \beta'$ is much smaller than $\delta \gamma$ (for instance $\gamma \asymp \delta^{5/4}, \tau \asymp \delta^{5/2}$ and $N^{-1/6} < \delta < (\log n)^{-16}$) we have 
\begin{equation}\label{eqn:U:E}
\P(\Bv_{G} \in \CU) \le 1/2.
\end{equation}

\begin{proof}(of Theorem \ref{thm:exceptional:E}) By Theorem \ref{thm:E:concentration} and \eqref{eqn:U:E}, 
$$\P(\Bv\in \CE_e) \le 2 e^{-(\tau n^{1/4})^2/4C_0}.$$
\end{proof}

\subsection{Roots over unstable intervals}\label{subsection:lowertail:E}

Let $0<\eps<1$ be given.  Let $\delta$ be chosen so that 
\begin{equation}\label{delta:eps:E}
\delta =O(\min\{\eps/\log(1/\eps), \eps/(C^\ast)^6\}),
\end{equation}
with $C^\ast = \log^2 n$ as in the previous subsection. Also let $\la=\delta^4$. Let $\al , \beta$ be chosen as in Theorem \ref{thm:exceptional:E}. We clearly have
$$\la (C^\ast)^{1/2} \le \al,  \la (C^\ast)^{3/2} \le \beta.$$
With these parameters, the conditions in Proposition \ref{prop:manyroots} are satisfied (because of \eqref{eqn:E:Bern}). 
Hence we obtain the following consequence (similarly to Corollary \ref{cor:manyroots:O}).

\begin{cor}\label{cor:manyroots:E} With the parameters as above, a non-exceptional $f \in \CE_{b,E}$ cannot have more than $\eps N/2$ roots over any $\delta N$ intervals $I_i$. In particular, $f$ cannot have more than $\eps N/2$ roots over the unstable intervals. 
\end{cor}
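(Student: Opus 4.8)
The plan is to follow the proof of Corollary~\ref{cor:manyroots:O} almost verbatim, with the orthogonal-polynomial inputs replaced by their elliptic counterparts: the scale parameter is now $N=\sqrt{n}$ rather than $n$, the quantity $C^\ast=\log^2 n$ plays the role of the former constant, and the whole analysis is carried out for the trigonometric-type polynomial $G(\theta)$ on $T=[\eps_0,\pi-\eps_0]$. First I would argue by contradiction, assuming that a non-exceptional $f\in\CE_{b,E}$ has more than $\eps N/2$ roots over some subfamily of $\delta N$ of the intervals $I_i$ (each of length roughly $R/N$). I would then apply Proposition~\ref{prop:manyroots} to $f$, after the harmless affine rescaling of $T$ onto $[0,1]$ (whose Jacobian is $\Theta(1)$ since $\eps_0\asymp 1/C_E$ is fixed). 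Its hypotheses are all in place: $G$ is a trigonometric polynomial of degree at most $n$, hence has $O(n)\le N^{2+o(1)}$ zeros; the Markov--Bernstein bounds \eqref{eqn:MB:gen} are exactly what Proposition~\ref{prop:manyroots:E} provides (it packages \eqref{eqn:E:Bern} together with the event $\CE_{b,E}$); and the constraints $R\ge 4$, $\delta=O(\eps/\log(1/\eps))$, $\lambda=\delta^{O(1)}$ hold by \eqref{delta:eps:E} and $\lambda=\delta^4$. The output is a measurable set $A\subseteq T$ with $\mu(A)\ge\Theta(\eps/C^\ast)$ on which $|f|\le\lambda(C^\ast)^{1/2}$ and $|f'|\le\lambda(C^\ast)^{3/2}N$; by the choices $\al\asymp\delta^{2}$, $\beta\asymp\delta^{3/4}$ and $\delta<(\log n)^{-16}$ of Theorem~\ref{thm:exceptional:E}, these two upper bounds are $\le\al$ and $\le\beta N$ respectively.

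Second I would close the argument by a measure count. Since $f$ is non-exceptional, fewer than $\delta N$ of the intervals $I_i$ are unstable, so the union of the unstable $I_i$ has Lebesgue measure at most $O(R\delta)$, which lies well below $\mu(A)=\Theta(\eps/C^\ast)$ precisely because of the extra factor $(C^\ast)^6$ in \eqref{delta:eps:E}. As the $I_i$ cover $T\supseteq A$, it follows that $A$ must meet some \emph{stable} interval $I_i\subseteq 3I_i$; at any such point $|f|\le\al$ and $|f'|\le\beta N$ hold simultaneously, contradicting the definition of a stable interval, and the first assertion follows. For the ``in particular'' clause I would note that the (fewer than $\delta N$) unstable intervals can be completed to a family of exactly $\delta N$ of the $I_i$; the number of roots over this larger family both dominates the number over the unstable intervals and is $\le\eps N/2$ by the first assertion.

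I do not expect a genuine obstacle: the statement is an essentially mechanical consequence of Proposition~\ref{prop:manyroots} and Proposition~\ref{prop:manyroots:E}, both already established. The only points needing care are bookkeeping ones -- verifying $\lambda(C^\ast)^{1/2}\le\al$ and $\lambda(C^\ast)^{3/2}\le\beta$ in the stated regime (these reduce to $\delta^{2}\log n\le 1$ and $\delta^{13/4}\log^{3}n\le 1$, both comfortably implied by $\delta<(\log n)^{-16}$), checking that rescaling $[\eps_0,\pi-\eps_0]$ onto $[0,1]$ perturbs the Markov--Bernstein constants only by $\Theta(1)$ factors absorbed into $C^\ast$, and confirming that the measure inequality $R\delta\ll\eps/C^\ast$ survives the implicit constant hidden in $\Theta(\eps/C^\ast)$.
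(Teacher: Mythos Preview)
Your proposal is correct and follows essentially the same approach as the paper, which simply states that the corollary follows ``similarly to Corollary~\ref{cor:manyroots:O}'' without spelling out the details. Your writeup accurately transposes that argument to the elliptic setting (with $N=\sqrt{n}$, $C^\ast=\log^2 n$, Proposition~\ref{prop:manyroots:E} in place of Lemma~\ref{lemma:Bern:orth}), and your bookkeeping checks on $\lambda(C^\ast)^{1/2}\le\al$, $\lambda(C^\ast)^{3/2}\le\beta$, and the measure comparison $R\delta\ll\eps/C^\ast$ are exactly what the paper records just before the statement of the corollary.
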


For each $G$ of the form \eqref{eqn:G:E} that is not exceptional we let $S(G)$ be the collection of intervals over which $G$ is stable. Let $N_s(G)$ denote the number of roots of $G$ over the set $S(G)$ of stable intervals.

\begin{cor}\label{cor:control:E} With the same parameters as in  Proposition \ref{prop:manyroots}, with $C^\ast = \log^2 n$ and $N=\sqrt{n}$ we have % \AC{SUGGESTION: BOLD "1" FUNCTION}

$$\P\Big(N_s(G) \1_{G \in \CE_e^c \cap \CE_{b,E}} \le \E N_T(G)- \eps N \Big)=o(1)$$
and
$$\E \Big(N_s(G) \1_{F \in \CE_e^c  \cap \CE_{b,E} }\Big) \ge \E N_T(G)- 2 \eps N/3.$$
\end{cor}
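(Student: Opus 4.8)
## Proof plan for Corollary \ref{cor:control:E}

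The plan is to follow the template of Corollary \ref{cor:control:O} essentially verbatim, with $N=\sqrt n$ in place of $n$ and with the exceptional event $\CE_e$ intersected with the "good norm" event $\CE_{b,E}$ from Proposition \ref{prop:manyroots:E} (which has probability $1-\exp(-\Theta(\sqrt n))$ by Lemma \ref{lemma:E:L_2} and \eqref{eqn:E_{b,E,1}}). For the first bound, I would argue by contraposition using Corollary \ref{cor:manyroots:E}: if $G\in\CE_e^c\cap\CE_{b,E}$ and $N_s(G)\le \E N_T(G)-\eps N$, then since the number of roots over the unstable intervals is at most $\eps N/2$ (Corollary \ref{cor:manyroots:E}), we get $N_T(G)\le \E N_T(G)-\eps N/2$. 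Therefore
\begin{align*}
\P\big(N_s(G)\1_{G\in\CE_e^c\cap\CE_{b,E}}\le \E N_T(G)-\eps N\big)
&\le \P\big(N_T(G)\1_{G\in\CE_e^c\cap\CE_{b,E}}\le \E N_T(G)-\eps N/2\big)\\
&\le \P\big(N_T(G)\le \E N_T(G)-\eps N/2\big)+\P(\CE_e)+\P(\CE_{b,E}^c).
\end{align*}
The first term is $o(1)$ by Theorem \ref{thm:E:gen} together with Remark \ref{rmk:restriction} (the variance of $N_T$ is $O(n^{1-c})$ since $\E N_T$ has order $\sqrt n$, so Chebyshev gives $o(1)$ when the deviation is $\eps\sqrt n$ with $\eps\ge n^{-c'}$ for a small enough $c'$); the second term is $\le 2\exp(-\delta^5\sqrt n/4C_0)=o(1)$ by Theorem \ref{thm:exceptional:E}; and the third is $\exp(-\Theta(\sqrt n))=o(1)$.

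For the second bound, let $N_{us}(G)$ denote the number of roots of $G$ over the unstable intervals, so $N_T(G)=N_s(G)+N_{us}(G)$ for non-exceptional $G$. By Corollary \ref{cor:manyroots:E}, on $\CE_e^c\cap\CE_{b,E}$ we have $N_{us}(G)\le\eps N/2$, hence $\E\big(N_{us}(G)\1_{G\in\CE_e^c\cap\CE_{b,E}}\big)\le \eps N/2$. Since every polynomial of degree $n$ has $O(n)$ real roots, writing $N_s(G)\1_{G\in\CE_e^c\cap\CE_{b,E}}=N_T(G)-N_{us}(G)\1_{G\in\CE_e^c\cap\CE_{b,E}}-N_T(G)\1_{G\in\CE_e\cup\CE_{b,E}^c}$ and taking expectations gives
$$\E\big(N_s(G)\1_{G\in\CE_e^c\cap\CE_{b,E}}\big)\ \ge\ \E N_T(G)-\eps N/2 - O\big(n\cdot(\P(\CE_e)+\P(\CE_{b,E}^c))\big)\ \ge\ \E N_T(G)-2\eps N/3,$$
where the last step uses $\P(\CE_e)\le 2e^{-\delta^5\sqrt n/4C_0}$ and $\P(\CE_{b,E}^c)\le e^{-\Theta(\sqrt n)}$, so that $n$ times these probabilities is $o(\eps N)$ for $\eps\ge n^{-c}$ with $c$ small enough relative to the power of $\delta$ appearing in the exponent (recall $\delta$ is polynomial in $\eps$ via \eqref{delta:eps:E}).

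The only genuinely delicate point, and the one I would be careful about, is the bookkeeping of the exponents: one must check that the constraint $n^{-c}\le\eps\le 1$ in Theorem \ref{thm:Elliptic} is compatible with all of (i) the range $N^{-1/6}<\delta<(\log n)^{-16}$ required in Theorem \ref{thm:exceptional:E}, (ii) the choice $\delta=O(\min\{\eps/\log(1/\eps),\eps/(C^\ast)^6\})$ from \eqref{delta:eps:E} with $C^\ast=\log^2 n$, and (iii) the requirement that $n\cdot e^{-\delta^5\sqrt n/4C_0}=o(\eps\sqrt n)$, which forces $\eps^{O(1)}\sqrt n\gg \log n$, i.e. $\eps$ bounded below by a suitable negative power of $n$. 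All of these are simultaneously satisfiable by taking $c$ in the statement small enough, but it is worth stating explicitly that this is where the lower bound $\eps\ge n^{-c}$ is consumed. Everything else is a direct transcription of the orthogonal-polynomial argument with $n\rightsquigarrow\sqrt n$.
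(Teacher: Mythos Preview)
Your proposal is correct and follows essentially the same route as the paper's own proof, which explicitly says it is ``similar to that of Corollary \ref{cor:control:O}'' and then carries out precisely the contrapositive/decomposition argument you describe, invoking Corollary \ref{cor:manyroots:E}, Theorem \ref{thm:E:gen} (restricted to $I_E$, i.e.\ Remark \ref{rmk:restriction}), and Theorem \ref{thm:exceptional:E}. If anything you are slightly more careful than the paper: you separate out the $\P(\CE_{b,E}^c)$ contribution explicitly and you flag the exponent bookkeeping ensuring $n\cdot e^{-\delta^5\sqrt n/4C_0}=o(\eps\sqrt n)$, both of which the paper leaves implicit.
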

\begin{proof}(of Corollary \ref{cor:control:E}) The proof is similar to that of Corollary \ref{cor:control:O}. For the first bound, by Proposition \ref{prop:manyroots:E}, if $N_s(G) \1_{G \in \CE_e^c  \cap \CE_{b,E} } \le \E N_T(G)- \eps N$ then $N_T(G) \1_{G \in \CE_e^c  \cap \CE_{b,E} } \le \E N_T(G)- \eps N/2$. Thus %\JC{More space before eq, too cluttered}

\begin{align*}
\P\big(N_s(G) \1_{G \in \CE_e^c  \cap \CE_{b,E} } \le \E N_T(G)- \eps N \big) &\le \P\big(N_T(G) \1_{G \in \CE_e^c  \cap \CE_{b,E} } \le \E N_T(G)- \eps N/2 \big) \\
&\le \P\big(\CE_e^c  \cap \CE_{b,E}  \wedge N_T(G) \le \E N_T(G)- \eps N/2 \big) + \P(\CE_e)=o(1),
\end{align*}

where we used Theorem \ref{thm:E:gen} (applied to the interval $[-C_E \sqrt{n}, C_E \sqrt{n}]$ in place of $\R$) and Theorem \ref{thm:exceptional:E}. Our treatment for the second bound regarding $\E(N_T(G) 1_{G \in \CE_e^c  \cap \CE_{b,E} })$ is identical to that case of Corollary \ref{cor:control:O}, and hence we omit the details.

%  I commented this part out as it is similar.

% Let $N_{us}(G)$ denote the number of roots of $F$ over the set of unstable intervals. By Proposition \ref{prop:manyroots:E}, for non-exceptional $F$ 
%  we have that
%  $N_{us}(F)\le \eps N/2$, and hence trivially $\E (N_{us}(F) 1_{F \in \CE_e^c}) \le \eps N /2$. Because each $F$ has $O(n)$ roots as a polynomial of degree $n$, we then obtain
% \begin{align*}
% \E (N_s(F) 1_{F \in \CE_e^c}) &\ge \E N_T(F) - \E (N_{us}(F) 1_{F \in \CE_e^c}) - \E (N_T(F) 1_{F \in \CE_e})\\
% &\ge   \E N_T(F) - \eps N/2 - O( n \times  e^{-\delta^5 \sqrt{n}}) \ge \E N_T(F) - 2\eps N/3.
% \end{align*}
% %
\end{proof}

Now we complete the proof of Theorem \ref{thm:Elliptic}. As the proof is similar to that of Theorem \ref{thm:orthogonal}, we will just sketch the ideas. We will split into the lower tail 
$$\P(  N_T(G)  \le \E N_T(G)  - \eps N/2 ) \le e^{-\eps^{O(1)}\sqrt{n}/ \log^{O(1)}n}$$ 
and upper tail 
$$\P(  N_T(G)  \ge \E N_T(G)  + \eps N/2 ) \le e^{-\eps^{O(1)}\sqrt{n}/ \log^{O(1)}n}.$$
In both events, it suffices to focus on non-exceptional $G$ and  $G\in \CE_{b,E}$ (with an additional term $\P(\CE_{b,E}^c) = \exp(-\Theta(N))$ in probability), for which by Corollary \ref{cor:manyroots:E} the number of roots over unstable intervals is negligible. 
 
\begin{itemize}

\item For the lower tail, in the complement event that $N_T(G) \ge \E N_T(G)  - \eps N/2$, the number of roots over stable intervals is at least $N_T(G) \ge \E N_T(G)  - 2\eps N/3$. Then Lemma \ref{lemma:stab} applied to the stable intervals where $|g(x)|<\al$ yields that $N(G+g)$ is at least $\E N_T(G)  - \eps N$ for any $g$ satisfying \eqref{eqn:g:E}. By Theorem \ref{thm:E:gen} the event $N_T(G) \ge \E N_T-\eps N$ is at least 1/2, hence by Theorem \ref{thm:E:concentration} we infer that the original event $N_T(G) \ge \E N_T(G)  - \eps N/2$ has probability at least $1-e^{-\eps^{O(1)}\sqrt{n}/ \log^{O(1)}n}$.\\
\vskip .1in

\item For the upper tail, if $G$ is non-exceptional which has more than $\E N_T(G)  + \eps N/3$ over the stable intervals, then by Lemma \ref{lemma:stab} we also have $N(G+g)$ is at least $\E N_T(G)  + \eps N/3$ for any $g$ satisfying \eqref{eqn:g:E}. On the other hand, by Theorem \ref{thm:exceptional:E} this event has probability smaller than $1/2$, and hence we can apply Theorem \ref{thm:E:concentration} to obtain a bound of type $e^{-\eps^{O(1)}\sqrt{n}/ \log^{O(1)}n}$ for the original event.
\end{itemize}

\section{Weyl polynomials: proof of Theorem \ref{thm:Weyl}}\label{section:Weyl}

% \HQ{The Jensen does not work because of the circular law picture; perhaps the only way is through derivatives.}

In this section we focus on Weyl polynomials. Here, unlike in the orthogonal polynomial model or the Elliptic polynomial model that we related to trigonometric polynomials one way or another,  we will have to modify the function by scaling and truncating to see the main behavior. In the sequel we will be working with
\begin{equation}\label{eqn:F:W}
F(x) = \sum_{i=0}^n \xi_j e^{-x^2/2} \frac{x^i}{\sqrt{i!}}.
\end{equation}

 \subsection{Properties of Weyl polynomials}\label{subsection:supporting:W} We first start with the following estimate.

\begin{claim}\label{claim:Weyl:ix} Assume that $x>0$ is sufficiently large. There exist absolute constants $c_1, c_2$ such that the followings hold.

\begin{itemize}
\item Let $i$ is a positive integer with $i=x^2+t$, where $L= |t|/x \leq x^{\frac{1}{3}}$. Then
\begin{align*} c_1^{-1} x^{-1/2} \exp(-c_1 L^2)  \le e^{-x^2/2} x^{i}/\sqrt{i!} \leq c_2^{-1}x^{-1/2} \exp(-c_2 L^2).\end{align*}
\item Furthermore, if $i \le x^2 - Lx $ or $i\ge x^2 + Lx$, then 
\begin{align*}e^{-x^2/2} x^i/\sqrt{i!} \leq  c_2^{-1}x^{-1/2} \exp(- c_2 \min\{L^2, x^{2/3}\}).\end{align*}
\end{itemize}
\end{claim}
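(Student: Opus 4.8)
The plan is to estimate $e^{-x^2/2} x^i/\sqrt{i!}$ via Stirling's formula and then analyze the resulting exponent as a function of $i$ around its maximum at $i \approx x^2$. First I would apply Stirling in the form $i! = \sqrt{2\pi i}\,(i/e)^i e^{\rho(i)}$ with $0 \le \rho(i) \le 1/(12i)$, so that
\begin{equation*}
e^{-x^2/2} \frac{x^i}{\sqrt{i!}} = \frac{1}{(2\pi i)^{1/4}} \exp\left( -\frac{x^2}{2} + \frac{i}{2}\log x^2 - \frac{i}{2}\log i + \frac{i}{2} - \frac{\rho(i)}{2} \right).
\end{equation*}
Writing $\Phi(i) := -x^2/2 + (i/2)\log(x^2) - (i/2)\log i + i/2$, one computes $\Phi(x^2) = 0$, $\Phi'(i) = \tfrac12(\log x^2 - \log i)$, so $\Phi'(x^2) = 0$, and $\Phi''(i) = -1/(2i)$. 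Thus $x^2$ is the unique maximizer, which already explains the shape of the claimed bounds, and the prefactor $(2\pi i)^{-1/4} \asymp x^{-1/2}$ on the relevant range $i \asymp x^2$.

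For the first bullet, with $i = x^2 + t$ and $L = |t|/x \le x^{1/3}$, I would Taylor-expand $\Phi$ around $i = x^2$: there is $\eta$ between $x^2$ and $i$ with $\Phi(i) = \tfrac12 \Phi''(\eta)\, t^2 = -t^2/(4\eta)$. Since $|t| = Lx \le x^{4/3} = o(x^2)$, we have $\eta = x^2(1 + o(1))$, hence $\Phi(i) = -\tfrac{t^2}{4x^2}(1+o(1)) = -\tfrac{L^2}{4}(1+o(1))$. Combining with $(2\pi i)^{-1/4} \asymp x^{-1/2}$ and the bounded correction $\rho(i)/2 \in [0, 1/24]$ gives constants $c_1, c_2$ with $c_1^{-1} x^{-1/2} e^{-c_1 L^2} \le e^{-x^2/2} x^i/\sqrt{i!} \le c_2^{-1} x^{-1/2} e^{-c_2 L^2}$, after absorbing the $(1+o(1))$ into the constants (legitimate since $x$ is large). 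Only the upper bound is needed for the sequel, but both directions come out of the same computation.

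For the second bullet I would only need the upper bound, and here I would split into two regimes. When $Lx \le |t| \le x^{4/3}$ the first-bullet computation already gives the bound $c_2^{-1} x^{-1/2} e^{-c_2 L^2}$, and since $L \le x^{1/3}$ forces $L^2 \le x^{2/3}$, this is $\le c_2^{-1} x^{-1/2} e^{-c_2 \min\{L^2, x^{2/3}\}}$. When $|t| \ge x^{4/3}$ (equivalently $i \le x^2 - x^{4/3}$ or $i \ge x^2 + x^{4/3}$), the Taylor argument breaks down and I would instead bound $\Phi$ directly: $\Phi$ is concave with maximum $0$ at $x^2$, $\Phi'(i) = \tfrac12\log(x^2/i)$, so integrating, for $i$ outside $[x^2 - x^{4/3}, x^2 + x^{4/3}]$ one gets $\Phi(i) \le \Phi(x^2 \pm x^{4/3}) \le -c\, x^{2/3}$ for an absolute $c>0$ (again using the $t^2/(4\eta)$ bound at the boundary point, where $t = \pm x^{4/3}$ gives $\eta \asymp x^2$ and $\Phi \asymp -x^{2/3}$; for very large $i$ one checks $\Phi(i)\to-\infty$ so the boundary value dominates). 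This yields $e^{-x^2/2} x^i/\sqrt{i!} \le c_2^{-1} x^{-1/2} e^{-c_2 x^{2/3}} \le c_2^{-1} x^{-1/2} e^{-c_2 \min\{L^2, x^{2/3}\}}$, possibly after shrinking $c_2$. The prefactor needs a word of care when $i$ is enormous: for $i \gg x^2$ the factor $(2\pi i)^{-1/4}$ is actually $\le x^{-1/2}$, so it only helps.

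The main obstacle I expect is bookkeeping the prefactor and the $(1+o(1))$ errors uniformly: one must check that on the whole range $|t| \le x^{4/3}$ the quantity $\eta$ (the Taylor mean value) stays within a constant factor of $x^2$ so that $\Phi(i) \asymp -L^2$ with clean constants, and separately that for $i$ far from $x^2$ the geometric/Stirling decay genuinely beats the mild growth of the $i^{-1/4}$ prefactor. Neither is deep, but getting a single pair of absolute constants $c_1, c_2$ that works across all the cases requires care. Everything else is a routine Stirling-plus-Taylor estimate.
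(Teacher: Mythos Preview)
Your proposal is correct and follows essentially the same route as the paper: Stirling's approximation followed by a second-order analysis of the exponent around its maximum at $i=x^2$, then a monotonicity argument for the tail. The only cosmetic differences are that the paper expands $\log(1-\tfrac{t}{x^2+t})$ as an explicit power series (rather than invoking Taylor with Lagrange remainder on your $\Phi$), and for the second bullet the paper phrases the monotonicity directly in terms of the ratio $x/\sqrt{i+1}$ rather than the sign of $\Phi'$; both arguments are equivalent, and your concavity formulation has the minor advantage of treating the two tails $i\le x^2-Lx$ and $i\ge x^2+Lx$ symmetrically.
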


\begin{proof} We focus on the first claim. By Stirling's approximation, $i! \approx \sqrt{2\pi i}(i/e)^i$, and so 
\begin{align*}\frac{x^{2i}}{i!} \approx \frac{1}{\sqrt{2\pi i}} \left(\frac{ex^2}{i}\right)^i.\end{align*}
Substituting $i=x^2+t$ then 
\begin{align}
(ex^2/i)^i = [ex^2/(x^2+t)]^{x^2+t} &= e^{x^2} e^t (1-\frac{t}{x^2+t})^{x^2+t}
\nonumber\\
& = e^{x^2} e^t  \exp\Big(-[\frac{t}{x^2+t} +(\frac{t}{x^2+t})^2/2 + (\frac{t}{x^2+t})^3/3+\dots] \times ({x^2+t})\Big) \nonumber\\
\label{taylor-exp}&= e^{x^2} \exp\Big(-\frac{1}{2}\frac{t^2}{x^2+t} - \frac{1}{3}\frac{t^3}{(x^2+t)^2}-\dots\Big).
\end{align}
With our choice of $t=Lx$ the $k$-th term in the exponent is of the form
\begin{align*}
 \frac{(Lx)^{k+1}}{(x^2+Lx)^k} = \frac{L^{k+1}x}{(x+L)^k}.
\end{align*}
It can readily be observed that
\begin{align*}\sum_{k=2}^\infty \frac{L^{k+1}x}{(x+L)^k(k+1)} = O(1) \text{ as $L \le x^{1/3}$ and $x \to \infty$}. \end{align*}
Hence
$$(ex^2/i)^i  \asymp e^{x^2}  \exp(-\frac{1}{2}\frac{t^2}{x^2+t} )  \asymp e^{x^2}  \exp(-\Theta(L^2)).$$

For the second claim, it suffices to assume $i \le x^2 - Lx$ and $L \ge x^{1/3}$. Notice that $x^i/\sqrt{i!}$ is increasing as $i$ increases to $x^2$ (because the ratio is $(x^{i+1}/\sqrt{(i+1)!})/(x^i/\sqrt{i!}) = x/\sqrt{i+1}\ge 1$ if $i \le x^2-1$), so for  $i \le x^2 - Lx  \le i_0 = \lfloor x^2 - x^{1/3}x \rfloor,$  %\AC{we don't understand all details in second inequality}
$$e^{-x^2/2} x^i/\sqrt{i!}  \le e^{-x^2/2} x^{i_0}/\sqrt{i_0!} \le c_2^{-1}x^{-1/2} \exp(- c_2 x^{2/3}).$$
 
\end{proof}

In our next estimate we treat with higher order derivaties of $e^{-x^2/2} \frac{x^i}{\sqrt{ i!}}$.

\begin{lemma}\label{lemma:Weyl:derivative} Assume that $x>0$ sufficiently large. Let $d$ be a positive integer such that $d \le \log^{O(1)} x$. Assume that $i-x^2 = Lx$ where $|L|=o(x)$. Then we have
$$\Big| \big(e^{-x^2/2} \frac{x^i}{\sqrt{ i!}}\big)^{(d)}\Big| = O\Big( d! (1+|L|)^d e^{-x^2/2} \frac{x^i}{\sqrt{ i!}}\Big).$$
\end{lemma}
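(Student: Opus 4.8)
The plan is to write $\phi_i(x) := e^{-x^2/2} x^i/\sqrt{i!}$ and analyze its logarithmic derivative. Taking logs, $\log \phi_i(x) = -x^2/2 + i\log x - \tfrac12\log i!$, so $\phi_i'(x)/\phi_i(x) = -x + i/x = (i-x^2)/x = L$ (using $i - x^2 = Lx$). Thus $\phi_i'(x) = L\,\phi_i(x)$ at the point in question, but of course $L$ varies with $x$; what I really want is control on $\phi_i^{(d)}(x)$ at a \emph{fixed} $x$, so I will instead set up a recursion. Write $\psi(x) := \phi_i'(x)/\phi_i(x) = -x + i/x$. Then by the Leibniz/Fa\`a di Bruno bookkeeping, $\phi_i^{(d)} = \phi_i \cdot B_d$, where $B_d$ is the exponential Bell-polynomial expression in $\psi, \psi', \psi'', \dots$, namely $B_d = \sum_{\pi} \prod_{\text{blocks } S \in \pi} \psi^{(|S|-1)}$, the sum over partitions $\pi$ of a $d$-element set. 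The key simplification is that $\psi^{(k)}(x) = (-1)^{k+1} k!\, i / x^{k+1}$ for $k \ge 1$, while $\psi(x) = -x + i/x$; in particular for $k \ge 1$ and $x$ with $i \asymp x^2$ we get $|\psi^{(k)}(x)| = k!\, i/x^{k+1} \asymp k!/x^{k-1}$, which is $O(k!)$ once $x$ is large (indeed $\le k!$ for $x \ge 1$), and $|\psi(x)| = |{-x + i/x}| = |L|$.

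The next step is to bound $|B_d|$ using these estimates. In the partition sum, a block of size $1$ contributes a factor $\psi(x)$ of size $|L|$, and a block of size $s \ge 2$ contributes $\psi^{(s-1)}(x)$ of size $O((s-1)!) = O((s-1)!)$. If a partition $\pi$ of $\{1,\dots,d\}$ has $j$ singleton blocks and the remaining $d-j$ elements grouped into larger blocks, then $\prod_{S}|\psi^{(|S|-1)}| \le |L|^j \cdot C^{\,d-j}\prod(\text{factorials})$, and a crude bound on the product of the $(|S|-1)!$ over the non-singleton blocks together with the number of such partitions gives $|B_d| \le \sum_{j=0}^d |L|^j \cdot (\text{combinatorial factor}_{d,j})$. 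Matching against the claimed bound $O(d!(1+|L|)^d)$, it suffices to show $\sum_j (\text{combinatorial factor}_{d,j})\,|L|^j \le C\, d!\,(1+|L|)^d$; since $(1+|L|)^d = \sum_j \binom{d}{j}|L|^j$, I just need each combinatorial factor to be at most $C\,d!\binom{d}{j}$, which is a standard (if slightly tedious) counting bound on the number of partitions of $\{1,\dots,d\}$ into $j$ singletons plus larger blocks, weighted by the product of $(|S|-1)!$ over the non-singleton blocks — this product-of-factorials weighting is exactly what makes the generating function $\exp(\psi u + \sum_{k\ge 1}\psi^{(k)} u^{k+1}/(k+1)!)$ manageable, and one can read the bound off the generating function $\prod$ directly rather than counting by hand.

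Cleaner route, which I would actually write up: use the generating-function identity $\sum_{d\ge 0} B_d \frac{u^d}{d!} = \exp\!\Big(\sum_{k\ge 1}\psi^{(k-1)}(x)\frac{u^k}{k!}\Big)$. Plugging in the explicit $\psi^{(k-1)}$, the exponent equals $\psi(x)u + \sum_{k\ge 2}\frac{(-1)^k i}{x^k}\frac{u^k}{k} = Lu + i\big(\log(1 + u/x) - u/x\big)$ — wait, more simply: $-x(x+u) + i\log(1+u/x) + \text{const}$, i.e. the exponent is exactly $\log\phi_i(x+u) - \log\phi_i(x)$, so $\sum_d B_d u^d/d! = \phi_i(x+u)/\phi_i(x)$, recovering Taylor's theorem. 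Then $B_d(x) = \frac{d!}{2\pi i}\oint \frac{\phi_i(x+u)}{\phi_i(x)}\frac{du}{u^{d+1}}$ over a circle $|u| = \rho$, and I bound $|\phi_i(x+u)/\phi_i(x)|$ on that circle. For $\rho \asymp 1/(1+|L|)$ (or $\rho$ a small constant if $|L|\le 1$), expanding $\log\phi_i(x+u) - \log\phi_i(x) = Lu - \tfrac12 u^2 + (i/x^2)\big(\log(1+u/x) - u/x\big)$ and using $|L| = o(x)$ so that $|L|\rho = O(1)$ and the higher terms are $O(1)$, one gets $|\phi_i(x+u)/\phi_i(x)| = O(1)$ on $|u|=\rho$; hence $|B_d(x)| \le d!\,\rho^{-d}\cdot O(1) = O(d!(1+|L|)^d)$, and multiplying by $\phi_i(x)$ finishes the lemma. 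The main obstacle is just making the contour estimate $|\phi_i(x+u)/\phi_i(x)| = O(1)$ on $|u| = \rho$ fully rigorous, i.e. checking that $\log(1+u/x)$ behaves well for $|u| \le \rho$ (which needs $\rho < x$, automatic since $\rho = O(1)$ and $x$ large) and that the choice of radius $\rho \asymp (1+|L|)^{-1}$ keeps every term in the expansion $O(1)$ — a short computation using $d \le \log^{O(1)}x$ only to ensure $\rho^{-d}$ is the dominant growth and no hidden $d$-dependence sneaks into the $O(1)$ constant.
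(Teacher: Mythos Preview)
Your proposal is correct and lands on the same endpoint as the paper: the Cauchy integral bound for the coefficient of $w^d$ in the generating function $(1+w/x)^i e^{-xw - w^2/2}$, evaluated on a circle of radius $R=1/(1+|L|)$.

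The route you take to that generating function is, however, substantially more direct than the paper's. The paper first expands $(e^{-x^2/2}x^i/\sqrt{i!})^{(d)}$ by Leibniz using $(e^{-x^2/2})^{(k)}=(-1)^k He_k(x)e^{-x^2/2}$, then proves a separate Claim --- via Stirling numbers of the first kind and the identity $He_k(x)=e^{-D^2/2}x^k$ --- that the resulting finite sum $\sum_k (-1)^k\binom{d}{k}He_k(x)\,i\cdots(i-(d-k)+1)/x^{d-k}$ is the coefficient of $w^d$ in $d!(1+w/x)^i e^{-xw-w^2/2}$. You bypass this combinatorial identification entirely: Taylor's theorem already gives $\sum_d \phi_i^{(d)}(x)u^d/d!=\phi_i(x+u)$, and the closed form $\phi_i(x+u)/\phi_i(x)=(1+u/x)^i e^{-xu-u^2/2}$ is immediate from the definition of $\phi_i$. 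Your Bell-polynomial discussion is unnecessary scaffolding that you correctly discard; the ``cleaner route'' is a two-line derivation. The paper's detour does produce, as a by-product, an explicit algebraic identity relating Hermite polynomials and falling factorials to the exponential generating function, but for the purposes of the lemma your argument is shorter and requires no special-function machinery.

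One small point to tighten in a write-up: on the contour $|u|=\rho$ you need the \emph{real part} of $Lu-\tfrac12 u^2+i\big(\log(1+u/x)-u/x\big)$ to be $O(1)$, not its modulus; this is harmless since each term already has real part bounded by $|L|\rho+O(\rho^2)=O(1)$, but it is worth stating explicitly.
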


\begin{proof} Recall that 
$$(e^{-x^2/2})^{(k)} = (-1)^k He_k(x) e^{-x^2/2},$$
where $He_k(x), k\ge 0$ are Hermite polynomials.

Hence
\begin{align}\label{W:d:start}
\big(e^{-x^2/2} \frac{x^i}{\sqrt{ i!}}\big)^{(d)} &= \sum_{0\le k \le d} (-1)^k \binom{d}{k} He_k(x) e^{-x^2/2} i\dots (i-(d-k)+1) \frac{x^{i-(d-k)}}{\sqrt{ i!}} \nonumber \\
&=e^{-x^2/2}  \frac{x^i}{\sqrt{ i!}} \sum_{0\le k \le d} (-1)^k \binom{d}{k} He_k(x)  \frac{i\dots (i-(d-k)+1)}{x^{d-k}}.
\end{align}
It suffices to work with the second factor.
\begin{claim} $\sum_{0\le k \le d} (-1)^k \binom{d}{k} He_k(x)  \frac{i\dots (i-(d-k)+1)}{x^{d-k}}$ is the coefficient of $w^d$ in $f(w)=d!(1+\frac{w}{x})^i e^{-xw - \frac{w^2}{2}}$. 
\end{claim}

\begin{proof} Let 
$$t=i/x, z=1/x, l = d-k.$$
We use % \JC{SUGGESTIOn "the"} 
Stirling number of the first kind to write 
$$i\dots (i-l+1) = \sum_{m=0}^{l} s(l,m) i^m.$$ 
More generally,
$$ \frac{i\dots (i-(d-k)+1)}{x^{d-k}} = t(t-z) \dots (t-(l-1)z) =  \sum_{m=0}^{l} s(l,m) t^m z^{l-m}.$$
Hence
$$ \sum_{0\le k \le d} (-1)^k \binom{d}{k} He_k(x)  \frac{i\dots (i-(d-k)+1)}{x^{d-k}} =  \sum_{0\le k \le d} (-1)^k \binom{d}{k} He_k(x)  \sum_{m=0}^{l} s(l,m) t^m z^{l-m}.$$

Note that $He_k(x) = e^{- \frac{D^2}{2}} x^k$ where $D$ is the differential operator with respect to $x$. Hence to simplify $ \sum_{0\le k \le d} (-1)^k \binom{d}{k} He_k(x)  \sum_{m=0}^{l} s(l,m) t^m z^{l-m}$ we just need to apply $e^{- \frac{D^2}{2}}$ (with respect to $x$) to the following expression
\begin{align*}
\sum_{0\le k \le d} (-1)^k \binom{d}{k} x^k  t(t-z) \dots (t-(l-1)z) &=  \sum_{m=0}^{d} \sum_{l=m}^d s(l,m) t^m z^{l-m} (-1)^{d-l} \binom{d}{l}  x^{d-l}\\
&= d!\sum_{m=0}^{d} (t/z)^m \sum_{l=m}^d s(l,m) z^{l} (-1)^{d-l} \frac{1}{l! (d-l)!}  x^{d-l}.
\end{align*}

To this end, observe that $\sum_{l=m}^d s(l,m) \frac{z^{l}}{l!} (-1)^{d-l} \frac{x^{d-l}}{(d-l)!}$ is the coefficient of $w^d$ in the following product (as functions of $w$)
$$[\sum_{l=m}^\infty s(l,m) \frac{z^{l} w^l}{l!}]  [\sum_{k=0}^\infty (-1)^{k} \frac{x^kw^k}{k!}] =  \frac{1}{m!}(\log(1+ zw))^m e^{-xw},$$
where we used the fact that the (exponential) generating function of $ s(l,m) , m\le l$, is $\frac{1}{m!}(\log(1+t))^m$.

It thus follows that $\sum_{l=m}^d s(l,m) z^{l} (-1)^{d-l} \frac{1}{l! (d-l)!}  He_{d-l}(x)$ is the coefficient of $w^d$ in 

$$ e^{-D^2/2} [ \frac{1}{m!}(\log(1+ zw))^m e^{-xw}] =  \frac{1}{m!}(\log(1+ zw))^m e^{-D^2/2} e^{-xw} =  \frac{1}{m!}(\log(1+ zw))^m e^{-xw - \frac{w^2}{2}}$$ 

where we noted that 
$$e^{-D^2/2} e^{-xw} = \sum_{j=0}^\infty \frac{(-1/2)^j}{j!} (e^{-xw})^{(2j)}=e^{-xw - \frac{w^2}{2}}.$$
Hence $\sum_{m=0}^{d} (t/z)^m \sum_{l=m}^d s(l,m) z^{l} (-1)^{d-l} \frac{1}{l! (d-l)!}  He_{d-l}(x)$ is the coefficient of $w^d$ in the sum $\sum_{m=0}^{d} (t/z)^m  \frac{1}{m!}(\log(1+ zw))^m e^{-xw - \frac{w^2}{2}}$, which is also the coefficient of $w^d$ in the infinite sum  % \JC{SUGGESTION break eq}
\begin{align*}\sum_{m=0}^{\infty} (t/z)^m  \frac{1}{m!}(\log(1+ zw))^m e^{-xw - \frac{w^2}{2}}&=e^{(t/z) \log(1+ zw)}e^{-xw - \frac{w^2}{2}} \\
&= (1+zw)^{t/z}e^{-xw - \frac{w^2}{2}}=(1+\frac{w}{x})^i e^{-xw - \frac{w^2}{2}}.\end{align*}
\end{proof}
As $f(w)$ is analytic in $w$, by Cauchy formula, the coefficient of $w^d$ in $f(w)$ is $\frac{f^{(d)}(0)}{d!}$,
$$\frac{1}{2\pi i}\int_C \frac{f(w)}{w^{d+1}}dw = \frac{f^{(d)}(0)}{d!},$$ 
where $C$ is the circle around zero of radius $R$ to be specified.

We write $i=x^2+ L x$, noting that $x$ is very large compared to $R=|w|$ and $|L|$. So 
\begin{align*}
    (1+\frac{w}{x})^i e^{-xw - \frac{w^2}{2}} = e^{[\frac{w}{x} + O(\frac{R^2}{x^2})] i}e^{-xw - \frac{w^2}{2}} &= e^{[\frac{w}{x} + O(\frac{R^2}{x^2})] (x^2+Lx)}e^{-xw - \frac{w^2}{2}} \\ &= e^{wL + O(R^2)}e^{-\frac{w^2}{2}}.\end{align*}
Hence % \AC{SUGGESTION bar size, eq break}
$$\left|\frac{1}{2\pi i}\int_C \frac{f(w)}{w^{d+1}}dw\right| \le d! R \frac{e^{R|L| + O(R^2)}}{R^{d+1}}=d!\frac{e^{R|L| + O(R^2)}}{R^{d}}.$$
Choosing $R=1/(|L|+1)$, we thus obtain a bound as desired for $|\sum_{0\le k \le d} (-1)^k \binom{d}{k} He_k(x)  \frac{i\dots (i-(d-k)+1)}{x^{d-k}}|$.
\end{proof}

We now proceed with the $L_2$-norm and Markov-Bernstein type estimates. Consider 
\begin{equation}\label{eqn:f:W}
f(x)=\sum_{i=0}^n a_i e^{-x^2/2} \frac{x^i}{\sqrt{i!}},
\end{equation}
where $a_i$ are now deterministic. 

Let $C_W$ be fixed. From now on we let 
\begin{equation}\label{eqn:I:W}
I:= [ \frac{1}{C_W}\sqrt{n}, C_W\sqrt{n}].
\end{equation}
The following (deterministic) lemma allows us to control the ``$L_2$-norm" of the function.

\begin{lemma}\label{lemma:W:d} Assume that $0\le d\le d_0= \log^{O(1)} n$. There exists a constant $C$ such that the following holds
$$\int_I (f^{(d)}(x))^2 dx \ll   (\log^4 n)  [C(\log^2 n) d]^{2d}  \sum_i a_i^2.$$
\end{lemma}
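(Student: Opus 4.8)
The statement is a purely analytic $L^2$ bound (the $a_i$ are deterministic), so the plan is to establish a clean pointwise bound on $f^{(d)}(x)$ for $x\in I$ exhibiting a Gaussian profile in the index $i$, and then integrate in $x$. Write $\phi_i(x)=e^{-x^2/2}x^i/\sqrt{i!}$, so that $f^{(d)}=\sum_{i=0}^n a_i\phi_i^{(d)}$, and recall $x\asymp\sqrt n$ throughout $I=[\frac1{C_W}\sqrt n,C_W\sqrt n]$ (eq.~\eqref{eqn:I:W}); all estimates below are for $n$ large. Fix $x\in I$, set $L_i=L_i(x)=(i-x^2)/x$, and split the index set into the \emph{bulk} $S_1(x)=\{i:\,|i-x^2|\le x^{4/3}\}$ (where $|L_i|\le x^{1/3}$) and its complement. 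On the bulk both of the available tools apply: Claim~\ref{claim:Weyl:ix}(1) gives the upper bound $\phi_i(x)\le C x^{-1/2}e^{-cL_i^2}$, and Lemma~\ref{lemma:Weyl:derivative} gives $|\phi_i^{(d)}(x)|=O\big(d!\,(1+|L_i|)^d\phi_i(x)\big)$ (valid since $|L_i|=o(x)$ and $d\le\log^{O(1)}x$).

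First I would assemble the bulk bound. Using $d!\,(1+|t|)^d e^{-ct^2/2}\le d!\sup_{t}(1+|t|)^d e^{-ct^2/2}=O\big((Cd^3)^{d/2}\big)$, the terms over $S_1(x)$ obey
$$\Big|\sum_{i\in S_1(x)}a_i\phi_i^{(d)}(x)\Big|\ll (Cd^3)^{d/2}\,x^{-1/2}\sum_{i}|a_i|\,e^{-cL_i(x)^2/2}.$$
Squaring and applying Cauchy--Schwarz in $i$ together with $\sum_i e^{-cL_i(x)^2/2}\asymp x$ (the $L_i(x)$ form an arithmetic progression of step $1/x$, so the sum compares to $x\int_{\mathbb R}e^{-cu^2/2}\,du$) yields the pointwise bound
$$\Big|\sum_{i\in S_1(x)}a_i\phi_i^{(d)}(x)\Big|^2\ll (Cd^3)^{d}\sum_i a_i^2\,e^{-cL_i(x)^2/2}.$$

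Next I would dispose of the complementary indices and integrate. For $i\notin S_1(x)$ one has $|L_i|>x^{1/3}$, so Claim~\ref{claim:Weyl:ix}(2) gives $\phi_i(x)\le Cx^{-1/2}e^{-c_2x^{2/3}}\le Cx^{-1/2}e^{-c_3 n^{1/3}}$; combined with a crude Leibniz/Hermite bound $|\phi_i^{(d)}(x)|\le\phi_i(x)\,(C\sqrt n)^d(d+1)!$ on $I$ (from $(e^{-x^2/2})^{(k)}=(-1)^kHe_k(x)e^{-x^2/2}$, $|He_k(x)|\le (k+1)!\,x^k$ for $x\ge1$, and $\frac{i!}{(i-l)!}\le i^l$, using $i\le n\asymp x^2$), and the fact that $d\le\log^{O(1)}n$ makes the prefactor $(C\sqrt n)^d(d+1)!=e^{O(\log^{O(1)}n)}\ll e^{c_3n^{1/3}/2}$, the tail part of $f^{(d)}$ is bounded uniformly by $Cn^{1/4}e^{-c_3n^{1/3}/2}(\sum_i a_i^2)^{1/2}$, hence contributes at most $Cn\,e^{-c_3n^{1/3}}\sum_i a_i^2$ after integrating over $I$ --- negligible. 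Integrating the bulk bound, $\int_I|\sum_{i\in S_1(x)}a_i\phi_i^{(d)}|^2\,dx\ll (Cd^3)^d\sum_i a_i^2\int_I e^{-cL_i(x)^2/2}\,dx$, and the crucial point is that $\int_I e^{-cL_i(x)^2/2}\,dx=O(1)$: under the substitution $u=L_i(x)=i/x-x$ one has $|du/dx|=i/x^2+1\in[1,C_W^2+1]$ on $I$, so $dx\asymp du$ and the integral is $O(\int_{\mathbb R}e^{-cu^2/2}\,du)$ --- in particular with no stray factor of $\sqrt n$, the powers of $x$ from $\phi_i^2\asymp x^{-1}$ and from $\sum_i e^{-cL_i^2/2}\asymp x$ having exactly cancelled. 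Putting this together gives $\int_I(f^{(d)})^2\ll (Cd^3)^d\sum_i a_i^2=(C^{1/2}d^{3/2})^{2d}\sum_i a_i^2$; since $d^{3/2}=d\cdot d^{1/2}\le d\log^2 n$ in the range $d\le d_0$, this is $\le(\log^4 n)[C'(\log^2 n)d]^{2d}\sum_i a_i^2$ as claimed, the leading $\log^4 n$ also absorbing the trivial $d=0$ case where $\int_I f^2\ll\sum_i a_i^2$.

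The main obstacle is the bulk estimate: one must marry Lemma~\ref{lemma:Weyl:derivative} (which unavoidably introduces the $d!$) with the Gaussian decay of Claim~\ref{claim:Weyl:ix}(1) and optimize the product $(1+|L_i|)^d e^{-cL_i^2}$ so as to extract the sharp $d$-dependence, while keeping careful track that $i$ really ranges over a window $|i-x^2|\lesssim x^2$ far wider than the bulk $|i-x^2|\lesssim x^{4/3}$ --- so that the far terms, where Lemma~\ref{lemma:Weyl:derivative} is simply unavailable, genuinely require the separate crude Hermite bound. A secondary but essential point is the exact cancellation of the powers of $n$, which is why the change of variables $x\mapsto i/x-x$ and its uniformly two-sided bounded Jacobian on $I$ should be carried out explicitly.
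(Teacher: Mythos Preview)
Your argument is correct and in fact yields a slightly sharper inequality than the one stated. The route, however, is genuinely different from the paper's.

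The paper partitions $I$ into $O(\sqrt n)$ subintervals $I_j=[\sqrt{k_j},\sqrt{k_{j+1}})$ with $k_{j+1}-k_j=\sqrt n$, fixes a \emph{logarithmic} truncation window $|i-k_j|\le L\sqrt n$ with $L=8\log^2 n$, and on that window uses only the uniform bound $\phi_i\ll x^{-1/2}$ together with $|\phi_i^{(d)}|\ll (Ld)^d\phi_i$ from Lemma~\ref{lemma:Weyl:derivative} (since $|L_i|\le L$ there). Cauchy--Schwarz over the $\asymp L\sqrt n$ indices in the window and summation over $j$ then produce the factor $L^2(Ld)^{2d}=(\log^4 n)[C(\log^2 n)d]^{2d}$ directly; the $\log^4 n$ is the price of the overlap of adjacent windows. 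You instead keep the full Gaussian profile $\phi_i\ll x^{-1/2}e^{-cL_i^2}$ over a much wider bulk $|L_i|\le x^{1/3}$, split $e^{-cL_i^2}=e^{-cL_i^2/2}\cdot e^{-cL_i^2/2}$, and optimise $d!\,(1+|L_i|)^d e^{-cL_i^2/2}$ to extract the sharper constant $(Cd^3)^{d/2}$; the substitution $u=i/x-x$ with two–sided bounded Jacobian then makes the $x$–integration clean. Your method explains where the logarithms in the paper's bound come from (they are artifacts of the truncation scale $L$), while the paper's method is more mechanical and avoids the optimisation step.

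One cosmetic point: your final bound $(Cd^{3/2})^{2d}\sum_i a_i^2$ only implies the \emph{stated} form $(\log^4 n)[C(\log^2 n)d]^{2d}\sum_i a_i^2$ when $d^{1/2}\le C\log^2 n$, i.e.\ $d_0\le C\log^4 n$. This is consistent with every application in the paper (where $d_0=O(\log n)$), but since the lemma is phrased with an unspecified $d_0=\log^{O(1)}n$ you should note the restriction, or simply record your sharper bound as is.
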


%We recall the reader that, motivated by this result, in Lemma \ref{lemma:W:d'} of Section \ref{section:supporting} we introduced 
%$$\|\Ba\|_{W,d_0}^2 = \max_{0\le d\le d_0}  \frac{1}{ (\log^4 n)  [ C (\log^2 n) d]^{2d} } \int_I (f^{(d)}(x))^2dx.$$

\begin{proof}(of Lemma \ref{lemma:W:d}) We divide the interval $I$ into $O(\sqrt{n})$ subintervals $I_j=[\sqrt{k_j}, \sqrt{k_{j+1}})$, where $k_j = n/C_W + j \sqrt{n}$. Set 
$$L := 8 (\log n)^2.$$
For each $x\in I_j$ we  can break down the sum in Lemma \ref{lemma:W:d} as follows: 
\begin{align}(\sum_{i=0}^{n} e^{-x^2/2} a_i x^i/\sqrt{i!})^{(d)}=(\sum_{i=\lfloor k_j-L\sqrt{n}\rfloor}^{\lfloor k_j+L\sqrt{n}\rfloor} (e^{-x^2/2} a_i x^i/\sqrt{i!}))^{(d)}+(\sum_{i\leq\lfloor k_j-L\sqrt{n}\rfloor}(*))^{(d)}+(\sum_{i\geq\lfloor k_j+L\sqrt{n}\rfloor+1}(*))^{(d)}.
\end{align}

We first show that the contribution in $\int_I (f^{(d)}(x))^2 dx$ of the truncated terms in negligible % \AC{dx}
\begin{align*}
& \sum_{j=0}^{\lfloor (C_W-1/C_W) \sqrt{n} \rfloor}  \int_{\sqrt{k_j}}^{\sqrt{k_{j+1}}}  [\sum_{i\leq\lfloor k_j-L\sqrt{n}\rfloor}(e^{-x^2/2} a_i x^i/\sqrt{i!}))^{(d)}]^2 +[\sum_{i\geq\lfloor k_j+L\sqrt{n}\rfloor}(e^{-x^2/2} a_i x^i/\sqrt{i!}))^{(d)}]^2 dx \\
& \le \exp(-\Theta(\log^4 n)) \sum_i a_i^2.
 \end{align*}
Indeed, for instance when $i\ge  k_j + L\sqrt{n}$, we can use the fact that 
$$(e^{-x^2/2} x^i/\sqrt{i!})^{(d)} = e^{-x^2/2} (x^{i-d}/\sqrt{(i-d)!}) P_d(x),$$
where $|P_d(x)| \le (Ci)^{2d} |x|^{2d} \le (C n)^{4d}$ for some sufficiently large constant $C$. On the other hand, by Claim \ref{claim:Weyl:ix}, as $i\ge  k_j + L\sqrt{n}$ and $d=O(\log n)$, we have $ e^{-x^2/2} x^{i-d}/\sqrt{(i-d)!} \le \exp(-\Theta(\log^4 n))$. We can argue similarly for the $i\le k_j -L\sqrt{n}$ case.

It remains to work with the main term
\begin{align}\label{eqn:L_2:mainterm:W}
 \sum_{j=0}^{\lfloor (C_W-1/C_W) \sqrt{n} \rfloor}  \int_{\sqrt{k_j}}^{\sqrt{k_{j+1}}}  (\sum_{i=\lfloor k_j -L\sqrt{n}\rfloor }^{\lfloor k_j+L\sqrt{n}\rfloor} a_i (e^{-x^2/2} \frac{x^i}{\sqrt{i!}})^{(d)})^2 dx.\
 \end{align}

By Lemma \ref{lemma:Weyl:derivative} we see that for $x \in I_j = [\sqrt{k_j}, \sqrt{k_{j+1}})$ and $i \in [\lfloor k_j-L\sqrt{n}\rfloor, \lfloor k_j+L\sqrt{n}\rfloor]$
$$ |(e^{-x^2/2} \frac{x^i}{\sqrt{i!}})^{(d)}| \ll |(Ld)^d  e^{-x^2/2} \frac{x^i}{\sqrt{i!}}|  \ll (Ld)^d \frac{1}{\sqrt{x}}$$
where in the last estimate we used Claim \ref{claim:Weyl:ix}.

Thus by Cauchy-Schwarz %\JC{SUGGESTIOn deal with these eqs?}
\begin{align}\label{trunc:L2}
& \sum_{j=0}^{\lfloor (C_W-1/C_W) \sqrt{n} \rfloor} \int_{\sqrt{k_j}}^{\sqrt{k_{j+1}}}  (\sum_{i=\lfloor k_j -L\sqrt{n} \rfloor }^{\lfloor k_j+L\sqrt{n}\rfloor} a_i (e^{-x^2/2} \frac{x^i}{\sqrt{i!}})^{(d)})^2 dx \nonumber \\
& \ll (Ld)^{2d} \sum_{j=0}^{\lfloor (C_W-1/C_W) \sqrt{n} \rfloor} \int_{\sqrt{k_j}}^{\sqrt{k_{j+1}}} (\sum_{i=\lfloor k_j -L\sqrt{n}\rfloor }^{\lfloor k_j+L\sqrt{n}\rfloor} a_i^2)  ( \sum_{i=\lfloor k_j -L\sqrt{n}\rfloor }^{\lfloor k_j+L\sqrt{n}\rfloor}  \frac{1}{x})dx \nonumber  \\
&\ll (Ld)^{2d}  \sum_{j=0}^{\lfloor (C_W-1/C_W) \sqrt{n} \rfloor}  (\sum_{i=\lfloor k_j -L\sqrt{n}\rfloor }^{\lfloor k_j+L\sqrt{n}\rfloor} a_i^2)    \int_{\sqrt{k_j}}^{\sqrt{k_{j+1}}} \frac{2L\sqrt{n}}{x}dx \nonumber \\
&  \ll L (Ld)^{2d} \sum_{j=0}^{\lfloor (C_W-1/C_W) \sqrt{n} \rfloor}  \sum_{i=\lfloor k_j -L\sqrt{n}\rfloor }^{\lfloor k_j+L\sqrt{n}\rfloor} a_i^2  (\sqrt{k_{j+1}}-\sqrt{k_{j}})  \nonumber  \\
& \ll L (Ld)^{2d} \sum_{j=0}^{\lfloor (C_W-1/C_W) \sqrt{n} \rfloor} \sum_{i=\lfloor k_j -L\sqrt{n}\rfloor }^{\lfloor k_j+L\sqrt{n}\rfloor} a_i^2 \ll L^2 (Ld)^{2d}  \sum_{i=0}^n a_i^2,
\end{align}
where we noted %, \JC{ADD? "in the second,third to last inequality}, 
that for all $x\in I_j$, $x \approx \sqrt{k_j}$, which has order $\sqrt{n}$.

\end{proof}

While Lemma \ref{lemma:W:d} gives us a useful comparison between $\int_I (f^{(d)}(x))^2 dx$  and the $L_2$-norm for any sequence $(a_0,\dots, a_n)$ that will be useful in geometric concentration, in our next lemma we show that for the random case, the bound can be slightly improved.

\begin{lemma}\label{lemma:W:prob:d} 
Let $N= \sqrt{n}$ and $d_0 =C \log n$ for some given constant $C$. Assume that $\Ba=(a_0,\dots, a_n)$ where the $a_i$ are iid copies of a subgaussian $\xi$ of mean zero and variance one. Then for $f$ of the form \eqref{eqn:f:W}, with probability at least $1- \exp(-\Theta(N/\log^2 n))$ with respect to $\Ba$ we have % \AC{vector a what is a0CWa}
\begin{equation}\label{eqn:W_0}
\int_I (f^{d}(x))^2 dx \ll  (\log^4 n)[ C (\log^2 n) d]^{2d} N,\quad \quad \quad  0\le d\le d_0.
\end{equation}
\end{lemma}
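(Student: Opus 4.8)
The plan is to realise $\int_I (f^{(d)}(x))^2\,dx$ as a quadratic form in the coefficient vector $\Ba=(a_0,\dots,a_n)$ and to apply the Hanson--Wright inequality; the extra saving of a factor $\sqrt n$ over the deterministic bound of Lemma~\ref{lemma:W:d} will come from the fact that the relevant Gram matrix is, up to negligible errors, banded. Writing $c_i^{(d)}(x):=\big(e^{-x^2/2}x^i/\sqrt{i!}\big)^{(d)}$ so that $f^{(d)}(x)=\sum_i a_i c_i^{(d)}(x)$, we have $\int_I (f^{(d)})^2\,dx=\Ba^{T}M_d\Ba$, where $M_d$ is the positive semidefinite matrix with entries $(M_d)_{ij}=\int_I c_i^{(d)}(x)c_j^{(d)}(x)\,dx$. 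The key local inputs are Claim~\ref{claim:Weyl:ix} and Lemma~\ref{lemma:Weyl:derivative}: for $x\in I$ and an index $i$ with $L:=|x^2-i|/x\le 8\log^2 n$ they give the pointwise bound $|c_i^{(d)}(x)|\ll (C\log^2 n\cdot d)^{d}\,x^{-1/2}e^{-cL^2}$ (a constant $C$ absorbing the factor $d!$ and other absolute constants), whereas for $i$ outside that window the identity $(e^{-x^2/2}x^i/\sqrt{i!})^{(d)}=e^{-x^2/2}(x^{i-d}/\sqrt{(i-d)!})P_d(x)$ with $|P_d(x)|\le (Cn)^{4d}$, together with the second part of Claim~\ref{claim:Weyl:ix}, yields $|c_i^{(d)}(x)|\le\exp(-\Theta(\log^4 n))$ (the polynomial-in-$d$ losses being harmless, since $(Cn)^{4d}\le\exp(O(\log^2 n))$ for $d\le d_0$).

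From these I would extract two facts. Summing the in-window bound over $i$ and using $\sum_i e^{-cL^2}\ll x$ for fixed $x$ (a Riemann-sum estimate) gives $\sum_i c_i^{(d)}(x)^2\ll (C\log^2 n\cdot d)^{2d}$ uniformly for $x\in I$; integrating in $x$ for fixed $i$ instead, the substitution $u=x^2$ gives $\int_I c_i^{(d)}(x)^2\,dx\ll (C\log^2 n\cdot d)^{2d}\,i^{-1/2}$, and since only the $\Theta(n)$ indices with $\sqrt i$ within $O(\log^2 n)$ of $I$ contribute non-negligibly (the others giving $\exp(-\Theta(\log^4 n))$ in total), $\E[\Ba^{T}M_d\Ba]=\tr M_d\ll (C\log^2 n\cdot d)^{2d}\sqrt n=(C\log^2 n\cdot d)^{2d}N$. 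For the operator norm, cover $I$ by the $O(\sqrt n)$ subintervals $I_j=[\sqrt{k_j},\sqrt{k_{j+1}})$ of length $\Theta(1)$ used in the proof of Lemma~\ref{lemma:W:d}; on $I_j$ only indices $i$ in the window $W_j=[k_j-8\log^2 n\sqrt n,\,k_j+8\log^2 n\sqrt n]$ matter, up to an additive $\exp(-\Theta(\log^4 n))$, so for any unit vector $u$,
$$u^{T}M_d u\;\ll\;\sum_j\Big(\sum_{i\in W_j}u_i^2\Big)\Big(\sup_{x\in I}\sum_i c_i^{(d)}(x)^2\Big)|I_j|\;\ll\;(C\log^2 n\cdot d)^{2d}\sum_j\sum_{i\in W_j}u_i^2\;\ll\;(\log^2 n)(C\log^2 n\cdot d)^{2d},$$
since each index lies in $O(\log^2 n)$ of the windows $W_j$. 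Hence $\|M_d\|_{\mathrm{op}}\ll(\log^2 n)(C\log^2 n\cdot d)^{2d}$, and since $M_d$ is positive semidefinite, $\|M_d\|_{\mathrm{HS}}^2=\tr(M_d^2)\le\|M_d\|_{\mathrm{op}}\,\tr M_d\ll(\log^2 n)(C\log^2 n\cdot d)^{4d}N$.

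Now the statement follows from the Hanson--Wright inequality (see e.g.\ \cite{RV-rec}): as $\xi$ is subgaussian with subgaussian norm $O(1)$, taking $t=(C\log^2 n\cdot d)^{2d}N$ gives
$$\P\big(\Ba^{T}M_d\Ba>\tr M_d+t\big)\;\le\;2\exp\!\Big(-c_1\min\Big(\tfrac{t^2}{\|M_d\|_{\mathrm{HS}}^2},\ \tfrac{t}{\|M_d\|_{\mathrm{op}}}\Big)\Big)\;\le\;2\exp\big(-c_2\,N/\log^2 n\big),$$
because both $t^2/\|M_d\|_{\mathrm{HS}}^2$ and $t/\|M_d\|_{\mathrm{op}}$ are $\gg N/\log^2 n$. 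Since $\tr M_d+t\le(\log^4 n)(C\log^2 n\cdot d)^{2d}N$ for $n$ large, a union bound over the $O(\log n)$ values $0\le d\le d_0$ yields \eqref{eqn:W_0} with probability $1-\exp(-\Theta(N/\log^2 n))$.

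The step I expect to be the main obstacle is the locality bookkeeping underlying the banded structure: one must check that the (sub-polynomial in $n$) factors produced by Lemma~\ref{lemma:Weyl:derivative} and by $P_d$ never overcome the $\exp(-\Theta(\log^4 n))$ Gaussian tail of Claim~\ref{claim:Weyl:ix}, uniformly in $0\le d\le d_0$ and in $i$, and that the regimes $|i-x^2|\lesssim\log^2 n\cdot x$ (handled by Lemma~\ref{lemma:Weyl:derivative}, valid only for $|L|=o(x)$) and $|i-x^2|\gtrsim\log^2 n\cdot x$ (handled by the crude $P_d$-bound together with Claim~\ref{claim:Weyl:ix}) match seamlessly. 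Obtaining the operator-norm bound with exactly one extra factor $\log^2 n$ is what makes Hanson--Wright output the rate $N/\log^2 n$ rather than something weaker.
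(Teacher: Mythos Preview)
Your proposal is correct and takes a genuinely different route from the paper.

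The paper proceeds by discretising the integral into a Riemann sum over grid points $x_{jk}$, observing that (after the same windowing/truncation you describe) each value $f^{(d)}(x_{jk})$ is a subgaussian random variable in the $\sigma$-algebra generated by the coefficients $a_i$ with $i$ in the local window $W_j$. It then exploits this locality by grouping the intervals $I_j$ into $L\asymp\log^2 n$ residue classes so that within each class the windows are disjoint, hence the corresponding squares $f^{(d)}(x_{jk})^2$ are genuinely independent sub-exponentials; a Bernstein-type inequality on each class followed by a union bound over the $L\cdot M$ classes and grid points yields the rate $N/\log^2 n$.

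You instead recognise $\int_I(f^{(d)})^2\,dx$ directly as a quadratic form $\Ba^T M_d\Ba$ and feed the same locality (the effectively banded structure of $M_d$, bandwidth $\asymp\log^2 n\cdot\sqrt n$) into bounds on $\tr M_d$ and $\|M_d\|_{\mathrm{op}}$, then apply Hanson--Wright. This is cleaner: no discretisation, no artificial partition into residue classes, and the parameter-matching is transparent (both $t/\|M_d\|_{\mathrm{op}}$ and $t^2/\|M_d\|_{\mathrm{HS}}^2$ come out exactly $N/\log^2 n$). The paper's approach, on the other hand, would adapt more readily to coefficients that are merely bounded or have a few finite moments, whereas Hanson--Wright needs the subgaussian tail; and the paper's grouping argument makes explicit which blocks of coefficients drive each piece of the integral, information that is used informally elsewhere in the section. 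Both arguments rest on exactly the same pointwise inputs (Claim~\ref{claim:Weyl:ix} and Lemma~\ref{lemma:Weyl:derivative}) and the same out-of-window error estimate, so the ``main obstacle'' you flag is common to both and is handled just as in the proof of Lemma~\ref{lemma:W:d}.
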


We let $\CE_{W,d_0}$ denote the event of $\Ba=(a_0,\dots,a_n)$ satisfying the above bound. Then 
$$\P(\CE_{W,d_0})\ge 1- \exp(-\Theta(N/\log^2 n))$$ and for $\Ba\in \CE_{W,d_0}$ the bound \eqref{eqn:W_0} holds for $f$ associated with $\Ba$. 

Observe that for small $d$ (such as when $d=O(1)$), the above bound is off by a factor $1/N$ compared to the bound from Lemma \ref{lemma:W:d}. Motivated by this crucial fact, we will also be working with the event $\CE_{W,3}$ later.

\begin{proof}(of Lemma \ref{lemma:W:prob:d}) It suffices to work with a given $d$ in the range $0\le d\le d_0$. For short let 
$$g(x):=f^{(d)}(x).$$
It suffices to assume that $\sum_i a_i^2 \ll n$ as this event has probability at least $1 - \exp(-\Theta(n))$. Under this event, by Lemma \ref{lemma:Weyl:derivative} and by the method of truncation above (basing on Claim \ref{claim:Weyl:ix}), for $d=O(\log n)$ it is clear that for all $x\in I$
$$|g'(x)|, |g(x)|\le [C (\log^2 n) d]^{2(d+1)} \sum_i a_i^2 \le    [ C (\log^2 n) d]^{2(d+1)} n.$$
As such, with say $M = [ C (\log^2 n) d]^{4(d+1)} n^2$, we can approximate the integral by finite sum
$$\int_{I_j} g^2(x) dx \approx \frac{|I_j|}{M}\sum_{i=0}^M g^2(x_{jk}),$$
where the points $x_{jk} ,1\le k\le M$ divide $I_j=[\sqrt{k_j}, \sqrt{k_{j+1}})$ into $M$ intervals of the same length. 
So
$$ \int_{I} g^2(x) dx \approx \sum_{j=0}^{ (C_W-1/C_W) \sqrt{n}} \frac{|I_j|}{M}\sum_{i=0}^M g^2(x_{jk}).$$ 
%\AC{remove one $c_w$ at the denominator}
Now for each $k$, by applying Lemma \ref{lemma:Weyl:derivative} and Claim \ref{claim:Weyl:ix}, the main part of $g(x_{jk})$, $\sum_{i=\lfloor k_j -L\sqrt{n}\rfloor }^{\lfloor k_j+L\sqrt{n}\rfloor} a_i (e^{-x^2/2} \frac{x^i}{\sqrt{i!}})^{(d)}$ from \eqref{eqn:L_2:mainterm:W}, is a subgaussian random variable of mean zero and variance at most $ (\log^4 n)  [ C (\log^2 n) d]^{2d}$  in the $\sigma$-algebra generated by $a_k, k \in  \lfloor k_j-L\sqrt{n}\rfloor \le k \le \lfloor k_j+L\sqrt{n}\rfloor$ (this is because each $a_i$ is subgaussian of mean zero and variance one, and by Lemma \ref{lemma:Weyl:derivative}, $| (e^{-x^2/2} \frac{x^i}{\sqrt{i!}})^{(d)}| \le  [ C (\log^2 n) d]^{d} e^{-x^2/2} \frac{x^i}{\sqrt{i!}} \le   [ C (\log^2 n) d]^{d}  (1/\sqrt{x})$ and $x$ has order $\sqrt{n}$). 

Hence, for each fixed $j_0\in \{0,\dots, L-1\}$, the $\lfloor (C_W-1/C_W)\sqrt{n}/L \rfloor$ sub-exponential random variables  $g^2(x_{(n/C_W+j_0)k}), g^2(x_{(n/C_W+j_0+L)k}),g^2(x_{(n/C_W+j_0+2L)k}), \dots$ are independent because they correspond to disjoint $J_j$ such as, and so %\JC{REVISIT}
$$\P \Big( g^2(x_{n/C_W+j_0k})+g^2(x_{(n/C_W+j_0+L)k}) + g^2(x_{(n/C_W+j_0+2L)k}) + \dots  \ge  (\log^2 n)  [ C (\log^2 n) d]^{d} \times N  \Big) \le \exp(-N/L).$$

By union bound, the intersection $\CE_{W,d_0}$ of the complement of these $L M$ events has probability at least 
$$\P(\CE_{W,d_0}) \ge 1- L [ C (\log^2 n) d]^{4(d+1)} n^2 \exp(-N/L) \ge 1 - \exp(-N/2L).$$
On the event $\CE_{W,d_0}$, 
\begin{align*}
\int_{I} g^2(x) dx & \approx \sum_{j=0}^{ (C_W-1/C_W) \sqrt{n}} \frac{|I_j|}{M}\sum_{i=0}^M g^2(x_{jk})  \asymp\frac{1}{M}\sum_{i=0}^M \sum_{j=0}^{ (C_W-1/C_W) \sqrt{n}} g(x_{jk})^2 \\
& \le \frac{1}{M}\sum_{i=0}^M L  (\log^2 n)  [ C (\log^2 n) d]^{d} \times N = L(\log^2 n)  [ C (\log^2 n) d]^{d} .
\end{align*}
\end{proof}

We next check the non-degenerating and repulsion property as in Theorem \ref{thm:repulsion}.  %\JC{SUGGESTION hyperlink to those maybe}. 
With $\Bv(x) = e^{-x^2/2} (x^i/\sqrt{i!})_{i=0}^n$ we have
$$ \Bv'(x) =  e^{-x^2/2} \left( \frac{i-x^2}{x} \frac{x^i}{\sqrt{i!}}\right)_{i=0}^n.$$ 
The following shows that $\Bv(x)$ and $\Bv'(x)$ are not degenerate over $I = [\frac{1}{C_W} \sqrt{n}, C_W\sqrt{n}]$.

\begin{claim} For $x\in I$ we have % \AC{CHANGE mising dot at the end}
$$|\Bv(x) \wedge \Bv'(x)| \asymp 1.$$
\end{claim}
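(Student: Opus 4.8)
The plan is to reduce $|\Bv(x)\wedge\Bv'(x)|$ to an explicit nonnegative sum of squared Poisson weights and then trap it between two constants depending only on $C_W$. Write $v_i=v_i(x)=e^{-x^2/2}x^i/\sqrt{i!}$, so $\Bv(x)=(v_i)_{i=0}^n$ and, by the formula recorded just before the claim, $\Bv'(x)=\bigl(v_i\,\tfrac{i-x^2}{x}\bigr)_{i=0}^n$. The Binet--Cauchy (Lagrange) identity then gives
$$|\Bv(x)\wedge\Bv'(x)|^2 \;=\; \sum_{0\le i<j\le n}\bigl(v_i v'_j-v_j v'_i\bigr)^2 \;=\; \frac{1}{x^2}\sum_{0\le i<j\le n}(j-i)^2\,v_i^2 v_j^2 ,$$
because $v_i v'_j-v_j v'_i=v_i v_j\bigl(\tfrac{j-x^2}{x}-\tfrac{i-x^2}{x}\bigr)=v_i v_j\,\tfrac{j-i}{x}$ (the $-x^2$ cancels). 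Thus the claim is equivalent to $\frac{1}{x^2}\sum_{i<j}(j-i)^2 v_i^2 v_j^2\asymp 1$, and it is convenient to observe $v_i^2=e^{-x^2}x^{2i}/i!=\P(\mathrm{Pois}(x^2)=i)$, so that $\sum_i v_i^2=\P(\mathrm{Pois}(x^2)\le n)$ and $\sum_i(i-x^2)^2 v_i^2\le\Var(\mathrm{Pois}(x^2))=x^2$.

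For the upper bound I would just use $|\Bv\wedge\Bv'|\le\|\Bv\|_2\|\Bv'\|_2$ with the Poisson reading above: $\|\Bv(x)\|_2^2=\P(\mathrm{Pois}(x^2)\le n)\le 1$ and $\|\Bv'(x)\|_2^2=\tfrac{1}{x^2}\E\bigl[(K-x^2)^2\mathbf{1}_{K\le n}\bigr]\le\tfrac{1}{x^2}\Var(\mathrm{Pois}(x^2))=1$ for $K\sim\mathrm{Pois}(x^2)$. Hence $|\Bv(x)\wedge\Bv'(x)|\le 1$ everywhere in $I$, with no input beyond elementary facts about the Poisson law. (Equivalently one expands $(j-i)^2\le 2(i-x^2)^2+2(j-x^2)^2$ directly in the sum above.)

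For the lower bound I would invoke Claim \ref{claim:Weyl:ix}. In the regime $x^2\le n$ (which, up to $O(1)$ roots, is all that matters: since $\rho_1(x)\asymp\sqrt n/x^2$ for $x\gtrsim\sqrt n$, at most $O(1)$ roots of the Weyl polynomial lie beyond $(1+\eps_0)\sqrt n$), the integers $i\le n$ with $|i-x^2|\le x$ number $\Theta(x)$, and for each such $i$ Claim \ref{claim:Weyl:ix} applied with $L=|i-x^2|/x=O(1)\le x^{1/3}$ gives $v_i(x)^2\gtrsim x^{-1}$. Choose two blocks $A,B$ of such indices at opposite ends of the admissible window, each of size $\Theta(x)$, with $j-i\gtrsim x$ whenever $i\in A$, $j\in B$. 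Then
$$\frac{1}{x^2}\sum_{0\le i<j\le n}(j-i)^2 v_i^2 v_j^2 \;\ge\; \frac{1}{x^2}\sum_{i\in A}\sum_{j\in B}(j-i)^2 v_i^2 v_j^2 \;\gtrsim\; \frac{1}{x^2}\cdot|A|\cdot|B|\cdot x^2\cdot x^{-1}\cdot x^{-1}\;\gtrsim\;1,$$
and combined with the upper bound this yields $|\Bv(x)\wedge\Bv'(x)|\asymp 1$.

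The delicate step, and the one I expect to cost the most care, is the behaviour near the edge $x\approx\sqrt n$: once $x^2-n$ exceeds a fixed multiple of $x\approx\sqrt n$, the truncation of the Weyl polynomial at degree $n$ has removed the entire bulk $|i-x^2|=O(x)$ of Poisson mass, so $\|\Bv(x)\|_2\to 0$ and no lower bound with an $n$-independent constant can survive. The natural fix is the same device used for the Elliptic model, where the full parameter range $T'=[0,\pi]$ is replaced by a bulk $T=[\eps_0,\pi-\eps_0]$: one proves the claim, and runs the entire argument of Section \ref{section:Weyl}, on the sub-interval $[\tfrac{1}{C_W}\sqrt n,\,(1-\eps_0)\sqrt n]$ together with its reflection, which still carries $\Theta(\sqrt n)$ of the roots, absorbing the $O(1)$ roots outside this window into the error terms of Theorem \ref{thm:Weyl}. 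On that sub-interval the quantitative bounds above hold with constants depending only on $C_W$ and $\eps_0$.
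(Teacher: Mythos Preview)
Your argument---Binet--Cauchy to get $|\Bv\wedge\Bv'|^2=\tfrac{1}{x^2}\sum_{i<j}(j-i)^2 v_i^2v_j^2$, the Poisson-variance reading for the upper bound, and Claim~\ref{claim:Weyl:ix} on the window $|i-x^2|=O(x)$ for the lower bound---is exactly the paper's method; its one-line proof just writes the restricted sum over $x^2+c_1x\le i,j\le x^2+c_2x$ and declares it $\asymp 1$, citing \cite[Section~12]{TV}.

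You go further than the paper by flagging a real issue that it passes over in silence. Since $I=[\tfrac{1}{C_W}\sqrt n,\,C_W\sqrt n]$ with $C_W>1$, once $x>(1+o(1))\sqrt n$ the Poisson bulk $|i-x^2|=O(x)$ lies entirely above $n$, so $\|\Bv(x)\|_2$ is exponentially small and no $n$-uniform lower bound on $|\Bv\wedge\Bv'|$ can hold there; the paper's proof tacitly assumes its index window sits inside $\{0,\dots,n\}$, which fails in this range. Your proposed patch---run the argument on the bulk $[\tfrac{1}{C_W}\sqrt n,(1-\eps_0)\sqrt n]$ (and its reflection) and absorb the $O(1)$ remaining roots into the error---is the natural fix and is consistent with the first-intensity asymptotics for Weyl polynomials quoted in the introduction.
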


\begin{proof} It is because (see also \cite[Section 12]{TV}) 
$$|\Bv(x) \wedge \Bv'(x)|^2 = e^{-2x^2} \sum_{x^2 +c_1x \le i, j\le x^2 +c_2x} \frac{|i-j|^2}{x^2} \frac{x^{2i}}{i!}  \frac{x^{2j}}{j!} \asymp 1.$$
\end{proof}

As a consequence, by Theorem \ref{thm:repulsion} 

\begin{theorem}\label{thm:repulsion:W} For $x \in I$, for any $\al,\beta \gg 1/\sqrt{n}$ we have
$$\P(|\langle \bxi, \Bv(x)\rangle | \le \al, |\langle \bxi, \Bv'(x)\rangle | \le \beta ) \le O(\al \beta).$$
In other words, 
$$\P\left(\sum_{i=0}^{n} \xi_i \big( e^{-x^2/2} \frac{x^i}{\sqrt{i!}},   e^{-x^2/2} \frac{i-x^2}{x} \frac{x^i}{\sqrt{i!}}\big) \in (-\alpha,\al) \times (-\beta, \beta)\right) =O(\al \beta).$$
\end{theorem}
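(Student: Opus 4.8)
The plan is to recognize Theorem~\ref{thm:repulsion:W} as the special case $N=1$ of Theorem~\ref{thm:repulsion} applied to the two-dimensional coordinate vectors $\Bu_i(x):=(v_i(x),v_i'(x))$, where $v_i(x)=e^{-x^2/2}x^i/\sqrt{i!}$, so that $(\langle\bxi,\Bv(x)\rangle,\langle\bxi,\Bv'(x)\rangle)=\sum_{i=0}^n\xi_i\Bu_i(x)$ and the event in question is exactly the box event $\sum_i\xi_i\Bu_i(x)\in(-\al,\al)\times(-\beta,\beta)$. The only hypothesis that requires checking is the non-degenerating property of Lemma~\ref{lemma:smallball}: $\sum_i\langle\Bu_i(x),(a_1,a_2)\rangle^2\asymp 1$ for all $(a_1,a_2)\in\BS^1$, which is precisely the assertion that the $2\times 2$ Gram matrix $G(x)=\sum_i\Bu_i(x)\Bu_i(x)^T$, whose entries are $\|\Bv\|_2^2$, $\langle\Bv,\Bv'\rangle$ and $\|\Bv'\|_2^2$, has both eigenvalues $\asymp 1$ on $I$.

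I would obtain this from two facts. First, $\det G(x)=\|\Bv\|_2^2\|\Bv'\|_2^2-\langle\Bv,\Bv'\rangle^2=|\Bv(x)\wedge\Bv'(x)|^2$, which is $\asymp 1$ by the preceding Claim; recall that the Claim's identity uses $v_i'(x)=\frac{i-x^2}{x}v_i(x)$ to write $|\Bv\wedge\Bv'|^2=\tfrac12 e^{-2x^2}\sum_{i,j}\frac{(i-j)^2}{x^2}\frac{x^{2i}}{i!}\frac{x^{2j}}{j!}$, and Claim~\ref{claim:Weyl:ix} identifies the $v_i(x)^2$ as Gaussian-type bumps of width $\asymp x$ and total mass $\asymp 1$, so the sum is a discretized variance integral of order $1$. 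Second, $\mathrm{tr}\,G(x)=\|\Bv\|_2^2+\|\Bv'\|_2^2=O(1)$: indeed $\|\Bv\|_2^2=e^{-x^2}\sum_{i\le n}x^{2i}/i!\le 1$, while the same bump estimate gives $\|\Bv'\|_2^2=\sum_i v_i(x)^2\big(\tfrac{i-x^2}{x}\big)^2=O(1)$ (the Gaussian weight beats the quadratic factor, and by the tail part of Claim~\ref{claim:Weyl:ix} the range $|i-x^2|\gg x^{4/3}$ contributes at most $\exp(-\Theta(x^{2/3}))\,\mathrm{poly}(n)=o(1)$). A positive determinant of size $\asymp 1$ together with a bounded trace forces both eigenvalues of $G(x)$ to be $\asymp 1$.

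Once the non-degenerating property holds, Lemma~\ref{lemma:smallball} gives $\sup_a\P\!\left(\sum_i\xi_i\Bu_i(x)\in B(a,1/\sqrt n)\right)=O(1/n)$, and covering the box $(-\al,\al)\times(-\beta,\beta)$ by $O(\al\beta n)$ balls of radius $1/\sqrt n$ (legitimate since $\al,\beta\gg 1/\sqrt n$) yields $\P=O(\al\beta)$ by a union bound, exactly as in the proof of Theorem~\ref{thm:repulsion}. So the deduction is purely formal once the Gram-matrix estimate is in place.

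The only genuine work, and hence the main obstacle, is the Claim $|\Bv(x)\wedge\Bv'(x)|\asymp 1$ (together with the companion bound $\|\Bv'(x)\|_2^2=O(1)$), both of which rest on the delicate Stirling/Poisson asymptotics of Claim~\ref{claim:Weyl:ix}. One should also keep in mind that this localization of mass — and therefore the estimate — is effective only where the dominant coordinates $i\approx x^2$ actually lie in $\{0,\dots,n\}$, i.e.\ for $x\le\sqrt n-O(1)$; the remaining part of $I$ up to $x\asymp C_W\sqrt n$ carries only $O(1)$ expected roots and is handled separately in the proof of Theorem~\ref{thm:Weyl}.
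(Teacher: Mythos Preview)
Your approach is exactly the paper's: verify the non-degenerating hypothesis of Lemma~\ref{lemma:smallball} from the Claim $|\Bv\wedge\Bv'|\asymp 1$ together with the trace bound $\|\Bv\|_2^2+\|\Bv'\|_2^2=O(1)$ (which you make explicit while the paper leaves it tacit), and then apply Theorem~\ref{thm:repulsion} by covering the box with $O(\al\beta n)$ balls of radius $1/\sqrt n$. One minor correction to your final remark: the region $x>\sqrt n$ is not in fact ``handled separately'' anywhere in the proof of Theorem~\ref{thm:Weyl}---the paper simply asserts the Claim on all of $I$---so your caveat there flags a genuine looseness in the paper's presentation rather than a step you should expect to find elsewhere.
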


\subsection{Exceptional polynomials}\label{subsection:exceptional:W}

This current subsection is similar to Subsection \ref{subsection:exceptional:E}, except here that the introduction of $g$ will be totally different. Set 
$$N :=C_W\sqrt{n}.$$  
Consider $F(x) =\sum_{i=0}^n a_i e^{-x^2/2} \frac{x^i}{\sqrt{i!}}$, where $x\in I=[\frac{1}{C_W}\sqrt{n}, C_W \sqrt{n}]$.
Consider a rescaling
\begin{equation}\label{eqn:G:W}
G(x) = F(Nx) = \sum_{i=0}^n a_i e^{-(Nx)^2/2} \frac{(Nx)^i}{\sqrt{i!}},
\end{equation}
where now % \AC{Should be $C_W^{-2}$?} IT SHOULD BE THE SAME
$$x\in T=[1/C_W^2,1].$$ 
We will identify $F$ and $G$ with the vector $\Ba \in \Omega^{n+1}$.

By Lemma \ref{lemma:W:prob:d}, with probability at least $1 - \exp(-\Theta(N/\log^2 n))$ it suffices to assume $\Ba\in \CE_{W,d_0}$ where for all $0\le d\le d_0= C\log n$
\begin{equation}\label{MB:W}
\int_T (G_{\Ba}^{(d)}(x))^2 dx  =  N^{2d} \frac{1}{N}\int_I (F_{\Ba}^{(d)}(x))^2 dx  \le    (\log^4 n) (C^\ast N)^{2d},
\end{equation}
where 
$$C^\ast = C' \log^3 n, \mbox{ for some constant $C'$}.$$

Let $R>4$ be fixed. Cover $T$ by $N/R$ open intervals $I_i$ of length (approximately) $R/N$ each. Let $3 I_i$ be the interval of length $3R/N$ having the same midpoint with $I_i$. Given some parameters $\al, \beta$, we call an interval $I_i$ {\it stable} for a function $f \in \CE_{W,d_0}$ of form \eqref{eqn:G:W} if there is no point in $x\in 3I_i$ such that $|f(x)|\le \al$ and $|f'(x)|\le \beta N$.  Let $\delta$ be another small parameter (so that $\delta R <1/4$), we call $f \in \CE_{W,d_0}$ {\it exceptional} if the number of unstable intervals  is at least $\delta N$. We call $f$ not exceptional otherwise.  

Let $\CE_e = \CE_e(R,\al,\beta; \delta)$ denote the set of vectors $\Bv_f$ associated to exceptional function $f$. Our goal in this section is the following.
\begin{theorem}\label{thm:exceptional:W} Assume that $\al,\beta,\delta$ satisfy % \JC{SUGGESTION More space?}
\begin{equation}\label{eqn:parametersthm}
\al \asymp \delta^{2}, \quad \quad \beta \asymp \delta^{3/4},\quad \quad   N^{-1/6} < \delta < (\log n)^{-16}.
\end{equation}
% \eqref{eqn:parameters}, then we have
Assume that $G$ has form \eqref{eqn:G:W}. Then 
$$\P\Big(\Bv_G \in \CE_e\Big) \le 2 e^{-\delta^5 N/ 4C_0\log^{8} n}+ e^{-\Theta(N/\log^2 n)}.$$
\end{theorem}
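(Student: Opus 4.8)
The argument runs parallel to the proofs of Theorem~\ref{thm:exceptional} and Theorem~\ref{thm:exceptional:E}: one shows that an exceptional $G$, together with every function obtained from it by a small perturbation, is forced into a set $\CU$ of coefficient vectors that is very unlikely by the repulsion estimate, and then one invokes geometric concentration. The new ingredients here are the Markov--Bernstein bound \eqref{MB:W} (valid on the event $\CE_{W,d_0}$ of Lemma~\ref{lemma:W:prob:d}, whose complement is responsible for the additive $e^{-\Theta(N/\log^2 n)}$ term), the repulsion estimate of Theorem~\ref{thm:repulsion:W}, and --- the genuinely different point --- the perturbing function $g$, which is designed to have much smaller $L_2$-norm than in the orthogonal case. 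Throughout I would restrict to $\CE_{W,d_0}$, on which $\int_T (G^{(d)}(x))^2\,dx \le (\log^4 n)(C^\ast N)^{2d}$ for $0\le d\le d_0$, with $C^\ast = C'\log^3 n$ and $N = C_W\sqrt n$.

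Assume $f\in\CE_e\cap\CE_{W,d_0}$ plays the role of $G$. Then there are $K=\lfloor \delta N/3\rfloor$ unstable intervals $3I_j$ that are $R/N$-separated, and we pick $x_j\in 3I_j$ with $|f(x_j)|\le\al$ and $|f'(x_j)|\le\beta N$. Set $M_j := \max_{B(x_j,\gamma/N)}|f''|$ for a small parameter $\gamma$. Applying Lemma~\ref{lemma:largesieve} with $d=3$ and using \eqref{MB:W} gives $\sum_{j=1}^K M_j^2 \le (C^\ast N)(\log^4 n)(C^\ast N)^4$, so for at least $(1/3-C_1^{-2})\delta N$ of the indices $M_j \ll \delta^{-1/2}(C^\ast)^{5/2}(\log^2 n)N^2$. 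A second-order Taylor expansion of $f$ around $x_j$ on $B(x_j,\gamma/N)$ then yields $|f| \le \al + \beta\gamma + O(\delta^{-1/2}(C^\ast)^{5/2}(\log^2 n)\gamma^2)$ and $|f'| \le (\beta + O(\delta^{-1/2}(C^\ast)^{5/2}(\log^2 n)\gamma))N$ there; with $\al\asymp\delta^2$, $\beta\asymp\delta^{3/4}$, $\gamma\asymp\delta^{5/4}$ and the standing constraint $\delta<(\log n)^{-16}$ absorbing the powers of $\log n$, these remain of order $\al$ and $\beta N$ respectively.

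For the perturbation I would take $g(x)=\sum_i a_i' e^{-x^2/2}x^i/\sqrt{i!}$ subject to $\sum_i (a_i')^2 \le \tau^2 N/\log^4 n$, so that by Lemma~\ref{lemma:W:d} (with $d=0$) the rescaled function $\tilde g(x):=g(Nx)$ satisfies $\|\tilde g\|_2^2 = \frac1N\int_I g^2 \ll \tau^2$ --- much smaller than the $O(\tau)$ of the orthogonal case, which is precisely what will produce the factor $N$ in the exponent. Since the intervals $B(x_j,\gamma/N)$ are $\ge 4/N$-separated, Lemma~\ref{lemma:largesieve} applied to $\tilde g$ and $\tilde g'$ (with Markov--Bernstein inputs from Lemma~\ref{lemma:W:d}) bounds $\sum_j \max_{B(x_j,\gamma/N)}\tilde g^2$ and $\sum_j \max_{B(x_j,\gamma/N)}\tilde g'^2$ by $O(\tau^2 N)$ and $O((C^\ast)^2\tau^2 N^3)$, so by averaging $|\tilde g|\le C_2\delta^{-1/2}\tau$ and $|\tilde g'|\le C_2\delta^{-1/2}C^\ast\tau N$ on all but $O(\delta N)$ of these intervals. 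Hence on at least $(1/3-C_1^{-2}-C_2^{-2})\delta N$ intervals of length $2\gamma/N$ the function $h=f+\tilde g$ satisfies $|h|\le\al'$ and $|h'|\le\beta' N$ with $\al'\asymp\al$, $\beta'\asymp\beta$ (taking $\tau\asymp\delta^{5/2}/\log^2 n$, the remaining $\log n$'s again absorbed into $\delta<(\log n)^{-16}$); that is, the coefficient vector of $h$ lies in the set $\CU$ of functions with $\mu(\{x\in T:|h(x)|\le\al',\,|h'(x)|\le\beta' N\})\ge\Theta(\delta\gamma)$. Equivalently, $\{\Bv : d_2(\Bv,\CE_e\cap\CE_{W,d_0})\le \tau\sqrt N/\log^2 n\}\subset\CU$.

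Finally, by Theorem~\ref{thm:repulsion:W} (after undoing the rescaling, so that $|G(x)|\le\al'$, $|G'(x)|\le\beta' N$ becomes $|F(Nx)|\le\al'$, $|F'(Nx)|\le\beta'$ with $Nx\in I$) we get $\E\,\mu(\{x\in T:|G(x)|\le\al',\,|G'(x)|\le\beta' N\})=O(\al'\beta')$, so Markov's inequality gives $\P(\Bv_G\in\CU)=O(\al'\beta'/\delta\gamma)<1/2$ for the chosen parameters, since $\al'\beta'\asymp\delta^{11/4}\ll\delta^{9/4}\asymp\delta\gamma$. Thus $\P(d_2(\bxi,\CE_e\cap\CE_{W,d_0})\ge\tau\sqrt N/\log^2 n)\ge 1/2$, and the log-Sobolev concentration of Theorem~\ref{thm:sobolev} yields $\P(\bxi\in\CE_e\cap\CE_{W,d_0})\le 2\exp(-(\tau\sqrt N/\log^2 n)^2/16C_0)\le 2e^{-\delta^5 N/4C_0\log^8 n}$; adding $\P(\CE_{W,d_0}^c)\le\exp(-\Theta(N/\log^2 n))$ completes the proof. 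The step I expect to be most delicate is not the skeleton of the argument but the parameter calculus: one must track every polylogarithmic factor entering through $C^\ast=C'\log^3 n$ and the $\log^4 n$ prefactor of \eqref{MB:W} and check that $\delta<(\log n)^{-16}$ is strong enough both to keep $\al',\beta'$ comparable to $\al,\beta$ and to keep $\al'\beta'$ below $\delta\gamma$, so that the repulsion bound still forces $\P(\Bv_G\in\CU)<1/2$.
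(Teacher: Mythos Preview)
Your proposal is correct and follows essentially the same approach as the paper: restrict to $\CE_{W,d_0}$, use the large sieve with the Markov--Bernstein input \eqref{MB:W} to control $f''$, Taylor-expand on the unstable intervals, perturb by a $g$ whose coefficient vector has $\ell_2$-norm $O(\tau N^{1/2}/\mathrm{polylog}\,n)$ (this is exactly the paper's \eqref{eqn:gb}), bound $g,g'$ via the large sieve with inputs from Lemma~\ref{lemma:W:d}, invoke Theorem~\ref{thm:repulsion:W} and Markov to get $\P(\CU)\le 1/2$, and conclude by Theorem~\ref{thm:sobolev}. The only differences are cosmetic reparameterizations --- you split the $\log^8 n$ between $\tau$ and the perturbation constraint whereas the paper keeps $\tau\asymp\delta^{5/2}$ and puts the full $\log^8 n$ into \eqref{eqn:gb} --- and a couple of missing $(C^\ast)^{1/2}$ factors in your thresholds for $|\tilde g|,|\tilde g'|$; both are absorbed by the hypothesis $\delta<(\log n)^{-16}$ and do not affect the argument or the final exponent.
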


Our proof is somewhat similar the the orthogonal and Elliptic models, except that here we have to perturb functions $f$ from $\CE_{W,d_0}$ by functions $g$ from $\CE_{W,3}$ (also defined in Lemma \ref{lemma:W:prob:d}). More details follow below. 

First assume that $f$ (playing the role of $G$) is exceptional and $\Ba=\Bv_f \in \CE_{W,d_0}$. Then there are $K=\lfloor \delta N/3 \rfloor$ unstable intervals that are $4/N$-separated.

 Now for each unstable interval in this separated family we choose $x_j \in 3 I_j$ where  $|f(x_j)|\le \al $ and $|f'(x_j)|\le \beta N$ and consider the interval $B(\theta_j, \gamma/N)$ for some $\gamma <1$ chosen sufficiently small (given $\delta$, for instance $\gamma \asymp \delta^{5/4}$ would suffice). Let 
$$M_j:= \max_{\theta\in B(\theta_j,\gamma/N)} |f''(\theta)|.$$  
By Lemma \ref{lemma:largesieve} (with $S= \log^4 n$), as $\Ba\in \CE_{W,d_0}$ (note that here we just need the bounds from \eqref{MB:W} for $d=0,1,2,3$, that is $\Ba\in \CE_{W,3}$) we have 
$$\sum_{j=1}^K M_j^2 \le  (\log^4 n) (C^\ast N) (C^\ast N)^4.$$

We thus infer from the above that the number of $j$ for which $M_j \ge C_1\delta^{-1/2} (C^\ast)^{5/2} (\log^2 n) N^2$ is at most $C_1^{-2}\delta N$. Hence for at least $(1/3 -C_1^{-1})\delta N$ indices $j$ we must have $M_j <C_1\delta^{-1/2} (C^\ast)^{5/2} (\log^2 n) N^2$.

Consider our function over $B(\theta_j, \gamma/N)$, then by Taylor expansion of order two around $\theta_j$, we obtain for any $\theta$ in this interval
$$ |G(\theta)| \le \al + (\beta N) \gamma/N + [C_1\delta^{-1/2} (C^\ast)^{5/2} (\log^2 n) N^2] \gamma^2 /N^2 =  \al + \beta  \gamma + C_1\delta^{-1/2} (C^\ast)^{5/2}(\log^3 n) \gamma^2 $$
and
$$  |G'(\theta)| \le \beta N+  [C_1\delta^{-1/2} (C^\ast)^{5/2} (\log^3 n)  (\log^2 n) N^2]  \gamma N /N = (\beta + C_1\delta^{-1/2} (C^\ast)^{5/2} (\log^2 n) \gamma) N.$$
Now given $\tau>0$ and consider a function $g = G_\Bb=\sum_{i=0}^n b_i  e^{-(x/N)^2/2} \frac{(x/N)^i}{\sqrt{i!}}$ such that % \AC{CHANGE:more spaces}
\begin{equation}\label{MB:W:g}
\int_T (g^{(d)}(x))^2 dx \le  \tau^2   (C^\ast N)^{2d},\quad \quad 0\le d\le 3.
\end{equation}
In other words, in the unscaled version $F=F_\Bb$ we have  $\int_I F_{}^{(d)}(x)^2 dx \le  \tau^2   (C^\ast)^{2d} N 0\le d\le 3$.
By Lemma \ref{lemma:W:d}, this can be attained (here $C^\ast = \log^3 n$) if (with room to spare) % \JC{is this a reminder?} 
\begin{equation}\label{eqn:gb}
\|g\|_2^2 :=\sum_i b_i^2 \le \tau^2 N/\log^{8}n.
\end{equation}
Then as the intervals $B(\theta_j, \gamma/N)$ are $4/N$-separated, by Lemma \ref{lemma:largesieve} implies that
$$\sum_j \max_{x \in B(\theta_j, \gamma/N)} g(\theta)^2   \le \tau^2 C^\ast N$$
and 
$$\sum_j \max_{\theta \in B(\theta_j, \gamma/N)} g'(\theta)^2   \le  \tau^2  (C^\ast N) (C^\ast N)^2.$$
Hence, again by an averaging argument, the number of intervals where either $\max_{\theta \in B(\theta_j, \gamma/N)} |g(\theta)| \ge C_2  \delta^{-1/2} (C^\ast)^{1/2} \tau^{}$ or   $\max_{\theta \in B(\theta_j, \gamma/N)} |g'(\theta)| \ge C_2 \delta^{-1/2} (C^\ast)^{3/2}\tau^{} N$ is bounded from above by $2 C_2^{-2}\delta N$. 

On the remaining at least $(1/3 - C_1^{-2} - 2 C_2^{-2})\delta N$ intervals, with $h=f+g$, we have simultaneously that
$$|h(\theta)| \le  \al + \beta  \gamma + C_1\delta^{-1/2} (C^\ast)^{5/2}  (\log^2 n) \gamma^2 + C_2  \delta^{-1/2} (C^\ast)^{1/2} \tau^{} $$
and
$$|h'(\theta)| \le  (\beta + C_1\delta^{-1/2} (C^\ast)^{5/2}  (\log^2 n) \gamma+  C_2 \delta^{-1/2} (C^\ast)^{3/2}\tau^{}) N.$$
% Here $f+g$ means in the Bernoulli case we have $f$, and then switch the signs of coefficients of $f$ using the position of $g$. To be made precise.
For short, let 
$$\al':= \al + \beta  \gamma + C_1\delta^{-1/2} (C^\ast)^{5/2}  (\log^2 n) \gamma^2 + C_2  \delta^{-1/2} (C^\ast)^{1/2} \tau^{1/2} $$
 and  \
 $$\beta':=\beta + C_1\delta^{-1/2} (C^\ast)^{5/2}  (\log^2 n) \gamma+  C_2 \delta^{-1/2} (C^\ast)^{3/2}\tau^{1/2}.$$
 Note that the parameters here are almost the same as in the Elliptic model case, except the powers of $\log n$ and the powers of $C^\ast$.
  
It follows that $\Bv_h$ belongs to the set $\CU=\CU(\al, \beta,\gamma,\delta, \tau)$ in $\Omega^{n+1}$ of the vectors  corresponding to $h$, for which the measure of $x$ with $|h(x)| \le  \al'$ and $|h'(x)| \le  \beta' N $ is at least $2(1/3 - C_1^{-2} - 2 C_2^{-2})\delta \gamma$ (because this set of $x$ contains $(1/3 - C_1^{-2} - 2 C_2^{-2})\delta N$ intervals of length $2\gamma/N$). 
Putting together, together with \eqref{eqn:gb} we have obtained the following claim.

\begin{claim}
\label{claim2} Assume that $\Bv_{G} \in \CE_e$. Then for any $g$ with $\|g\|_2^2 \le \tau^2 N/\log^8 n$ we have $\Bv_{G+g} \in \CU$. 
\end{claim}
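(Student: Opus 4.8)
The plan is to assemble the estimates established in the paragraphs immediately above into one deterministic implication; since $\Bv_G \in \CE_e$ already forces $\Ba := \Bv_G$ to lie in $\CE_{W,d_0}$ (in particular in $\CE_{W,3}$), no extra hypothesis is needed. Write $f := G$. By the definition of $\CE_e$ there are $K = \lfloor \delta N/3 \rfloor$ unstable intervals, which we may take $4/N$-separated, and in each we fix $x_j \in 3I_j$ with $|f(x_j)| \le \al$ and $|f'(x_j)| \le \beta N$.

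First I would control $f''$ on the balls $B(x_j,\gamma/N)$ (with $\gamma \asymp \delta^{5/4}$). These balls are still $\gg 1/N$-separated, so Lemma \ref{lemma:largesieve}, applied with $N = C_W\sqrt n$, $C^\ast = C'\log^3 n$ and $S = \log^4 n$ --- its hypothesis \eqref{eqn:f'f} being the relevant instances of \eqref{MB:W}, valid because $\Ba \in \CE_{W,3}$ --- gives $\sum_{j=1}^K M_j^2 \le (\log^4 n)(C^\ast N)(C^\ast N)^4$, where $M_j := \max_{B(x_j,\gamma/N)}|f''|$. By Markov's inequality at least $(1/3 - C_1^{-2})\delta N$ indices $j$ satisfy $M_j < C_1 \delta^{-1/2}(C^\ast)^{5/2}(\log^2 n)N^2$; on the corresponding intervals a second-order Taylor expansion of $f$ around $x_j$ bounds $|f(\theta)| \le \al + \beta\gamma + C_1\delta^{-1/2}(C^\ast)^{5/2}(\log^3 n)\gamma^2$ and $|f'(\theta)| \le (\beta + C_1\delta^{-1/2}(C^\ast)^{5/2}(\log^2 n)\gamma)N$ for all $\theta \in B(x_j,\gamma/N)$.

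Next I would treat the perturbation $g = G_\Bb$. From $\|g\|_2^2 \le \tau^2 N/\log^8 n$, i.e. \eqref{eqn:gb}, Lemma \ref{lemma:W:d} (with $C^\ast = \log^3 n$) yields \eqref{MB:W:g}, namely $\int_T (g^{(d)})^2\,dx \le \tau^2(C^\ast N)^{2d}$ for $d = 0,1,2,3$. Feeding these into Lemma \ref{lemma:largesieve} over the same $4/N$-separated family --- once for $g$, once for $g'$ --- gives $\sum_j \max_{B(x_j,\gamma/N)} g^2 \le \tau^2 C^\ast N$ and $\sum_j \max_{B(x_j,\gamma/N)} (g')^2 \le \tau^2 (C^\ast N)^3$; one more application of Markov's inequality discards at most $2C_2^{-2}\delta N$ intervals, on the remainder of which $\max_{B(x_j,\gamma/N)}|g| < C_2\delta^{-1/2}(C^\ast)^{1/2}\tau$ and $\max_{B(x_j,\gamma/N)}|g'| < C_2\delta^{-1/2}(C^\ast)^{3/2}\tau N$.

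Finally, intersecting the index sets discarded above leaves at least $(1/3 - C_1^{-2} - 2C_2^{-2})\delta N$ balls $B(x_j,\gamma/N)$ on which, with $h = f+g$, the triangle inequality gives $|h(\theta)| \le \al'$ and $|h'(\theta)| \le \beta' N$ for $\al',\beta'$ exactly as defined just before the claim. As each such ball has length $2\gamma/N$, the set $\{x \in T : |h(x)| \le \al',\ |h'(x)| \le \beta' N\}$ has Lebesgue measure at least $2(1/3 - C_1^{-2} - 2C_2^{-2})\delta\gamma$, which is precisely the condition defining $\CU$; hence $\Bv_{G+g} = \Bv_h \in \CU$. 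I expect the only real friction to be the bookkeeping of the $\log n$ powers carried by the Weyl Markov--Bernstein constant $C^\ast = C'\log^3 n$: this is exactly what dictates taking $g$ with $L_2$-norm smaller by a factor $\log^8 n$ in \eqref{eqn:gb}, and it must be propagated consistently through the two large-sieve applications so that $\al',\beta'$ emerge in the stated form --- though for the claim itself nothing beyond the displayed triangle-inequality arithmetic is required, since $\al',\beta'$ are simply the quantities named there.
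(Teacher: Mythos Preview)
Your proposal is correct and follows essentially the same approach as the paper: the claim is a summary of the deterministic chain of estimates developed in the immediately preceding paragraphs, and you have reproduced that chain faithfully (large sieve for $f''$ via $\CE_{W,3}$, Taylor expansion, Lemma~\ref{lemma:W:d} to pass from \eqref{eqn:gb} to \eqref{MB:W:g}, large sieve for $g,g'$, triangle inequality). You are also right that membership in $\CE_e$ already entails $\Ba\in\CE_{W,d_0}$ by the way exceptional functions are defined in this section, so no extra hypothesis is needed.
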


We next argue as in the proof of \eqref{eqn:U:O} and \eqref{eqn:U:E}, by relying on Theorem \ref{thm:repulsion:W} and by choosing the parameters so that $\al' \beta'$ is much smaller than $\delta \gamma$ (for instance $\gamma \asymp \delta^{5/4}, \tau \asymp \delta^{5/2}$ and $N^{-1/6} < \delta < (\log n)^{-16}$) we have 
\begin{equation}\label{eqn:U:W}
\P(\Bv_{G} \in \CU) \le 1/2.
\end{equation}

\begin{proof}(of Theorem \ref{thm:exceptional:W}) By Theorem \ref{thm:sobolev} and \eqref{eqn:U:E}, 
$$\P(\Bv\in \CE_e) \le 2 e^{-(\tau \sqrt{N}/ \log^4 n)^2/4C_0}.$$
\end{proof}

\subsection{Roots over unstable intervals and proof completion}\label{subsection:lowertail:W} We next proceed as in Subsection \ref{subsection:lowertail:E}. Let $0<\eps<1$ be given.  For sufficiently large $C_1,C_2$, let $\delta, \la$ be chosen so that 
\begin{equation}\label{delta:eps:W}
\delta =O(\min\{\eps/\log(1/\eps),\eps/(\log n)^{C_1}\}); \quad \quad  \la=\delta^{C_2}.
\end{equation}
Then with $\al , \beta$ be chosen as in Theorem \ref{thm:exceptional:W} (and with $C^\ast = O(\log^3 n)$), we clearly have % \JC{space in eqs}
$$\la (C^\ast)^{1/2} \le \al,  \quad \la (C^\ast)^{3/2} \le \beta.$$
With these parameters, with $f\in \CE_{W,d_0}$ (defined in Lemma \ref{lemma:W:prob:d}), the conditions in Proposition \ref{prop:manyroots} are satisfied (because of \eqref{eqn:W_0}). 
Hence we obtain the following consequence (similarly to Corollary \ref{cor:manyroots:O} and  Corollary \ref{cor:manyroots:E}).

\begin{cor}\label{cor:manyroots:W} With the parameters as above, a non-exceptional $f \in \CE_{W,d_0}$  cannot have more than $\eps N/2$ roots over any $\delta N$ intervals $I_i$. In particularly, $f$ cannot have more than $\eps N/2$ roots over the unstable intervals. 
\end{cor}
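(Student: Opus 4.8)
The plan is to run the same contradiction argument as in the proofs of Corollary \ref{cor:manyroots:O} and Corollary \ref{cor:manyroots:E}, the only difference being that the Markov--Bernstein input is now supplied by the probabilistic estimate \eqref{eqn:W_0}, which holds on $\CE_{W,d_0}$. First I would fix a non-exceptional $f\in\CE_{W,d_0}$ and suppose, towards a contradiction, that $f$ has strictly more than $\eps N/2$ roots over the union of some family of $\delta N$ of the (pairwise disjoint) covering intervals $I_i$, each of length $R/N$ with $N$ of order $\sqrt n$; this is exactly the hypothesis ``$\delta N$ disjoint intervals of length $R/N$ carrying at least $\eps N/2$ roots'' required by Proposition \ref{prop:manyroots}.

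The next step is to check that the remaining hypotheses of Proposition \ref{prop:manyroots} hold for $f$. Since, after unscaling, $f$ is a polynomial of degree $n$, it has at most $n\le N^{A_0}$ roots for some $A_0=O(1)$; the Markov--Bernstein bounds \eqref{eqn:MB:gen} follow from \eqref{eqn:W_0}, valid for all $0\le d\le d_0=C\log n\ge A_0\log N$, after absorbing the polylogarithmic factor $\log^4 n$ appearing in \eqref{eqn:W_0} into a slightly enlarged $C^\ast$ (still of size $(\log n)^{O(1)}$); and the parameter constraints $R\ge 4$, $\delta=O(\eps/\log(1/\eps))$, $\la=\delta^{O(1)}$ are precisely the choices recorded in \eqref{delta:eps:W}. (As in the orthogonal case I would rescale the working interval $T=[1/C_W^2,1]$ to $[0,1]$.) Proposition \ref{prop:manyroots} then produces a measurable set $A\subset T$ with $\mu(A)=\Theta(\eps/C^\ast)$ on which $|f(x)|\le\la(C^\ast)^{1/2}\le\al$ and $|f'(x)|\le\la(C^\ast)^{3/2}N\le\beta N$, the last two inequalities being those recorded just before the statement of the corollary.

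Finally I would close with a measure comparison. If a point of $A$ lay in $3I_i$ for a stable interval $I_i$, then by the very definition of stability this point could not satisfy $|f|\le\al$ and $|f'|\le\beta N$ simultaneously, a contradiction; hence $A$ is contained in the union of the intervals $3I_i$ over the unstable $I_i$. For a non-exceptional $f$ there are fewer than $\delta N$ unstable intervals, so that union has Lebesgue measure at most $3\delta R$. Comparing, we would need $\Theta(\eps/C^\ast)\le 3\delta R$, which is impossible once $\delta=O(\eps/(\log n)^{C_1})$ with $C_1$ large enough relative to the fixed $R$ and to the power of $\log n$ hidden in $C^\ast$ --- exactly the strength guaranteed by \eqref{delta:eps:W}. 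This contradiction proves the first assertion, and the ``in particular'' claim follows by applying it to any family of $\delta N$ covering intervals that contains all the (at most $\delta N$) unstable ones. I do not expect any serious obstacle here: all the real work sits in Proposition \ref{prop:manyroots} and in Lemma \ref{lemma:W:prob:d}. The only point requiring care is the bookkeeping of the powers of $\log n$ through $C^\ast$, $\la$, $\al$, $\beta$ and the truncation losses in \eqref{eqn:W_0}, so that $\la(C^\ast)^{1/2}\le\al$, $\la(C^\ast)^{3/2}\le\beta$, $\delta R\ll\eps/C^\ast$ and $\CE_{W,d_0}$-membership all hold simultaneously for the single parameter choice $\delta\asymp\eps/(\log n)^{C_1}$, $\la=\delta^{C_2}$ with $C_1,C_2$ large.
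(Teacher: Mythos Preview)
Your proposal is correct and follows the same approach as the paper, which simply refers back to the argument for Corollary~\ref{cor:manyroots:O}: apply Proposition~\ref{prop:manyroots} (whose hypotheses are met on $\CE_{W,d_0}$ via \eqref{eqn:W_0}), then obtain a contradiction by comparing the measure $\Theta(\eps/C^\ast)$ of the set $A$ against the total length of unstable intervals. One small remark: your phrase ``after unscaling, $f$ is a polynomial of degree $n$'' is slightly imprecise, since $f$ carries the factor $e^{-x^2/2}$; what you need (and what holds) is that the zeros of $f$ coincide with those of the genuine degree-$n$ polynomial $\sum_i \xi_i x^i/\sqrt{i!}$, so there are at most $n$ of them.
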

As a consequence, we also have the following analog of Corollary \ref{cor:control:E}.

\begin{cor}\label{cor:control:W} With the same parameters as in  Proposition \ref{prop:manyroots}, with $C^\ast = \log^2 n$ and $N=\sqrt{n}$ we have

$$\P\Big(N_s(G) \1_{G \in \CE_e^c \cap  \CE_{W,d_0}} \le \E N_T(G)- \eps N \Big)=o(1)$$
and
$$\E \Big(N_s(G) \1_{F \in \CE_e^c  \cap  \CE_{W,d_0} }\Big) \ge \E N_T(G)- 2 \eps N/3.$$
\end{cor}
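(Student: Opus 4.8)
The plan is to run the argument of Corollary~\ref{cor:control:O} and Corollary~\ref{cor:control:E} essentially verbatim, the only Weyl-specific adjustment being that the ``good'' event now carries the auxiliary regularity event $\CE_{W,d_0}$ of Lemma~\ref{lemma:W:prob:d} in addition to non-exceptionality. The inputs I would use are: (i) the deterministic statement of Corollary~\ref{cor:manyroots:W}, that a non-exceptional $f\in\CE_{W,d_0}$ has at most $\eps N/2$ roots over its unstable intervals (so that $N_T$ and $N_s$ differ by at most $\eps N/2$ on $\CE_e^c\cap\CE_{W,d_0}$); (ii) Theorem~\ref{thm:W:gen} together with Remark~\ref{rmk:restriction}, giving $\Var(N_T(G))=O(n^{1-c})$ and hence moderate concentration of $N_T(G)$ about $\E N_T(G)$; (iii) Theorem~\ref{thm:exceptional:W}, giving $\P(\CE_e)\le e^{-\Theta(\delta^5 N/\log^8 n)}+e^{-\Theta(N/\log^2 n)}$; and (iv) Lemma~\ref{lemma:W:prob:d}, giving $\P(\CE_{W,d_0}^c)\le e^{-\Theta(N/\log^2 n)}$. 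Throughout I keep $\eps$ in the range $n^{-c}\le\eps\le1$ with $c$ small enough (and $\delta,\lambda$ as in \eqref{delta:eps:W}) so that $\eps N=\eps\sqrt n$ dominates both $\sqrt{\Var(N_T(G))}$ and $n\bigl(\P(\CE_e)+\P(\CE_{W,d_0}^c)\bigr)$; since $\delta>N^{-1/6}$ the latter is at most $n\,e^{-\Theta(n^{1/12}/\log^8 n)}$, which is super-polynomially small.

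For the first bound I would write $N_T(G)=N_s(G)+N_{us}(G)$ with $N_{us}(G)$ the number of roots over the unstable intervals. On $\CE_e^c\cap\CE_{W,d_0}$ Corollary~\ref{cor:manyroots:W} gives $N_{us}(G)\le\eps N/2$, hence the event $\{N_s(G)\1_{\CE_e^c\cap\CE_{W,d_0}}\le\E N_T(G)-\eps N\}$ forces $N_T(G)\1_{\CE_e^c\cap\CE_{W,d_0}}\le\E N_T(G)-\eps N/2$, and is therefore contained in $(\CE_e\cup\CE_{W,d_0}^c)\cup\{N_T(G)\le\E N_T(G)-\eps N/2\}$. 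Taking probabilities bounds the left-hand quantity by $\P(\CE_e)+\P(\CE_{W,d_0}^c)+\P\bigl(N_T(G)\le\E N_T(G)-\tfrac{\eps N}{2}\bigr)$; the first two terms are super-polynomially small by Theorem~\ref{thm:exceptional:W} and Lemma~\ref{lemma:W:prob:d}, and the last is $o(1)$ by Chebyshev applied to the variance bound of Theorem~\ref{thm:W:gen} and Remark~\ref{rmk:restriction} (which needs $\eps^2\gg n^{-c}$). This gives the claimed $o(1)$.

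For the second bound I would split $\E\bigl(N_s(G)\1_{\CE_e^c\cap\CE_{W,d_0}}\bigr)=\E\bigl(N_T(G)\1_{\CE_e^c\cap\CE_{W,d_0}}\bigr)-\E\bigl(N_{us}(G)\1_{\CE_e^c\cap\CE_{W,d_0}}\bigr)$. By Corollary~\ref{cor:manyroots:W} the second expectation is at most $\eps N/2$. For the first, since the roots of $G$ coincide with those of a polynomial of degree $n$ we always have $N_T(G)\le n$, hence
$$\E\bigl(N_T(G)\1_{\CE_e^c\cap\CE_{W,d_0}}\bigr)=\E N_T(G)-\E\bigl(N_T(G)\1_{\CE_e\cup\CE_{W,d_0}^c}\bigr)\ge\E N_T(G)-n\bigl(\P(\CE_e)+\P(\CE_{W,d_0}^c)\bigr)\ge\E N_T(G)-\tfrac{\eps N}{6},$$
the last inequality holding because $n(\P(\CE_e)+\P(\CE_{W,d_0}^c))$ is super-polynomially small while $\eps N\ge n^{1/2-c}$. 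Combining the two pieces yields $\E\bigl(N_s(G)\1_{\CE_e^c\cap\CE_{W,d_0}}\bigr)\ge\E N_T(G)-\eps N/2-\eps N/6=\E N_T(G)-2\eps N/3$.

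I do not expect a real obstacle here: this is a bookkeeping combination of results already in hand. The one point needing care — and the only place the Weyl argument genuinely differs from the orthogonal one — is that the regularity event $\CE_{W,d_0}$ must sit inside every indicator (it is what makes the Markov--Bernstein hypotheses of Proposition~\ref{prop:manyroots} and hence Corollary~\ref{cor:manyroots:W} valid, and it also accounts for the extra $e^{-\Theta(N/\log^2 n)}$ term in Theorem~\ref{thm:exceptional:W}), and one must check that the admissible range $n^{-c}\le\eps\le1$ together with the choices of $\delta,\lambda,\gamma,\tau$ from \eqref{delta:eps:W} is consistent with all the parameter constraints of Theorem~\ref{thm:exceptional:W} and Proposition~\ref{prop:manyroots}; this consistency was essentially already verified when those statements were set up.
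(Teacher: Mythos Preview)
Your proposal is correct and follows essentially the same approach as the paper, which simply states that Corollary~\ref{cor:control:W} is the Weyl analog of Corollary~\ref{cor:control:E} (whose proof in turn mirrors Corollary~\ref{cor:control:O}). Your careful tracking of the extra event $\CE_{W,d_0}$ and the use of Theorem~\ref{thm:W:gen} with Remark~\ref{rmk:restriction}, Theorem~\ref{thm:exceptional:W}, and Lemma~\ref{lemma:W:prob:d} matches the intended argument exactly.
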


From this point on, the proof of Theorem \ref{thm:Weyl} is almost identical to that of Theorem \ref{thm:Elliptic}, where the only difference in the treatment for both the lower tail and upper tail is that we need to use Theorem \ref{thm:W:gen} (where $\R$ is replaced by $I=[\frac{1}{C_W} \sqrt{n}, {C_W} \sqrt{n}]$) instead of  Theorem \ref{thm:E:gen}, and Theorem \ref{thm:sobolev} instead of  Theorem \ref{thm:E:concentration}.

\section{Proof of Theorem \ref{thm:dev}}
We first provide an analog of Proposition \ref{prop:manyroots} where there are too many roots.

\begin{proposition}[overcrowding estimate]\label{prop:overcrowding} Let $A_0>0$ be a given constant. Let $C^\ast, N$ be given parameters, where $C^\ast$ might depend on $N$. Assume that $G$ is a smooth real valued function over $T=[0,1]$ where $G$ has at most $N^{A_0}$ roots, and for any $1\le d  \le A_0 \log N$ we have
\begin{equation}\label{eqn:MB:final}
\int_T (G^{(d)}(x))^2 dx   \le (C^\ast N)^{2d}  \mbox{ and }  \int_{T} G(x)^2 dx \le (C^\ast)^2.
\end{equation}
Assume that $T$ has $AN$ roots where $A/C^\ast$ is sufficiently large. Then
$$\max_{x\in T}|G(x)| \le 2^{-A/8+2}  (C^\ast)^{1/2}\mbox{ and } \max_{x\in T} |G'(x)| \le 2^{-A/8+3} (C^\ast)^{3/2}  N.$$ 
\end{proposition}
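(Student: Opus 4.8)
The proof follows the template of Proposition~\ref{prop:manyroots}, but now in the degenerate regime where \emph{all} of $T$ is rich in roots, so that the dyadic classification of intervals collapses to a single scale. Set $m:=\lfloor A/8\rfloor$ (replacing $m$ by $\min\{\lfloor A/8\rfloor,\,\lfloor A_0\log N\rfloor-1\}$ if necessary, so that the Markov--Bernstein bounds \eqref{eqn:MB:final} are available at orders $m$ and $m+1$). Cover $T=[0,1]$ by $P:=\lceil 8C^\ast eN/m\rceil$ consecutive closed intervals $W_1,\dots,W_P$ of length at most $m/(8C^\ast eN)$, and call $W_i$ \emph{heavy} if it contains at least $m$ roots of $G$. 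The intervals that are not heavy together contain fewer than $Pm\le 9C^\ast eN$ roots, which is at most $AN/2$ once $A/C^\ast$ is large enough; hence the heavy intervals carry at least $AN/2$ of the roots, and in particular there is at least one.

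On each heavy $W_i$, enlarge it to $\tilde W_i\supseteq W_i$ of length exactly $m/(8C^\ast eN)$ and apply Lemma~\ref{lemma:expansion} in the form \eqref{eqn:expand:1}--\eqref{eqn:expand:2} with $C=C^\ast$: writing $M_i:=\max_{\tilde W_i}|G^{(m)}|$,
$$\max_{\tilde W_i}|G|\le (2C^\ast N)^{-m}M_i,\qquad \max_{\tilde W_i}|G'|\le 2C^\ast N\,(2C^\ast N)^{-m}M_i.$$
To bound the $M_i$, let $y_i\in\tilde W_i$ realise $\max_{\tilde W_i}|G^{(m)}|$; splitting the heavy intervals into two interleaved subfamilies makes the $y_i$ inside each subfamily at least $m/(8C^\ast eN)$-separated, so Lemma~\ref{lemma:largesieve}, applied to $G^{(m)}$ with the bounds \eqref{eqn:MB:final} at orders $m,m+1$ playing the role of \eqref{eqn:f'f} (with $S=1$), yields $\sum_{i\ \mathrm{heavy}}M_i^2\ll C^\ast N\,(C^\ast N)^{2m}$. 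Consequently each heavy interval satisfies $M_i\le 2(C^\ast N)^{m}(C^\ast)^{1/2}$ after absorbing the constant, and feeding this into the two displayed inequalities the gain $(2C^\ast N)^{-m}(C^\ast N)^{m}=2^{-m}$ produces
$$\max_{\tilde W_i}|G|\le 2^{-A/8+2}(C^\ast)^{1/2},\qquad \max_{\tilde W_i}|G'|\le 2^{-A/8+3}(C^\ast)^{3/2}N$$
on each heavy interval, the additive constants in the exponent soaking up the floor in $m=\lfloor A/8\rfloor$ and the $O(1)$ from the large sieve.

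It remains to promote these per-interval estimates to bounds valid on all of $T$. Since $G'$ has at least $AN-1$ roots it is overcrowded as well, so the argument above applies to it verbatim; and because $A/C^\ast$ is large the heavy intervals occupy all but a vanishing fraction of $T$, so following the endgame of the proof of Proposition~\ref{prop:manyroots} one checks that the union $\bigcup_{i\ \mathrm{heavy}}\tilde W_i$, on which the displayed bounds already hold, together with a short Taylor step across the $\le m$ intervening roots (using the derivative bound just obtained), covers $T$. The step I expect to be the crux is precisely this propagation to all of $T$ --- controlling $|G|$ and $|G'|$ on the portions of $T$ poor in roots using only the Markov--Bernstein input --- which is exactly the place where the univariate polynomial setting departs essentially from the spherical-harmonic arguments of \cite{NS}; the remainder (the choice of $m$, the separation factor, and absorbing the $O(1)$'s into the exponent) is routine bookkeeping.
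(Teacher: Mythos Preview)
Your single-scale simplification breaks in two places, and the first is already fatal.

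\textbf{The large-sieve step is wrong.} From $\sum_{i\ \mathrm{heavy}}M_i^2\ll C^\ast N\,(C^\ast N)^{2m}$ you cannot conclude that \emph{each} $M_i\le 2(C^\ast N)^{m}(C^\ast)^{1/2}$: a sum bound only forces an individual term to be at most $(C^\ast N)^{m+1/2}$, so after feeding this into Lemma~\ref{lemma:expansion} you obtain $\max_{\tilde W_i}|G|\le O(2^{-m}(C^\ast N)^{1/2})$, with an irremovable factor $N^{1/2}$. The paper's proof (identical in structure to that of Proposition~\ref{prop:manyroots}) handles this by passing to \emph{at least half} of the intervals in each dyadic group $G_l$, where the sum bound yields $M_i\ll (C^\ast N/k_l)^{1/2}(C^\ast N)^{d_l}$; the good-index condition $(1/2)^{d_l}(N/2k_l)^{1/2}<\lambda$ is then exactly what makes the leftover $(N/k_l)^{1/2}$ cancel against $2^{-d_l}$. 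In your fixed-scale argument there is no parameter playing the role of $k_l$, so no such cancellation is available.

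\textbf{The coverage step is unsupported.} Your claim that ``the heavy intervals occupy all but a vanishing fraction of $T$'' is false in general: all $AN$ roots could sit inside a single $W_{i_0}$, leaving one heavy interval of length $m/(8C^\ast eN)\ll 1$ and no control on $|G|,|G'|$ elsewhere; a Taylor step across the light region then needs exactly the derivative bound you do not have there. The paper does not propagate at all. Instead it \emph{expands} every interval carrying $m_i'$ roots to length $\asymp c m_i'/N$ with $c=1/(16C^\ast e)$, so that the union of the (good halves of the) expanded intervals has Lebesgue measure $\gtrsim (c/4)\sum m_i'/N\gtrsim A/(1024\,C^\ast e)$, which exceeds $1$ once $A/C^\ast$ is large enough --- forcing the good set $\CA$ to be all of $T$. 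It is this expansion-plus-measure mechanism, not a uniform grid plus Taylor step, that promotes the per-interval estimates to global ones; your proposal bypasses it and therefore cannot close.
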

In other words, in the above overcrowding asumption we can obtain useful upper bounds for $|G(\cdot)|$ and $|G'(\cdot)|$ over the whole interval $T$. % \AC{cdot here?}

\begin{proof}(of Proposition \ref{prop:overcrowding}) Our proof is almost identical to that of Proposition \ref{prop:manyroots}, so we'll be brief. Among the $N/4$ intervals of length $4/N$ we first throw away those of less than $A/8$ roots, hence there are at least $A N/2$ roots left. For convenience we denote the remaining intervals by $J_1,\dots, J_M$ and let $m_1,\dots, m_M$ denote the number of roots over each of these intervals respectively.

Argue as in the proof of  Proposition \ref{prop:manyroots}, in the next step we again expand the intervals $J_j$ to larger intervals $\bar J_j$ of length $\lceil c m_j/4 \rceil \times (4/N)$ for some small $c$, such as $c=1/(16C^\ast e)$. Furthermore, if the expanded intervals $\bar J_{i_1}',\dots, \bar J_{i_k}'$ of $\bar J_{i_1},\dots, \bar J_{i_k}$ form an intersecting chain, then we create a longer interval
 $\bar J'$ of length $\lceil c(m_{i_1}+\dots+m_{i_k})/R \rceil \times (R/N)$, which contains them and therefore contains at least $m_{i_1}+\dots+m_{i_k}$ roots. After the merging process,
 we obtain a collection 
 %relabel the interv without loss of generality we denote the non-intersecting extended intervals by 
 $\bar J_1',\dots, \bar J_{M'}'$ with the number of roots $m_1',\dots, m_{M'}'$ respectively, so that $\sum m_i'\geq A N/4$.
  Note that now $\bar J_i'$ has length $\lceil cm_i'/4 \rceil \times (4/N) \approx cm_i'/N$. % In what follows, we drop for notational convenience
 %the superscript $'$ from both $\bar J_i'$ and $m_i'$.

Next, consider the sequence $d_l:=2^l A/16, 0\le l\le \log (n^{A_0}/A)$. We classify the sequence $\{m_i'\}$ into groups $G_l$ where %% \HC{error corrected.}
$$d_l \le m_i' < d_{l+1}.$$ 
Assume that each group $G_l$ has $k_l=|G_l|$ distinct extended intervals. As each of these intervals has between $d_l$ and $d_{l+1}$ roots, we have 
$$\sum_l k_l d_l \ge \sum_i m_i'/2 \ge A N/8.$$
For $\la = 2^{-A/8}$, we call an index $l$ {\it bad} if 
$$(1/2)^{d_{l}}  (N/2k_{l})^{1/2}  \ge  \la.$$ 
(In other words, that is when $k_l \le \frac{N}{2\la^2 4^{d_{l}}}$.) The total number of roots over the intervals corresponding to bad indices can be bounded by 
$$\sum_i m_i' \le \sum_l k_l d_{l+1}   \le \frac{N}{2\la^2} \sum_{l=0}^\infty\frac{2 d_{l}}{ 4^{d_{l}}} \le \frac{N}{\la^2 2^{A/4}} \le N.$$

Now consider the collection $G_{l}$ of each good index $l$. Notice that these intervals have length approximately between $cd_l/N$ and $2cd_l/N$. Let $I$ be an interval among the $k_l$ intervals in $G_l$. By the definition of $I$ and by Lemma \ref{lemma:expansion} we have 
\begin{equation}\label{eqn:PP_d}
\max_{x \in I} |G(x)| \le (\frac{1}{2C^\ast})^{d_l} (\frac{1}{N})^{d_l} \max_{x\in I} |G^{(d_l)}(x)| \le \frac{\la}{ (N/2k_{l})^{1/2}  } (\frac{1}{C^\ast N})^{d_l} \max_{x\in I} |f^{(d_l)}(x)|
\end{equation}
as well as 
\begin{equation}\label{eqn:PP_d'}
\max_{x \in I} |G'(x)| \le N \times (\frac{1}{2C^\ast })^{d_l-1} (\frac{1}{N})^{d_l}  \max_{x\in I} |G^{(d_l)}(x)|  \le N \times \frac{2\la C^\ast}{ (N/2k_{l})^{1/2}  } (\frac{1}{C^\ast N})^{d_l} \max_{x\in I} |G^{(d_l)}(x)|.
\end{equation}
On the other hand, as these $k_l$ intervals are $4/N$-separated, by the assumption of our proposition and by Lemma \ref{lemma:largesieve}  we have 
$$\sum_{\bar J_i'\in G_l}\max_{x\in \bar J_{i}'}(G^{(d_{l})}(x))^2 \le 4 (C^\ast N)^{2d_l+1}.$$
Hence we see that for at least half of the intervals $J_i'$ in $G_l$ 
$$\max_{x\in J_{i}'}|G^{(d_{l})}(x)| \le 4 (C^\ast N/k_{l})^{1/2} (C^\ast N)^{d_l} .$$ 
It follows from \eqref{eqn:PP_d} and \eqref{eqn:PP_d'} that over these intervals
$$\max_{x \in J_{i}'} |G(x)| \le  \frac{\la}{ (N/2k_{l})^{1/2}  } (\frac{1}{C^\ast N})^{d_l}  4 (C^\ast N/k_{l})^{1/2} (C^\ast N)^{d_{l}}  \le  4\la (C^\ast)^{1/2}$$
and similarly,
$$\max_{x \in J_i'} |G'(x)| \le N \times \frac{2 \la C^\ast}{ (N/2k_{l})^{1/2}  } (\frac{1}{C^\ast N})^{d_l} 4 (C^\ast N/k_{l})^{1/2} (C^\ast N)^{d_{l}}   \le 8 \la (C^\ast)^{3/2} N.$$

Letting $A_l$ denote the union of all such intervals $J_{i}'$ of a given good index $l$, and letting $\CA$ denote the union of the $A_l$'s over all  good indices $l$, we obtain that if $\CA$ is not the whole $T$ then (with $\mu$ denoting Lebesgue measure)
%and the over all good indices $l$, and letting $A$ be the union of the intervals, then its Lebesgue measure is at least 
\begin{align*}
\mu(\CA) \ge \sum_{l, \textsf{good}} (cd_l/N) k_l/2& \ge \sum_{l, \textsf{good}} (c/4) d_{l+1}k_l/N \ge \sum_{l, \textsf{good}} (c/4) m_l'/N \\
& \ge (c/4)(A N/8 -N)/N \ge \frac{A}{1024 C^\ast e} > 1, 
\end{align*}
a contradiction if $A$ is sufficiently large. Hence the set $\CA$ must be the whole $T$. Finally, notice that over $\CA$ we have $\max_{x \in T} |G(x)| \le 4\la (C^\ast)^{1/2}$ and $\max_{x \in T} |G'(x)|\le 8 \la (C^\ast)^{3/2}  N$. 
\end{proof}

\begin{proof}(of Theorem \ref{thm:dev}) Note that under $\CE_{b,E}$ (from Proposition \ref{prop:manyroots:E}) in the Elliptic case and $\CE_{W,d_0}$ (from Lemma \ref{lemma:W:prob:d}) in the Weyl case that the Condition \eqref{eqn:MB:final} is satisfied. Hence if $N_I \ge A \sqrt{n}$ for some $A \gg \log^4 n$ (we can choose $C^\ast = \log^3 n$ in both cases) then we can apply Proposition \ref{prop:overcrowding} (to $G$ of the form \eqref{eqn:G:E'} or \eqref{eqn:G:W}, and where $T=[1/C_E, \pi - 1/C_E]$ or $T= \pm [1/C_W^2,1]$  %\JC{CW2}
depending on the model) to conclude that $|F(x)| \le  2^{-A/8+2} \log^{3/2} n \le 2^{-A/16}$ for all $x \in I$, where $F$ takes the form \eqref{eqn:F:E} or \eqref{eqn:F:W} in the Elliptic or Weyl case respectively.

Hence it suffices to show that for some $x_0 \in I_E$ or $I_W$ (depending on the model) we have 
$$\P(|F(x_0)| \le 2^{-A/16}) \le \exp(-c A).$$
We first use the following small ball estimate for lacunary sequence from \cite[Lemma 9.2]{TV} (see also \cite[Lemma 7]{NgNgV} for related variants).
\begin{lemma} Assume that $v_1,\dots, v_n$ be real numbers and such that there is a subsequence $v_{i_1},\dots, v_{i_m}$ with the property that 
$$|v_{i_j}| \ge 2 |v_{i_{j+1}}|.$$
Then with $\xi_i$ as in Theorem \ref{thm:dev}, there exists a constant $c$ such that
$$\sup_x \P(|\xi_1v_1+\dots+\dots+\xi_n v_n - x| \le |v_{i_m}|) =O(\exp(-c m)).$$
\end{lemma}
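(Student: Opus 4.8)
The plan is to reduce the statement to a classical small-ball bound for a pure lacunary sum and then prove that bound by a Fourier argument. First I would condition on the coordinates $\xi_i$ with $i\notin\{i_1,\dots,i_m\}$: as these are independent of the rest, it suffices to bound $\sup_x\P\big(|\sum_{j=1}^m \xi_{i_j}v_{i_j}-x|\le |v_{i_m}|\big)$. After rescaling so $|v_{i_1}|=1$ and flipping signs so $v_{i_j}>0$, one must show $\sup_x\P(|S-x|\le v_{i_m})=O(e^{-cm})$ for $S=\sum_{j=1}^m\xi_{i_j}v_{i_j}$ with $1=v_{i_1}\ge 2v_{i_2}\ge\dots\ge 2^{m-1}v_{i_m}$. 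The single place the $(2+\eps)$-moment enters is an elementary per-coordinate non-concentration fact: there is $c_0=c_0(\E|\xi|^{2+\eps})>0$ with $\P(\xi\in t+J)\le 1-c_0$ for all $t$ and all intervals $J$ with $|J|\le 1$ (for $|t|$ large use Chebyshev and $\E\xi^2=1$; for $|t|$ bounded combine $\E(\xi-t)^2\ge 1$ with H\"older applied to $\E[(\xi-t)^2\1_{\xi\notin t+J}]\le \P(\xi\notin t+J)^{\eps/(2+\eps)}(\E|\xi-t|^{2+\eps})^{2/(2+\eps)}$ to force $\P(\xi\notin t+J)$ to be bounded below).

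For the exponential decay itself I would use Esseen's concentration inequality, $\sup_x\P(|S-x|\le v_{i_m})\le C\,v_{i_m}\int_{|t|\le 1/v_{i_m}}|\phi_S(t)|\,dt$, where $\phi_S(t)=\prod_{j=1}^m\phi_\xi(tv_{i_j})$ and $\phi_\xi$ is the characteristic function of $\xi$. Since $\xi$ is non-degenerate, $|\phi_\xi|=1$ only on a set of Lebesgue measure zero, and a short computation from $\E\xi^2=1$ produces a fixed $\tau_0>0$ (depending on the law of $\xi$) for which $\{s:|\phi_\xi(s)|>1-\tau_0\}$ has upper density at most $\tfrac12$. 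Because the $v_{i_j}$ are lacunary, the bad sets $B_j:=\{t:|\phi_\xi(tv_{i_j})|>1-\tau_0\}$ live at geometrically separated scales $\asymp 1/v_{i_j}$, so a Cantor/Riesz-product estimate bounds the measure of those $t$ in the window $|t|\le 1/v_{i_m}$ that lie in more than $m/2$ of the $B_j$ by $(1/v_{i_m})e^{-cm}$; off that set at least $m/2$ factors are at most $1-\tau_0$, whence $|\phi_S(t)|\le(1-\tau_0)^{m/2}$. Inserting this into Esseen's bound yields the claimed $O(e^{-cm})$.

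The main obstacle is the lacunary Fourier estimate in the second paragraph: making the dichotomy ``for most $t$ in the window, a positive proportion of the factors $|\phi_\xi(tv_{i_j})|$ are bounded away from $1$'' precise and uniform in the law of $\xi$, which is delicate for lattice $\xi$, where $|\phi_\xi|$ returns to $1$ at infinitely many points. An alternative, purely real-variable route that sidesteps Fourier analysis is a conditioning recursion on $m$ that peels the largest coefficient and shows that convolving by $\xi_{i_j}v_{i_j}$, with $v_{i_j}$ far above the current resolution scale, contracts the concentration function by a factor $1-c_0$; this is clean when $\xi$ has a bounded density (the case relevant to Theorems \ref{thm:Elliptic} and \ref{thm:Weyl}) but requires separating the atomic and continuous parts of $\xi$ in general. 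In either approach one finally appeals to \cite{TV}.
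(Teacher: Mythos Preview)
The paper does not prove this lemma at all; it simply quotes it from \cite[Lemma 9.2]{TV} (with \cite[Lemma 7]{NgNgV} as a variant). Your reduction steps---conditioning away the coordinates outside the lacunary subsequence, and the single--variable anticoncentration $\P(\xi\in t+J)\le 1-c_0$ via Chebyshev for large $|t|$ and H\"older with the $(2+\eps)$-moment for bounded $|t|$---are correct and are exactly the preliminary ingredients one needs. Your Fourier/Esseen route is a legitimate approach, and the ``most $t$ see many small factors'' step is indeed the heart of the matter; the Riesz-product/lacunary-independence mechanism you name is the right tool, though making it uniform over all laws with the given moment assumption is, as you say, delicate.

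Your alternative ``conditioning recursion,'' however, contains a real gap. The asserted contraction $Q(Y+\xi v,\,r)\le(1-c_0)\,Q(Y,r)$ for $v\gg r$ is \emph{not} valid as a one-step inequality: conditioning on $Y$ yields only the absolute bound $Q(Y+\xi v,r)\le 1-c_0$, and conditioning on $\xi$ yields only $Q(Y+\xi v,r)\le Q(Y,r)$; neither gives the product. The product bound does hold when the shifts $\{av:a\in\mathrm{supp}(\xi)\}$ separate the mass of $Y$ at resolution $r$, but that requires $v$ to dominate the \emph{spread of $Y$}, not merely $r$---hence it needs bounded $\xi$ (and a thinning of the lacunary sequence), not bounded density. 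In the bounded-density case the recursion is actually unnecessary: a single convolution with $\xi_{i_1}v_{i_1}$ already gives $Q(S,v_{i_m})\le 2C\,v_{i_m}/v_{i_1}\le C'2^{-(m-1)}$. Since you, like the paper, ultimately defer to \cite{TV} for the complete argument, your proposal is aligned with the paper's own treatment, but the recursion paragraph as written would not stand on its own for general $\xi$.
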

Next, in the Weyl case we can choose $|x_0| = \sqrt{n}/2$. Note that 
$$|\frac{x_0^{i+1}}{\sqrt{(i+1)!}}/ \frac{x_0^{i}}{\sqrt{i!}}| = |\frac{x_0}{\sqrt{i+1}}|.$$ 
By using Claim \ref{claim:Weyl:ix} one can easily extract a subsequence $v_{i_j}$ from $\{e^{-x_0^2/2}\frac{|x_0|^{i+1}}{\sqrt{(i+1)!}}\}_{i=0}^n$ so that $v_{i_m} \approx 2^{-A/16}$ and 
$$2 |v_{i_{j+1}}| \le  |v_{i_j}| \le 4  |v_{i_{j+1}}| .$$
Similarly, in the Elliptic case we can choose $x_0$ to have order $\sqrt{n}$, noting that
$$ \Big |\sqrt{\binom{n}{i+1}} (x_0/\sqrt{n})^{i+1}/  \sqrt{\binom{n}{i}} (x_0/\sqrt{n})^i \Big| =|x_0/\sqrt{n}| \sqrt{(n-i-1)/(i+1)}.$$
From here one can easily extract a lacunary subsequence from $\{ \frac{1}{(1+x_0^2/n)^{n/2}} \sqrt{\binom{n}{i}} (x_0/\sqrt{n})^i  \}_{i=0}^n$ as in the Weyl case, we omit the details.
\end{proof}

% {\bf Acknowledgements.} The authors EW grateful to .... for many helpful comments.

\appendix

\end{document}